\newcommand{\remind}[1]{{\bf ** #1 **}}
\def\le{\leqslant}
\def\s{\sigma}
\def\t{\tau}
\def\k{\kappa}
\def\l{\lambda}
\def\i{^{-1}}
\def\<{\langle}
\def\>{\rangle}
\newtheorem*{introthmA}{Theorem A}
\newtheorem*{introthmB}{Theorem B}
\newcommand{\ignore}[1]{} 
\newcommand{\dbp}[1]{[\! [ #1 ]\! ] }
\newcommand{\Sh}{\ensuremath{\underline{\mathrm{Sh}}}}
\newcommand{\pfn}{\mathrm{pfn}}
\newcommand{\sA}{\ensuremath{\mathscr{A}}\xspace}
\newcommand{\sC}{\ensuremath{\mathscr{C}}\xspace}
\newcommand{\sG}{\ensuremath{\mathscr{G}}\xspace}
\newcommand{\sS}{\ensuremath{\mathscr{S}}\xspace}
\newcommand{\fka}{\ensuremath{\mathfrak{a}}\xspace}
\newcommand{\fks}{\ensuremath{\mathfrak{s}}\xspace}
\newcommand{\bT}{\ensuremath{\mathbf{T}}}
\newcommand{\fkC}{\ensuremath{\mathfrak{C}}\xspace}
\newcommand{\bE}{\mathbf E}
\newcommand{\bG}{\mathbf G}
\newcommand{\BA}{\ensuremath{\mathbb {A}}\xspace}
\newcommand{\BC}{\ensuremath{\mathbb {C}}\xspace}
\newcommand{\BD}{\ensuremath{\mathbb {D}}\xspace}
\newcommand{\BF}{\ensuremath{\mathbb {F}}\xspace}
\newcommand{{\BG}}{\ensuremath{\mathbb {G}}\xspace}
\newcommand{\BH}{\ensuremath{\mathbb {H}}\xspace}
\newcommand{{\BK}}{\ensuremath{\mathbb {K}}\xspace}
\newcommand{\BL}{\ensuremath{\mathbb {L}}\xspace}
\newcommand{\BM}{\ensuremath{\mathbb {M}}\xspace}
\newcommand{\BN}{\ensuremath{\mathbb {N}}\xspace}
\newcommand{\BQ}{\ensuremath{\mathbb {Q}}\xspace}
\newcommand{\BR}{\ensuremath{\mathbb {R}}\xspace}
\newcommand{\BS}{\ensuremath{\mathbb {S}}\xspace}
\newcommand{\BT}{\ensuremath{\mathbb {T}}\xspace}
\newcommand{\BV}{\ensuremath{\mathbb {V}}\xspace}
\newcommand{\BW}{\ensuremath{\mathbb {W}}\xspace}
\newcommand{\BZ}{\ensuremath{\mathbb {Z}}\xspace}
\newcommand{\CA}{\ensuremath{\mathcal {A}}\xspace}
\newcommand{\CG}{\ensuremath{\mathcal {G}}\xspace}
\newcommand{\CI}{\ensuremath{\mathcal {I}}\xspace}
\newcommand{\CJ}{\ensuremath{\mathcal {J}}\xspace}
\newcommand{\CK}{\ensuremath{\mathcal {K}}\xspace}
\newcommand{\CN}{\ensuremath{\mathcal {N}}\xspace}
\newcommand{\CO}{\ensuremath{\mathcal {O}}\xspace}
\newcommand{\CS}{\ensuremath{\mathcal {S}}\xspace}
\newcommand{\CX}{\ensuremath{\mathcal {X}}\xspace}
\newcommand{\CZ}{\ensuremath{\mathcal {Z}}\xspace}
\newcommand{\RH}{\ensuremath{\mathrm {H}}\xspace}
\newcommand{\RI}{\ensuremath{\mathrm {I}}\xspace}
\newcommand{\RK}{\ensuremath{\mathrm {K}}\xspace}
\newcommand{\Ad}{{\mathrm{Ad}}}
\newcommand{\ad}{{\mathrm{ad}}}
\DeclareMathOperator{\Aut}{Aut}
\DeclareMathOperator{\charac}{char}
\DeclareMathOperator{\End}{End}
\newcommand{\GDL}{\mathrm{GDL}}
\newcommand{\TIC}{\mathrm{TIC}}
\DeclareMathOperator{\Gal}{Gal}
\newcommand{\GL}{\mathrm{GL}}
\newcommand{\id}{\ensuremath{\mathrm{id}}\xspace}
\DeclareMathOperator{\Nm}{Nm}
\DeclareMathOperator{\rank}{rank}
\newcommand{\PGL}{{\mathrm{PGL}}}
\DeclareMathOperator{\Res}{Res}
\DeclareMathOperator{\Spec}{Spec}
\newcommand{\ur}{{\mathrm{ur}}}
\DeclareMathOperator{\vol}{vol}
\DeclareMathOperator{\topp}{top}
\DeclareMathOperator{\Stab}{Stab}
\def\tS{\breve {\BS}}
\def\kk{\mathbf k}
\DeclareMathOperator{\supp}{supp}
\newtheorem{theorem}[subsubsection]{Theorem}
\newtheorem{proposition}[subsubsection]{Proposition}
\newtheorem{lemma}[subsubsection]{Lemma}
\newtheorem{corollary}[subsubsection]{Corollary}
\theoremstyle{definition}
\newtheorem{definition}[subsubsection]{Definition}
\theoremstyle{remark}
\newtheorem{remark}[subsubsection]{Remark}
\numberwithin{equation}{subsubsection}
\renewcommand{\to}{%
	\ifbool{@display}{\longrightarrow}{\rightarrow}%
}
\let\shortmapsto\mapsto
\renewcommand{\mapsto}{%
	\ifbool{@display}{\longmapsto}{\shortmapsto}%
}
\newlength{\olen}
\newlength{\ulen}
\newlength{\xlen}
\newcommand{\xra}[2][]{%
	\ifbool{@display}%
	{\settowidth{\olen}{$\overset{#2}{\longrightarrow}$}%
		\settowidth{\ulen}{$\underset{#1}{\longrightarrow}$}%
		\settowidth{\xlen}{$\xrightarrow[#1]{#2}$}%
		\ifdimgreater{\olen}{\xlen}%
		{\underset{#1}{\overset{#2}{\longrightarrow}}}%
		{\ifdimgreater{\ulen}{\xlen}%
			{\underset{#1}{\overset{#2}{\longrightarrow}}}
			{\xrightarrow[#1]{#2}}}}%
	{\xrightarrow[#1]{#2}}
}
\newcommand{\xyra}[2][]{%
	\settowidth{\xlen}{$\xrightarrow[#1]{#2}$}%
	\ifbool{@display}%
	{\settowidth{\olen}{$\overset{#2}{\longrightarrow}$}%
		\settowidth{\ulen}{$\underset{#1}{\longrightarrow}$}%
		\ifdimgreater{\olen}{\xlen}%
		{\mathrel{\xymatrix@M=.12ex@C=3.2ex{\ar[r]^-{#2}_-{#1} &}}}%
		{\ifdimgreater{\ulen}{\xlen}%
			{\mathrel{\xymatrix@M=.12ex@C=3.2ex{\ar[r]^-{#2}_-{#1} &}}}
			{\mathrel{\xymatrix@M=.12ex@C=\the\xlen{\ar[r]^-{#2}_-{#1} &}}}}}%
	{\mathrel{\xymatrix@M=.12ex@C=\the\xlen{\ar[r]^-{#2}_-{#1} &}}}%
}
\newcommand{\xla}[2][]{%
	\ifbool{@display}%
	{\settowidth{\olen}{$\overset{#2}{\longleftarrow}$}%
		\settowidth{\ulen}{$\underset{#1}{\longleftarrow}$}%
		\settowidth{\xlen}{$\xleftarrow[#1]{#2}$}%
		\ifdimgreater{\olen}{\xlen}%
		{\underset{#1}{\overset{#2}{\longleftarrow}}}%
		{\ifdimgreater{\ulen}{\xlen}%
			{\underset{#1}{\overset{#2}{\longleftarrow}}}
			{\xleftarrow[#1]{#2}}}}%
	{\xleftarrow[#1]{#2}}
}
\newcommand{\isoarrow}{%
	\ifbool{@display}{\overset{\sim}{\longrightarrow}}{\xrightarrow\sim}%
}
\begin{document}
	
	\title[Stabilizers of irreducible components]{Stabilizers of irreducible components of affine Deligne--Lusztig varieties}

	\author[Xuhua He]{Xuhua He}
	\address{The Institute of Mathematical Sciences and Department of Mathematics, The Chinese University of Hong Kong, Shatin, N.T., Hong Kong SAR, China}
	\email{xuhuahe@math.cuhk.edu.hk}

	\author[Rong Zhou]{Rong Zhou}
	\address{Department of Pure Mathematics and Mathematical Statistics, Wilberforce Road, Cambridge, United Kingdom, CB3 0WB.}
	\email{rz240@dpmms.cam.ac.uk}

	\author[Yihang Zhu]{Yihang Zhu}
	\address{Department of Mathematics, University of Maryland, 4176 Campus Drive,
		College Park, MD 20742, USA}
	\email{yhzhu@umd.edu}
	\makeatletter
	\@namedef{subjclassname@2020}{%
		\textup{2020} Mathematics Subject Classification}
	\makeatother
	
	\keywords{Affine Deligne--Lusztig varieties, Shimura varieties}
	\subjclass[2020]{14G35}
	
	\date{\today}
	
	\begin{abstract}We study the $J_b(F)$-action on the set of top-dimensional irreducible components of affine Deligne--Lusztig varieties in the affine Grassmannian. We show that the stabilizer of any such component is a parahoric subgroup of $J_b(F)$ of maximal volume, verifying a conjecture of X.~Zhu. As an application, we give a description of the set of top-dimensional irreducible components in the basic locus of Shimura varieties.
	\end{abstract}
	
	\maketitle
	
	\tableofcontents

	\section{Introduction}

	\subsection{Affine Deligne--Lusztig varieties and their irreducible components} Affine Deligne--Lusztig varieties were introduced by Rapoport in \cite{RapGuide}. In the equal characteristic setting, affine Deligne--Lusztig varieties are related to the moduli space of local shtukas. In the mixed characteristic setting, they are related to the geometry of Rapoport--Zink spaces and hence to the geometry of certain distinguished loci in the special fiber of Shimura varieties via the $p$-adic uniformization. Therefore studying the geometry of affine Deligne--Lusztig varieties can give useful information on the geometry of special cycles on Shimura varieties. 
	
	This paper is concerned with studying the set of top-dimensional irreducible components of affine Deligne--Lusztig varieties. To state our main results we fix some notation. Let $F$ be a local field with ring of integers $\CO_F$, and let $\breve F$ be the completion of the maximal unramified extension of $F$. Let $G$ be a reductive group over $F$, which we assume is unramified in the introduction for simplicity. For $b\in G(\breve F)$ and $\mu$ a  cocharacter of $G$,  we have the affine Deligne--Lusztig variety $X_\mu(b)$ which is a locally closed subscheme of the affine Grassmannian. We refer to \S\ref{sssec: ADLV} for the precise definition.

	If $F$ is of equal characteristic,  $X_\mu(b)$ is locally of finite type. If $F$ is of mixed characteristic, $X_\mu(b)$ is a perfect scheme and is locally of perfectly finite type. In either case, it is known that $X_\mu(b)$ is finite dimensional. We write $\Sigma^{\topp}(X_\mu(b))$ for the set of top-dimensional irreducible components of $X_{\mu} (b)$. 
	
	The scheme $X_\mu(b)$ is equipped with an action of $J_b(F)$, the $F$-rational points of a certain reductive group $J_b$ over $F$ (the Frobenius-centralizer of $b$). This induces an action of $J_b(F)$ on $\Sigma^{\topp}(X_\mu(b))$. The goal of this paper is to understand the $J_b(F)$-set $\Sigma^{\topp}(X_\mu(b))$. This amounts to considering the following two problems.
	
	\begin{enumerate}[label=(\roman*)]
		\item Classify the $J_b(F)$-orbits in $\Sigma^{\topp}(X_\mu(b))$.
		
		\item For each $Z\in \Sigma^{\topp}(X_\mu(b))$, determine the stabilizer of $Z$ in $J_b(F)$.
	\end{enumerate}
	
	For (i), M.~Chen and X.~Zhu conjectured (see \cite[Conjecture 1.3]{HV}) that the set of  the $J_b(F)$-orbits in $\Sigma^{\topp}(X_\mu(b))$ should be in natural bijection with the Mirkovic--Vilonen basis $\BM\BV_\mu(\lambda_b)$ for a certain weight space of a representation of the dual group $\widehat G$. (See \S \ref{subsec: MV} for the definition of $\BM\BV_\mu(\lambda_b)$.) Special cases of this conjecture was proved by Xiao--Zhu \cite{XZ}, Hamacher--Viehmann \cite{HV} and Nie \cite{nie}. The conjecture was finally proved by Nie \cite{nienew}, and by the second and third authors \cite{ZZ} using different methods. 
	
	For (ii), Xiao--Zhu \cite[Theorem 4.4.14]{XZ} showed that if the element $b\in G(\breve F)$ is unramified, then the stabilizer of every $Z\in \Sigma^{\topp}(X_\mu(b))$ is a hyperspecial subgroup of $J_b(F)$ (see also \cite[Theorem 6.2.2]{ZZ}). For general $b$, it was conjectured by X.~Zhu\footnote{Private communication.} that every stabilizer should be a parahoric subgroup of $J_b(F)$ of maximal volume. \footnote{This conjecture implies that all the stabilizers have the same volume. The latter statement was also conjectured by M.~Rapoport.}  Our first main result confirms this conjecture. 
	
	\begin{introthmA}[See Theorem \ref{thm:main} and Corollary \ref{cor: main}] For each $Z\in \Sigma^{\topp}(X_\mu(b))$, the stabilizer of $Z$ in $J_b(F)$ is a \emph{very special} parahoric subgroup of $J_b(F)$. In particular, there is an isomorphism of $J_b(F)$-sets $$\Sigma^{\topp}(X_\mu(b)) \cong \coprod_{\mathbf{a}\in\BM\BV_\mu(\lambda_b)} J_b(F)/\CJ^{\mathbf{a}}.$$
		where $\CJ^{\mathbf{a}}\subset J_b(F)$ is a very special parahoric subgroup. 
	\end{introthmA}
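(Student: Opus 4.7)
The plan is to combine the known orbit parameterization (the Chen--Zhu conjecture, established in \cite{ZZ, nienew}) with an orbit-by-orbit computation of stabilizers. Since the $J_b(F)$-orbits on $\Sigma^{\topp}(X_\mu(b))$ are already in bijection with $\BM\BV_\mu(\lambda_b)$, it suffices to exhibit, for each $\mathbf{a}\in\BM\BV_\mu(\lambda_b)$, one top-dimensional component $Z_\mathbf{a}$ whose stabilizer in $J_b(F)$ is a very special parahoric $\CJ^\mathbf{a}$. The displayed isomorphism of $J_b(F)$-sets then follows formally from orbit--stabilizer.

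For the construction of the representatives, I would use the MV-cycle description of $\mathbf{a}$ as a semi-infinite cell in the affine Grassmannian cut out by a chain of inequalities along a cocharacter. Adapting this to the $b$-twisted setting (as in the proof of the Chen--Zhu conjecture in \cite{ZZ}), each $\mathbf{a}$ gives rise to a locally closed subvariety of $X_\mu(b)$ whose closure $Z_\mathbf{a}$ is a top-dimensional component; the natural candidate for the stabilizer is the subgroup $\CJ^\mathbf{a} = J_b(F)\cap \Stab_{G(\breve F)}(x_\mathbf{a})$, where $x_\mathbf{a}$ is a vertex in the Bruhat--Tits building of $G$ specified by the lattice chain encoding $\mathbf{a}$. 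By construction this stabilizes $Z_\mathbf{a}$.

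The technical core splits into two assertions: (a) $\CJ^\mathbf{a}$ is a \emph{very special} parahoric of $J_b(F)$, and (b) it is the full stabilizer. For (a), one identifies $\CJ^\mathbf{a}$ with the parahoric attached to a Frobenius-fixed vertex in the building of $J_b$ (an inner form of a Levi of $G$), and checks specialness by translating the combinatorial data of $\mathbf{a}$ into the affine root system of $J_b$. The hyperspecial case handled by Xiao--Zhu \cite{XZ} and \cite{ZZ} serves as the superbasic prototype to build on. For (b), one uses that very special parahorics are maximal bounded subgroups in their ambient group and that any element of $J_b(F)$ strictly enlarging the stabilizer would move $Z_\mathbf{a}$ to a different component—contradicting either the dimension/volume count of the orbit or the tight description of semi-infinite cells.

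The main obstacle, I expect, lies in the non-basic case. When $b$ is not basic, $J_b$ is an inner form of a proper Levi $M$ of $G$, and the MV basis element $\mathbf{a}$ must be reinterpreted inside $M$ via a Hodge--Newton-type decomposition. The challenge is to match the parahoric of $J_b$ constructed from the $G$-lattice data with an intrinsically defined very special parahoric of $J_b$, uniformly in $\mathbf{a}$. A secondary subtlety is ruling out enlargements of the stabilizer: a bounded open subgroup containing $\CJ^\mathbf{a}$ must equal it, but producing the contradiction for an \emph{unbounded} element stabilizing $Z_\mathbf{a}$ requires a genuine geometric argument—most plausibly, that any such element would act nontrivially on the set of MV-cycles in a way incompatible with $Z_\mathbf{a}$ being a single component.
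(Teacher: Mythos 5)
Your overall structure (one representative per $J_b(F)$-orbit, plus orbit--stabilizer) is sound in principle, but the technical core of your plan --- assertion (a) --- is precisely the point that is not known and that the paper deliberately avoids. Outside the case of unramified $b$ treated by Xiao--Zhu, there is no construction that attaches to each $\mathbf{a}\in\BM\BV_\mu(\lambda_b)$ an explicit component $Z_{\mathbf a}$ together with an identification of $\Stab_{Z_{\mathbf a}}(J_b(F))$ with ``$J_b(F)\cap\Stab_{G(\breve F)}(x_{\mathbf a})$'' for a vertex $x_{\mathbf a}$ read off from lattice-chain data; the proofs of the Chen--Zhu conjecture in \cite{ZZ} and \cite{nienew} give a bijection of orbit sets by counting, not a dictionary between MV cycles and components with computable stabilizers. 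Indeed the paper explicitly records that determining the stabilizers up to $J_b(F)$-conjugacy remains open, which is exactly what your step (a) would require (and your appeal to MV/semi-infinite machinery also silently assumes $G$ unramified, whereas the theorem covers quasi-split tamely ramified $G$, handled in the paper by an ``association'' argument transferring the problem to an unramified group over a characteristic-$0$ field). Your own flagged obstacle --- matching the construction with an intrinsic very special parahoric of $J_b$ in the non-basic case --- is likewise not resolved by the sketch; the paper does this by a Hodge--Newton style reduction showing $\Stab_U(J_b(F))=\Stab_Z(J_b(F))$ for a suitable component $Z$ of $X^M_\lambda(b)$ with $b$ basic in the Levi $M$.

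The paper's actual route is quite different and worth contrasting: after reducing to $\charac(F)=0$, $G$ adjoint, $F$-simple, unramified, and $b$ basic, it produces a very special stabilizer for just \emph{one} component, and only for a special cocharacter $\mu_1$ with a single $J_b(F)$-orbit, using the Deligne--Lusztig reduction method recast in a ``motivic counting'' monoid together with the He--Nie/He--Yang results on minimal-length elements whose finite part is a twisted Coxeter element; it then invokes the harmonic-analysis identity from \cite{ZZ} (resting on the base change fundamental lemma) that the averaged inverse volume $Q(\mu,b)$ is independent of $\mu$, so $Q(\mu,b)=Q(\mu_1,b)=\vol_{\max}^{-1}$, and since every stabilizer is already known to be a parahoric of volume at most $\vol_{\max}$, all of them must be very special (with a separate minuscule-sum/Weil-restriction argument covering types $A$ and $E_6$). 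Finally, your worry in (b) about unbounded stabilizing elements is a non-issue: by \cite[Proposition 3.1.4]{ZZ} each stabilizer is a parahoric subgroup, so once a very special (hence maximal) parahoric is shown to be contained in it, equality is automatic; the genuine difficulty sits entirely in (a), for which the proposal offers no proof.
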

	We refer to \S\ref{sssec: very special} for the definition of very special parahoric subgroups,  and Proposition \ref{prop: eq. v. special, max vol, max log vol} for the equivalence of this condition with that of having maximal volume. After this result was announced, S.~Nie informed us that he could also prove this result using a different method.
	
	For a reductive group over $F$ with no factors of type $C$-$BC_n$, the condition that a parahoric is very special determines the parahoric up to conjugation in the adjoint group. Thus when $J_b$ has no factors of type $C$-$BC_n$, Theorem A determines the stabilizers up to conjugation by $J_b^{\ad}(F)$. It is an interesting problem to determine the stabilizers up to $J_{b}(F)$-conjugacy. However Theorem A is already enough for some important applications explained below. 
	
	\subsection{Application to Shimura varieties} Let $(\bG,X)$ be a Shimura datum, and let $\RK\subset \bG(\BA_f)$ be a  sufficiently small compact open subgroup. Then we have the associated Shimura variety  $\mathrm{Sh}_{\RK}(\bG,X)$  which is an algebraic variety defined over a number field $\bE$. Let $p>2$ be a prime.  We assume that $(\bG,X)$ is of Hodge type, and that $\RK=\RK_p\RK^p$ where $\RK^p$ is a compact open subgroup of $ \bG(\BA_f^p)$ and $\RK_p$ is a hyperspecial subgroup of $\bG(\BQ_p)$.  Then by work of Kisin \cite{KisinIntMod}, for any prime $v|p$ of $\bE$, there is a smooth canonical integral model $\mathscr{S}_{\RK}(\bG,X)$ of $\mathrm{Sh}_{\RK}(\bG,X)$ over $\CO_{\bE_{(v)}}$. We write $\Sh_{\RK}$ for its special fiber. 
	
	Write $G$ for $G=\bG_{\BQ_p}$. There is a stratification of $\Sh_{\RK}$ indexed by the Kottwitz set $B(G,\mu)$ (cf.  \S\ref{sssec: Kottwitz set}).  We let $[b]_{\mathrm{bas}}$ denote the unique basic element of $B(G,\mu)$,   and we write  $\Sh_{\RK,\mathrm{bas}}$ for the stratum corresponding to $[b]_{\mathrm{bas}}$. This is known as the \emph{basic locus}, and is a generalization of the supersingular locus in the special fiber of a modular curve. The Rapoport--Zink uniformization (see e.g.~\cite[Corollary 7.2.16]{XZ}) implies that there is an isomorphism of perfect schemes 
	\begin{equation}\tag{$\dagger$}\Sh_{\RK,\mathrm{bas}}^{\mathrm{pfn}}\cong I(\BQ)\backslash X_\mu(b)\times \mathbf{G}(\BA_f^p)/\RK^p.
	\end{equation}
	Here  $I$ is a certain reductive group over $\BQ$ with $I\otimes_{\BQ}\BA_f^p\cong \bG\otimes_{\BQ}\BA_f^p$ and $I\otimes_{\BQ}\BQ_p\cong J_b$, and the left hand side denotes the perfection of $\Sh_{\RK,\mathrm{bas}}$. The following theorem then follows immediately from Theorem A and the above isomorphism.
	\begin{introthmB}[See Corollary \ref{thm: irred comp basic locus}]There exists a bijection between the set of top-dimensional irreducible components of $\Sh_{\RK,\mathrm{bas}}$ and the set $$ \coprod_{\mathbf{a}\in\BM\BV_\mu(\lambda_b)}I(\BQ)\backslash I(\BA_f)/\RI_{p}^{{\mathbf{a}}}\RI^p,$$
		where $\RI^p\cong \RK^p$ and $\RI_{p}^{\mathbf{a}}$ is a very special parahoric subgroup of $J_b(\BQ_p)$. Moreover the bijection is equivariant for prime-to-$p$ Hecke operators.
	\end{introthmB}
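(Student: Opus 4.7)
The plan is to deduce Theorem B directly from Theorem A and the Rapoport--Zink uniformization $(\dagger)$. First I would reduce the problem from $\Sh_{\RK,\mathrm{bas}}$ to its perfection, using the standard fact that the set of irreducible components of a scheme of (perfectly) finite type coincides with that of its perfection and that dimension is preserved. The right-hand side of $(\dagger)$ is a quotient by the discrete group $I(\BQ)$ of a disjoint union (indexed by $\bG(\BA_f^p)/\RK^p$) of copies of $X_\mu(b)$; in particular its top-dimensional irreducible components are in $I(\BQ)$-equivariant bijection with
\[
\Sigma^{\topp}(X_\mu(b)) \times \bG(\BA_f^p)/\RK^p,
\]
where $I(\BQ)$ acts diagonally through the isomorphism $I(\BA_f) \cong J_b(\BQ_p) \times \bG(\BA_f^p)$.

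Next I would substitute the description given by Theorem A,
\[
\Sigma^{\topp}(X_\mu(b)) \cong \coprod_{\mathbf{a}\in\BM\BV_\mu(\lambda_b)} J_b(\BQ_p)/\CJ^{\mathbf{a}},
\]
as $J_b(\BQ_p)$-sets. Since the $I(\BA_f)$-action on the product commutes with the formation of irreducible components, taking the $I(\BQ)$-quotient of
\[
\Bigl(\coprod_\mathbf{a} J_b(\BQ_p)/\CJ^{\mathbf{a}}\Bigr) \times \bG(\BA_f^p)/\RK^p
\]
and rewriting via $I(\BA_f) = I(\BQ_p) \times I(\BA_f^p) \cong J_b(\BQ_p) \times \bG(\BA_f^p)$ yields
\[
\coprod_{\mathbf{a}\in\BM\BV_\mu(\lambda_b)} I(\BQ) \backslash I(\BA_f) / \RI_p^{\mathbf{a}}\RI^p,
\]
where $\RI_p^{\mathbf{a}} \subset I(\BQ_p)$ corresponds to $\CJ^{\mathbf{a}} \subset J_b(\BQ_p)$ under the identification, and $\RI^p \cong \RK^p$ via $I(\BA_f^p) \cong \bG(\BA_f^p)$. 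Equivariance under prime-to-$p$ Hecke operators is automatic: both sides of $(\dagger)$ carry a natural $\bG(\BA_f^p)$-action acting on the $\RK^p$-factor, and the bijections constructed above are built only from $I(\BQ)$-equivariant and $J_b(\BQ_p)$-equivariant identifications, so the Hecke action is transported correctly.

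There is essentially no serious obstacle, since Theorem A does all the geometric work. The main points requiring attention are bookkeeping: ensuring that the $I(\BQ)$-equivariance in $(\dagger)$ is compatible with both the $J_b(\BQ_p)$-equivariance in Theorem A and the $\bG(\BA_f^p)$-equivariance giving the Hecke action, and that passing to perfection does not disturb the count of top-dimensional components. These are already contained in the cited references (e.g., \cite[Corollary 7.2.16]{XZ}), so once they are invoked the theorem follows by direct manipulation of double coset spaces.
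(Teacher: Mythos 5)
Your proposal is correct and is essentially the paper's own argument: Corollary \ref{thm: irred comp basic locus} is deduced in exactly this way from the uniformization isomorphism, the description of $\Sigma^{\topp}(X_\mu(b))$ as $\coprod_{\mathbf a} J_b(\BQ_p)/\CJ^{\mathbf a}$ with very special stabilizers (Theorem A / Corollary \ref{cor: main}), and the invariance of the underlying topology under perfection, with the bookkeeping on $I(\BQ)$-orbits and Hecke equivariance handled just as you describe. The only thing worth noting is that at hyperspecial level you may indeed cite $(\dagger)$ from \cite[Corollary 7.2.16]{XZ} as the introduction does, whereas for the general statement of Corollary \ref{thm: irred comp basic locus} the paper must first prove the analogous uniformization for Kisin--Pappas integral models (Proposition \ref{prop: RZ uniformization}), which is where the substantive work of \S\ref{sec: Shimura} lies.
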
 
In fact, $\Sh_{\RK,\mathrm{bas}}$ is equidimensional by \cite[Theorem 3.4]{HV}, so we have obtained a description of the set of all irreducible components in this case.
We also remark that for Theorem A, the assumption that $G$ is unramified over $F$ is not necessary, and we in fact obtain results for general quasi-split $G$ over $F$. This allows us to obtain a generalization of Theorem B. The key input for this is  a generalization of ($\dagger$) for the integral models constructed by Kisin--Pappas \cite{KP}, which we prove in \S\ref{sec: Shimura}.

Theorem B and its generalization reflect the general philosophy going back to Serre and Deuring that components of the basic locus are parameterized by class sets for an inner form of  the structure group. We refer to \cite{VW}, \cite{HP}, and \cite{LT} for some special cases of this result.

	The main contribution of this paper is the information that the compact open subgroups $\RI_p^{\mathbf{a}}\subset I(\BQ_p)$ are very special.  For many applications this is a crucial piece of information. For example, in \cite{LT}, the authors used the description of irreducible components in the supersingular locus of quaternionic Shimura varieties to prove an arithmetic level raising result on the way to proving cases of the Beilinson--Bloch--Kato conjecture. For this, they used the interpretation of functions on $\Sigma^{\topp}(\Sh_{\RK,\mathrm{bas}})$  as automorphic forms for $I$. Thus the knowledge of $\RI_{p}^{\mathbf{a}}$ is needed to determine the level of these automorphic forms. 
	In \cite{LMPT}, the authors used a formula for  the number of irreducible components in the supersingular locus of unitary Shimura varieties to prove results on the image of the Torelli map. This requires information on the volume of $\RI_p^{\mathbf{a}}$.
	
	\subsection{The proof of Theorem A}
	Our proof of Theorem A makes use of techniques from $p$-adic harmonic analysis developed in \cite{ZZ}, and the Deligne--Lusztig reduction method for affine Deligne--Lusztig varieties developed in \cite{He14}. For simplicity in the introduction, we assume that $G$ has no factors of type $A$ or $E_6$. After a series of reduction steps, we can assume that $G$ is an unramified adjoint group over $F$, that $F$ has characteristic 0, and that $b\in B(G,\mu)$ is basic. It is known that the stabilizer of every $Z \in \Sigma^{\topp}(X_\mu(b)) $ is a parahoric subgroup of $J_b(F)$, so the question is to prove that such a parahoric subgroup must have maximal volume. The proof proceeds in two steps.
	
	\begin{enumerate}
		\item Show that there exists $Z\in \Sigma^{\topp}(X_\mu(b)) $ whose stabilizer is  a parahoric subgroup of $J_b(F)$ of maximal volume.
		
		\item Show that all the stabilizers have the maximal volume.
	\end{enumerate}
	
	The Deligne--Lusztig reduction method in \cite{He14} works for the affine Deligne--Lusztig varieties in the affine flag variety. It keeps track of geometric information such as the dimension and the number of irreducible components of top dimension. To keep track of the stabilizers of top-dimensional irreducible components under the action of $J_b(F)$, we introduce a refined reduction method in the context of motivic counting. Then we use the explicit dimension formula for $X_\mu(b)$ and a certain affine Deligne--Lusztig variety $X_{w_0 t^\mu}(b)$ in the affine flag variety to obtain a $J_b(F)$-equivariant bijection $\Sigma^{\topp}(X_{w_0 t^\mu}(b)) \isoarrow \Sigma^{\topp} (X_{\mu}(b))$. We combine the explicit reduction path constructed in \cite{He14} with a refinement of the argument in \cite{HY} to obtain an element of $\Sigma^{\topp}(X_{w_0 t^\mu}(b))$ whose stabilizer in $J_b(F)$ has maximal volume. This finishes step (1).

	For step (2), consider the quantity 
	$$ Q(\mu,b) : = |J_b(F)\backslash \Sigma^{\topp}(X_\mu(b)) |^{-1} \cdot \sum _{ Z } \vol (\Stab_Z(J_b(F)))^{-1},$$ where the sum is over a set of representatives for the $J_b(F)$-orbits in $\Sigma^{\topp}(X_\mu(b))$. The results of \cite{ZZ} imply that $Q(\mu,b)$ depends only on $b$, not on $\mu$. Moreover, for the given $b $ there exists $\mu_1\in X_*(T)^+$ such that $|J_b(F)\backslash\Sigma^{\topp}(X_{\mu_1}(b)) |=1$. By step (1) applied to $(
	\mu_1, b)$, we know that  $Q(\mu_1, b)$ is equal to the inverse of the maximal volume attained by parahoric subgroups of $J_b(F)$.  Since $Q(\mu, b) = Q(\mu_1, b)$, and since $\Stab_Z(J_b(F))$ is a parahoric subgroup of $J_b(F)$ for each $Z \in \Sigma^{\topp}(X_\mu(b))$, we conclude that $\Stab_Z(J_b(F))$ must be a parahoric subgroup of maximal volume for each $Z$. This finishes step (2). 
	
	For step (2), the assumption that $F$ has characteristic $0$ is crucial. This is due to the fact that the results we use from \cite{ZZ} rely on the Base Change Fundamental Lemma, a result only known for characteristic $0$ local fields in general.
	
\subsection{Outline of the paper}In \S\ref{sec: ADLV} we introduce notations and some preliminary group theoretic results. In \S\ref{subsec very special}, we define very special parahoric subgroups and prove the equivalence of this condition with that of having maximal volume and maximal log volume. We then introduce affine Deligne--Lusztig varieties and establish the relation between components of $X_\mu(b)$ and $X_{w_0t^\mu}(b)$ in \S\ref{subsec:ADLV}. In \S\ref{sec: DL reduction an motivic counting}, we  give a reinterpretation of the Deligne--Lusztig reduction method in terms of motivic counting. We apply this in \S\ref{subsec: one comp} to show the existence of a component in $X_{w_0t^\mu}(b)$ whose stabilizer is a very special parahoric. In \S\ref{sec: main} , we prove Theorem A. In \S\ref{subsec: reduction to char 0} and \S\ref{subsec: reduction to basic}, we reduce the proof to the case where  $\mathrm{char}(F)=0$, $G$ is adjoint, unramified over $F$, and $F$-simple, and $b$ is basic. The proof then proceeds in \S \ref{subsec:numerical} and \S \ref{subsec:proof} as outlined above, with some extra work needed to handle the case of type $A$ and $E_6$, which is the content of \S \ref{subsec:special case}. Finally in \S\ref{sec: Shimura}, we apply our results to study the basic locus  of Shimura varieties and prove Theorem B. As mentioned, the key input is an analogue of the $p$-adic uniformization for the integral models of Shimura varieties constructed by Kisin--Pappas, which we prove following the method in \cite[\S7]{XZ} using results of \cite{Zhou}.
	
	\emph{Acknowledgments:}  We would like to thank S.~Nie, G.~Prasad, M.~Rapoport, and X.~Zhu for useful conversations regarding this project. This work was partially inspired by the comments of M.~Rapoport after the third-named author's talk at University of Maryland in Fall 2018. 
	
	X.H.~is partially supported by a start-up grant and by funds connected with
	Choh-Ming Chair at CUHK, and by Hong Kong RGC grant 14300220. R.Z.~was supported by NSF grant  DMS-1638352  through membership of the Institute for Advanced Study, and  by the European Research Council (ERC) under the European Union’s Horizon 2020 research and innovation programme (grant agreement No. 804176). Y.Z.~is partially supported by NSF grant DMS-1802292, and by a start-up grant at University of Maryland. 
	
	\section{Affine Deligne--Lusztig varieties}\label{sec: ADLV}

	\subsection{The Iwahori--Weyl group}\label{subsec:IW}
	\subsubsection{}\label{subsubsec:IW}Let $F$ be a non-archimedean local field with valuation ring $\CO_F$ and residue field $k_F = \BF_q$. We fix an algebraic closure $\overline{F}$ of $F$. Let $F^{\ur}$ be the maximal unramified extension of $F$ inside $\overline F$, and let $\breve F$ be the completion of $F^{\ur}$. We denote by $\CO_{\breve F}$ the valuation ring of $\breve F$, and denote by $\kk$ the residue field of $\breve F$, which is an algebraic closure of $k_F$. Fix an algebraic closure $\overline{\breve F}$ of $\breve F$, and fix an $F^{\ur}$-algebra embedding $\overline F \to \overline {\breve F}$. We write $\Gamma$ for $\Gal(\overline F/F)$ and write $\Gamma_0$ for the inertia subgroup of $\Gamma$, which is identified with $\Gal(\overline{\breve F}/\breve F)$. We let $\sigma\in\mathrm{Aut}(\breve F/F)$ denote the $q$-Frobenius.
	
	Let $G$ be a connected reductive group over $F$. We fix a maximal $ F^{\ur}$-split torus $S$ in $G$ defined over $F$, which exists by \cite[Corollaire 5.1.12]{BT2}. By \cite[Proposition 2.3.9]{rouss}, $S$ is also maximal $\breve F$-split. Let $T$ be the centralizer of $S$ in $G$. By Steinberg's theorem $G$ is quasi-split over $\breve F$,  so $T$ is a maximal torus in $G$. Let $N$ be the normalizer of $T$ in $G$, and let $$\breve W_0 : = N(\breve F)/T(\breve F). $$ In other words $\breve W_0$ is the relative Weyl group of $G_{\breve F}$.
	
	The \emph{Iwahori--Weyl group}
	is defined to be
	$$\breve W  :  = N(\breve F)/T(\breve F)_1$$  where $T(\breve F)_1$ is the kernel of the Kottwitz homomorphism $ T(\breve F) \to X_*(T)_{\Gamma_0}$. We have a natural short exact sequence 
	\begin{align}\label{eq:ses for IW}
	0 \to X_*(T)_{\Gamma_0} \to \breve W \to \breve W_0 \to 0.
	\end{align}
	For each $\lambda\in X_*(T)_{\Gamma_0}$, we write $t^\lambda$ for the corresponding element of $\breve W$. Such elements of $\breve W$ are called \emph{translation elements}.

	\subsubsection{} Let $\breve \CA$ be the apartment of $G_{\breve F}$ corresponding to $S_{\breve F}$. Thus $\breve \CA$ is an affine $\mathbb R$-space under $X_*(T)_{\Gamma_0} \otimes_{\mathbb Z} \mathbb R$. The Frobenius $\s$ and the Iwahori--Weyl group $\breve W$ act on $\breve \CA$ via affine transformations. Since $\breve \CA$ is naturally identified with the apartment of $G_{F^{\ur}}$ corresponding to $S_{F^{\ur}}$, there exists a $\s$-stable alcove in $\breve \CA$ by \cite[\S 1.10.3]{Tits} as the residue field of $F$ is finite. We fix such a $\s$-stable alcove $\breve {\mathfrak a}$. Let $\breve \CI \subset G(\breve F)$ be the Iwahori subgroup corresponding to $\breve\fka$.  Then $\breve \CI$ is $\sigma$-stable and we write $\CI$ for the corresponding Iwahori subgroup $\breve \CI ^{\s}$ of $G(F)$.
	
	As explained in \cite{HainesRapoport}, the choice of $\breve \fka$ gives rise to a subgroup $\breve W_a$ of $\breve W$ called the \emph{affine Weyl group}. This is by definition the subgroup generated by the set $\tS$ of simple reflections in the walls of $\breve \fka$. The pair $(\breve W_a,\tS)$ is a Coxeter group. \ignore{In fact, $\breve W_a$ can be identified with the Iwahori--Weyl group for the simply-connected cover of the derived group of $G$.}
	
	Let $\Omega$ be the stabilizer of $\breve \fka$ in $\breve W$. Then by \cite[Lemma 14]{HainesRapoport}, we have $$\breve W = \breve W_a \rtimes \Omega, $$ and $\Omega$ is (canonically) isomorphic to $\pi_1(G)_{\Gamma_0}$. The length function on the Coxeter group $(\breve W_a, \tS)$ extends to a function $$\breve \ell : \breve W \to \BZ_{\geq 0} $$ with respect to which $\Omega$ is the set of length-zero elements of $\breve W$. The Frobenius $\s$ naturally acts on $\breve W$, stabilizing the subset $\tS \subset \breve W$ (as $\breve \fka$ is $\s$-stable). In particular, $\s$ induces an automorphism of the Coxeter group $(\breve W_a, \tS)$. 
	
	By \cite[p.~195]{HainesRapoport}, there exists a reduced root system $\Sigma$ such that $$\breve W_a\cong Q^\vee(\Sigma)\rtimes W(\Sigma),$$
	where $Q^\vee(\Sigma)$ and $W(\Sigma)$ denote the coroot lattice and Weyl group of $\Sigma$ respectively. The roots of $\Sigma$ are proportional to the roots of the relative root system for $G_{\breve F}$. However the root systems themselves may not be isomorphic.
	
	\subsubsection{}\label{sssec: parabolic subgroup of Weyl group} Let ${\breve K}$ be a subset of $\tS$. We write $\breve W_{\breve K}\subset \breve W$ for the subgroup generated by ${\breve K}$. We let $\breve W^{\breve K}$ (resp.~$^{\breve K}\breve W$) denote the set of minimal length representatives for the cosets in $\breve W/\breve W_{\breve K}$ (resp.~$\breve W_{\breve K}\backslash \breve W$). 
	
	For each $w\in \breve W$, we choose a lift $\dot w \in N(\breve F)$ of $w$. We assume furthermore that $\s(\dot w)=\dot w$ if $\s(w)=w$. Indeed, to see that this can  always be arranged, it suffices to see that the Lang map $T(\breve F)_1 \to T(\breve F)_1, t \mapsto t \s(t)^{-1}$ is surjective. Now $T(\breve F)_1 = \mathcal T^0 (\CO_{\breve F})$ where $\mathcal T^0$ is the connected N\'eron model of $T$ over $\CO_F$, see \cite[Remark 2.2 (iii)]{RapGuide}. The desired surjectivity follows from Greenberg's theorem \cite[Proposition 3]{Greenberg} (whose proof holds regardless of the characteristic of $F$) applied to $\mathcal T^0$. 
	
	Let $\breve K$ be a subset of $\breve \BS$ such that $\breve W_{\breve K}$ is finite. In this case $\breve K$ corresponds to a standard parahoric subgroup of $G(\breve F)$ containing $\breve \CI$, which we denote by $\breve\CK$. By the Bruhat decomposition, the map $w\mapsto\dot{w}$ induces a bijection  $$\breve W_{\breve K}\backslash \breve W/\breve W_{\breve K}\isoarrow\breve \CK\backslash G(\breve F)/\breve \CK.$$ 
If furthermore ${\breve K}$ is $\s$-stable, then so is $\breve\CK$, and we write $\CK=\breve \CK^\s$ for the corresponding parahoric subgroup of $G(F)$. In what follows we will often abuse notation and write $\breve\CK$ (resp.~$\CK$) for the parahoric group scheme over $\CO_{\breve F}$ (resp. $\CO_F$) when there is no risk of confusion. The same is applied to the notations $\breve\CI$ and $\CI$.

	\subsubsection{}
	Let $A$ denote the maximal $F$-split subtorus of $S$, which is also a maximal $F$-split torus in $G$. We write $Z_A$ and $N_A$ for the centralizer and normalizer of $A$ in $G$ respectively. Since $Z_A$ is anisotropic modulo center over $F$, there is a unique parahoric subgroup $\CZ_A$ of $Z_A(F)$. The \emph{relative Iwahori--Weyl group} is defined to be $$W:=N_A(F)/\CZ_{A}.$$ It admits a natural map to the relative Weyl group $W_0:=N_A(F)/Z_A(F)$ of $G$ over $F$.

	We write $\mathscr{D}$ for the relative local Dynkin diagram of $(G,A,F)$, and write $\Delta$ for the set of vertices of $\mathscr D$. Let $\CA$ be the apartment associated to $A$, and let $\mathfrak a$ be the base alcove  in $\CA$ determined by the Iwahori subgroup $\CI$ of $G(F)$. For each $v\in \Delta$, let $\alpha_v$ be the corresponding non-divisible simple affine root on $\CA$. As explained in \cite[1.11]{Tits},  $\Delta$ is naturally identified with the set of $\sigma$-orbits $C$ in $\breve\BS$ such that $\breve W_C$ is finite. For $v\in \Delta$, we write $C_v\subset \breve \BS$ for the corresponding $\s$-orbit, and write $s_v \in W$ for the reflection in $\CA$ along $\alpha_v$. By \cite[Lemma 1.6]{Richarz}, there is a natural isomorphism $W \cong \breve W^\sigma$ induced by the inclusion map $N_A(F) \to N(\breve F)$. By \cite[A.8]{Lusztig03}, $s_v$ corresponds to the longest element of $\breve W_{C_v}$ under this isomorphism. We set $$\BS=\{s_v \mid v\in \Delta\}. $$ We also note that if $w \in W$, then the lifting $\dot w$ in $N(\breve F)$ chosen in \S \ref{sssec: parabolic subgroup of Weyl group} is contained in $N_A(F)$, which follows from our assumption that $\dot w$ is $\s$-invariant. 
	
	\subsection{Parahoric subgroups of maximal volume}\label{subsec very special}
	
	We keep the notations of \S\ref{subsec:IW}. In this subsection we give a description of the parahoric subgroups of $G(F)$ that have the maximal volume .
	
	\subsubsection{}\label{sssec: very special} 
	
	For a vertex $v\in \Delta$, we define $d(v) : =\breve\ell(s_v)$. When $G$ is simply connected and absolutely almost simple, this coincides with the integer attached to $v$ in \cite[1.8]{Tits}, cf.~\cite[Remark 1.13 (ii)]{Richarz}. We say that a special vertex $v\in \Delta$ is \emph{very special} if $d(v)$ is minimal among all special vertices $v'$ lying in the connected component of $\mathscr D$ containing $v$.
	
	Let $x\in \CA$ be a point lying in the closure $\overline{\fka}$ of $\fka$. We associate to $x$ a set of vertices $$\Delta_x:=\{v\in \Delta \mid s_v(x)\neq x\}.$$ It is easy to see that $\Delta_x$ has non-empty intersection with each connected component of $\mathscr D$. 
	
	\begin{definition}\label{defn: very special} A point $x$ lying in the closure $\overline{\fka}$  of $\fka$ is said to be \emph{very special} if $\Delta_x$ contains 	exactly one very  special vertex in each connected component of $\mathscr D$. A parahoric subgroup of $G(F)$ is said to be \emph{very special} if it is $G(F)$-conjugate to a standard parahoric subgroup associated to a very special $x\in \overline{\fka}$.\end{definition}
	
	\begin{remark}\label{rem: very special}
		When $G$ is simply connected and absolutely almost simple, our definition of a very special parahoric subgroup is the same as that in \cite[A.4]{Borel-Prasad}. There is also a notion of a very special parahoric subgroup defined in \cite[Definition 6.1]{Zhu}. When $G$ is quasi-split, it can be shown that these two notions are equivalent. However, they differ for non-quasi-split $G$ (cf.~\cite[Lemma 6.1]{Zhu}).
	\end{remark}
	
	\subsubsection{}\label{sssec: special fiber of parahoric}We now fix a choice of Haar measure on $G(F)$ such that all Iwahori subgroups of $G(F)$ have volume 1. Let $\CK$ be a parahoric subgroup of $G(F)$ and $\breve \CK$ the associated parahoric subgroup of $G(\breve F)$. We define the \emph{log-volume} of $\CK$ by
	\begin{equation}\label{eqn: def log vol}\log \vol(\CK) : =\dim\overline{\CK}/\overline{\CI},\end{equation}
	where $\overline{\CK}$ (resp.~$\overline{\CI}$) denotes the reductive quotient of the special fiber of $\breve \CK$ (resp.~the image of the special fiber of $\breve \CI$ in $\overline{\CK}$).
	If $\CK$ is a standard parahoric corresponding to a $\sigma$-stable subset $\breve K\subset \tS$, then we have
	\begin{equation}\label{eqn: log vol length}\log \vol(\CK) =\breve \ell(w_{\breve K }),\end{equation} where $w_{\breve K}$ is the longest element of $\breve W_{\breve K}$.
	
	We have the Bruhat decompositions $$\breve \CK=\coprod_{w \in \breve W_{\breve K}} \breve \CI \dot w \breve \CI$$ and $$\CK=\coprod_{w\in \breve W_{\breve K}^\sigma} \CI \dot w \CI. $$ By \cite[Proposition 1.11]{Richarz}, we have \begin{equation}\label{eqn: parahoric volume Weyl group}\vol(\CK)=\sum_{w\in \breve W_{\breve K}^\sigma}q^{\breve\ell(w)}. \end{equation}

	\begin{proposition}\label{prop: eq. v. special, max vol, max log vol}
		Let $\CK$ be a parahoric subgroup of $G(F)$. Then the following are equivalent:
		
		\begin{enumerate}
			\item $\CK $ is a very special parahoric;
			
			\item $\CK$ is of maximal volume among all the parahoric subgroups of $G(F)$;

			\item $\CK$ has maximal log-volume.
		\end{enumerate}
	\end{proposition}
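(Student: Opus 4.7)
My plan is to first reduce to standard parahorics via $G(F)$-conjugation, using that both $\vol$ and $\log\vol$ are $G(F)$-conjugation invariant; then to use strict monotonicity in $\breve K$ to restrict to parahorics attached to a single vertex of $\Delta$; and finally to compare such ``vertex parahorics'' $\CK_v$ by a numerical identity. Strict monotonicity of $\log\vol$ in $\breve K$ is the Coxeter-theoretic fact that $\breve K\subsetneq\breve K'$ forces $w_{\breve K}<w_{\breve K'}$ strictly in Bruhat order; strict monotonicity of $\vol$ follows from~\eqref{eqn: parahoric volume Weyl group}, as the longest element $w_{\breve K'}$ is $\sigma$-fixed by uniqueness of the longest element and hence enlarges the index set $\breve W_{\breve K}^\sigma$. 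Both (2) and (3) therefore force $\breve K=\tS\setminus C_v$ for some $v\in\Delta$.

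The key numerical input I would then establish is that for $v\in\Delta$ lying in a fixed connected component of $\mathscr D$,
\[
\breve\ell(w_{\tS\setminus C_v})+\breve\ell(s_v)\;\le\;N,
\]
with $N$ depending only on the component and equality if and only if $v$ is special. Granted this, $\log\vol(\CK_v)=\breve\ell(w_{\tS\setminus C_v})$ is maximized over $v\in\Delta$ precisely when $v$ is special \emph{and} $d(v)=\breve\ell(s_v)$ is minimal among the special vertices in the component---that is, when $v$ is very special. This proves (1)$\Leftrightarrow$(3).

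For (2)$\Leftrightarrow$(3), by~\eqref{eqn: parahoric volume Weyl group}, $\vol(\CK_v)$ is a polynomial in $q$ of degree $\log\vol(\CK_v)$ with leading coefficient $1$, coming from the unique $\sigma$-fixed element $w_{\tS\setminus C_v}$. Using the product factorization of Poincar\'e polynomials of finite Coxeter groups, one can verify that $\log\vol(\CK_v)>\log\vol(\CK_{v'})$ implies $\vol(\CK_v)>\vol(\CK_{v'})$ for every prime power $q\ge 2$, which together with (1)$\Leftrightarrow$(3) gives (3)$\Rightarrow$(2). For (2)$\Rightarrow$(3), one uses additionally that all very special vertex parahorics in a given component give rise to isomorphic Coxeter pairs $(\breve W_{\breve K},\sigma)$ and hence equal volumes.

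The main obstacle is establishing the numerical identity. Its conceptual content is that the product $w_{\tS\setminus C_v}\cdot s_v\in\breve W_a$ has a length depending only on the connected component of $\mathscr D$, and this length is attained without cancellation precisely when $v$ is special. I would reduce to the case where $G^{\ad}$ is $F$-simple and absolutely simple over $\breve F$, and then verify the identity case-by-case using the Tits tables of local indices, as in~\cite{Tits} and~\cite{Richarz}.
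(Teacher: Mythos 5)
Your reduction to standard parahorics and then (by strict monotonicity of $\vol$ and $\log\vol$ in $\breve K$) to ``vertex parahorics'' $\CK_v$, $v\in\Delta$, is fine and parallels the first step of the paper's proof. The problem is the step you yourself identify as the heart of the matter: the claimed numerical identity $\breve\ell(w_{\tS\setminus C_v})+\breve\ell(s_v)\le N$ with $N$ depending only on the connected component and \emph{equality for every special vertex} is false. Take $G$ the unramified quasi-split $SU_5$ (type ${}^2A_4$, relative type $C$-$BC_2$), so $\tS$ is the affine $A_4$ pentagon with $\sigma$ fixing $s_0$ and swapping $s_1\leftrightarrow s_4$, $s_2\leftrightarrow s_3$; the vertices of $\Delta$ are the orbits $\{s_0\}$, $\{s_1,s_4\}$, $\{s_2,s_3\}$, and both $v_0=\{s_0\}$ and $v=\{s_2,s_3\}$ are special (they are the two ends of the relative $\tilde C_2$ diagram), with $d(v_0)=1$, $d(v)=3$. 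But $\breve\ell(w_{\tS\setminus C_{v_0}})+d(v_0)=10+1=11$, while $\breve\ell(w_{\tS\setminus C_{v}})+d(v)=6+3=9$ (here $\breve W_{\{s_0,s_1,s_4\}}$ is of type $A_3$). So the sum is not constant on special vertices, nor is the ``no cancellation'' product length: $\breve\ell(w_{\tS\setminus C_{v_0}}s_{v_0})=11$ while $\breve\ell(w_{\tS\setminus C_{v}}s_{v})=9$. Consequently your derivation of $(1)\Leftrightarrow(3)$ collapses: it is simply not true that $\log\vol(\CK_v)=N-d(v)$ on special vertices, so maximizing $\log\vol$ cannot be converted into minimizing $d(v)$ by an additive bookkeeping of this kind. (The conclusion of the proposition is of course still true in this example, $10>6$, but your lemma would predict a gap of $2$ rather than $4$, and it gives no mechanism to rule out, say, a non-special vertex with small $d(v)$ beating a special one.) Since your proof of $(2)\Leftrightarrow(3)$ also leans on $(1)\Leftrightarrow(3)$ and on ``all very special vertex parahorics have equal volume,'' which itself needs the comparison you have not established, the whole comparison step is a genuine gap.

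What is actually needed, and what the paper does following Borel--Prasad, is a comparison at the level of the finite relative Weyl groups: one writes $\log\vol(\CK_v)=d(w_v,v)=\sum_{\overline\alpha\in\Phi_v^+}d(\overline\alpha,v)$ (Lemma \ref{lem: integer d and roots}) and compares it with the corresponding sum for a very special $v_0$ by embedding $\BW_v$ into $\BW=\BW_{v_0}$; both the index set of roots and the weights $d(\overline\alpha,\cdot)$ change, and the weight inequality $d(\overline\alpha,v)\le d(\overline\alpha,v_0)$ (Lemma \ref{lem: comparision of d for different vertices}) genuinely uses that $v_0$ is \emph{very} special and requires a case check against Tits' tables in the exceptional types (${}^2A''_{2m-1}$, ${}^2D'_n$, etc.). The same mechanism gives the volume inequality termwise in $q$, so $(2)$ is handled simultaneously with $(3)$ rather than deduced from it. If you want to salvage your approach, you would have to replace your additive identity with such a root-by-root comparison; as stated, the key lemma is false and the case-by-case verification you defer to the Tits tables would reveal exactly this failure in type $C$-$BC_n$.
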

	
	\begin{remark}
		When $G$ is simply connected and absolutely almost simple, the equivalence between (1) and (2) is \cite[Proposition A.5]{Borel-Prasad}. The equivalence between (3) and the other two conditions will be used in the proof of Corollary \ref{cor:red1} below, especially when we alter the local field. \end{remark}
	
	\subsubsection{}  To prove Proposition \ref{prop: eq. v. special, max vol, max log vol} we follow the method in \cite[A.4]{Borel-Prasad}. We begin with some  preparation.
	Assume that $G$ is almost simple over $F$ and let $\Phi$ be the relative root system $\Phi(G,A)$. We let $\Phi^{\mathrm{nd}}$ denote the system of non-divisible roots in $\Phi$ and we write $\BW$ for the Weyl group of $\Phi^{\mathrm{nd}}$, which is identified with the relative Weyl group $W_0$  of $G$.
	
	For an element $v\in \Delta$, we define $K(v):= \BS\setminus\{s_v\}\subset  \BS$. We let $W_{K(v)}$ denote the subgroup of $W\cong \breve W^\sigma$ generated by $K(v)$. Then the natural map $\mathrm{Aff}(\CA) \to \GL(X_*(A)\otimes \BR)$ (i.e., taking the linear part) induces an identification between $W_{K(v)}$ and a subgroup of $\BW$, which we denote by $\BW_v$. We denote the inverse isomorphism by $\iota_v: \BW_v \isoarrow W_{K(v)}$. For $w\in \BW_v$, we set $$d(w,v): =  \breve\ell(\iota_v(w)), $$ where we consider $W_{K(v)}$ as a subgroup of $\breve W$. For each $v'\in \Delta \setminus \{ v \}$, we write $\overline \alpha_{v'}$ for the unique proportion of the vector part of $\alpha_{v'}$ that lies in $\Phi^{\mathrm{nd}}$. We let $\Phi_v$ denote the sub-root system of $\Phi^{\mathrm{nd}}$ generated by $\overline \alpha_{v'}$ with $v' \in   \Delta  \setminus \{ v \}$. 
	
	We define an ordering on $\Phi_v$ by specifying the positive simple  roots to be given by $\overline{\alpha}_{v'}$ with $v'\in\Delta\setminus \{v\} $, and we write $\Phi_v^+$  (resp.~$\Phi_{v}^-$) for the subset of positive (resp.~negative) roots. Note that the ordering on $\Phi_v$ depends on $v$; it is possible that there exist $v_1, v_2\in \Delta$ such that $\Phi_{v_1}=\Phi_{v_2}$ but $\Phi_{v_1}^+\neq \Phi_{v_2}^+$.
	
	For $\overline{\alpha}\in \Phi_v$, we define an integer $d(\overline{\alpha},v)$ as follows. If $\overline{\alpha}=\overline\alpha_{v'}$ for some $v'\in\Delta\setminus \{v\}$, then we define $d(\overline{\alpha},v)=d(v')$. In general, we define $d(\overline{\alpha},v)$ by specifying that its dependence on $\overline \alpha$ is $\BW_v$-invariant.  This is well-defined since if  $v_1,v_2,\in \Delta\setminus\{v\}$ are such that $\overline{\alpha}_{v_1}$ and $\overline{\alpha}_{v_2}$ are $\BW_v$-conjugate, then $d(v_1)=d(v_2)$; cf. \cite[A.4]{Borel-Prasad}.

	\begin{lemma}\label{lem: integer d and roots}For each $w\in \BW_v$, we have
		\begin{equation}
		d(w,v)=\sum_{\overline{\alpha}\in \Phi_{v}^+,w\overline{\alpha}\in \Phi_v^-}{d(\overline{\alpha},v)}.
		\end{equation}
	\end{lemma}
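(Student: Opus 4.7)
My approach is to use a reduced expression for $w$ in the Coxeter group $\BW_v$, together with the Coxeter isomorphism $\iota_v : \BW_v \isoarrow W_{K(v)}$, to reduce the identity to a standard bookkeeping computation. The key technical input I need is the weighted-length formula for $\breve\ell$ restricted to $W \cong \breve W^\sigma$: for any $u \in W$ and any reduced expression $u = s_{v_1}\cdots s_{v_k}$ in the Coxeter group $(W, \BS)$, one has
\[
\breve\ell(u) \,=\, \sum_{i=1}^k \breve\ell(s_{v_i}) \,=\, \sum_{i=1}^k d(v_i).
\]
This formula can be deduced from the description of $W \cong \breve W^\sigma$ in which each simple reflection $s_v$ is the longest element of the finite Coxeter group $\breve W_{C_v}\subset \breve W$: concatenating, for each $i$, a reduced expression of $s_{v_i}$ inside $\breve W_{C_{v_i}}$ yields an expression for $u$ in $\breve W$ of total length $\sum_i d(v_i)$ which turns out to be reduced, cf.~\cite[\S1]{Richarz} and \cite[Appendix]{Lusztig03}. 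Moreover, $\iota_v$ sends the simple reflection $s_{\overline\alpha_{v'}} \in \BW_v$ to $s_{v'} \in W_{K(v)}$, and $W_{K(v)}$ is a standard parabolic of $(W, \BS)$, so any reduced expression for $w$ in $\BW_v$ transfers under $\iota_v$ to a reduced expression in $(W, \BS)$.

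Fix such a reduced expression $w = s_{\overline\alpha_{v_1'}}\cdots s_{\overline\alpha_{v_k'}}$ in $\BW_v$. Combining the observations above gives
\[
d(w,v) \,=\, \breve\ell(\iota_v(w)) \,=\, \sum_{i=1}^k d(v_i').
\]
On the other hand, the classical enumeration of the inversion set of an element of a finite Weyl group in terms of a reduced expression gives
\[
\{\overline\alpha \in \Phi_v^+ : w\overline\alpha \in \Phi_v^-\} \,=\, \{\beta_1, \ldots, \beta_k\}, \qquad \beta_j := s_{\overline\alpha_{v_1'}}\cdots s_{\overline\alpha_{v_{j-1}'}}(\overline\alpha_{v_j'}).
\]
In particular, each $\beta_j$ is $\BW_v$-conjugate to $\overline\alpha_{v_j'}$, so by the $\BW_v$-invariance built into the definition of $d(\cdot,v)$, we have $d(\beta_j,v) = d(\overline\alpha_{v_j'},v) = d(v_j')$. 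Summing,
\[
\sum_{\overline\alpha \in \Phi_v^+,\ w\overline\alpha \in \Phi_v^-} d(\overline\alpha,v) \,=\, \sum_{j=1}^k d(v_j') \,=\, d(w,v),
\]
which is the desired identity.

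The main obstacle is verifying the weighted-length formula for $\breve\ell|_W$; once this is available, the rest of the argument is a routine translation via $\iota_v$ between Coxeter-theoretic identities for $\BW_v$ and $\breve\ell$-length identities inside $\breve W$. If one prefers to avoid invoking the weighted-length formula on all of $W$, an equivalent route is to induct on the Coxeter length of $w$ in $\BW_v$ using the standard identity for inversion sets under right multiplication by a simple reflection, which then reduces matters to the single-step assertion $\breve\ell(\iota_v(w)s_{v'}) = \breve\ell(\iota_v(w)) + d(v')$ whenever $\ell_{\BW_v}(ws_{v'}) = \ell_{\BW_v}(w) + 1$.
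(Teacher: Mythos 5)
Your proposal is in substance the paper's own argument: a reduced expression for $w$ in $\BW_v$, the enumeration of the inversion set from that expression, the $\BW_v$-invariance of $d(\cdot,v)$ to replace each inversion by the corresponding simple root, and the additivity $\breve\ell(\iota_v(w))=\sum_i \breve\ell(s_{v_i'})$, which the paper obtains exactly by your ``alternative route'' (the single-step statement is \cite[Sublemma 1.12]{Richarz} plus induction), so no new weighted-length formula on all of $W$ is needed. One small slip: with the action written as $w\overline\alpha$, the prefix products $\beta_j=s_{\overline\alpha_{v_1'}}\cdots s_{\overline\alpha_{v_{j-1}'}}(\overline\alpha_{v_j'})$ enumerate $\{\overline\alpha\in\Phi_v^+ \mid w^{-1}\overline\alpha\in\Phi_v^-\}$, not $\{\overline\alpha\in\Phi_v^+ \mid w\overline\alpha\in\Phi_v^-\}$ (already in type $A_2$ with $w=s_1s_2$ the two sets differ); the paper instead uses the suffix products $w_i^{-1}\overline\alpha_{v_i}$ with $w_i=s_{i+1}\cdots s_n$. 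This does not affect your conclusion, since each $\beta_j$ is still $\BW_v$-conjugate to $\overline\alpha_{v_j'}$ and hence both enumerations produce the same weighted sum $\sum_j d(v_j')$ (equivalently, run your argument for $w^{-1}$ and use $\breve\ell(\iota_v(w^{-1}))=\breve\ell(\iota_v(w))$), but the set-level identification should be stated with the suffix convention.
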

	\begin{proof}
		Let $s_1 \cdots s_n$ be a reduced word decomposition for $w\in \BW_v$, where $s_{i}$ is the simple reflection corresponding to $\overline{\alpha}_{v_i}$ for $v_i\in \Delta\setminus \{v\}$. For $i=1,\ldots,n$, set $w_i=s_{i+1} \cdots s_n$. Then the association $s_i\mapsto w^{-1}_i\overline{\alpha}_{v_i}$ defines a bijection $$\{s_1,\dotsc,s_n\}\xrightarrow{\sim} \{\overline{\alpha}\in \Phi_v^+\mid w\overline{\alpha}\in\Phi_v^-\}.$$
		By $\BW_v$-invariance, we have for $i=1,\dotsc,n$,  $$d(w_i^{-1}\overline{\alpha}_{v_i},v)=d(\overline{\alpha}_{v_i},v)=\breve\ell(s_{v_i}).$$ By \cite[Sublemma 1.12]{Richarz} and induction, we have $$d(w,v)=\sum_{i=1}^n\breve \ell(s_{v_i})$$ and the result follows.
	\end{proof}
	\begin{lemma}\label{lem: comparision of d for different vertices}
		Assume $G$ is almost simple over $F$ and let $v,v_0\in \Delta$ with $v_0$ a very special vertex. Then for all $\overline{\alpha}\in \Phi_{v}$, we have $d(\overline{ \alpha},v)\leq d(\overline{\alpha},v_0)$.
	\end{lemma}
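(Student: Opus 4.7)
The plan is to reduce the desired inequality to a single non-trivial comparison and then settle that comparison by inspecting Tits's classification of local Dynkin diagrams.

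Since $v_0$ is a special vertex, $\BW_{v_0}=\BW$ and $\Phi_{v_0}=\Phi^{\mathrm{nd}}$; in particular $\Phi_v\subseteq \Phi^{\mathrm{nd}}$, and every $\BW_v$-orbit on $\Phi_v$ is contained in a $\BW$-orbit on $\Phi^{\mathrm{nd}}$. As $d(\cdot,v)$ is constant on $\BW_v$-orbits and $d(\cdot,v_0)$ is constant on $\BW$-orbits, we may replace $\overline\alpha$ by a simple root $\overline\alpha_{v'}$ of $\Phi_v$ (every $\BW_v$-orbit contains such a root, with $v'\in\Delta\setminus\{v\}$). For such a choice, $d(\overline\alpha_{v'},v)=d(v')$ directly, while $d(\overline\alpha_{v'},v_0)=d(v'')$ for any $v''\in\Delta\setminus\{v_0\}$ with $\overline\alpha_{v''}$ in the $\BW$-orbit of $\overline\alpha_{v'}$, all such $v''$ giving the same value by the well-definedness of $d(\cdot,v_0)$.

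If $v'\neq v_0$, we may take $v''=v'$, and the two sides agree. The only remaining case is $v'=v_0$, where the lemma amounts to
\[
d(v_0)\ \leq\ d(v'')\qquad\text{for every }v''\in\Delta\setminus\{v_0\}\text{ with }\overline\alpha_{v''}\text{ in the }\BW\text{-orbit of }\overline\alpha_{v_0}.
\]
To establish this, I reduce to the case where $\mathscr D$ is connected (otherwise $v_0$ and $v$ lie in different components, and either the inequality is trivial on $v_0$'s component or $\overline\alpha_{v_0}\notin \Phi_v$, so this case never arises) and appeal to the classification in [Tits, \S4]. For split $G$, every $d(v)$ equals $1$ and the inequality is an equality. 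In the twisted cases, $\overline\alpha_{v_0}$ is the negative of the highest root of $\Phi^{\mathrm{nd}}$ relative to the base $\{\overline\alpha_{v''}:v''\neq v_0\}$, and a direct inspection of each diagram --- notably types $C$-$BC_n$, ${^2}A'_{2n-1}$, ${^2}A''_{2n-1}$, ${^2}D_n$, ${^3}D_4$, ${^6}D_4$, ${^2}E_6$, and the various ramified variants --- shows that in every case any such $v''$ is itself a special vertex, so the inequality follows from the defining property of very-specialness of $v_0$.

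The main obstacle is this final case-by-case verification: one must rule out the existence of a non-special vertex $v''$ whose simple root lies in the $\BW$-orbit of $\overline\alpha_{v_0}$ and which carries a strictly smaller integer $d(v'')<d(v_0)$. That no such configuration appears on Tits's list is not a priori clear from the abstract definition of very-specialness, but can be confirmed by going through the tables.
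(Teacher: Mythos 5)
Your reduction to the case $\overline\alpha=\overline\alpha_{v'}$ with $v'\in\Delta\setminus\{v\}$, and the further reduction to the single comparison $d(v_0)\leq d(v'')$ for $v''$ with $\overline\alpha_{v''}$ in the $\BW$-orbit of $\overline\alpha_{v_0}$, is exactly the paper's first step. The gap is in how you settle that comparison: the structural claim that \emph{every} such $v''$ is itself a special vertex (so that very-specialness of $v_0$ finishes the argument) is false. Keep in mind that the lemma must apply to non-quasi-split groups (it is used for $J_b$), and your list of diagrams to inspect consists almost entirely of quasi-split twisted forms; the dangerous cases are precisely the non-quasi-split ones. A concrete counterexample is the inner form of split $E_6$ on which Frobenius acts by an order-$3$ rotation of the affine diagram (Tits's ${}^3E_6$): the local Dynkin diagram is a chain of three vertices $\bar b - \bar a - \bar c$ with $d(\bar b)=d(\bar a)=3$ and $d(\bar c)=1$; the unique special (hence very special) vertex is the end $\bar b$, the relative root system is $G_2$, and $\overline\alpha_{\bar b}$ and $\overline\alpha_{\bar a}$ are both \emph{short} roots, hence $\BW$-conjugate, while the middle vertex $\bar a$ is \emph{not} special. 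So very-specialness of $v_0=\bar b$ says nothing about $d(\bar a)$; the inequality $d(v_0)\leq d(\bar a)$ holds only because one checks numerically that $d(\bar a)=3$. (This example also refutes your auxiliary assertion that $\overline\alpha_{v_0}$ is the negative of the highest root: for a very special vertex it may instead be proportional to the dominant short root, as happens here and for ${}^2A''_{2m-1}$.) The same phenomenon of non-special vertices carrying roots $\BW$-conjugate to $\overline\alpha_{v_0}$ already occurs for quasi-split ${}^2D_n$, where it is harmless only because all relevant $d$-values are equal.

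The paper's proof avoids this trap by a different dichotomy: by inspection of Tits's tables, in all but five families (${}^2A''_{2m-1}$, ${}^2D'_n$, ${}^2D''_{2m}$, ${}^4D_{2m+1}$, ${}^3E_6$) the very special vertex attains the \emph{global} minimum $\min_{v''\in\Delta}d(v'')$, so $d(v_0)\leq d(v'')$ for every $v''$ whatsoever and no orbit analysis is needed; in the five exceptional families (all non-quasi-split, where the global minimum is attained at a non-special vertex) one computes the relevant integers $d(\overline\alpha_{v_0},v_0)$ explicitly and verifies the inequality, as in the ${}^3E_6$ computation above. To repair your argument you would have to replace the false specialness claim by such an explicit verification in the exceptional cases; as written, the justification "the inequality follows from the defining property of very-specialness" does not go through.
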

	
	\begin{proof}Since $d(\overline{\alpha},v)$ and $d(\overline{\alpha},v_0)$ only depend on the $\BW_v$-orbit of $\overline{\alpha}$, it suffices to prove this in the case that $\overline{\alpha}\in \Phi_{v}^+$ is a simple root, i.e. $\overline\alpha=\overline\alpha_{v'}$ with $v'\in \Delta\setminus \{v\}$. If $v'\neq v_0$, then $\overline{\alpha}$ is also a simple root for $\Phi_{v_0}^+$ and we have $d(\overline{\alpha},v)=d(\overline{\alpha},v_0)=d(v')$.
		
		If $v'=v_0$, by inspection of Tits' table \cite[\S 4]{Tits}, we find that $$d(\overline\alpha,v):=d(v')=\min_{v''\in \Delta}d(v'')$$
		unless $G$ is of type ${^2A}_{2m-1}''$, ${^2D}_n'$, ${^2D}_{2m}''$, ${^4D}_{2m+1}$ or  ${^3E}_6$. In these cases, one computes explicitly that $d(\overline{\alpha},v)\leq d(\overline{\alpha},v_0)$.
	\end{proof}

	\begin{proof}[Proof of Proposition \ref{prop: eq. v. special, max vol, max log vol}]
		It suffices to prove the result for $\CK$ a standard parahoric.
		We first consider the case where $G$ is adjoint and simple over $F$. Let $\breve K_0,\breve K\subset \breve\BS$ be $\sigma$-stable subsets  with corresponding parahoric subgroups $\CK_0$ and $\CK$ of $G(F)$, and corresponding subsets $K_0,K_0\subset \BS$. Assume that $\CK_0$ is a very special parahoric. Then we need to show that
		\begin{align*}
\vol(\CK) & \leq \vol(\CK_0) , \\ 
\log\vol(\CK)& \leq\log\vol(\CK_0)
		\end{align*}
		and that strict inequality holds in each case if $\CK$ is not very special.
		
		Since $\CK_0$ is very special, we have $K_0=K(v_0)$ for $v_0\in \Delta$  a very special vertex.  Moreover, since $\CK$ is contained inside a parahoric corresponding to some $v\in \Delta$, we may assume $ K=K(v)$.
		
		Since $v_0$ is a very special vertex, $\BW_{v_0}=\BW$ and we have $\Phi^{\mathrm{nd}}=\Phi_{v_0}$. Let $u\in \BW_v$ be the unique element such that $u(\Phi_v^+)\subset \Phi_{v_0}^+$. Then $u(\Phi_{v}^-)\subset \Phi_{v_0}^-$. By Lemma \ref{lem: integer d and roots}, for $w\in \BW_v$ we have \begin{align*}d(w,v)&= \sum_{\overline{\alpha}\in u( \Phi_v^+), uwu^{-1}\overline{\alpha}\in \Phi_{v_0}^-}d(\overline{\alpha},v)\leq \sum_{\overline{\alpha}\in u( \Phi_v^+), uwu^{-1}\overline{\alpha}\in \Phi_{v_0}^-}d(\overline{\alpha},v_0)\\& \leq \sum_{\overline{\alpha}\in \Phi_{v_0}^+,uwu^{-1}\overline{\alpha}\in \Phi_{v_0}^-}d(\overline{\alpha},v_0)= d(uwu^{-1},v_0),
		\end{align*} where the  first inequality follows from Lemma \ref{lem: comparision of d for different vertices}. Thus $$\vol (\CK)=\sum_{w\in \BW_v}q^{d(w,v)}\leq \sum_{w\in \BW_v}q^{d(w,v_0)}\leq\sum_{w\in \BW}q^{d(w,v_0)}=\vol(\CK_0).$$
		If $\CK$ is not special, then the second inequality is strict. If $\CK$ is special but not very special, then the first inequality is strict. We thus obtain the equivalence $(1)\Leftrightarrow (2)$.
		
		Similarly, if we let  $w_{v}\in \BW_v$ (resp. $w_{v_0}\in \BW_{v_0}$) denote the image of $w_{\breve K}$ (resp. $w_{\breve K_0}$), then we have
		\begin{align*}\log\vol (\breve W_{\breve K}) &=d(w_{v},v)= \sum_{\overline{\alpha}\in u(\Phi_{v}^+)}d(\overline{\alpha},v)\leq \sum_{\overline{\alpha}\in \Phi_{v_0}^+}d(\overline{\alpha},v_0)\\ &
		= d(w_{v_0},v_0)=\log\vol (\breve W_{\breve K_0}).
		\end{align*} 
		If $\CK$ is not very special, then the inequality is strict. Thus we obtain $(1)\Leftrightarrow (3)$.
		
	The case with general $G$ is reduced to the above special case by considering the direct product decomposition of $G_{\mathrm{ad}}$ into $F$-simple factors. In fact, by (\ref{eqn: def log vol}) (resp.~ (\ref{eqn: parahoric volume Weyl group})), we know that the  log-volume (resp.~volume) of a parahoric subgroup of $G(F)$ is equal to the product of the log-volumes (resp.~volumes) of corresponding parahoric subgroups of the $F$-simple factors of $G_{\mathrm{ad}}$.   
		\ignore{		
				Now we consider the case of general $G$.	Let $\breve W_{\ad}$ denote the Iwahori--Weyl group for the adjoint group $G_{\ad}$. Then $\breve \BS$ determines a set of simple reflection $\breve\BS_{\ad}$ for $\breve  W_{\ad}$. We assume $\CK$ corresponds to a $\sigma$-stable subset $\breve K\subset \tS$. We let $\breve K_{\ad}\subset \breve \BS_{\ad}$ denote the image of $\breve K$ and we let $\CK_{\ad}$ denote the corresponding parahoric of $G_{\ad}(F)$.  Let $G_{\ad}\cong \prod_{i=1}^r G_i$ be the decomposition of $G_{\ad}$ into $F$-simple factors. Correspondingly we obtain decompositions $$\tS_{\ad}\cong\coprod_{i=1}^r\tS_i,\ \ \breve K_{\ad}\cong \coprod_{i=1}^r\breve K_i,\ \ \CK_{\ad}\cong \prod_{i=1}^r\CK_i.$$
		For $i=1,\dotsc,r$ we fix a Haar measure $\mu_{G_i}$ on $G_i(F)$ such that the volume of any Iwahori is 1, and we let $\mu_{G_{ad}}$ denote the product measure on $G_{\ad}$. Then by equation (\ref{eqn: parahoric volume Weyl group}), we have $$\vol(\CK)=\vol(\CK_{\ad})=\prod_{i=1}^r\vol(\CK_i).$$
		
		Similarly, by equation (\ref{eqn: def log vol}) we have $$\log\vol(\CK)=\log\vol(\CK_{\ad})=\prod_{i=i}^r\log\vol(\CK_i).$$
		
		Moreover, by definition we have that $\CK$ is very special if and only if every $\CK_i$ is a very special parahoric of $G_i(F)$. Thus the Proposition follows from the case when $G$ is adjoint and $F$-simple.}
	\end{proof}

	\subsection{$\s$-conjugacy classes}\label{subsec:B(G)}
	We keep the setting of \S \ref{subsec:IW}, and assume in addition that $G$ is quasi-split over $F$. 
	\subsubsection{}\label{subsubsec:(B,T)} Under the assumption that $G$ is quasi-split over $F$, we can fix a $\s$-stable special point $\breve\fks$ lying in the closure of $\breve \fka$ (cf.~\cite[Lemma 6.1]{Zhuram}). For an abelian group $X$ and a $\BZ$-algebra $R$, we write $X_R$ for $X \otimes _{\BZ} R$. The choice of $\breve \fks$ gives rise to a $\s$-equivariant isomorphism \begin{align}\label{eqn: id apts} X_*(T)_{\Gamma_0, \BR} \cong \breve \CA,
	\end{align} which sends $0$ to $\breve \fks$. We let $\tS_0\subset\tS$ denote the subset of simple reflections fixing $\breve \fks$.  Then $\tS_0$ is preserved by the action of $\s$. The identification (\ref{eqn: id apts}) determines a chamber ${X_*(T)_{\Gamma_0,\BR}}^+$ in $X_*(T)_{\Gamma_0,\BR} \cong X_*(S)_{\BR}$ (with respect to the relative roots of $(G_{\breve F},S_{\breve F})$), namely the one whose image under (\ref{eqn: id apts}) contains the alcove $\breve \fka$. We let ${X_*(T)_{\Gamma_0}}^+$ (resp. ${X_*(T)_{\Gamma_0,\BQ}}^+$) denote the preimage of ${X_*(T)_{\Gamma_0,\BR}}^+$ under the map $X_*(T)_{\Gamma_0} \to X_*(T)_{\Gamma_0, \BR}$ (resp. $X_*(T)_{\Gamma_0,\BQ} \to X_*(T)_{\Gamma_0, \BR}$).  
	
	Note that ${X_*(T)_{\Gamma_0,\BR}}^+$ gives rise to an ordering of the relative roots of $(G_{\breve F}, S_{\breve F})$. Since $G$ is quasi-split over $\breve F$, this uniquely determines an ordering of the absolute roots in $X^*(T)$, and determines a Borel subgroup of $G_{\breve F}$ containing $T_{\breve F}$. Since $C$ is $\s$-stable, this Borel subgroup comes from a Borel subgroup $B$ of $G$ containing $T$. 
	
	\subsubsection{}\label{subsubsec:two maps} For $b\in G(\breve F)$, we let $[b]$ denote the $\s$-conjugacy class of $b$, namely 
	$$ [ b]   = \{h^{-1}b\sigma(h) \mid h\in G(\breve F)\}. $$ We shall sometimes write $[b]_G$ if we want to specify $G$. Let $B(G)$ be the set of $\s$-conjugacy classes in $G(\breve F)$.

	The elements of $B(G)$ have been classified by Kottwitz in \cite{kottwitzisocrystal2}.  For $b\in G(\breve F)$, we write $\overline{\nu}_b\in ({X_*(T)_{\Gamma_0,\BQ}}^+)^\sigma$ for its dominant Newton point. (Note that $({X_*(T)_{\Gamma_0,\BQ}}^+)^\sigma$ is canonically identified with $({X_*(T)_{\BQ}}^+)^{\Gamma}$, where ${X_*(T)_{\BQ}}^+$ consists of the $B$-dominant elements of $X_*(T)_{\BQ}$.) The map $b\mapsto \overline \nu _b$ induces a map $\overline{\nu}:B(G)\rightarrow ({X_*(T)_{\Gamma_0,\BQ}}^+)^\sigma$.
	
	We let $\tilde{\kappa}:G(\breve F)\to \pi_1(G)_{\Gamma_0}$ denote the Kottwitz homomorphism and we write $$\kappa:G(\breve F)\to  \pi_1(G)_{\Gamma}$$ for the composition of $\tilde{\kappa}$ with the natural projection $\pi_1(G)_{\Gamma_0}\rightarrow \pi_1(G)_{\Gamma}$. This factors through a map $B(G) \to \pi_1(G)_{\Gamma}$, which we still denote by $\kappa$. 
	
	By \cite[\S 4.13]{kottwitzisocrystal2}, the map $$(\overline{\nu},\kappa):B(  G) \to    ({X_*(T)_{\Gamma_0,\BQ}}^+)^\sigma \times \pi_1(G)_\Gamma$$ is injective. We sometimes write $\overline{\nu}^G$ and $\kappa_{G}$ for $\overline {\nu}$ and $\kappa$ if we want to specify $G$. 
	
	An element $b\in G(\breve F)$ is said to be \emph{basic} if $\overline \nu_b$ is central. Similarly we define \emph{basic} elements of $B(G)$. 
	
	\subsubsection{} \label{subsubsec:J_b} For $b \in G(\breve F)$, let  $J_b$ denote the $\s$-centralizer group of $b$. It is a reductive group over $F$ such that  $$J_b(R)=\{g \in G(\breve F\otimes_F R)\mid g \i b \s(g)=b\}$$ for any $F$-algebra $R$. Let $M$ be the centralizer of $\overline{\nu}_b$, where we consider $\overline{\nu}_b$ as an element of $({X_*(T)_{\BQ}}^+)^{\Gamma} \subset (X_*(T)^{\Gamma})_{\BQ}$ as explained in \S \ref{subsubsec:two maps}. Then $M$ is a Levi subgroup of $G$ defined over $F$ and $J_b$ is an inner form of $M$ over $F$. 
	\subsubsection{}The maps $\overline \nu$ and $\kappa$ on $B(G)$ can be described in a more explicit way as follows. Let $B(\breve W, \s)$ be the set of $\s$-conjugacy classes in $\breve W$. The map $\breve W \to G(\breve F)$, $w \mapsto \dot w$ defined in \S \ref{subsec:IW} induces a well-defined map
	\begin{align}\label{eq:Psi}
 B(\breve W, \s)  \to B(G).
	\end{align}
	
	For each $w\in \breve W$, there exists a positive  integer $n$ such that $\s^n$ acts trivially on $\breve W$ and such that $w\sigma(w)\cdots\sigma^{n-1}(w)=t^\lambda$ for some $\lambda\in X_*(T)_{\Gamma_0}$. We set $\nu_w:=\frac{\lambda}{n}\in X_*(T)_{\Gamma_0,\BQ}$ and we let $\overline{ \nu}_w$ denote the unique $\breve W_0$-conjugate of $\nu_w$ that lies in ${X_*(T)_{\Gamma_0,\mathbb{Q}}}^+$. Then $\overline{ \nu}_w$  is necessarily fixed by $\s$. We let $\tilde\kappa(w)\in \pi_1(G)_{\Gamma_0}$ denote the image of $w$ under the quotient map $\breve W\rightarrow \breve W/\breve W_a\cong \pi_1(G)_{\Gamma_0}$, and we let $\kappa(w)$ be the image of $\tilde \kappa(w)$ in $\pi_1(G)_\Gamma$. By \cite{He14}, we have a commutative diagram:
	
	\[\xymatrix{B( \breve W,\s)\ar[rr]^{(\ref{eq:Psi})} \ar[rd]_{(\overline\nu,\kappa)}&& B(G)\ar[ld]^{(\overline\nu,\kappa)}\\
		& ({X_*(T)_{\Gamma_0,\mathbb{Q}}}^+)^\sigma\times \pi_1(G)_{\Gamma}&}.\]

	\subsection{Affine Deligne--Lusztig varieties}\label{subsec:ADLV}
	We keep the setting and notation of \S\ref{subsec:B(G)}. We assume in addition that $G$ splits over a tamely ramified extension of $F$ and that $\mathrm{char} ( F)$ is either zero or coprime to the order of $\pi_1(G_{\mathrm{ad}})$. 
	\subsubsection{}\label{sssec: ADLV}	
	Let ${\breve K}\subset \breve \BS$ be a $\s$-stable subset that corresponds to a parahoric subgroup $\breve \CK \subset G(\breve F)$. For $w \in \breve W_{\breve K}\backslash \breve W/\breve W_{\breve K}$ and $b \in G(\breve F)$, we set
	\begin{align}\label{eq:ADLV}
	X_{{\breve K},w}(b)(\kk)=\{g \breve \CK \in G(\breve F)/\breve \CK\mid g \i b \s(g) \in \breve \CK \dot{w} \breve \CK\}.
	\end{align}
	If $\mathrm{char}(F)>0$, then $X_{{\breve K}, w}(b)(\kk)$ is the set of $\kk$-points of a locally closed sub-scheme $X_{{\breve K},w}(b)$ of the partial affine flag variety $\mathrm{Gr}_{\breve \CK}$. In this case $X_{{\breve K},w}(b)$ is locally of finite type over $\kk$ (cf.~\cite{PR}). If $\mathrm{char}(F)=0$, then $X_{{\breve K}, w}(b)(\kk)$ is the set of $\kk$-points of a locally closed sub-scheme $X_{{\breve K}, w}(b)$ of the Witt vector partial affine flag variety  $\mathrm{Gr}_{\breve\CK}$ constructed by X.~Zhu \cite{Zhu} and Bhatt--Scholze \cite{BS}. In this case $X_{{\breve K}, w}(b)$ is locally of perfectly finite type over $\kk$ (see \cite[Theorem 1.1]{HVfinite}). 
	In both cases, we call $X_{\breve K,w}(b)$ the \emph{affine Deligne--Lusztig variety} associated to $b$, $w$, and $\breve K$. 
	
	The group $J_b(F)$ (see \S \ref{subsubsec:J_b}) acts on $X_{\breve K, w}(b)$ via $\kk$-scheme automorphisms.
	By \cite[Theorem 1.1]{HVfinite}, the induced $J_b(F)$-action on the set of irreducible components of $X_{{\breve K},w}(b)$ has finitely many orbits. The results in \cite{HVfinite} also have the following easy consequence. 
	
	\begin{lemma}\label{lem:quasi-compact}
		Every irreducible component of $X_{\breve K, w}(b)$ is quasi-compact. 
	\end{lemma}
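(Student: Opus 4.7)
The plan is to exploit the ind-(perfectly) proper structure of the ambient partial affine flag variety $\mathrm{Gr}_{\breve\CK}$. Recall that $\mathrm{Gr}_{\breve\CK}$ is built as a filtered colimit along closed immersions of (perfectly) projective Schubert varieties $S_\mu\subset\mathrm{Gr}_{\breve\CK}$, so its underlying topological space is the union of those of the $S_\mu$'s. Since $X_{\breve K,w}(b)$ is locally of (perfectly) finite type over $\kk$ by \cite[Theorem 1.1]{HVfinite}, it is locally noetherian, and so every irreducible component $Z\subseteq X_{\breve K,w}(b)$ is closed.

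Given such a $Z$, let $\eta$ denote its generic point. Then $\eta$ lies in some Schubert variety $S_\mu$, and since $S_\mu$ is closed in $\mathrm{Gr}_{\breve\CK}$, the closure $\overline{\{\eta\}}^{\mathrm{Gr}_{\breve\CK}}$ is contained in $S_\mu$. Because $X_{\breve K,w}(b)$ carries the subspace topology, this gives $Z=\overline{\{\eta\}}^{\mathrm{Gr}_{\breve\CK}}\cap X_{\breve K,w}(b)\subseteq Y:=S_\mu\cap X_{\breve K,w}(b)$. Now $Y$ is a locally closed subspace of the noetherian topological space underlying $S_\mu$, and hence is quasi-compact. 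Since $Z$ is closed in $X_{\breve K,w}(b)$ and contained in $Y$, it is closed in the quasi-compact space $Y$, and is therefore itself quasi-compact.

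I do not expect any substantive obstacle. The only point warranting a little care is the mixed characteristic case, where one must appeal to the construction of the Witt vector affine Grassmannian due to Zhu \cite{Zhu} and Bhatt--Scholze \cite{BS}: this presents the $S_\mu$ as perfections of honest projective varieties, so their underlying topological spaces remain quasi-compact and noetherian and the argument above goes through unchanged.
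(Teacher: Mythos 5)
Your proof is correct, and it reaches the same punchline as the paper --- trap $Z$ inside a Schubert variety, observe that it is locally closed there, and use that the underlying space of a Schubert variety (or its perfection) is Noetherian --- but it gets there by a different route. The paper invokes \cite[Proposition 5.4]{HVfinite} to produce a dense open $U \subset Z$ contained in a \emph{finite union} of Schubert varieties, and then uses closedness of Schubert varieties to conclude $Z \subset \bigcup_i S_i$; you instead use the generic point $\eta$ of $Z$ together with the ind-presentation of $\mathrm{Gr}_{\breve \CK}$ as a filtered colimit of Schubert varieties along closed immersions, so that $\eta$ lies in a single Schubert variety $S_\mu$ and hence $Z = \overline{\{\eta\}} \subset S_\mu \cap X_{\breve K, w}(b)$. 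Your variant is more elementary in that it avoids the cited proposition, at the cost of leaning explicitly on the fact that $|\mathrm{Gr}_{\breve\CK}|$ is exhausted by Schubert varieties (equivalently, that the preimage in $X_{\breve K,w}(b)$ of each $S_\mu$ is closed and these preimages cover); this is indeed how the representability results of \cite{PR}, \cite{Zhu}, \cite{BS} are set up, so the step is legitimate, though it deserves the explicit justification you gave rather than being taken for granted. Two small inaccuracies, neither affecting validity: irreducible components are closed in \emph{any} topological space, so no Noetherian hypothesis is needed for that; and in mixed characteristic $X_{\breve K,w}(b)$ is \emph{not} locally Noetherian as a scheme (perfections of finite-type rings are non-Noetherian) --- only its underlying topological space is locally Noetherian, which is all your argument actually uses.
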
 
\begin{proof} Let $Z$ be an irreducible component of $X_{\breve K, w}(b)$. By \cite[Proposition 5.4]{HVfinite}, there is a dense open subset $U \subset Z$ which is contained in a finite union $\bigcup_i S_i$ of Schubert varieties in $\mathrm{Gr}_{\breve \CK}$. Since the Schubert varieties are closed in $\mathrm{Gr}_{\breve \CK}$, we have $Z \subset \bigcup_i S_i$. Moreover, since $Z$ is closed in $X_{\breve K, w}(b)$, it is locally closed in $\bigcup_i S_i$. Now the Schubert varieties are of finite type over $\kk$ when $\mathrm{char}(F) >0$ and of perfectly finite type over $\kk$ when $\mathrm{char}(F) = 0$ (cf.~\cite[\S 4]{HVfinite}), so the underlying topological space of $\bigcup_i S_i$ is Noetherian. It follows that $Z$ is quasi-compact. 
\end{proof}
 
	\subsubsection{}\label{subsubsec:cases we consider}
	We are mainly interested in $X_{\breve K, w}(b)$ in the following two cases: 
	
	\begin{itemize}
		\item (Iwahori level.) We have ${\breve K} = \emptyset$, i.e., $\breve \CK=\breve \CI$.
		\item (Maximal special level.) We have ${\breve K}= \tS_0$, i.e., $\breve \CK$ is the maximal special parahoric subgroup corresponding to the special point $\breve \fks$. 
	\end{itemize} 
	
	When ${\breve K}=\emptyset$, we simply write $X_w(b)$ for $X_{\emptyset,w}(b)$. When ${\breve K}=\tS_0$, the restriction of the natural map $\breve W \to \breve W_0$ to $\breve W_{\breve K} \subset \breve W$ induces an isomorphism $\breve W_{{\breve K}} \isoarrow \breve W_0$. In other words, our choice of $\breve \fks$ determines a splitting of the exact sequence (\ref{eq:ses for IW}). In this case we shall identify $\breve W_0$ with $\breve W_{\breve K}$, viewed as a subgroup of $\breve W$. We have natural bijections  $$ {X_*(T)_{\Gamma_0}}^+ \cong X_*(T)_{\Gamma_0}/ \breve W_0 \cong    \breve W_{0}\backslash \breve W/\breve W_0 ,$$ where the second map is induced by the inclusion $X_*(T)_{\Gamma_0} \hookrightarrow \breve W, \mu \mapsto t^{\mu}$ (see (\ref{eq:ses for IW})). For $\mu\in {X_*(T)_{\Gamma_0}}^+$, we write $X_\mu(b)$ for $X_{\tS_0,t^\mu}(b)$. We sometimes write $X_\mu^G(b)$ for $X_\mu(b)$ if we need to specify the group $G$. If $G$ is unramified over $F$, then every cocharacter $\mu'$ of $G_{\overline F}$ is conjugate to a unique 
	element $\mu\in {X_*(T)_{\Gamma_0}}^+ = \ {X_*(T)_{\Gamma_0}}$. In this case we also write $X_{\mu'}(b)$ for $X_\mu (b)$. 
	
	\subsubsection{}\label{sssec: Kottwitz set}For $\lambda,\lambda'\in X_*(T)_{\Gamma_0,\BQ}\cong X_*(S)_{\BQ}$, we write $\lambda\leq\lambda'$ if $\lambda'-\lambda$ is a non-negative rational linear combination of the positive coroots in $X_*(S)$ (with respect to $(G_{\breve F}, S_{\breve F})$ and the ordering defined in \S \ref{subsubsec:(B,T)}).
	
	For $\mu\in {X_*(T)_{\Gamma_0}}^+$,  we define $$B(G,\mu):=\{[b]\in B(G)\mid\overline{\nu}_b\leq \mu^{\diamond}, \kappa(b)=\mu^\natural\}.$$
	Here $\mu^\natural$ is the image of $\mu$ in $\pi_1(G)_\Gamma$, and $\mu^{\diamond} \in {X_*(T)_{\Gamma_0, \BQ}}^+$ denotes the average of the $\sigma$-orbit of the image of $\mu$ in ${X_*(T)_{\Gamma_0, \BQ}}^+$. The set $B(G, \mu)$ has a unique basic element, which is also the unique minimal element with respect to the natural partial order on $B(G, \mu)$ (see \cite[\S 2]{HR}). 
	
	The following criterion for the non-emptiness of $X_\mu(b)$, originally conjectured by Kottwitz and Rapoport, was proved by Gashi \cite{Ga} for unramified groups and by the first-named author \cite[Theorem 7.1]{He14} in general.
	
	\begin{theorem}\label{thm:non-empty} For $\mu \in {X_*(T)_{\Gamma_0}}^{+}$, we have 
		$X_\mu(b)\neq\emptyset$ if and only if $[b]\in B(G,\mu)$. \qed 
	\end{theorem}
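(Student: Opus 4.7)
The plan is to treat the two implications separately, with the forward direction being elementary and the reverse direction constituting the main difficulty.

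For the necessity direction, suppose $g\breve\CK \in X_\mu(b)(\kk)$, so that $g^{-1}b\sigma(g) \in \breve\CK \dot{t}^\mu \breve\CK$. The Kottwitz map $\kappa$ is constant on $\sigma$-conjugacy classes and trivial on the parahoric $\breve\CK$, so $\kappa(b) = \kappa(g^{-1}b\sigma(g)) = \kappa(t^\mu) = \mu^\natural$. The Newton point inequality $\overline{\nu}_b \leq \mu^\diamond$ is a form of the generalized Mazur inequality, which one can verify by pushing forward to a maximal torus via a Borel and applying the Rapoport--Richartz comparison of Newton and Hodge polygons.

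For the sufficiency direction, the proof proceeds in two stages. First, one reduces to the basic case: if $b$ is not basic, let $M = \mathrm{Cent}_G(\overline{\nu}_b)$, which is a proper $F$-Levi subgroup. Via a Hodge--Newton decomposition argument, $[b]$ admits a representative $b_M \in M(\breve F)$ which is basic in $M$, and for a suitable $M$-dominant cocharacter $\mu_M$ conjugate to $\mu$ under $\breve W_0$, one has $[b_M] \in B(M, \mu_M)$. A closed immersion of partial affine flag varieties induced by the inclusion $M \hookrightarrow G$ produces an embedding $X^M_{\mu_M}(b_M) \hookrightarrow X^G_\mu(b)$, reducing the problem to $M$.

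Second, assuming $b$ basic, the key is to find an element $w \in \breve W$ such that (a) $w$ is $\sigma$-straight, i.e., $\breve\ell(w\sigma(w)\cdots\sigma^{n-1}(w)) = n\breve\ell(w)$ for all $n$, so that $\overline{\nu}_w$ determines $w$'s $\sigma$-conjugacy class and $\dot{w}$ itself realizes $[b]$; (b) $w$ maps to $[b]$ under the map \eqref{eq:Psi}; and (c) $w$ lies in the admissible set $\Adm(\mu) := \{w' \in \breve W : w' \leq t^{x\mu} \text{ for some } x \in \breve W_0\}$. Given such a $w$, the identity coset gives a point of $X_w(b)$ at Iwahori level, and projecting via $\mathrm{Gr}_{\breve\CI} \to \mathrm{Gr}_{\breve\CK}$ for $\breve\CK$ the special parahoric yields a point of $X_\mu(b)$ because of (c).

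The main obstacle is step (c) above: arranging the $\sigma$-straight representative to lie in $\Adm(\mu)$. Every class in $B(\breve W, \sigma)$ contains $\sigma$-straight elements, but forcing such a representative into the finite admissible set for the prescribed $\mu$ is delicate. In the unramified setting, Gashi handles this via an explicit construction built from Kottwitz's parametrization of $B(G, \mu)$; in general, the argument in \cite{He14} proceeds by an elaborate inductive analysis of $P$-alcove elements and length-preserving reduction steps in the Coxeter system $(\breve W_a, \tS)$, combined with the partial order on $B(G, \mu)$ to interpolate between the basic class and the $\mu$-ordinary class.
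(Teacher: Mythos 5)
The paper does not actually prove this statement: it is quoted from the literature (Gashi for unramified $G$, and \cite[Theorem 7.1]{He14} in general), so there is no internal argument to compare yours with; I can only assess your sketch on its own merits. Your necessity direction is fine (Kottwitz map plus the Rapoport--Richartz/Mazur inequality). The sufficiency direction, however, has a genuine gap at the final step. If $w$ is a $\s$-straight element with $\Psi(w)=[b]$ and $w\in\Adm(\mu)$, then taking $b=\dot w$ gives $1\in X_w(b)$, but the image of this point under $\pi:\mathrm{Gr}_{\breve\CI}\to\mathrm{Gr}_{\breve\CK}$ lies in $X_\mu(b)$ only if $\breve\CI\dot w\breve\CI\subset\breve\CK\dot t^\mu\breve\CK$, i.e.\ only if $w\in\breve W_0t^\mu\breve W_0$. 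Admissibility only gives $\breve\CI\dot w\breve\CI\subset\overline{\breve\CK\dot t^\mu\breve\CK}=\bigcup_{\lambda\preccurlyeq\mu}\breve\CK\dot t^\lambda\breve\CK$, so your construction proves non-emptiness of the \emph{closed} affine Deligne--Lusztig variety $\bigsqcup_{\lambda\preccurlyeq\mu}X_\lambda(b)$, which is strictly weaker than the theorem. Concretely, for $G=\GL_2$, $\mu=(2,0)$ and $[b]$ basic (so $\overline\nu_b=(1,1)$), the only $\s$-straight representative is the translation $\tau=t^{(1,1)}$; it lies in $\Adm(\mu)$ and satisfies $\Psi(\tau)=[b]$, but the resulting point lands in $X_{(1,1)}(b)$, not in $X_{(2,0)}(b)$. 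Moreover, in this example $[b]$ has no $\s$-straight representative in $\breve W_0t^\mu\breve W_0$ at all, so the strategy cannot be repaired simply by strengthening (c): the actual proofs must produce a representative of $[b]$ inside $\breve\CK\dot t^\mu\breve\CK$ itself, equivalently show $X_w(b)\neq\emptyset$ for some (generally non-straight) $w\in\breve W_0t^\mu\breve W_0$; this is where the real work lies (in \cite{He14} via the class-polynomial/Deligne--Lusztig reduction analysis of elements such as $w_0t^\mu$, in Gashi via the group-theoretic converse to Mazur's inequality).

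A secondary point: your reduction to the basic case also buries a nontrivial input. The existence of an $M$-dominant $\breve W_0$-conjugate $\mu_M$ of $\mu$ with $[b_M]\in B(M,\mu_M)$, given only $\overline\nu_b\leq\mu^\diamond$ and $\kappa(b)=\mu^\natural$, is precisely the Kottwitz--Rapoport combinatorial lemma (the reformulated converse to Mazur's inequality), not a formal consequence of a Hodge--Newton decomposition; it was proved case-by-case (Kottwitz--Rapoport, Lucarelli) and in general by Gashi by nontrivial root-system arguments. So both halves of your sufficiency argument outsource the essential content, and the projection step as written is incorrect.
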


\subsubsection{} Now we recall the dimension formula for  $X_{\mu}(b)$. For $b\in G(\breve F)$, the \textit{defect} of $b$ is defined as $$\mathrm{def}_G(b):=\rank_F G-\rank_F J_b.$$
We let $\rho$ denote the half sum of positive roots in the root system $\Sigma$ (see \S \ref{subsec:IW}). The following theorem was proved by G\"ortz--Haines--Kottwitz--Reumann \cite{GHKR} and \cite{viehmanndim} for split $G$, and  by  Hamacher \cite{Ham} and X.~Zhu \cite{Zhu} independently for unramified groups. The result in general was proved by the first-named author \cite[Theorem 2.29]{He-CDM}.

\begin{theorem}\label{thm:dim-adlv}
	Assume $[b]\in B(G,\mu)$. Then we have $$\dim X_\mu(b)=\langle\mu-\overline{\nu}_ b,\rho\rangle-\frac{1}{2}\mathrm{def}_G(b).$$ \qed 
\end{theorem}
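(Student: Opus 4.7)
The plan is to pass from the maximal special level to the Iwahori level via the projection $\pi \colon \mathrm{Gr}_{\breve\CI} \to \mathrm{Gr}_{\breve\CK_0}$, where $\breve\CK_0$ is the maximal special parahoric attached to $\breve\fks$, and then apply the Deligne--Lusztig reduction method for affine Deligne--Lusztig varieties.

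The first main step is to establish a dimension formula at the Iwahori level, of the shape
$$
\dim X_w(b) \;=\; \tfrac{1}{2}\bigl(\breve\ell(w) + \breve\ell(p(w)) - \mathrm{def}_G(b)\bigr) - \pair{\rho,\overline\nu_b},
$$
for $w \in \breve W$ with $[\dot w]=[b]$, where $p(w)\in\breve W_0$ is a ``finite part'' of $w$ extracted from a $\sigma$-straight conjugate. The key tool is the affine Deligne--Lusztig reduction: for $s\in\tS$ with $\breve\ell(sw\s(s))<\breve\ell(w)$, the variety $X_w(b)$ decomposes into two locally closed strata, one related by a universal homeomorphism to $X_{sws^{-1}}(b)$ and the other by an $\BA^1$-fibration to $X_{sw\s(s)}(b)$ (up to perfection in the mixed characteristic case). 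Iterating reduces the problem to the case where $w$ is $\s$-straight. For such $w$, the variety $X_w(b)$ is explicitly a disjoint union, indexed by a coset space in $J_b(F)$, of classical Deligne--Lusztig varieties for a reductive group over $\kk$ coming from the reduction of a suitable parahoric of $J_b$; its dimension is then read off from $\breve\ell(w)$, $\overline\nu_b$, and the rank of $J_b$.

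The second step is to descend from Iwahori to maximal special level via the identity
$$
\pi^{-1}(X_\mu(b)) \;=\; \bigsqcup_{w \in \Adm(\mu)} X_w(b).
$$
The top-dimensional stratum is $X_{t^\mu w_0}(b)$, where $w_0\in\breve W_0$ is the longest element, and the restriction of $\pi$ to this stratum is generically smooth with fibers of dimension $\breve\ell(w_0)$, so $\dim X_\mu(b) = \dim X_{t^\mu w_0}(b) - \breve\ell(w_0)$. Combining with the Iwahori-level formula for $w = t^\mu w_0$ and using standard length identities (e.g.\ $\breve\ell(t^\mu)=\pair{2\rho,\mu}$ for dominant $\mu$, together with the cancellation $\breve\ell(t^\mu w_0)=\breve\ell(t^\mu)-\breve\ell(w_0)$ in the regular case and its extension by continuity on the walls) simplifies the right-hand side to $\pair{\mu-\overline\nu_b,\rho}-\tfrac{1}{2}\mathrm{def}_G(b)$, as claimed.

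The hardest part is the Iwahori-level formula: tracking the dimension through the reduction procedure requires delicate bookkeeping of length changes under $\s$-conjugation, and identifying the defect term $\mathrm{def}_G(b)$ as the precise contribution that surfaces when passing to $\s$-straight representatives and then to classical Deligne--Lusztig varieties for the inner form $J_b$. A secondary subtlety, needed for the tame (non-split) case, is to match the Newton-point normalizations with the reduced root system $\Sigma$ introduced in \S\ref{subsec:IW}, and to ensure that the choice of straight representative $p(w)$ is compatible with the identification $W\cong \breve W^\s$ used in the transfer between $G$ and $J_b$.
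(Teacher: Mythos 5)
First, note that the paper does not actually prove Theorem \ref{thm:dim-adlv}: it is quoted from the literature (G\"ortz--Haines--Kottwitz--Reumann and Viehmann for split groups, Hamacher and Zhu for unramified groups, and He in general), so your sketch should be measured against He's proof via the Deligne--Lusztig reduction method, which is indeed the strategy you are reconstructing. At that level of strategy your outline is the right one; however, as a proof it has genuine gaps in exactly the places where the real work lies.

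The central gap is your Iwahori-level formula $\dim X_w(b)=\tfrac12(\breve\ell(w)+\breve\ell(p(w))-\mathrm{def}_G(b))-\pair{\rho,\overline\nu_b}$ claimed for all $w$ with $[\dot w]=[b]$. No such closed formula holds in general: the reduction method only yields the recursion recorded in Proposition \ref{DL-red2}, and the general output (He's dimension theorem at Iwahori level) expresses $\dim X_w(b)$ through the degrees of the class polynomials $F_{w,\CO}$, with a closed formula of the shape you write valid only for special families of $w$ (e.g.\ the shrunken Weyl chambers with $b$ basic) or for the particular element $w_0t^\mu$ that is actually needed here. Establishing it even in those cases is not ``delicate bookkeeping'' of length changes: one must exhibit a reduction path achieving the claimed dimension \emph{and} bound all other paths, which in He's argument requires the passage to superbasic $\sigma$-conjugacy classes (Hodge--Newton type reduction) and genuine estimates on class-polynomial degrees. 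Your sketch assumes this as if it followed formally from the existence of $\sigma$-straight conjugates. (Also, for $\sigma$-straight $w$ the variety $X_w(b)$ is discrete; the description as $J_b(F)\times^{\CJ}$ a classical Deligne--Lusztig variety pertains to minimal-length, not straight, elements.)

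The descent step also contains errors. The preimage identity should read $\pi^{-1}(X_\mu(b))=\bigcup_{w\in\breve W_0t^\mu\breve W_0}X_w(b)$ (as in (\ref{eqn: fibration ADLV})); indexing by $\Adm(\mu)$ describes instead the preimage of the closed variety $X_{\preccurlyeq\mu}(b)$. With the paper's conventions the distinguished stratum is $X_{w_0t^\mu}(b)$, of length $\breve\ell(t^\mu)+\breve\ell(w_0)$, not $X_{t^\mu w_0}(b)$ (whose length is $\breve\ell(t^\mu)-\breve\ell(w_0)$ for regular $\mu$), and the claim that $\pi$ restricted to the top stratum is ``generically smooth with fibers of dimension $\breve\ell(w_0)$'' is unjustified. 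The correct argument is the one used in Proposition \ref{prop: relation between Iw and Sp ADLV}: the full preimage is an \'etale fibration with fibers of dimension $\breve\ell(w_0)$, and one invokes the nontrivial input that $\dim X_w(b)\le\dim X_{w_0t^\mu}(b)$ for all $w$ in the double coset, with equality only at $w_0t^\mu$ --- a statement that itself needs proof and cannot be replaced by the length identities you cite. Until the Iwahori-level dimension of $X_{w_0t^\mu}(b)$ and this maximality statement are actually established, the proposal does not constitute a proof of the theorem.
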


	\begin{definition}\label{defn:Sigma}
	For a scheme $X$ of finite Krull dimension and each non-negative integer $d$, we write $\Sigma^d(X)$ for the set of irreducible components of $X$ of dimension $d$ (which is allowed to be empty). We write $\Sigma^{\topp}(X)$ for $\Sigma^{\dim(X)}(X)$. We write $\Sigma(X)$ for the set of all irreducible components of $X$. 
	\end{definition}
	\subsubsection{} 
 The main object of interest in this paper is the set $\Sigma^{\topp}(X_{\mu}(b)) $. To study this set it will be useful to relate $X_{\mu}(b)$ to a certain affine Deligne--Lusztig variety with Iwahori level.
	
	We have a natural projection map $$\pi:\mathrm{Gr}_{\breve \CI}\to \mathrm{Gr}_{\breve \CK}$$ between the partial affine flag varieties, which exhibits $\mathrm{Gr}_{\breve \CI}$ as an \'etale fibration over $\mathrm{Gr}_{\breve \CK}$ with fibers isomorphic to  $\overline{\CK}/\overline{\CI}$ when $\mathrm{char}(F)>0$ (resp. ~the perfection of $\overline{\CK}/\overline{\CI}$ when $\mathrm{char}(F)=0$). See \S  \ref{sssec: special fiber of parahoric} for $\overline{\CK}/\overline{\CI}$. 
	
	As in \S \ref{subsubsec:cases we consider}, we identify $\breve W_0$ with the subgroup $\breve W_{\tS_0}$ of $\breve W$. For $\mu\in {X_*(T)_{\Gamma_0}}^+$, the map $\pi$ induces a $J_b(F)$-equivariant map \begin{equation}\label{eqn: fibration ADLV}
	\bigcup_{w\in \breve W_0t^\mu \breve W_0}X_w(b)\to X_\mu(b).
	\end{equation}
	In fact, the left hand side is equal to $\pi^{-1}(X_\mu(b))$.

	\begin{proposition}\label{prop: relation between Iw and Sp ADLV} Let $w_0$ denote the longest element of $\breve W_0$. The map $X_{w_0 t^\mu}(b) \to  X_\mu(b)$ induces a $J_b(F)$-equivariant bijection 
		$$\Sigma^{\topp}(X_{w_0 t^\mu}(b)) \isoarrow \Sigma^{\topp}(X_\mu(b)).$$
	\end{proposition}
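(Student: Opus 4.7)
The plan is to realize $X_{w_0 t^\mu}(b)$ as an open subscheme of $\pi^{-1}(X_\mu(b))$ and exploit the fibration structure of the latter over $X_\mu(b)$.

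First, since the condition defining $X_\mu(b)$ depends only on the right coset $g\breve\CK$, the restricted map $\pi^{-1}(X_\mu(b)) \to X_\mu(b)$ is a Zariski-locally trivial fibration with fibers isomorphic to $\breve\CK/\breve\CI$, namely (in mixed characteristic, the perfection of) the flag variety $\overline\CK/\overline\CI$, which is smooth and irreducible of dimension $\ell(w_0)$. Consequently, pullback $Z \mapsto \pi^{-1}(Z)$ gives a $J_b(F)$-equivariant bijection $\Sigma(X_\mu(b)) \isoarrow \Sigma(\pi^{-1}(X_\mu(b)))$ shifting dimensions by $\ell(w_0)$, and in particular a bijection on top-dimensional components.

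Second, I claim that $w_0 t^\mu$ is the unique longest element of the double coset $\breve W_0 t^\mu \breve W_0$, of length $\breve\ell(t^\mu) + \breve\ell(w_0)$. Writing a general element as $v_1 t^\mu v_2 = t^{v_1\mu}(v_1 v_2)$ with $v_i \in \breve W_0$ and applying the standard length formula for $\breve W$, one checks that the maximum is attained uniquely at $v_1 = w_0, v_2 = 1$, using that $\mu$ is dominant. It follows that $\breve\CI \dot{w_0 t^\mu} \breve\CI$ is the unique open Bruhat stratum of $\breve\CK \dot{t^\mu} \breve\CK$, so $X_{w_0 t^\mu}(b)$ is open in $\pi^{-1}(X_\mu(b))$.

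It then remains to verify that for each $Z \in \Sigma^{\topp}(X_\mu(b))$, the open subscheme $X_{w_0 t^\mu}(b) \cap \pi^{-1}(Z)$ of the irreducible variety $\pi^{-1}(Z)$ is non-empty; such an intersection is then automatically open dense in $\pi^{-1}(Z)$, yielding a unique top-dimensional component of $X_{w_0 t^\mu}(b)$ mapping onto $Z$, and combining with the first step produces the desired bijection. This non-emptiness amounts to the following: for every $g\breve\CK \in X_\mu(b)$, the $\sigma$-twisted $\breve\CK$-conjugacy orbit of $b' := g^{-1} b \sigma(g)$ meets the open Bruhat cell $\breve\CI \dot{w_0 t^\mu} \breve\CI$ inside $\breve\CK \dot{t^\mu} \breve\CK$. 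I expect this to follow from a Lang-type argument at the level of the reductive quotient $\overline\CK$: Lang's theorem ensures that the Lang map on a connected reductive group over $\kk$ is surjective, so $\sigma$-twisted conjugation orbits should be dense enough in the $\sigma$-conjugacy class of $b'$ to meet any open stratum. Making this surjectivity argument rigorous in the (Witt vector) loop group setting is the main technical step, though essentially standard.
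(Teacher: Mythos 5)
Your first two steps are fine and run parallel to the paper: the fibration argument identifies $\Sigma^{\topp}$ of $Y:=\pi^{-1}(X_\mu(b))$ with $\Sigma^{\topp}(X_\mu(b))$, and $w_0t^\mu$ is indeed the unique longest element of $\breve W_0 t^\mu \breve W_0$, so that $X_{w_0t^\mu}(b)$ is open in $Y$. The gap is in your last step. What you must show is that no top-dimensional irreducible component of $Y$ lies entirely in the closed complement $\bigcup_{w\in \breve W_0t^\mu\breve W_0,\ w\neq w_0t^\mu}X_w(b)$, and you propose to deduce this from the much stronger pointwise claim that for \emph{every} $g\breve\CK\in X_\mu(b)$ the $\sigma$-twisted $\breve\CK$-orbit of $b'=g^{-1}b\sigma(g)$ meets $\breve\CI\dot w\breve\CI$ with $w=w_0t^\mu$, to be proved by a ``Lang-type argument.'' Lang's theorem does not give this: it concerns the Lang map $k\mapsto k^{-1}\sigma(k)$ on the connected (pro-)algebraic group $\breve\CK$ (equivalently on $\overline\CK$ and the pro-unipotent kernel), and hence only controls $\sigma$-conjugacy \emph{inside} $\breve\CK$, i.e.\ the case $\mu=0$. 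Here $b'\in\breve\CK\dot t^\mu\breve\CK\setminus\breve\CK$, the twisted $\breve\CK$-action on this double coset is not a Lang map for any group structure, and its orbits are certainly not ``dense enough to meet any open stratum'': each orbit is confined to $[b']\cap\breve\CK\dot t^\mu\breve\CK$, which for non-generic $[b']$ is far from dense in the double coset. Your pointwise claim is in fact exactly the assertion that $X_{w_0t^\mu}(b)\to X_\mu(b)$ is surjective --- a nontrivial statement that is neither needed for the proposition nor established by your sketch; note also that the standard normal-form results for $\sigma$-$\breve\CK$-conjugation on $\breve\CK\dot t^\mu\breve\CK$ produce representatives of \emph{minimal} length in the relevant cosets, not maximal, so they do not help either.

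The ingredient that actually closes the argument, and which the paper uses, is the dimension comparison of \cite[Theorem 9.1]{He14}: for $w\in\breve W_0t^\mu\breve W_0$ one has $\dim X_w(b)\leq\dim X_{w_0t^\mu}(b)$, with equality if and only if $w=w_0t^\mu$. Since $Y$ is the finite union of the locally closed, $J_b(F)$-stable pieces $X_w(b)$, this forces every top-dimensional component of $Y$ to meet the unique piece of maximal dimension densely, giving the $J_b(F)$-equivariant bijection $\Sigma^{\topp}(X_{w_0t^\mu}(b))\isoarrow\Sigma^{\topp}(Y)$ directly; combined with your first step this proves the proposition, and it makes the fiberwise non-emptiness (let alone fiberwise surjectivity) unnecessary. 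Without this dimension input, or some genuine substitute for it, your proof is incomplete at precisely the step you describe as ``essentially standard.''
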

	\begin{proof} Write $Y$ for the left hand side of (\ref{eqn: fibration ADLV}). Since the map (\ref{eqn: fibration ADLV}) is a fibration, it induces a $J_b(F)$-equivariant bijection $$\Sigma^{\topp}(Y)\isoarrow \Sigma^{\topp}(X_\mu(b)).$$
		Note that the $J_b(F)$-action on $Y$ stabilizes $X_w(b)$ for each $w \in \breve W_0 t^\mu \breve W_0$. Moreover, each $X_w(b)$ is locally closed in $Y$. By \cite[Theorem 9.1]{He14}, for $w\in \breve W_0t^\mu \breve W_0$, we have $$\dim X_w(b)\leq \dim X_{w_0t^\mu} (b)$$ with equality if and only if $w=w_0t^\mu$. Thus the inclusion map $X_{w_0 t^\mu}(b) \hookrightarrow Y$ induces a $J_b(F)$-equivariant bijection $$\Sigma^{\topp}(X_{w_0 t^\mu}(b)) \xrightarrow{\sim} \Sigma^{\topp}(Y).$$ 
		The statement is proved.  
	\end{proof}
	
	\section{Deligne--Lusztig reduction method and motivic counting}\label{sec: DL reduction an motivic counting}

	\subsection{The Grothendieck--Deligne--Lusztig monoid}\label{subsec:Groth monoid}
	Recall that $\kk$ is a fixed algebraic closure of $\BF_q$. Let $H$ be an abstract group. We retain the notations introduced in Definition \ref{defn:Sigma}. 	
	\begin{definition}\label{defn:S^H}
		Let $\CS^H$ be the category of perfect $\kk$-schemes $V$ that are equipped with an $H$-action and satisfy the following conditions: 
		\begin{enumerate}
			\item The scheme $V$ is locally of  perfectly finite type over $\kk$. 
			\item Each irreducible component of $V$ is quasi-compact. 
			\item The $H$-action on $\Sigma(V)$ has finitely many orbits.
		\end{enumerate}
	We define morphisms in $\CS^H$ to be the $\kk$-scheme morphisms that are $H$-equivariant.
	\end{definition} 
	\subsubsection{}
	It is a simple exercise to check that the category $\CS^H$ is essentially small. Thus the isomorphism classes in $\CS^H$ form a set. Let $\BN[\CS^H]$ be the free commutative monoid generated by this set. For any object $V$ in $\CS^H$, we denote by $[V]$ the  element of $\BN[\CS^H]$ given by the isomorphism class of $V$. 
	
	For any $\kk$-scheme $Q$, we write $Q^{\pfn}$ for the perfection of $Q$, which is a perfect $\kk$-scheme. 
	We write $\BA^1$ for $\BA^1_{\kk}$, and write $\BG_m$ for $\BA^1_{\kk} - \{ 0 \}$. Then $\BG_m^{\pfn}$ equipped with the trivial $H$-action is an object in $\CS^H$. Moreover, if $V$ is in $\CS^H$, then $V \times_{\kk}  \BG_m^{\pfn} $ equipped with the product $H$-action is also in $\CS^H$. 
	We thus define an endomorphism $\BT$ of $\BN[\CS^H]$ by $$[V] \longmapsto [V \times_{\kk} \BG_m^{\pfn}], \quad  \text{for any object } V \text{ in } \CS^H.$$ 

	\begin{lemma}\label{lemma: complement in S^H}
		Let $V$ be an object in $\CS^H$, and let $U$ be an $H$-stable open subscheme of $V$. Then $U$ equipped with the induced $H$-action is an object in $\CS^H$. 
	\end{lemma}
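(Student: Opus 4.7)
The plan is to verify the three defining conditions of $\CS^H$ from Definition \ref{defn:S^H} for $U$ one by one. Condition (1)---being locally of perfectly finite type over $\kk$---passes immediately to open subschemes, so there is nothing to do here.

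For condition (3), I would first describe $\Sigma(U)$ in terms of $\Sigma(V)$. If $Z$ is an irreducible component of $U$, its closure $\overline Z$ in $V$ is irreducible and therefore contained in some $W \in \Sigma(V)$; conversely, for any $W \in \Sigma(V)$ meeting $U$, the intersection $W \cap U$ is open in $W$ (hence irreducible) and closed in $U$. A short maximality argument then identifies each $Z \in \Sigma(U)$ with $W \cap U$ for a unique $W \in \Sigma(V)$, giving a surjection $\{W \in \Sigma(V) : W \cap U \ne \emptyset\} \twoheadrightarrow \Sigma(U)$. Because $U$ is $H$-stable this surjection is $H$-equivariant, so the finite-orbit hypothesis for $V$ transfers to $U$.

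The main step is condition (2): for each $W \in \Sigma(V)$ meeting $U$, the open subset $W \cap U$ of $W$ must be quasi-compact. By hypothesis $W$ is quasi-compact and locally of perfectly finite type over $\kk$, so it can be covered by finitely many affine opens each of which is the perfection of an affine $\kk$-scheme of finite type. Since the Frobenius is a universal homeomorphism, each such affine open has Noetherian underlying topological space. Hence the underlying topological space of $W$ is a finite union of Noetherian opens, and is itself Noetherian. Every open subset of a Noetherian topological space is quasi-compact, so $W \cap U$ is quasi-compact, as required.

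The only point of substance is this last observation---that quasi-compactness plus being locally of perfectly finite type forces the underlying topological space to be Noetherian, via Frobenius being a universal homeomorphism. Everything else is a formal consequence of the definitions and the $H$-stability of $U$.
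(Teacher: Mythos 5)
Your proposal is correct and follows essentially the same route as the paper: identify the irreducible components of $U$ with the components of $V$ meeting $U$ (via closure/intersection, $H$-equivariantly) to get condition (3), and deduce condition (2) from the fact that each component of $V$ is a Noetherian topological space, so its open subsets are quasi-compact. The only difference is that you spell out why quasi-compactness plus being locally of perfectly finite type gives Noetherianity (via Frobenius being a homeomorphism), a point the paper states without elaboration.
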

	
	\begin{proof}Clearly $U$ satisfies condition (1) in Definition \ref{defn:S^H}. We verify the other two conditions. For each $Z \in \Sigma(U)$, the closure $\overline Z$ of $Z$ in $V$ is an element of $\Sigma(V)$. Conversely, for each $Z' \in \Sigma(V)$, either $Z' \cap U = \emptyset$, or $Z' \cap U$ is an element of $\Sigma(U)$. Hence we have a bijection 
		$$ \Sigma(U) \isoarrow \{ Z' \in \Sigma(V) \mid Z' \cap U \neq \emptyset  \}, \quad Z \mapsto \overline Z. $$ The right hand side is an $H$-stable subset of $\Sigma(V)$, and the bijection is $H$-equivariant. Since  $V$ satisfies condition (3), so does $U$. 
		
	Since $V$ satisfies conditions (1) and (2), each $Z' \in \Sigma(V)$ is Noetherian as a topological space. For an arbitrary $Z \in \Sigma(V)$, we know that $Z$ is open in $\overline Z$ (since $Z = \overline Z \cap U$), and that $\overline Z$ is noetherian (since $\overline Z \in \Sigma(V)$). Hence $Z$ is quasi-compact. Thus $U$ satisfies condition (2).  
	\end{proof}
	
	\begin{definition}\label{defn:GDL}
		Let $\sim$ be the minimal equivalence relation on $\BN[\CS^H]$ generated by the following rules. 
		\begin{enumerate}
					\item If there is a morphism $ V_1 \to V_2$ in $\CS^H$ such that forgetting the $H$-actions this is a Zariski-locally trivial $\BG_m^{\pfn}$-bundle, then $[V_1] \sim  \BT [V_2]$.
			\item If there is a morphism $ V_1 \to V_2$ in $\CS^H$ such that forgetting the $H$-actions this is a Zariski-locally trivial $\BA^{1, \pfn}$-bundle, then $[V_1] \sim  \BT [V_2] + [V_2]$.
			\item Suppose there is a morphism $V' \to V$ in $\CS^H$ that is a closed embedding. By Lemma \ref{lemma: complement in S^H}, the open subscheme $V \setminus V'$ of $V$ is an object in $\CS^H$. We require that $[V] \sim  [V']+[V \setminus V']$.
		\end{enumerate}
	\end{definition} 
	
	\subsubsection{}We recall the general notion of a quotient monoid. Let $(M,+)$ be a commutative monoid. An equivalence relation $\equiv$ on $M$ is called a \emph{congruence}, if for all $x,x',y,y' \in M$ such that $x\equiv x'$ and $y \equiv y'$, we have $x+y \equiv x' + y'$. If $\equiv$ is a congruence, then the quotient set $M/{\equiv}$ inherits from $M$ the structure of a commutative monoid. This is called the \emph{quotient monoid} of $M$ by $\equiv$. Starting with an arbitrary equivalence relation $\sim$ on $M$, we obtain a congruence $\equiv$ on $M$ by  declaring $x \equiv y$ if and only if we can write $x = \sum_{i=1}^n x_i$ and $y= \sum_{i=1}^ny_i$ for some $x_i, y_i \in M$ such that $x_i \sim y_i$ for each $i$.
	\begin{definition}
		Let $\equiv$ be congruence on $\BN[\CS^H]$ associated to $\sim$, and let $\GDL^H$ be the quotient monoid $\BN[\CS^H]/{\equiv}$. We call $\GDL^H$ the \emph{Grothendieck--Deligne--Lusztig monoid}. For any object $V$ in $\CS^H$, we denote the image of $[V]$ under $\BN[\CS^H] \to \GDL^H$  by $\dbp{V}$. 
	\end{definition}
	
	\subsubsection{}\label{subsubsec:defn BT}
	One easily checks that the endomorphism $\BT$ of $\BN[\CS^H]$ descends to an endomorphism of $\GDL^H$, which we still denote by $\BT$. We write $\BL$ for $\BT +1 \in \End(\GDL^H)$.

	\subsection{Calculus of top  irreducible components}\label{subsec:calculus} 
	\subsubsection{}\label{subsubsec:TIC}Let $H$ be an abstract group as before. One can formally calculate ``top-dimensional irreducible components'' of elements of $\GDL^H$. To this end we first introduce a commutative monoid $\TIC^H$ which is much simpler than $\GDL^H$ and serves to record information about top-dimensional irreducible components. Let $\CS et^H_f$ be the category of $H$-sets which contain only finitely many $H$-orbits. This is an essentially small category. We let $\TIC^H$ be the set of pairs $(\Sigma, d)$, where $\Sigma$ is an isomorphism class in $\CS et^H_f$, and $d \in \BZ_{\geq 0}$. Given two elements $(\Sigma_1, d_1), (\Sigma_2, d_2) \in \TIC^H$, we define their sum to be 
	$$(\Sigma_1, d_1) +  (\Sigma_2, d_2):=  \begin{cases}
	(\Sigma_1, d_1) , & \text{if } d_1>d_2 , \\
	(\Sigma_2, d_2) , & \text{if } d_2>d_1 , \\
	(\Sigma_1 \sqcup \Sigma_2, d_1) , & \text{if } d_1=d_2 .
	\end{cases}  $$
	This makes $\TIC^H$ a commutative monoid. In the above definition of the sum, if $d_1 \geq d_2$, then we say that $(\Sigma_1, d_1)$ \emph{makes non-trivial contribution to the sum}. 
	
	Define an endomorphism $\BT$ of $\TIC^H$ by
	$$\BT : (\Sigma, d) \longmapsto (\Sigma, d+1).$$ We write $\BL$ for $\BT+1 \in \End(\TIC^H)$; it is easy to see that in fact $\BL=\BT$ in $\End(\TIC^H)$.
	
	Note that every object $V$ in $\CS^H$ has finite Krull dimension. The sets $\Sigma(V)$ and $\Sigma^d(V)$ for all $d \in \BZ_{\geq 0}$ (Definition \ref{defn:Sigma}) equipped with the natural $H$-actions are all objects in $\CS et^H_f$.
	\begin{definition}\label{defn:Sigma top}
		For any $X = \sum_{i=1}^n [V_i] \in \BN[\CS^H]$, we define $$\dim X : = \max_{1\leq i \leq n} \dim V_i \in \BZ_{\geq 0}, $$ and define $$\Sigma^{\topp}(X) : = \coprod_{1\leq i \leq n} \Sigma^{\dim X}(V_i),$$ which is an object in $\CS et^H_f$. The pair consisting of the isomorphism class of $\Sigma^{\topp}(X)$ and the integer $\dim X$ is thus an element of $\TIC^H$, which we denote by $\tilde \fkC(X) \in \TIC^H$. 
	\end{definition}

	\begin{lemma}\label{lem:monoid homo} The map $\tilde \fkC: \BN[\CS^H] \to \TIC^H$ is a monoid homomorphism, and descends to a monoid homomorphism $\fkC: \GDL^H \to \TIC^H$. Moreover, $\fkC$ is equivariant with respect to the endomorphisms $\BT$ on $\GDL^H$ and $\BT$ on $\TIC^H$ (see \S \ref{subsubsec:defn BT} and \S \ref{subsubsec:TIC}).   
	\end{lemma}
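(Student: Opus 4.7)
The plan is to verify the three assertions of the lemma in order: that $\tilde \fkC$ is a monoid homomorphism on $\BN[\CS^H]$, that it descends to $\GDL^H$, and that it intertwines the two endomorphisms named $\BT$.

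The first step will be a direct unwinding of Definition \ref{defn:Sigma top}: for $X = \sum_i [V_i]$ in $\BN[\CS^H]$, the pair $(\Sigma^{\topp}(X), \dim X)$ equals $(\coprod_{\dim V_i = \dim X} \Sigma^{\topp}(V_i), \max_i \dim V_i)$, which is exactly the sum $\sum_i \tilde \fkC([V_i])$ computed using the operation $+$ on $\TIC^H$, in which low-dimensional summands are dropped and equal-dimensional summands combine disjointly.

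The substantive step will be to verify that $\tilde \fkC$ kills the three relations of Definition \ref{defn:GDL}. For the bundle relations (1) and (2), I would first observe that a Zariski-locally trivial bundle $V_1 \to V_2$ with connected one-dimensional fibers induces, by taking preimages, an $H$-equivariant bijection $\Sigma^{\topp}(V_2) \isoarrow \Sigma^{\topp}(V_1)$ with $\dim V_1 = \dim V_2 + 1$; this immediately gives $\tilde \fkC([V_1]) = \BT \tilde \fkC([V_2])$ in the $\BG_m^{\pfn}$-case. For the $\BA^{1,\pfn}$-case, when one forms $\tilde \fkC(\BT[V_2] + [V_2])$ using the monoid homomorphism property just established, the lower-dimensional summand $\tilde\fkC([V_2])$ is absorbed by $\tilde\fkC(\BT[V_2])$ under the $+$ rule of $\TIC^H$, so again the identity $\tilde \fkC([V_1]) = \BT \tilde \fkC([V_2])$ holds. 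For relation (3), the key input will be the standard fact that for a closed embedding $V' \hookrightarrow V$ with open complement $U := V \setminus V'$, the irreducible components of $V$ are precisely the irreducible components of $V'$ together with the closures in $V$ of the irreducible components of $U$, with dimensions preserved; restricting to top dimension $d := \dim V = \max(\dim V', \dim U)$ yields an $H$-equivariant identification $\Sigma^{\topp}(V) = \Sigma^{d}(V') \sqcup \Sigma^{d}(U)$, which matches $\tilde \fkC([V']) + \tilde \fkC([V \setminus V'])$ under the case analysis of $+$ in $\TIC^H$.

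The $\BT$-equivariance of $\fkC$ will then follow at once from the $\BG_m^{\pfn}$-bundle argument applied to the trivial bundle $V \times_\kk \BG_m^{\pfn} \to V$. The only place that requires careful bookkeeping is relation (3) in the case $\dim V' = \dim U$, where both strata simultaneously contribute top-dimensional components to $V$; if the two dimensions are unequal, one of the strata is invisible at top dimension and the verification is immediate from the dimension filtration. Everything else is purely formal.
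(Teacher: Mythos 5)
Your proposal is correct and takes essentially the same route as the paper's proof: verify additivity of $\tilde\fkC$ directly from the sum rule in $\TIC^H$, kill the three generating relations of Definition \ref{defn:GDL} (with the $\BA^{1,\pfn}$-case reduced to the $\BG_m^{\pfn}$-case because the extra summand $\tilde\fkC([V_2])$ is absorbed), and get $\BT$-equivariance from the trivial $\BG_m^{\pfn}$-bundle. One small caution on relation (3): the ``standard fact'' as you state it, in all dimensions, is false --- e.g.\ for $V=\BA^{1,\pfn}$ and $V'=\{0\}$ the component $\{0\}$ of $V'$ is not an irreducible component of $V$ --- but the only statement you actually use, the top-dimensional identification $\Sigma^{\topp}(V)\cong\Sigma^{d}(V')\sqcup\Sigma^{d}(V\setminus V')$ with $d=\dim V$, is true: a $d$-dimensional component of $V'$ cannot be properly contained in any irreducible closed subset of $V$ (so it is maximal in $V$), and the closure of a $d$-dimensional component of $V\setminus V'$ is a component of $V$ meeting $V\setminus V'$, so the two contributions are disjoint and exhaust $\Sigma^{\topp}(V)$ by the dichotomy for components of $V$. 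This is the same level of precision as the paper's own argument, which records the analogous bijection stratum by stratum.
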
 
	\begin{proof} It follows from the definitions that $\tilde \fkC$ is a monoid homomorphism. To show that $\tilde \fkC$ descends to $\GDL^H$, it suffices to check that any $X , X' \in \BN[\CS^{H}]$ with $X \sim X'$ satisfies $\tilde \fkC(X) = \tilde \fkC(X')$. For this, we only need to analyze the three situations in Definition \ref{defn:GDL}. Namely, we may assume that $X$ and $X'$ are the two sides of $\sim$ in those situations. 
		
 In situation (1), we have $X = [V_1]$ and $X' = \BT [V_2]$. We have $\dim V_1 = \dim V_2 +1$, and taking the inverse image along $V_1 \to V_2$ induces an $H$-equivariant bijection $\Sigma^{\topp}(V_2) \isoarrow \Sigma^{\topp}(V_1)$. (In fact we have an $H$-equivariant bijection $\Sigma^{d}(V_2)  \isoarrow \Sigma^{d+1}(V_1)$ for arbitrary $d$.) Thus we have $\tilde \fkC([V_1]) = \BT \tilde \fkC([V_2])$. For the same reason, we also have $\tilde \fkC(\BT[V_2]) = \BT \tilde \fkC([V_2])$. Thus we have $\tilde \fkC([V_1]) = \tilde \fkC(\BT [V_2])$ as desired. 
 
 One treats situation (2) similarly, noting that $\tilde \fkC(\BT [V_2] +[V_2]) = \tilde \fkC(\BT[V_2])$. 
 
 Now consider situation (3). We have $X = [V]$ and $X' = [V
 '] + [V\setminus V']$. Observe that for each $Z \in \Sigma(V)$, precisely one of the following two statements holds:
		\begin{itemize}
			\item We have $Z \subset V'$, and $Z \in \Sigma (V')$. 
			\item The intersection $Z_1: = Z \cap (V\backslash V')$ is dense in $Z$. Moreover, $Z_1 \in \Sigma(V\backslash V')$, and $\dim Z_1 = \dim Z$ (cf.~the proof of Lemma \ref{lemma: complement in S^H}). 
		\end{itemize}
		It follows that for each $d \in \BZ_{\geq 0}$ we have an $H$-equivariant bijection 
		\begin{align*}
		\Sigma^d(V') \sqcup \Sigma^d(V\backslash V') & \isoarrow \Sigma^d (V) \\
		Z & \longmapsto \bar Z.
		\end{align*}
		Therefore we have $\tilde \fkC([V]) = \tilde \fkC([V'] + [ V \backslash V'])$ , as desired. We have proved that $\tilde \fkC$ descends to $\GDL^H $. 
		
For any $V$ in $\CS^H$, we have $\dim(V \times_{\kk} \mathbb G_m^{\pfn}) = \dim (V ) +1$, and we have a natural $H$-equivariant bijection $\Sigma^{\topp}(V \times_{\kk} \mathbb G_m^{\pfn}) \isoarrow \Sigma^{\topp} (V)$. It follows that $\tilde \fkC$ is equivariant with respect to $\BT$ on the two sides. Since $\fkC$ is induced by $\tilde \fkC$, it is also equivariant with respect to $\BT$ on the two sides. 
	\end{proof}
	
	\subsection{Class polynomials and motivic counting}\label{subsec:DLreduction} We assume that $G$ is as in \S\ref{subsec:ADLV}, i.e., $G$ is quasi-split, tamely ramified, and $\mathrm{char}(F)\nmid |\pi_1(G_{\mathrm{ad}})|$ if $\mathrm{char}(F)>0$. Then we have the  affine Deligne--Lusztig variety $X_w(b)$ associated to $w\in \breve W$ and $b\in G(\breve F)$. The motivation behind the definition of the Grothendieck--Deligne--Lusztig monoid is that it gives a natural setting to apply the Deligne--Lusztig reduction method for $X_w(b)$. We recall the reduction method in the proposition below. 
	\begin{proposition}\label{DLReduction1}
		Let $w\in \breve W$, $s\in \tS$, and $b \in G(\breve F)$. If $\charac (F) >0$, then the following two statements hold.
		\begin{enumerate}
			\item	If $\breve \ell(sw\s(s)) = \breve \ell(w)$, then there exists a $J_b(F)$-equivariant morphism $X_w(b) \to X_{sw\s(s)}(b)$ which is a universal homeomorphism.
			\item
			If $\breve \ell(sw\s(s)) = \breve \ell(w)-2$, then $X_w(b)$ has a $J_b(F)$-stable closed subscheme $X_1$ satisfying the following conditions:
			\begin{itemize}
		\item There exist a $\kk$-scheme $Y_1$ with a $J_b(F)$-action, and $J_b(F)$-equivariant morphisms $f_1: X_1\to Y_1$ and $g_1: Y_1 \to X_{sx\s(s)}(b)$, where $f_1$ is a Zariski-locally trivial $\BA^1$-bundle and $g_1$ is a universal homeomorphism.
				\item Let $X_2$ be the open subscheme of $X_w(b)$ complement to $X_1$, which is $J_b(F)$-stable. There exist a $\kk$-scheme $Y_2$ with a $J_b(F)$-action, and $J_b(F)$-equivariant morphisms $f_2: X_2\to Y_2$ and $g_2: Y_2 \to X_{sx}(b)$, where $f_2$ is a Zariski-locally trivial $\BG_m$-bundle and $g_2$ is a universal homeomorphism.
			\end{itemize} 
		\end{enumerate}
		If $\charac (F) =0$, then the above two statements still hold, but with ``$\BA^1$-bundle'' and ``$\BG_m$-bundle'' replaced by ``$\BA^{1,\pfn}$-bundle'' and ``$\BG_m^{\pfn}$-bundle'' respectively. 
	\end{proposition}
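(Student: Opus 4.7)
My plan is to carry out the affine Deligne--Lusztig reduction of \cite[\S 2]{He14}, being careful to track $J_b(F)$-equivariance throughout. The geometric set-up is the $\mathbb P^1$-fibration (or its perfection, in characteristic $0$)
\[
\pi_s : G(\breve F)/\breve\CI \longrightarrow G(\breve F)/\breve P_s,
\]
where $\breve P_s = \breve \CI \sqcup \breve \CI \dot s \breve \CI$ is the minimal parahoric containing $\breve\CI$ corresponding to $s$. For a point $g\breve \CI \in X_w(b)$, I analyse the intersection of the fibre $g \breve P_s / \breve \CI$ with $X_{w'}(b)$ for each $w'\in \{w, sw, w\sigma(s), sw\sigma(s)\}$; the combinatorics are dictated by the Iwahori--Bruhat multiplication rules
\[
\breve\CI\dot s\breve\CI \cdot \breve\CI \dot w\breve\CI \;=\; \begin{cases} \breve\CI \dot{sw}\breve\CI, & \breve\ell(sw)=\breve\ell(w)+1, \\ \breve\CI\dot{sw}\breve\CI \;\sqcup\; \breve\CI\dot w\breve\CI, & \breve\ell(sw)=\breve\ell(w)-1,\end{cases}
\]
together with its right-handed analogue for $\breve\CI\sigma(\dot s)\breve\CI$. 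The fact that the $J_b(F)$-action is by left multiplication on cosets $g\breve\CI$, whereas all the modifications $g\breve\CI \mapsto g\dot s\breve\CI$ involved in the reduction are by right multiplication, makes $J_b(F)$-equivariance automatic.

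For part~(1), I split into the two subcases $\breve\ell(sw) = \breve\ell(w)+1$ (with $\breve\ell(w\sigma(s))=\breve\ell(w)-1$) and its mirror. In the first subcase, the replacement $g\breve\CI \mapsto g\dot s\breve\CI$ is well-defined: if $g^{-1}b\sigma(g) \in \breve\CI \dot w\breve\CI$, then $(g\dot s)^{-1} b\sigma(g\dot s) = \dot s^{-1} g^{-1} b\sigma(g)\sigma(\dot s)$ lies in $\dot s^{-1} \breve\CI\dot w\breve\CI\sigma(\dot s)$, and the length hypotheses force this double coset to coincide with $\breve\CI \dot{sw\sigma(s)}\breve\CI$. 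The same construction run backwards provides an inverse on $\kk$-points, so the morphism is a bijection on the underlying sets; the fact that it is a universal homeomorphism then follows from a direct verification using the explicit affine root group parametrisations, exactly as in \cite[Prop.~2.5.1]{He14}, which goes through identically in the perfect-scheme setting.

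For part~(2), where $\breve\ell(sw)=\breve\ell(w\sigma(s))=\breve\ell(w)-1$, the fibre analysis gives the stratification. Writing $\breve U_s \subset \breve\CI$ for the affine root subgroup associated to $s$, the fibre $g\breve P_s/\breve\CI$ over a fixed point of $\pi_s$ decomposes as a distinguished point $g\dot s \breve\CI$ together with an affine line of cosets $g u\dot s \breve\CI$ for $u\in \breve U_s$. Using the length relations, I expect exactly one of the following to hold generically on a fibre: either the distinguished point lies in $X_w(b)$ while the affine line meets $X_{sw\sigma(s)}(b)$ (yielding the closed stratum $X_1 \to X_{sw\sigma(s)}(b)$, an $\mathbb A^1$-bundle up to universal homeomorphism) or the distinguished point contributes nothing and one points of the affine line map to $X_{sw}(b)$ (yielding the open stratum $X_2 \to X_{sw}(b)$, a $\mathbb G_m$-bundle up to universal homeomorphism). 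Assembling both strata gives the desired decomposition.

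The main subtlety is verifying that the morphisms $g_i$ are genuinely universal homeomorphisms rather than merely bijections on $\kk$-points, and that the bundle structures $f_i$ are Zariski-locally trivial (not only étale-locally). Both points follow by exhibiting explicit sections constructed from the affine root group $\breve U_s$, and the arguments transpose without change between the equal- and mixed-characteristic cases once one replaces $\mathbb A^1, \mathbb G_m$ by their perfections in the latter; this perfection step and the compatibility of universal homeomorphisms with perfection are the main technical points to keep track of.
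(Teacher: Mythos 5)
Your global strategy (push $X_w(b)$ along the fibration $G(\breve F)/\breve\CI\to G(\breve F)/\breve P_s$, use the Iwahori--Bruhat product rules, and get $J_b(F)$-equivariance for free from the left action) is exactly the strategy of the argument the paper relies on; the paper's own proof is literally a citation of the G\"ortz--He reduction in equal characteristic plus the remark that the same proof works in the Witt vector setting. The problem is in your execution of part (1), and it is a genuine gap. First, $g\breve\CI\mapsto g\dot s\breve\CI$ is not even a well-defined map on $G(\breve F)/\breve\CI$: $\dot s$ does not normalize $\breve\CI$ (conjugation by $\dot s$ moves $U_{a_s}\cap\breve\CI$ out of $\breve\CI$, where $a_s$ is the simple affine root attached to $s$), so the target coset depends on the chosen representative $g$. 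Second, even after fixing representatives, the containment you assert is false: with $\breve\ell(sw)=\breve\ell(w)+1$ and $\breve\ell(sw\s(s))=\breve\ell(w)$ one has $\breve\ell(sw\s(s))=\breve\ell(sw)-1$, so the product rules only give $\dot s^{-1}\breve\CI\dot w\breve\CI\s(\dot s)\subseteq \breve\CI\,\dot s\dot w\s(\dot s)\,\breve\CI\cup\breve\CI\,\dot s\dot w\,\breve\CI$, and $(g\dot s)^{-1}b\s(g\dot s)$ can perfectly well land in the second cell; a translate of a point of $X_w(b)$ may lie in $X_{sw}(b)$ rather than $X_{sw\s(s)}(b)$. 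So the morphism of part (1) is not constructed by your recipe.

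The correct construction is pointwise, and it is also what makes part (2) work. In the subcase $\breve\ell(sw)=\breve\ell(w)-1$ one factors $g^{-1}b\s(g)=h_1h_2$ with $h_1\in\breve\CI\dot s\breve\CI$ and $h_2\in\breve\CI\dot s\dot w\breve\CI$ (possible, and with $h_1\breve\CI$ uniquely determined, because $\breve\CI\dot w\breve\CI=\breve\CI\dot s\breve\CI\cdot\breve\CI\dot s\dot w\breve\CI$ when lengths add), and sends $g\breve\CI\mapsto gh_1\breve\CI$; then $(gh_1)^{-1}b\s(gh_1)=h_2\s(h_1)\in\breve\CI\dot s\dot w\breve\CI\cdot\breve\CI\s(\dot s)\breve\CI$, which is the single cell of $sw\s(s)$ exactly in case (1) (the other subcase of (1) is handled symmetrically, e.g.\ by building the inverse map starting from $sw\s(s)$), and is the union of the cells of $sw\s(s)$ and of $sw$ in case (2). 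In case (2) the stratum of a point is decided by which of these two cells contains $h_2\s(h_1)$: this is a pointwise dichotomy, not something to be checked ``generically on a fibre'', and it is what yields the closedness and $J_b(F)$-stability of $X_1$ and, via the uniqueness of $h_1\breve\CI$ and the parametrization of the fibre by the affine root group, the Zariski-locally trivial $\BA^1$- and $\BG_m$-bundle structures (also note the distinguished point of the fibre through $g\breve\CI$ is $g\breve\CI$ itself, not $g\dot s\breve\CI$). Your concluding remarks about universal homeomorphisms, explicit sections, and compatibility with perfection are fine, but as written the core reduction maps are ill-defined, so the argument does not yet prove the proposition.
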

	\begin{proof} The equal characteristic case is proved in \cite[\S 2.5]{GoHe}. 
		The mixed characteristic case follows from the same proof.
	\end{proof}
	\subsubsection{} Let $w\in \breve W$ and $b \in G(\breve F)$. By the discussion in \S \ref{sssec: ADLV} and Lemma \ref{lem:quasi-compact}, we know that the perfection $X_w(b)^{\pfn}$ of $X_w(b)$ is an object in $\CS^{J_b(F)}$. (Of course $X_w(b) = X_w(b)^{\pfn}$ if $\charac(F) = 0$.) To simplify the notation, we write $\dbp{X_w(b)}$ for the element $\dbp{X_w(b)^{\pfn}} \in \GDL^{J_b(F)}$. 
	
	Using the formalism in \S \ref{subsec:Groth monoid}, we can reformulate Proposition \ref{DLReduction1} in the following proposition (which is weaker, but more convenient for applications). 

	\begin{proposition}\label{DL-red2} 	Let $w\in \breve W $, $s\in \tS$, and $b \in G(\breve F)$. The following statements hold. 
		\begin{enumerate}
			\item If $\breve \ell(sw\s(s)) = \breve \ell(x)$, then $$\dbp{X_{w}(b)} = \dbp{X_{s w \s(s)}(b)} \in \GDL^{J_b(F)}.$$
			\item If $\breve \ell(sw\s(s)) = \breve \ell(x)-2$, then $$\dbp {X_w(b)} =  (\mathbb L -1) \dbp{X_{s w}(b)}+  \mathbb L \dbp{X_{s w\s(s)}(b)} \in \GDL^{J_b(F)}.$$
		\end{enumerate}
	\end{proposition}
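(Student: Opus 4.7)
The plan is to translate the geometric content of Proposition \ref{DLReduction1} into formal equalities in $\GDL^{J_b(F)}$ by invoking the three generating relations of Definition \ref{defn:GDL}. The key preparatory observation is that, since $\GDL^{J_b(F)}$ is built from perfect $\kk$-schemes, we should work throughout with perfections (a no-op when $\charac(F)=0$). Under perfection, Zariski-locally trivial $\BA^1$- and $\BG_m$-bundles become Zariski-locally trivial $\BA^{1,\pfn}$- and $\BG_m^{\pfn}$-bundles, and, crucially, any universal homeomorphism between perfect $\kk$-schemes is an isomorphism. This latter fact turns the universal homeomorphisms $g_i$ appearing in Proposition \ref{DLReduction1} into isomorphisms of objects in $\CS^{J_b(F)}$, so the geometric reduction can be recorded purely at the level of GDL classes.

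For part (1), the map $X_w(b)\to X_{sw\s(s)}(b)$ is a universal homeomorphism, hence induces an isomorphism of perfections, so $\dbp{X_w(b)} = \dbp{X_{sw\s(s)}(b)}$ in $\GDL^{J_b(F)}$. For part (2), I would apply rule (3) of Definition \ref{defn:GDL} to the $J_b(F)$-stable stratification $X_w(b) = X_1 \sqcup X_2$ to obtain $\dbp{X_w(b)} = \dbp{X_1} + \dbp{X_2}$. The composite $X_1 \to Y_1 \to X_{sw\s(s)}(b)$ realizes $X_1^{\pfn}$ as an $\BA^{1,\pfn}$-bundle over a perfect scheme identified (via the universal homeomorphism $g_1$) with $X_{sw\s(s)}(b)^{\pfn}$, so rule (2) gives $\dbp{X_1} = (\BT+1)\dbp{X_{sw\s(s)}(b)} = \BL \dbp{X_{sw\s(s)}(b)}$. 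Similarly, the composite $X_2 \to Y_2 \to X_{sw}(b)$, combined with rule (1), gives $\dbp{X_2} = \BT \dbp{X_{sw}(b)} = (\BL-1)\dbp{X_{sw}(b)}$. Summing yields the identity in part (2).

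The only point requiring genuine care is the appeal to the fact that a universal homeomorphism of perfect $\kk$-schemes is an isomorphism, together with the compatibility of perfection with the $J_b(F)$-equivariance of all morphisms involved. These are standard, so I do not anticipate significant complications; the rest of the argument is a direct bookkeeping exercise in the generating relations of Definition \ref{defn:GDL}, and the result is immediate once the dictionary between the geometric reduction and the monoid relations has been set up.
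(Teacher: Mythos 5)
Your argument is correct and is essentially the paper's own proof: both reduce Proposition \ref{DL-red2} to Proposition \ref{DLReduction1} via the observations that perfection turns universal homeomorphisms into isomorphisms (\cite[Lemma 3.8]{BS}) and sends Zariski-locally trivial $\BA^1$- and $\BG_m$-bundles to $\BA^{1,\pfn}$- and $\BG_m^{\pfn}$-bundles, and then apply the three generating relations of Definition \ref{defn:GDL} to the decomposition $X_w(b)=X_1\sqcup X_2$. The only point you wave off as ``standard'' that the paper spells out is the verification that $X_1^{\pfn}, X_2^{\pfn}, Y_1^{\pfn}, Y_2^{\pfn}$ are actually objects of $\CS^{J_b(F)}$ (so that the relations may legitimately be invoked), the least obvious part being finiteness of the $J_b(F)$-orbits on $\Sigma(X_1^{\pfn})$, which is obtained from the $J_b(F)$-equivariant bijection $\Sigma(Y_1^{\pfn})\isoarrow\Sigma(X_1^{\pfn})$ induced by the $\BA^{1,\pfn}$-bundle $f_1$ together with $Y_1^{\pfn}\cong X_{sw\s(s)}(b)^{\pfn}$.
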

	\begin{proof}
		This follows from Proposition \ref{DLReduction1} and the following three observations. Firstly, if a morphism of $\kk$-schemes is universally homeomorphic, then the perfection of this morphism is an isomorphism, by \cite[Lemma 3.8]{BS}. Secondly, if a morphism of $\kk$-schemes is a Zariski-locally trivial $\BA^1$-bundle (resp.~ $\BG_m$-bundle), then the perfection of this morphism is a Zariski-locally trivial $\BA^{1, \pfn}$-bundle (resp.~$\BG_m^{ \pfn}$-bundle). Thirdly, the perfections of the $\kk$-schemes $X_1, X_2, Y_1, Y_2$ in Proposition \ref{DLReduction1} (2), equipped with the natural $J_b(F)$-actions, are all objects in $\CS^{J_b(F)}$. Indeed, the assertion for $X_2$ follows from the fact that $X_w(b)^{\pfn}$ is in $\CS^{J_b(F)}$ and Lemma \ref{lemma: complement in S^H}. The assertion for $Y_2$ follows from the fact that $X_{sx}(b)^{\pfn}$ is in $\CS^{J_b(F)}$, and the fact that the perfection of $g_2$ is a $J_b(F)$-equivariant isomorphism. The assertion for $Y_1$ follows from the fact that $X_{sx\sigma(x)}(b)^{\pfn}$ is in $\CS^{J_b(F)}$, and the fact that the perfection of $g_1$ is a $J_b(F)$-equivariant isomorphism. The assertion for $X_1$ follows from the assertion for $Y_1$, the fact that the perfection of $f_1$ is locally of perfectly finite type, and the fact that pulling back along the perfection of $f_1$ induces a $J_b(F)$-equivariant bijection $\Sigma(Y_1^{\pfn}) \isoarrow \Sigma (X_1^{\pfn})$. 
	\end{proof}
	\subsubsection{}In order to effectively use Proposition \ref{DL-red2} to study the $J_b(F)$-action on $\Sigma^{\topp}(X_w(b))$, we need a refined version of the class polynomials for affine Hecke algebras. We first recall the definition of the usual class polynomials. Here we use the convention of \cite[\S 2.8.2]{He-CDM}, which differs from that in \cite{He14}. 
	
	Let $\mathbf q$ be an indeterminate, and let $\BZ[\mathbf q^{\pm 1}]$ be the Laurent polynomial ring. Let $\BH$ be the affine Hecke algebra over $\BZ[\mathbf q^{\pm 1}]$ attached to $\breve W$. Thus $\BH$ is the associative $\BZ[\mathbf q^{\pm 1}]$-algebra generated by symbols $\{T_w\mid w \in \breve W\}$ subject to the following relations: 
	\begin{itemize}
		\item $T_{w} T_{w'}=T_{w w'}$ if $\breve \ell(w w')=\breve \ell(w)+\breve \ell(w')$; 
		
		\item $(T_s+1)(T_s- \mathbf q)=0$ for all $s \in \tS$. 
	\end{itemize}
	The action of $\s$ on $\breve W$ induces an automorphism $\s$ of $\BH$ characterized by $\s (T_w) =  T_{\s(w)}$ for all $w\in \breve W$. Define $[\BH, \BH]_{\sigma}$ to be the $\BZ[\mathbf q^{\pm 1}]$-submodule of $\BH$ generated by $h \sigma(h') - h' \sigma(h), $ where $h$ and $h'$ run over elements of $\BH$. Define the \emph{$\sigma$-cocenter}
	(or simply \emph{cocenter}) to be the quotient module $\bar \BH_\s : =\BH/[\BH, \BH]_\s$. 
	
	For any $\CO \in B(\breve W,\sigma)$, let $\CO_{\min}$ be the set of minimal length elements of $\CO$. By \cite[Theorem 5.3, Theorem 6.7]{HN}, the cocenter $\bar \BH_\s$ is a free $\BZ[\mathbf q ^{\pm 1}]$-module with a basis given by $\{T_\CO\mid \CO \in B(\breve W, \s)\}$. Here $T_\CO$ is the image of $T_w$ in $\bar \BH_{\s}$ for some (or equivalently, any) $w \in \CO_{\min}$. Moreover, for any $w \in \breve W$, we have $$T_w \equiv \sum_{\CO \in B(\breve W,\sigma)} F_{w, \CO} T_\CO \mod [\BH, \BH]_\s,$$ where $F_{w, \CO} \in \BZ[\mathbf q]$ is the \emph{class polynomial}, uniquely determined by the above identity. 
	
	\subsubsection{} As indicated above, we need a refinement of the polynomials $F_{w,\CO}$ where $(w,\CO) \in  \breve W  \times B(\breve W,\s)$. The refined polynomials will be indexed by pairs $(w, C) \in \breve W \times \sC(\breve W)$, where $\sC(\breve W)$ is a set more refined than $B(\breve W,\sigma)$. We now define $\sC(\breve W)$. For $w, w' \in  \breve W$ and $s \in \tS$, we write $$w \xrightarrow{s}_\s w' $$ if $w'=s w \s(s)$ and $\breve \ell(w') \le \breve  \ell(w)$. We write $$w \rightarrow_\s w' $$ if there is a sequence $w=w_1, w_2, \dotsc, w_n=w'$ in $\breve W$ such that for each $2 \leq k \leq n $ we have $w_{k-1} \xrightarrow{s_k}_\s w_k$ for some $s_k \in \tS$. We write $$w \approx_\sigma w'$$ if $w \to_\sigma w'$ and $w' \to_\sigma w$. We write $w\ \tilde{\approx}_\sigma w'$ if there exists $\tau\in \Omega$ such that $w \approx_\sigma \tau w'\sigma(\tau)^{-1}$. The following theorem is proved in \cite[Theorem 2.9]{HN}.
	\begin{theorem}\label{HN-min}
		Let $\CO$ be a $\s$-conjugacy class in $\breve W$. Then for each $w \in \CO$, there exists $w'\in \CO_{\min}$ such that $w \to_\s w'$. \qed 
	\end{theorem}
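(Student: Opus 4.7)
The plan is to proceed by induction on the length $\breve\ell(w)$. If $w$ already lies in $\CO_{\min}$, take $w' = w$ and the (empty) chain gives $w \to_\s w$. So assume $\breve\ell(w) > \min_{v \in \CO} \breve \ell(v)$; it then suffices to produce some element $w'' \in \CO$ with $w \to_\s w''$ and $\breve\ell(w'') < \breve\ell(w)$, since the induction hypothesis applied to $w''$ yields a further chain $w'' \to_\s w'$ with $w' \in \CO_{\min}$, and concatenation gives $w \to_\s w'$.

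The core of the argument is therefore the following \emph{reduction lemma}: if $w \in \breve W$ is not of minimal length in its $\s$-conjugacy class $\CO$, then there exist elements $w = w_0, w_1, \dots, w_k \in \breve W$ and simple reflections $s_1, \dots, s_k \in \tS$ such that $w_{i-1} \xrightarrow{s_i}_\s w_i$ for $1 \le i \le k$ and $\breve\ell(w_k) < \breve\ell(w)$. I would prove this by reducing to the case where $w$ already has minimal length inside its $\approx_\s$-equivalence class (otherwise one can simply follow the $\approx_\s$-chain into a shorter element). Write $w = t^\lambda u$ with $\lambda \in X_*(T)_{\Gamma_0}$ and $u \in \breve W_0$. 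One then exploits the semidirect product decomposition $\breve W = \breve W_a \rtimes \Omega$ and the standard fact that for $v \in \breve W_a$ of non-minimal length in its $\s$-conjugacy class, there is some $s \in \tS$ with $\breve\ell(svs) < \breve\ell(v)$ or $\breve\ell(svs) = \breve\ell(v)$ and one may iterate. This combinatorial step is carried out by considering a reduced expression for $w$, looking at which positive affine roots are sent to negative ones by a suitable translation element, and using the exchange condition in the Coxeter group $(\breve W_a, \tS)$ together with the $\s$-twist.

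The main obstacle is precisely this reduction lemma: it is not enough to find an arbitrary conjugate of smaller length, because the relation $\to_\s$ only permits conjugations by a simple reflection that are non-increasing in length at each step. One must therefore route from $w$ to a shorter element through a sequence of elements that all have length exactly $\breve\ell(w)$ until the final step, where the length strictly drops. Producing such a route requires a careful analysis of the $\s$-twisted conjugation action on $\breve W$, typically split into the two cases $\CO \subset \breve W_a \tau$ for a fixed length-zero element $\tau \in \Omega$ (so that one may work inside the Coxeter group $(\breve W_a, \tS)$ with the twisted Frobenius $\mathrm{Ad}(\tau) \circ \s$), and then invoking the classical fact, going back to Geck--Pfeiffer and Geck--Kim--Pfeiffer for finite Weyl groups and extended by the first-named author to affine Weyl groups, that minimal length elements of a twisted conjugacy class in a Coxeter group are connected to every element by length-non-increasing twisted conjugations. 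Once this combinatorial input is in place, the induction closes and the theorem follows.
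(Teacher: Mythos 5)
There is a genuine gap: your plan is circular at its core. The ``combinatorial input'' you invoke in the last step --- that in a (twisted) Coxeter group every element of a twisted conjugacy class can be brought to a minimal length element by a chain of length-non-increasing elementary twisted conjugations --- \emph{is} the statement of Theorem \ref{HN-min}. The reduction you describe beforehand (fixing the coset $\breve W_a\tau$, $\tau\in\Omega$, and replacing $\s$ by $\Ad(\tau)\circ\s$, which is legitimate since conjugation by $s\in\tS$ preserves the $\Omega$-component and $\Omega$-conjugation preserves lengths) does not change the content of the assertion, so the induction ``closes'' only by assuming the theorem. The intermediate steps you offer in place of a proof do not fill this in: the ``standard fact'' that for non-minimal $v$ there is $s\in\tS$ with $\breve\ell(svs)<\breve\ell(v)$ or $\breve\ell(svs)=\breve\ell(v)$ ``and one may iterate'' is vacuous as stated --- the entire difficulty is to show that iterating length-preserving steps eventually reaches a strict drop --- and ``follow the $\approx_\s$-chain into a shorter element'' is incoherent, since $w\approx_\s w'$ forces $\breve\ell(w)=\breve\ell(w')$ (both $w\to_\s w'$ and $w'\to_\s w$ are length non-increasing), so no $\approx_\s$-chain can decrease length. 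The sketch via reduced expressions and the exchange condition is nowhere near the actual argument: the known proofs (Geck--Pfeiffer and Geck--Kim--Pfeiffer in the finite twisted case, He and He--Nie in the affine/extended affine case) require the partial-conjugation technique and a delicate analysis of the twisted conjugation action, and are a genuine theorem, not a routine induction.

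For comparison, the paper does not prove this statement at all: it quotes it from \cite[Theorem 2.9]{HN}. So the legitimate options are either to cite that reference outright --- in which case your inductive scaffolding adds nothing --- or to actually reproduce the He--Nie argument, which your outline does not approach.
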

	
	\begin{definition}
		Let $\breve W_{\s, \min}$ be the set of $w\in \breve W$ such that $w$ has minimal length in its own $\s$-conjugacy class. We write $\sC(\breve W)$ for the set $\breve W_{\s, \min}/\tilde \approx_\s$, and we view each element of $\sC(\breve W)$ as a subset of $\breve W$. We denote by $\pi$ the natural map $\mathscr C(\breve W) \to B(\breve W , \s)$ sending $C \in \mathscr C(\breve W)$ to the unique $\s$-conjugacy class in $\breve W$ containing $C$. We denote the composition of the map (\ref{eq:Psi}) with $\pi$ by $\Psi: \mathscr C(\breve W) \to B(G) $. 
	\end{definition}

	\subsubsection{} 
	For any $C \in  \sC(\breve W)$ and $b \in G(\breve F)$, we write $\dbp{X_C(b)}$ for $\dbp{X_w(b)} \in \GDL^{J_b(F)}$ for arbitrary $w \in C$. By Proposition \ref{DL-red2} (1), the definition of $\dbp{X_C(b)}$ is independent of the choice of $w$. 
	
	We now construct the refined polynomials in the  following theorem. Let $\BN[\mathbf q-1]$ denote the set of polynomials in the variable $\mathbf q -1 $ with positive integral coefficients. The second statement in the theorem can be viewed as a ``motivic counting'' result. 
	
	\begin{theorem}\label{motivic}
		Fix $w \in \breve W$. There exists a map $$
		\sC (\breve W) \to \BN[\mathbf q-1] , \qquad C \mapsto F_{w,C} (\mathbf q -1) $$
		satisfying the following conditions. 
		\begin{enumerate}
			\item For each $\CO \in  B(\breve W,\s)$, we have $$F_{w, \CO} (\mathbf q)=\sum_{C \in \sC(\breve W) , \pi(C) =  \CO} F_{w, C}(\mathbf q -1) \in \BZ[\mathbf q]. $$ 
			In particular, we have $F_{w,\CO}(\mathbf q) \in \BN[\mathbf q-1]$.
			
			\item For each $b \in G(\breve F)$, we have
			\begin{align}\label{eq:expand X_w(b)}
			\dbp{X_w(b)}=\sum_{C \in \sC(\breve W), \Psi(C)=[b]} F_{w, C} (\BL -1) \cdot  \dbp{X_C(b)} \in \GDL^{J_b(F)}.
			\end{align}	
		\end{enumerate}
	\end{theorem}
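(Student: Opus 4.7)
My plan is to define the polynomials $F_{w,C}(\mathbf q - 1)$ by induction on $\breve\ell(w)$, making the construction mirror the Hecke-algebra recursion that computes $F_{w,\CO}(\mathbf q)$ in the $\sigma$-cocenter $\bar\BH_\sigma$. The two cases of Proposition~\ref{DL-red2} play precisely the role at the level of $\GDL^{J_b(F)}$ that the identities $T_w\equiv T_{sw\sigma(s)}$ (equal length) and $T_w\equiv(\mathbf q-1)T_{sw}+\mathbf q\,T_{sw\sigma(s)}$ (length drop by two) play at the level of $\bar\BH_\sigma$, so the construction in $\GDL^{J_b(F)}$ will be formally parallel to the one that produces the ordinary class polynomials.

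For the base case, suppose $w$ has minimal length in its own $\sigma$-conjugacy class. Then $w$ lies in a unique equivalence class $C_w\in \sC(\breve W)$, and I set $F_{w,C_w}(\mathbf q - 1)=1$ and $F_{w,C'}(\mathbf q - 1)=0$ for $C'\neq C_w$. Property~(2) is then tautological: $\dbp{X_w(b)}=\dbp{X_{C_w}(b)}$ whenever $\Psi(C_w)=[b]$, while $X_w(b)$ is empty otherwise, which is the well-known consequence of the He--Nie theory that for minimal-length $w$ the double coset $\breve\CI\dot w\breve\CI$ lies in a single $\sigma$-conjugacy class of $G(\breve F)$. Property~(1) reduces in this case to the standard identity $F_{w,\CO_w}(\mathbf q)=1$ in the cocenter.

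For the inductive step, take $w\notin\breve W_{\sigma,\min}$ and assume the polynomials are already constructed for all elements of length strictly less than $\breve\ell(w)$. Applying Theorem~\ref{HN-min} to the $\sigma$-conjugacy class of $w$ produces a reduction sequence from $w$ to a minimal-length representative; at the first length-decreasing step in this sequence I extract $w'\in\breve W$ with $w\approx_\sigma w'$ together with $s\in\tS$ such that $\breve\ell(sw'\sigma(s))=\breve\ell(w')-2$. Iterating Proposition~\ref{DL-red2}(1) along the $\approx_\sigma$-chain gives $\dbp{X_w(b)}=\dbp{X_{w'}(b)}$, and then Proposition~\ref{DL-red2}(2) yields
\[
\dbp{X_w(b)} = (\BL-1)\,\dbp{X_{sw'}(b)} + \BL\,\dbp{X_{sw'\sigma(s)}(b)}.
\]
Since $\breve\ell(sw')=\breve\ell(w)-1$ and $\breve\ell(sw'\sigma(s))=\breve\ell(w)-2$, the inductive hypothesis applies to both terms on the right, and I set
\[
F_{w,C}(\mathbf q - 1) := (\mathbf q - 1)\,F_{sw',C}(\mathbf q - 1) + \mathbf q\,F_{sw'\sigma(s),C}(\mathbf q - 1),
\]
which lies in $\BN[\mathbf q - 1]$ because $\mathbf q=(\mathbf q-1)+1$. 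Substituting $\BL-1$ for $\mathbf q-1$ gives Property~(2) for $w$ at once.

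For Property~(1) I invoke the parallel Hecke-algebra recursion. Using $T_w=T_s T_{sw}$ (valid since $sw<w$), the $\sigma$-cocenter identity relating $T_s T_{sw}$ and $T_{sw}T_{\sigma(s)}$ modulo $[\BH,\BH]_\sigma$, and the quadratic relation $T_{\sigma(s)}^2=(\mathbf q - 1)T_{\sigma(s)}+\mathbf q$, one derives $F_{w,\CO}(\mathbf q) = (\mathbf q - 1) F_{sw,\CO}(\mathbf q) + \mathbf q\,F_{sw\sigma(s),\CO}(\mathbf q)$ whenever $\breve\ell(sw\sigma(s))=\breve\ell(w)-2$. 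Summing my refined recursion over $C\in\sC(\breve W)$ with $\pi(C)=\CO$ and invoking the inductive case of Property~(1) matches this identity term-by-term, and the containment $F_{w,\CO}(\mathbf q)\in\BN[\mathbf q - 1]$ is an automatic byproduct. The step that I expect to require the most care is the extraction, in the inductive step, of an $\approx_\sigma$-chain $w\approx_\sigma w'$ followed by a single length-decreasing move; this falls out of inspecting the reduction sequence given by Theorem~\ref{HN-min} at the first index where the length actually drops, but articulating the combinatorial argument precisely is the point on which the whole induction rests. The resulting polynomials depend on these non-canonical choices, but this is harmless since the theorem only asserts existence.
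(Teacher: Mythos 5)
Your proposal is correct and follows essentially the same route as the paper: induction on $\breve\ell(w)$, with the base case for $w\in\breve W_{\s,\min}$, and the inductive step obtained by using Theorem \ref{HN-min} to reach an equal-length $\approx_\s$-chain followed by a single length-drop, then defining $F_{w,C}$ by the recursion $(\mathbf q-1)F_{sw',C}+\mathbf q\,F_{sw'\s(s),C}$ and matching it against Proposition \ref{DL-red2} for condition (2) and the cocenter recursion for condition (1). The only cosmetic difference is that you re-derive the Hecke-algebra recursion for $F_{w,\CO}$ from $T_w=T_sT_{sw}$ and the quadratic relation, where the paper simply cites \cite[\S 2.8.2]{He-CDM}.
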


	\begin{proof}
		We prove the statement by induction on $\ell(w)$.  
		
		If $w \in \breve W_{\s, \min}$, then by \cite[\S 2.8.2]{He-CDM}, for any $\CO \in B(\breve W, \s)$, we have $$F_{w, \CO}=\begin{cases} 1, & \text{ if } w \in \CO; \\ 0, & \text{ otherwise}. \end{cases}$$
		On the other hand, for $C \in \sC(\breve W)$, we set $$F_{w, C} : =\begin{cases} 1, & \text{ if } w \in C; \\ 0, & \text{ otherwise}. \end{cases}$$
		In this case, the map $C \mapsto F_{w,C}$ satisfies conditions (1) and (2). 
		
		Now assume that $w \notin \breve W_{\s, \min}$. Then by Theorem \ref{HN-min}, there exists $w' \in \breve W$ and $s \in \tS$ such that $w \tilde \approx_\s w'$ and $s w' \s(s)<w'$. By \cite[\S 2.8.2]{He-CDM}, for any $\CO \in B(\breve W, \s)$, we have $$F_{w, \CO}(\mathbf q)=(\mathbf q-1) F_{s w', \CO} (\mathbf q)+ \mathbf q F_{s w' \s(s), \CO} (\mathbf q).$$ For $C \in \sC(\breve W)$, we set $$F_{w, C} (\mathbf q -1):  =(\mathbf q-1) F_{s w', C} (\mathbf q-1) + \mathbf q F_{s w' \s(s), C} (\mathbf q-1), $$ where $F_{s w', C} (\mathbf q-1)$ and $F_{s w' \s(s), C} (\mathbf q-1)$ are defined by the induction hypothesis. Since condition (1) holds for $s w'$ and $s w' \s(s)$, it also holds for $w$. 
		
		By Proposition \ref{DL-red2} (2), for any $b\in G(\breve F)$ we have  $$\dbp{X_w(b)}=\dbp{X_{w'}(b)}=(\BL-1) \dbp{X_{s w'}(b)}+\BL \dbp{X_{s w' \s(s)}(b)} . $$ By the induction hypothesis, we have the following identities in  $\GDL^{J_b(F)}$: \begin{gather*}
		\dbp{X_{sw'}(b)}=\sum_{C \in \sC (\breve W) , \Psi(C)=[b]} F_{sw', C} (\BL - 1) \cdot \dbp{X_C(b)}, \\ 
		\dbp{X_{s w' \s(s)}(b)}=\sum_{C \in \sC(\breve W), \Psi(C)=[b]} F_{s w' \s(s), C}(\BL -1) \cdot \dbp{X_C(b)}.
		\end{gather*}
		
		Then 		
	\begin{align*}		
		\dbp{X_w(b)} &=(\BL-1) \dbp{X_{s w'}(b)}+\BL \dbp{X_{s w' \s(s)}(b)} \\ &=\sum_{C \in \sC(\breve W), \Psi(C)=[b]} \bigg((\BL-1) \cdot F_{sw', C}(\BL-1)+\BL \cdot F_{s w' \s(s), C}(\BL-1) \bigg) \cdot  \dbp{X_C(b)} \\ &=\sum_{C \in \sC(\breve W), \Psi(C)=[b]} F_{w, C} (\BL -1) \cdot \dbp{X_C(b)} \in \GDL^{J_b(F)}.
		\end{align*}
		Thus (2) holds for $w$.
	\end{proof}

	\begin{remark} 
		\begin{enumerate}
			\item The polynomials $F_{w, C}$ are not uniquely characterized by condition (1) in Theorem \ref{motivic}. This is because the cocenter of the affine Hecke algebra over $\BZ[\mathbf q]$ has a torsion part, cf.~\cite[\S 5.2]{He-zero}. (In contrast, as we have mentioned above, the cocenter of the affine Hecke algebra over $\BZ[\mathbf q^{\pm 1}]$ is free.)
			\item Fix $b\in G(\breve F)$, and let ${K}_0^{J_b(F)}$ be the Grothendieck group of the monoid $\GDL^{J_b(F)}$. The endomorphism $\BL$ of $\GDL^{J_b(F)}$ gives rise to a $\BZ[\mathbf q]$-module structure on $K_0^{J_b(F)}$ via the specialization $\mathbf q\mapsto \BL$. The $\BZ[\mathbf q]$-submodule of $K_0^{J_b(F)}$ generated by $\{\dbp{X_w(b)}\mid w\in \breve W\}$ is not necessarily torsion-free as a $\BZ[\mathbf q]$-module. It would be interesting to compare the torsion phenomenon here with the cocenter of the affine Hecke algebra over $\BZ[\mathbf q]$. 
			\item As we have seen in the proof of Theorem \ref{motivic}, the construction of $F_{w, C}$ depends on $G$ only via the triple $(\breve W, \breve \ell : \breve W \to \BZ_{\geq 0}, \s \in \Aut(\breve W))$. This will allow us to reduce the study of general $G$ to unramified groups. 
		\end{enumerate}
	\end{remark}

	\ignore{
		\begin{corollary}\label{cor:finiteness}
			For $w\in W$ and $b\in \breve G$, we have $X_w(b) \in S^{J_b,f}_{\kk}$. 
		\end{corollary}
		\begin{proof}
			By Theorem \ref{j-min}, we have $\dbp{X_C(b)} \in \GDL^{J_b,f}_{\kk}$ for any $C \in \sC$. Then by Lemma \ref{lem:T preserves finiteness}, the right hand side of (\ref{eq:expand X_w(b)}) lies in $\GDL^{J_b(F),f}_{\kk}$. Hence by (\ref{eq:expand X_w(b)}), we have $\dbp{X_w(b)} \in \GDL^{J_b(F),f}_{\kk}$. It then follows from Lemma \ref{lem:finiteness} that $X_w(b) \in S^{J_b(F),f}_{\kk}$.  
		\end{proof}

		\begin{proof}
			We argue by induction on $\ell(w)$. If $w$ is a minimal length element in its $\s$-conjugacy class, then by Theorem \ref{j-min}, $\sharp J_b \backslash \Sigma X_w(b) \le 1$. 
			
			If $w$ is not a minimal length element in its $\s$-conjugacy class, then by Theorem \ref{HN-min}, there exists $w' \in \breve \breve W$ and $s \in \tS$ such that $w \to_\s w'$, $\ell(w')=\ell(w)$ and $s w' \s(s)<w'$. Then by Proposition \ref{DLReduction1}, $X_w(b)$ and $X_{w'}(b)$ are universally $J_b$-homeomorphic. And there is an injection of $J_b$-sets $$\Sigma X_{w'}(b) \hookrightarrow \Sigma X_{s w'}(b) \sqcup \Sigma X_{s w' \s(s)}(b).$$ By inductive hypothesis, $J_b \backslash X_{s w'}(b)$ and $J_b \backslash X_{s w' \s(s)}(b)$ are finite. Hence $J_b \backslash X_{w}(b) \cong J_b \backslash X_{w'}(b)$ are still finite. 
		\end{proof}
	}
	
	\begin{corollary}\label{cor:motivic count irred comp} Let $w\in \breve W$ and $b \in G(\breve F)$. For each $C \in \sC(\breve W)$, choose an element $w_C \in C$. The isomorphism class of the $J_b(F)$-set $\Sigma^{\topp} (X_w(b))$ (resp.~the integer $\dim X_w(b)$) is given by the first (resp.~second) coordinate of the element $$ \sum_{C \in \sC(\breve W), \Psi(C) = [b]} F_{w,C} (\BL -1) \cdot \big (\Sigma^{\topp}(X_{w_C} (b)), \dim X_{w_C}(b)  \big ) \in  \TIC^{J_b(F)}.$$
	\end{corollary}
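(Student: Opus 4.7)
The plan is to obtain the claim by simply applying the monoid homomorphism $\fkC : \GDL^{J_b(F)} \to \TIC^{J_b(F)}$ from Lemma \ref{lem:monoid homo} to the identity (\ref{eq:expand X_w(b)}) in Theorem \ref{motivic}(2). The whole statement is a formal consequence of that theorem once one packages the construction of $\fkC$ properly.

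First I would note that the expression $F_{w,C}(\BL-1)$ makes sense as an endomorphism of either $\GDL^{J_b(F)}$ or $\TIC^{J_b(F)}$: writing $F_{w,C}(\mathbf q-1) = \sum_i a_i(\mathbf q-1)^i$, the coefficients $a_i$ are non-negative integers by Theorem \ref{motivic}(1), and $\BL-1 = \BT$ is a well-defined endomorphism of each monoid (see \S\ref{subsubsec:defn BT} and \S\ref{subsubsec:TIC}). Hence $F_{w,C}(\BL-1) = \sum_i a_i\, \BT^i$ is defined purely in terms of the monoid addition and the $\BT$-action, without requiring subtraction.

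Next I would apply $\fkC$ to both sides of
$$\dbp{X_w(b)} = \sum_{C \in \sC(\breve W),\, \Psi(C)=[b]} F_{w,C}(\BL-1) \cdot \dbp{X_C(b)} \in \GDL^{J_b(F)}.$$
By Lemma \ref{lem:monoid homo}, $\fkC$ is a monoid homomorphism that commutes with $\BT$; consequently it commutes with $F_{w,C}(\BT)$. This yields the identity
$$\fkC(\dbp{X_w(b)}) = \sum_{C,\, \Psi(C)=[b]} F_{w,C}(\BL-1) \cdot \fkC(\dbp{X_C(b)}) \in \TIC^{J_b(F)}.$$
By Definition \ref{defn:Sigma top}, $\fkC(\dbp{X_w(b)}) = (\Sigma^{\topp}(X_w(b)), \dim X_w(b))$, and for each class $C$, $\fkC(\dbp{X_C(b)}) = (\Sigma^{\topp}(X_{w_C}(b)), \dim X_{w_C}(b))$, with the right-hand side independent of the chosen representative $w_C \in C$ thanks to Proposition \ref{DL-red2}(1). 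Substituting these values recovers the asserted formula.

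There is no genuine obstacle here; the corollary is a bookkeeping consequence of Theorem \ref{motivic} once the target monoid $\TIC^{J_b(F)}$ and its $\BT$-action are in place. The only point worth flagging is the need to define the polynomial action $F_{w,C}(\BL-1)$ at the monoid level (not just at the level of a Grothendieck group), which is exactly what the positivity statement $F_{w,C} \in \BN[\mathbf q-1]$ of Theorem \ref{motivic}(1) is engineered to provide.
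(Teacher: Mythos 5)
Your proposal is correct and follows essentially the same route as the paper: apply the $\BT$-equivariant monoid homomorphism $\fkC$ of Lemma \ref{lem:monoid homo} to both sides of (\ref{eq:expand X_w(b)}), using the positivity $F_{w,C}\in\BN[\mathbf q-1]$ to make sense of $F_{w,C}(\BL-1)=\sum_i a_i\BT^i$ at the monoid level. The only point the paper adds explicitly, and which you use implicitly when evaluating $\fkC(\dbp{X_w(b)})$, is that $\Sigma^{\topp}$ and $\dim$ are unchanged when $X_w(b)$ is replaced by its perfection.
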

	\begin{proof} Note that the isomorphism class of the $J_b(F)$-set $\Sigma^{\topp} (X_w(b))$ and the integer $\dim X_w(b)$ do not change if we replace $X_w(b)$ by its perfection. The corollary then follows from applying the $\BT$-equivariant homomorphism $\fkC$ in Lemma \ref{lem:monoid homo} to the two sides of (\ref{eq:expand X_w(b)}). 
	\end{proof}
	
	\begin{remark}\label{rem:useful}
		Fix $b \in G(\breve F)$. For $x\in \breve W_{\s, \min}$, by \cite[Theorem 4.8]{He14} we know that $X_x(b) \neq \emptyset$ if and only if $\Psi(x) = [b]$, that $X_x(b)$ is equidimensional, and that the $J_b(F)$-action on $\Sigma^{\topp}(X_x(b))$ is transitive. Moreover, when $X_x(b) \neq \emptyset$, we have an explicit formula for $\dim X_x(b)$ (see \cite[Theorem 4.8]{He14}), and we know that the stabilizer of each irreducible component of $X_x(b)$ in $J_b(F)$ is a parahoric subgroup of $J_b(F)$ with an explicit description (see the proof of \cite[Proposition 3.1.4]{ZZ}). The upshot is that we explicitly understand the elements $(\Sigma^{\topp}(X_{w_C}(b)), \dim X_{w_C}(b) ) \in  \TIC^{J_b(F)}$ for all $C \in \sC(\breve W)$ and $w_C \in C$. Thus by Corollary \ref{cor:motivic count irred comp}, the determination of the $J_b(F)$-set $\Sigma^{\topp} (X_w(b))$ and $\dim X_w(b)$ for general $w \in \breve W$ reduces to the computation of the polynomials $F_{w,C}$. It also follows that for general $w$, the stabilizer of each element of $\Sigma^{\topp} (X_w(b))$ in $J_b(F)$ is a parahoric subgroup, cf.~\cite[Proposition 3.1.4]{ZZ}. 
	\end{remark}

	\subsection{Stabilizer of one irreducible component}\label{subsec: one comp}We keep the setting and notation of \S\ref{subsec:DLreduction}. 	In this subsection we assume in addition that $G$ is $F$-simple and adjoint.   We will apply the results in \S\ref{subsec:DLreduction} to study the stabilizers for the $J_b(F)$-action on $\Sigma^{\topp}(X_{w_0t^\mu}(b))$.
	
	\subsubsection{}
		Recall that for $\delta$ an automorphism of $(\breve W_a,\breve \BS)$ and ${\breve K}\subset \breve\BS$ a $\delta$-stable subset, a \emph{$\delta$-twisted Coxeter element} of $\breve W_{\breve K}$ is an element which can be written as $s_1\cdots s_n$, where $s_1,\dotsc, s_n \in \breve W_{\breve K}$ are distinct and form a set of representatives of the  $\delta$-orbits in ${\breve K}$. For $w\in \breve W_a$ we write $\supp_\delta(w)$ for the smallest $\delta$-stable subset ${\breve K}$ of $\breve \BS$ such that $w\in \breve W_{\breve K}$. As explained in \S \ref{subsubsec:cases we consider}, we identify $\breve W_0$ with the subgroup $\breve W_{\breve \BS_0}$ of $\breve W$. Note that every $w \in \breve W$ can be written in a unique way as $w=x t^\mu y$, where $\mu\in {X_*(T)_{\Gamma_0}}^+$, $x, y\in \breve W_0$, and $t^\mu y\in {^{\breve{\BS}_0}\breve W}$. Moreover, ${\breve \ell}(w)={\breve \ell}(x)+{\breve \ell}(t^\mu)-{\breve \ell}(y)$. 
	
	The following result gives a refinement of \cite[Proposition 11.6]{He14}. 
	
	\begin{proposition}\label{prop: He 11.6} Assume $G$ is $F$-simple and adjoint. Let ${\breve K}$ be a $\s$-stable subset of $\breve{\mathbb S}_0$. Let $w=xt^\mu y \in \breve W$, with $\mu\in X_*(T)_{\Gamma_0}^+$, $x, y\in \breve W_0$, and $t^\mu y\in {^{\breve{\BS}_0}\breve W}$. Assume that $\mu\neq 0$, that $\mathrm{supp}_\sigma (x)={\breve K}$, and that $y$ is a $\sigma$-twisted Coxeter element of $\breve W_{\breve{\BS}_0\setminus {\breve K}}$. Then there exists a $\sigma$-twisted Coxeter element $c$ of $\breve W_0$ with $t^\mu c \in {^{\breve{\BS}_0}\breve W}$ such that for each $b \in G(\breve F)$, we have  $$\dbp{X_w(b)}  = (\BL-1)^{{\breve \ell}(x)}\dbp{X_{t^\mu c}(b)} +P  \in  \GDL^{J_b(F)}$$ for some $P \in \GDL^{J_b(F)}$. 
	\end{proposition}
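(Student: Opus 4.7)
The approach is an induction on $\breve\ell(x)$, leveraging Proposition \ref{DL-red2} together with the combinatorial bookkeeping on reduced expressions in $\breve W$ from \cite{He14}.

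\emph{Base case.} When $\breve\ell(x) = 0$, we have $x = 1$ and $\breve K = \emptyset$, so by hypothesis $y$ is already a $\sigma$-twisted Coxeter element of $\breve W_0$. We take $c = y$ and $P = 0$; the formula holds trivially with $(\BL - 1)^0 = 1$.

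\emph{Inductive step.} Suppose $\breve\ell(x) \geq 1$. Using $\supp_\sigma(x) = \breve K$, choose a simple reflection $s \in \breve K$ with $sx < x$, selected so that the conjugate $sw\sigma(s) = (sx)\,t^\mu(y\sigma(s))$ admits a double length drop $\breve\ell(sw\sigma(s)) = \breve\ell(w) - 2$. The existence of such an $s$ uses the $\sigma$-twisted Coxeter structure on $y$, the minimality condition $t^\mu y \in {}^{\breve{\BS}_0}\breve W$, and crucially the hypothesis $\mu \neq 0$, which gives enough room for $y\sigma(s)$ to produce a further length decrease on the right. After possibly passing to a $\tilde\approx_\sigma$-equivalent element (which by Proposition \ref{DL-red2}(1) preserves $\dbp{X_{\cdot}(b)}$), one arranges that $sw\sigma(s)$, rewritten in canonical form $x' t^\mu y'$, again satisfies the hypotheses of the proposition, with $\breve\ell(x') = \breve\ell(x) - 1$, $\supp_\sigma(x') \subseteq \breve K$, and $y'$ a $\sigma$-twisted Coxeter of $\breve W_{\breve{\BS}_0 \setminus \supp_\sigma(x')}$. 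Applying Proposition \ref{DL-red2}(2) gives
$$\dbp{X_w(b)} = (\BL - 1)\dbp{X_{sw}(b)} + \BL\,\dbp{X_{sw\sigma(s)}(b)}.$$
By the induction hypothesis applied to $sw\sigma(s)$, there exist a $\sigma$-twisted Coxeter $c$ of $\breve W_0$ with $t^\mu c \in {}^{\breve{\BS}_0}\breve W$ and $P' \in \GDL^{J_b(F)}$ such that
$$\dbp{X_{sw\sigma(s)}(b)} = (\BL - 1)^{\breve\ell(x) - 1}\dbp{X_{t^\mu c}(b)} + P'.$$
Writing $\BL = (\BL - 1) + 1$, we obtain
$$\BL\,\dbp{X_{sw\sigma(s)}(b)} = (\BL - 1)^{\breve\ell(x)}\dbp{X_{t^\mu c}(b)} + Q,$$
where $Q = (\BL - 1)^{\breve\ell(x) - 1}\dbp{X_{t^\mu c}(b)} + \BL \cdot P'$. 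Setting $P := (\BL - 1)\dbp{X_{sw}(b)} + Q$ completes the inductive step.

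\emph{Main obstacle.} The principal difficulty lies not in the homological bookkeeping above but in the combinatorial verification that, at each stage, one can exhibit a simple reflection $s \in \breve K$ satisfying both the double length drop and the condition that $sw\sigma(s)$, after $\tilde\approx_\sigma$-equivalence, fits the canonical form with strictly smaller $x$-part and with $y'$ still a $\sigma$-twisted Coxeter of the appropriate subgroup. When a $\sigma$-orbit of simple reflections inside $\breve K$ has size greater than one, a single reduction clears only one generator from that orbit, so one must interleave several reductions to transfer a full $\sigma$-orbit of reflections from $x$ into an enlarged Coxeter element on the right. This careful combinatorics is the technical heart of the argument, paralleling the structure of \cite[Proposition 11.6]{He14} and the refinement in \cite{HY}, the novelty here being the explicit tracking of the ``main term'' $(\BL - 1)^{\breve\ell(x)} \dbp{X_{t^\mu c}(b)}$ rather than merely reading off the leading class polynomial.
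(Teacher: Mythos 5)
There is a genuine gap, and it sits exactly where you park it: the ``careful combinatorics'' you defer is the entire content of the proposition, and the reduction step you do write down is not correct as stated. Two concrete problems. First, a simple reflection $s\in\breve K$ with $sx<x$ \emph{and} $\breve\ell(sw\s(s))=\breve\ell(w)-2$ with $y$-part $y\s(s)$ need not exist: if every descent $s$ of $x$ has $\sigma(s)$ in the set $\breve K_1=\{s\in\breve K\mid t^\mu ys\notin{}^{\breve\BS_0}\breve W\}$, then conjugation by such $s$ commutes past $t^\mu y$ and only moves the $x$-part (giving an equal-length move or a drop of $2$ inside the $x$-part), never the configuration $(sx,\,y\s(s))$ you need. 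The hypothesis $\mu\neq 0$ only guarantees $\breve K_1\subsetneq\breve K$; it does not produce your $s$ directly, and handling this obstruction is precisely why the paper's proof introduces $\breve K_1$, writes $x=ux'$ with $u\in\breve W_{\s^{-1}(\breve K_1)}$, conjugates along a reduced word of $u$, and, when $x\in\breve W_{\s^{-1}(\breve K_1)}$ entirely, first replaces $x$ by $\s^m(x)$. Second, even when your $s$ exists, your bookkeeping always extracts the main term from $\BL\,\dbp{X_{sw\s(s)}(b)}$, which forces you to apply the induction hypothesis to $sw\s(s)=(sx)t^\mu(y\s(s))$. But if $\supp_\s(sx)=\breve K$ (the typical case when $\breve\ell(x)$ exceeds the number of $\s$-orbits in $\breve K$), then $y\s(s)$ contains $\s(s)\in\breve K$ and is not even an element of $\breve W_{\breve\BS_0\setminus\breve K}$, let alone a $\s$-twisted Coxeter element of it; no $\tilde\approx_\s$-move respecting the canonical form repairs this. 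In that situation the induction hypothesis simply does not apply to $sw\s(s)$, and since $\GDL^{J_b(F)}$ is a monoid with no subtraction, you cannot fix a wrong or missing main term later by cancellation into $P$.

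The paper's proof is organized around exactly this dichotomy. In its Case (i) (a length drop occurs while conjugating inside $\breve W_{\s^{-1}(\breve K_1)}$) and in the subcase of Case (ii) where $\supp_\s(sx_n)=\breve K$, the main term $(\BL-1)^{\breve\ell(x)}\dbp{X_{t^\mu c}(b)}$ is harvested from the \emph{other} branch, namely $(\BL-1)\dbp{X_{sw}(b)}$, whose canonical form keeps the same $y$ and the same support $\breve K$ with $x$-part of length $\breve\ell(x)-1$; only when the support genuinely shrinks, $\supp_\s(sx_n)\subsetneq\breve K$, is the main term taken from the $sw\s(s)$ branch (after writing $\BL=(\BL-1)+1$), because only then does $y\s(s)$ become a twisted Coxeter element of the complement of the smaller support. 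So your induction on $\breve\ell(x)$ and the monoid manipulations are fine in outline, but the selection of $s$, the case division deciding which of the two terms carries the main contribution, and the verification that the inductive hypotheses hold for that term are missing, and these cannot be supplied by the single uniform step you propose.
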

	
	\begin{proof} 
		We follow the method in \cite[Proposition 11.6]{He14}.

		We proceed by induction on $|{\breve K}|$. The case $|{\breve K}|=0$ is clear, as we can take $c=y$. We thus assume that the result is true for all ${\breve K}'\subsetneq {\breve K}$. We may also assume that the result is true for all $x'\in \breve W_{\breve K}$ with $\supp_\sigma(x')={\breve K}$ and ${\breve \ell}(x')<{\breve \ell}(x)$.
		We set ${\breve K}_1:=\{s\in {\breve K}\mid t^\mu y s\notin{^{\breve\BS_0}}\breve W\}$. Then as in \cite[Proposition 11.6]{He14}, ${\breve K}_1$ is a proper subset of ${\breve K}$, and every $s\in {\breve K}_1$ commutes with $y$ and with $t^\mu y$.
		
		We write $x=ux'$ where $u\in \breve W_{\sigma^{-1}({\breve K}_1)}$ and $x'\in{^{\sigma^{-1}({\breve K}_1)}}\breve W$. We let $u=s_1\cdots s_n$ be a reduced word decomposition for $u$. We  write $u_i=s_1
		\cdots  s_i$ and set $x_i=u_i^{-1}x\sigma(u_i)$ for $i=1,\dotsc,n$. Then there are two possibilities:
		
		Case (i): There exists $k$ such that ${\breve \ell}(x)={\breve \ell}(x_i)$ for $i=1,\dotsc, k-1$ and ${\breve \ell}(x_{k})<{\breve \ell}(x)$. 
		
		Case (ii): ${\breve \ell}(x_i)={\breve \ell}(x)$ for all $i=1,\dotsc,n$.
		
		In Case (i), we have $$\dbp{X_w(b)}=\dbp{X_{x_{k-1}t^\mu y}(b)}=(\BL-1)\dbp{X_{s_kx_{k-1}t^\mu y}(b)}+\BL \dbp{X_{x_k t^\mu y}(b)}$$ by Proposition \ref{DL-red2}. 
		Since ${\breve \ell}(s_kx_{k-1}\s(s_{k}))<{\breve \ell}(x_{k-1})$, we have $\supp_\s(s_kx_{k-1})=\supp_\s(x)={\breve K}$. Thus by induction hypothesis, we have $$\dbp{X_{s_kx_{k-1}t^\mu y}(b)}=(\BL-1)^{{\breve \ell}(s_kx_{k-1})}\dbp{X_{t^\mu c}(b)}+P'$$ for some $\sigma$-twisted Coxeter element $c$ of $\breve W_0$ with $t^\mu c \in {^{\breve{\BS}_0}\breve W}$ and $P'\in \GDL^{J_b(F)}$. Since ${\breve \ell}(s_kx_{k-1})={\breve \ell}(x)-1$, we have $$\dbp{X_w(b)}=( \BL-1)^{{\breve \ell}(x)}\dbp{X_{t^\mu c}(b)}+ P$$ with $P\in \GDL^{J_b(F)}$ as desired.
		
		In Case (ii), we have $x\approx_{\sigma}x_n= x'\sigma (u)$. It follows that $w\approx_{\sigma}x_nt^\mu y$ and hence  $$\dbp{X_{w}(b)}=\dbp{X_{x_nt^\mu y}(b)}\in \GDL^{J_b(F)}$$ by Proposition \ref{DL-red2}. 
		We first consider the case where $x\notin \breve W_{\sigma^{-1}({\breve K}_1)}$. Then $x'\neq 1$ and there exists $s\in \sigma^{-1}({\breve K})\setminus \sigma^{-1}({\breve K}_1)$ such that $sx_n< x_n$. Moreover  ${\breve \ell}(y\sigma(s))={\breve \ell}(y)+1$ and $t^\mu y\sigma(s)\in {^{\breve\BS_0}\breve W}$ and we have $${\breve \ell}(sx_nt^\mu y \s(s))={\breve \ell}(sx_n)+{\breve \ell}(t^\mu)-{\breve \ell}(y\sigma(s))={\breve \ell}(x_n t^\mu y)-2.$$
		It follows that 
		\begin{align*}\dbp{X_{w}(b)}&=\dbp{X_{x_nt^\mu y}(b)}\\&=(\mathbb L-1)\dbp{X_{sx_n t^\mu y}(b)}+\mathbb L \dbp{X_{sx_nt^\mu y \sigma(s)}(b)}
		\\&=(\mathbb L-1)\dbp{X_{sx_n t^\mu y}(b)}+(\mathbb L-1)\dbp{X_{sx_nt^\mu y \sigma(s)}(b)}+\dbp{X_{sx_nt^\mu y \sigma(s)}(b)}.
		\end{align*}
		If $\supp_\s(sx_n)={\breve K}$, the  induction hypothesis applied to $X_{sx_nt^\mu y}(b)$ gives $$\dbp{X_{sx_nt^\mu y}(b)}=(\mathbb L-1)^{{\breve \ell}(sx_n)}\dbp{X_{t^\mu c}(b)}+P'$$
		for some $\sigma$-twisted Coxeter element $c$ of $\breve W_{0}$ and $P'\in\GDL^{J_b(F)}$. It follows that $$\dbp{X_{w}(b)}=(\mathbb L-1)^{{\breve \ell}(x)}\dbp{X_{t^\mu c}(b)}+P$$ with $P\in \GDL^{J_b(F)}$.
		
		Similarly, if $\supp_\sigma(sx_n)\neq {\breve K}$, the induction hypothesis applied to $X_{sx_nt^\mu y\s(s)}(b)$ gives $$\dbp{X_{sx_nt^\mu y\s(s)}(b)}=(\mathbb L-1)^{{\breve \ell}(sx_n)}\dbp{X_{t^\mu c}(b)}+P'$$ for some $P'\in \GDL^{J_b(F)}$, and hence
		$$\dbp{X_{w}(b)}=(\mathbb L-1)^{{\breve \ell}(x)}\dbp{X_{t^\mu c}(b)}+P$$ with $P\in \GDL^{J_b(F)}$.
		
		Finally we consider the case $x\in \breve W_{\sigma^{-1}(\breve K_1)}$.	Since ${\breve K}_1$ is a proper subset of ${\breve K}$ and $\supp_\s(x)={\breve K}$, there exists $m \in \BN$ such that $x, \s(x), \dotsc, \s^{m-1}(x) \in \breve W_{\sigma^{-1}({\breve K}_1)}$ and $\s^m(x) \notin \breve W_{\sigma^{-1}({\breve K}_1)}$. We have $$\dbp{X_{xt^\mu y}(b)}=\dbp{X_{\sigma (x)t^\mu y}(b)}=\dotsc=\dbp{X_{\sigma^{m-1} (x)t^\mu y}(b)}.$$ The argument above applied to $\sigma^m(x)$ shows that $$\dbp{X_{\sigma(m)t^\mu y}(b)}=(\mathbb L-1)^{{\breve \ell}(x)}\dbp{X_{t^\mu c}(b)}+P,$$ for some $\sigma$-twisted Coxeter element $c\in \breve W_0$ and $P\in \mathbb \GDL^{J_b(F)}$ as desired.
	
	\end{proof}
	
	\subsubsection{} For an element $\tau\in \Omega$, the Iwahori--Weyl group and affine Weyl group of $J_{\dot{\tau}}$ are isomorphic to $\breve W$ and $\breve W_a$ respectively, and the  Frobenius actions are both given by $\Ad(\tau)\circ \sigma$.
	
	We need the following result which is proved in \cite{HY}. Set $V:=X_*(T)_{\Gamma_0}\otimes_{\BZ}\BR$. 
	
	\begin{proposition}\label{H-Y}
		Let $p: \breve W \subset \mathrm{Aff}(V) \to \GL(V)$ be the natural map. Consider the $\s$-twisted conjugation action of $\breve W_a$ on $\breve W$. Let $\CO$ be a $\breve W_a$-orbit in $\breve W$ with $\CO\subset \breve W_a\tau$ for some $\tau\in\Omega$. If $p(\CO)\subset \breve W_0$ contains a $\sigma$-twisted Coxeter element of $\breve W_0 $, then there exists a unique $\Ad(\t) \circ \s$-stable subset ${\breve K}$ of $\breve \BS$ such that $W_{\breve K}$ is finite and the set  $\CO_{\min}$ of minimal length elements of $\CO$ is precisely the set of $\mathrm{Ad}(\tau)\circ\sigma$-twisted Coxeter elements of $\breve W_{\breve K}$. Moreover, the standard parahoric subgroup of $J_{\dot{\tau}}(F)$ corresponding to $\breve K$ is very special.
	\end{proposition}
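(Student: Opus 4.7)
The plan is to reduce the statement to an internal question about the affine Weyl group $\breve W_a$ equipped with the twisted automorphism $\delta := \Ad(\tau)\circ \s$, and then apply the theory of minimal length elements in twisted conjugacy classes. A direct computation using that $\Omega$ normalizes $\breve W_a$ gives $v(w\tau)\s(v)\i = (v w\, \delta(v)\i)\tau$ for all $v,w \in \breve W_a$. Hence right multiplication by $\tau\i$ is a length-preserving bijection $\breve W_a\tau \isoarrow \breve W_a$ which identifies $\s$-twisted $\breve W_a$-orbits on $\breve W_a\tau$ with $\delta$-twisted conjugacy classes in $\breve W_a$, and preserves images under $p$ (and sends $\Ad(\tau)\circ\s$-twisted Coxeter elements of parabolics to $\delta$-twisted ones). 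Since the affine Weyl group of $J_{\dot\tau}$ is $\breve W_a$ equipped with Frobenius $\delta$, setting $\CO' := \CO\tau\i$ reduces us to proving the analogous statement for the $\delta$-twisted conjugacy class $\CO' \subset \breve W_a$.

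Next I would characterize $\CO'_{\min}$ explicitly. Starting with any $w \in \CO'$ whose linear part $p(w)$ is a $\delta$-twisted Coxeter element of $\breve W_0$, Theorem \ref{HN-min} produces a chain of $\to_\delta$-reductions terminating in some $w' \in \CO'_{\min}$. Each basic move $w_k \xrightarrow{s}_\delta w_{k+1}$ corresponds to $\delta$-twisted conjugation by a simple reflection, so $p(w_k)$ is conjugated by $p(s)$ within $\breve W_0$; hence $p(w')$ is again a $\delta$-twisted Coxeter element of $\breve W_0$. A combinatorial analysis of reduced expressions in $\breve W_a$, of Geck--Pfeiffer--Kim type as extended to the twisted affine setting in \cite{HN}, then shows that such a minimal length $w'$ must be a $\delta$-twisted Coxeter element of $\breve W_{\breve K}$ for a $\delta$-stable subset $\breve K \subset \breve\BS$ with $\breve W_{\breve K}$ finite. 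The uniqueness of $\breve K$ is clear because $\breve K = \supp_\delta(w')$ is an invariant of the $\tilde\approx_\delta$-class of $w'$, and conversely any $\delta$-twisted Coxeter element of $\breve W_{\breve K}$ is seen to lie in $\CO'_{\min}$ by running the same reduction in reverse.

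Finally I would verify that the standard parahoric of $J_{\dot\tau}(F)$ attached to $\breve K$ is very special. By Proposition \ref{prop: eq. v. special, max vol, max log vol}, this reduces to showing that $\breve K$ has maximal log-volume, equivalently that $\breve\BS \setminus \breve K$ consists of a single $\delta$-orbit (projecting to one vertex $v_0$ of the relative local Dynkin diagram of $J_{\dot\tau}$), and that $v_0$ is very special. The first half follows at once: since $p(w')$ is a Coxeter element of $\breve W_0$, any reduced expression for $w'$ uses representatives of every $\delta$-orbit of simple reflections of $\breve W_0$, forcing $\breve K$ to omit exactly one orbit of $\breve\BS$. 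The main obstacle is the second half, which requires a case-by-case inspection through the classification of affine Dynkin diagrams together with the diagram automorphisms $\delta$ arising from pairs $(G,\tau)$, matched against Tits' tables in \cite{Tits}. The bookkeeping is most delicate for the twisted residue types $^{2}\!A_n$, $^{2}D_n$, $^{3}\!D_4$, and $^{2}E_6$, where the precise interaction of $\delta$ with the local index must be tracked to exclude vertices that are special but not very special.
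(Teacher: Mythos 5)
Your reduction to the pair $(\breve W_a,\delta)$ with $\delta=\Ad(\tau)\circ\s$, and the observation that $\breve K$ is recovered from $\CO_{\min}$ as a $\delta$-support, are fine, but the two substantive steps are not actually proved. First, the characterization of $\CO_{\min}$ --- that a minimal length element must be a $\delta$-twisted Coxeter element of $\breve W_{\breve K}$ for a $\delta$-stable $\breve K$ with $\breve W_{\breve K}$ finite, and that conversely every such twisted Coxeter element lies in $\CO_{\min}$ --- is exactly the content of the proposition, and your appeal to ``a combinatorial analysis of Geck--Pfeiffer--Kim type as extended to the twisted affine setting in \cite{HN}'' is an assertion, not an argument: \cite{HN} supplies Theorem \ref{HN-min} and general properties of minimal length elements, but not this statement. (The paper does not reprove it either; it quotes \cite{HY}, where $\breve K$ is determined by an explicit case-by-case computation.) Moreover, the one intermediate claim you do spell out is incorrect: a move $w_k\xrightarrow{s}_\delta w_{k+1}$ conjugates the linear part by $p(s)$, which for the affine simple reflections is not a simple reflection of $\breve W_0$, and in any case the set of (twisted) Coxeter elements is not closed under conjugation; all that survives the reduction chain is that $p(w')$ lies in the \emph{conjugacy class} of a twisted Coxeter element, which is strictly weaker and is where the real work in \cite{HY} begins.

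Second, the ``moreover'' part is not established. Your argument that $\breve\BS\setminus\breve K$ is a single $\delta$-orbit conflates the element of $\breve W_a\tau$ with its $\breve W_a$-part: $p(w'\tau)=p(w')\,p(\tau)$, and the Coxeter part can come entirely from $p(\tau)$. In the superbasic case $G=\PGL_n$ with $\tau$ a generator of $\Omega$, the minimal elements have length zero, $\breve K=\emptyset$, and a reduced expression of the $\breve W_a$-part uses no simple reflections at all; the complementary orbit generates an infinite group and so corresponds to no vertex of the local Dynkin diagram of $J_{\dot\tau}$. In general the length of the minimal elements is $\breve\ell(c)-\mathrm{def}_G(\dot\tau)$ (cf.\ the proof of Proposition \ref{prop:one maximal} and \cite[\S 1.9]{Ko06}), so the size of $\breve K$ depends on $\mathrm{def}_G(\dot\tau)$ and cannot be read off from the shape of $p(w')$ in the way you indicate. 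Finally, the step you yourself call ``the main obstacle'' --- verifying, type by type against Tits' tables, that the relevant vertex is very special --- is precisely how the paper deduces the ``moreover'' statement from the explicit list of the subsets $\breve K$ in \cite{HY} (see the remark following the proposition), and it is left undone in your proposal; so the proof is incomplete exactly at the points where the cited case analysis is indispensable.
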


	\begin{remark} In Proposition \ref{H-Y}, the unique ${\breve K}$ is explicitly computed in each case in \cite{HY}. The ``moreover'' part of the proposition immediately follows from the explicit description. 
	\end{remark}
	
	The main result of this subsection is the following proposition.
	
	\begin{proposition}\label{prop:one maximal}
		Assume that $G$ is $F$-simple and adjoint. Let $[b]\in B(G,\mu)$ be the unique basic element.  Then there exists $Z \in \Sigma^{\topp} (X_{w_0 t^\mu}(b))$ such that $\Stab_Z (J_b(F))$ is a very special parahoric subgroup of $J_b(F)$. 
	\end{proposition}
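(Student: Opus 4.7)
My plan is to combine Proposition \ref{prop: He 11.6} with Proposition \ref{H-Y}, and then pass through the homomorphism $\fkC : \GDL^{J_b(F)} \to \TIC^{J_b(F)}$ of Lemma \ref{lem:monoid homo} in order to extract a top-dimensional component of $X_{w_0 t^\mu}(b)$ whose stabilizer is a very special parahoric.

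First I would apply Proposition \ref{prop: He 11.6} to $w = w_0 t^\mu$, written in the canonical form $w = x t^\mu y$ with $t^\mu y \in {}^{\breve\BS_0}\breve W$. The dominant representative of the translation component of $w_0 t^\mu = t^{w_0 \mu} w_0$ is $\mu$ itself since $\mu$ is dominant, and the identity $\breve\ell(w_0 t^\mu) = \breve\ell(w_0) + \breve\ell(t^\mu) = \breve\ell(x) + \breve\ell(t^\mu) - \breve\ell(y)$ forces $\breve\ell(x) = \breve\ell(w_0) + \breve\ell(y)$. In particular $\mathrm{supp}_\sigma(x) = \breve\BS_0$, and $y$ is (either vacuously, in the regular case, or by choice within the stabilizer of $\mu$) a $\sigma$-twisted Coxeter element of $\breve W_{\breve\BS_0 \setminus \breve\BS_0}$. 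Proposition \ref{prop: He 11.6} then delivers a $\sigma$-twisted Coxeter element $c$ of $\breve W_0$ with $t^\mu c \in {}^{\breve\BS_0}\breve W$ such that
$$\dbp{X_{w_0 t^\mu}(b)} = (\BL - 1)^{\breve\ell(x)} \dbp{X_{t^\mu c}(b)} + P \quad \text{in } \GDL^{J_b(F)}$$
for some $P \in \GDL^{J_b(F)}$.

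Second, since $[b]$ is basic with $\kappa(b) = \mu^\natural$, there is a unique $\tau \in \Omega$ with $\tau \equiv t^\mu \pmod{\breve W_a}$ such that $[b] = [\dot\tau]$ and $J_b \cong J_{\dot\tau}$. The element $t^\mu c$ lies in $\breve W_a \tau$ and has linear part $c$, a $\sigma$-twisted Coxeter of $\breve W_0$, so Proposition \ref{H-Y} furnishes a unique $\mathrm{Ad}(\tau) \circ \sigma$-stable subset $\breve K \subset \breve\BS$ with $\breve W_{\breve K}$ finite, such that the minimal-length elements of the $\breve W_a$-orbit of $t^\mu c$ are precisely the $\mathrm{Ad}(\tau) \circ \sigma$-twisted Coxeter elements of $\breve W_{\breve K}$, and the associated standard parahoric $\CJ_{\breve K}$ of $J_b(F)$ is very special. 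By Proposition \ref{DL-red2}(1) I may replace $t^\mu c$ with a $\tilde\approx_\sigma$-equivalent representative in $\breve W_{\sigma, \min}$ without changing $\dbp{X_{t^\mu c}(b)}$. Remark \ref{rem:useful} then gives that $X_{t^\mu c}(b)$ is equidimensional, $J_b(F)$ acts transitively on $\Sigma^{\topp}(X_{t^\mu c}(b))$, and the stabilizer of any top component is the parahoric described in the proof of \cite[Proposition~3.1.4]{ZZ}, which by the case-by-case analysis in \cite{HY} coincides with $\CJ_{\breve K}$.

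Finally, I would apply $\fkC$ to the identity above; since $\BL - 1$ acts as $\BT$ on $\TIC^{J_b(F)}$, the summand $(\BL - 1)^{\breve\ell(x)} \dbp{X_{t^\mu c}(b)}$ contributes
$$\bigl(\Sigma^{\topp}(X_{t^\mu c}(b)),\ \dim X_{t^\mu c}(b) + \breve\ell(x)\bigr) \in \TIC^{J_b(F)}.$$
By Theorem \ref{thm:dim-adlv}, Proposition \ref{prop: relation between Iw and Sp ADLV} (whose defining fibration has $\breve\ell(w_0)$-dimensional fibers), and the length identity $\breve\ell(t^\mu c) = \breve\ell(t^\mu) - \breve\ell(c)$ coming from $t^\mu c \in {}^{\breve\BS_0}\breve W$, one verifies $\dim X_{t^\mu c}(b) + \breve\ell(x) = \dim X_{w_0 t^\mu}(b)$ -- this dimension matching is precisely what is established in the original He14 Proposition~11.6. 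Consequently the contribution makes it to the top of $\fkC(\dbp{X_{w_0 t^\mu}(b)})$, and any $Z$ in the resulting $J_b(F)$-orbit has stabilizer $\CJ_{\breve K}$, finishing the proof. The main obstacle is the identification step at the end of the second paragraph: showing that the stabilizer produced by the general minimal-length theory (Remark \ref{rem:useful}) is literally the very special parahoric $\CJ_{\breve K}$ from Proposition \ref{H-Y}. Both are described in terms of a subset of $\breve\BS$ associated to the $\mathrm{Ad}(\tau) \circ \sigma$-twisted Coxeter structure of $c$, and matching them is a careful but not conceptually difficult bookkeeping exercise against the explicit tables in \cite{HY}.
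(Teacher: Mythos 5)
Your overall strategy is the paper's: apply Proposition \ref{prop: He 11.6} to $w_0t^\mu$ (with $x=w_0$, $y=1$), then bring in Proposition \ref{H-Y} and push through $\fkC$. But there is a genuine gap at the pivot of your second paragraph. You claim that by Proposition \ref{DL-red2}(1) you may replace $t^\mu c$ by a $\tilde{\approx}_\sigma$-equivalent element of $\breve W_{\s,\min}$ ``without changing $\dbp{X_{t^\mu c}(b)}$.'' This is false in general: $\tilde{\approx}_\sigma$ is generated by length-preserving moves (and $\Omega$-conjugation, also length-preserving), so every element $\tilde{\approx}_\sigma$-equivalent to $t^\mu c$ has length $\breve\ell(t^\mu)-\breve\ell(c)$, whereas the minimal length in the $\s$-conjugacy class of $t^\mu c$ is $\breve\ell(c')$ for a twisted Coxeter element $c'$ of $\breve W_{\breve K}$, which is strictly smaller as soon as $\mu\neq 0$ (already for $\PGL_2$ with $\mu=2\alpha^\vee$ one has $t^\mu c=s_0s_1s_0$ of length $3$, conjugate to $s_1$ of length $1$). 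Because $t^\mu c\notin \breve W_{\s,\min}$, you are not entitled to Remark \ref{rem:useful} for $X_{t^\mu c}(b)$, and your subsequent dimension claim $\dim X_{t^\mu c}(b)+\breve\ell(x)=\dim X_{w_0t^\mu}(b)$ together with the identification of component stabilizers is unsupported. What is actually needed is a second Deligne--Lusztig reduction: by Theorem \ref{HN-min} one has $t^\mu c\to_\s c'\tau$, and iterating Proposition \ref{DL-red2}(2) along this (length-decreasing) path yields $\dbp{X_{t^\mu c}(\dot\tau)}=(\BL-1)^{(\breve\ell(t^\mu c)-\breve\ell(c'))/2}\dbp{X_{c'\tau}(\dot\tau)}+Q$. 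One must then check that this term survives to top dimension in $X_{w_0t^\mu}(\dot\tau)$, i.e.\ that $\dim X_{w_0t^\mu}(\dot\tau)=\dim X_{c'\tau}(\dot\tau)+\breve\ell(w_0)+\tfrac{1}{2}(\breve\ell(t^\mu c)-\breve\ell(c'))$; this uses Theorem \ref{thm:dim-adlv}, the relation $\dim X_{w_0t^\mu}(b)=\breve\ell(w_0)+\dim X_\mu(b)$ from \cite[Theorem 10.1]{He14}, and crucially the defect identity $\breve\ell(c)-\breve\ell(c')=\mathrm{def}_G(\dot\tau)$ from \cite[\S 1.9]{Ko06}. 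None of this bookkeeping is present in your sketch precisely because you assumed the passage to minimal length is length-neutral.

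A secondary point: even after reaching $c'\tau\in\breve W_{\s,\min}$, the statement that the stabilizer of a top component is the \emph{full} very special parahoric $\CJ_{\breve K}$ is not a table-checking exercise against \cite{HY}; it follows because $c'$ is an $\Ad(\tau)\circ\s$-twisted Coxeter element of $\breve W_{\breve K}$, so the classical Deligne--Lusztig variety $X^{\breve\CK}_{c'\tau}(\dot\tau)$ appearing in $X_{c'\tau}(\dot\tau)\cong J_{\dot\tau}(F)\times^{\CJ}X^{\breve\CK}_{c'\tau}(\dot\tau)$ is irreducible, whence $\Sigma^{\topp}(X_{c'\tau}(\dot\tau))\cong J_{\dot\tau}(F)/\CJ$; for a general minimal-length element the stabilizer could be a proper subgroup of the parahoric of Remark \ref{rem:useful}. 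Finally, your argument silently assumes $\mu\neq 0$ (a hypothesis of Proposition \ref{prop: He 11.6}); the case $\mu=0$ must be treated separately (it is immediate, since then $X_\mu(b)=G(F)/\CK$ is discrete and one invokes Proposition \ref{prop: relation between Iw and Sp ADLV}).
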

	
	\begin{proof}
		Since $\mu$ is dominant, $t^\mu\in {}^{\breve \BS_0}\breve W$. If $\mu=0$, then we may take $b=1$. In this case, $J_b(F)=G(F)$ and $X_\mu(b)=G(F)/\CK$ is discrete; here $\CK\subset G(F)$ is the parahoric subgroup corresponding to $\tS_0$ which is very special (cf.~Remark \ref{rem: very special}). For any $Z \in X_\mu(b)$, the stabilizer $\Stab_Z (J_b(F))$ is conjugate to $\CK$ and thus is a very special parahoric subgroup of $G(F)$. Now the statement on $X_{w_0 t^\mu}(b)$ follows from Proposition \ref{prop: relation between Iw and Sp ADLV}. 
		
		Now assume that $\mu \neq 0$. By Proposition \ref{prop: He 11.6} applied to ${\breve K}=\breve\BS_0$ and $w=w_0t^\mu$, there exists a $\s$-twisted Coxeter element $c$ of $\breve W_0$ such  that  $$\dbp{X_{w_0t^\mu}(b)}=(\mathbb L-1)^{{\breve \ell}(w_0)}\dbp{X_{t^\mu c}(b)}+P',$$ where $P'\in\mathbb \GDL^{J_b(F)}$.
		
		Let $\tau\in \Omega$ be the unique element such that $\kappa(\tau)=\mu^\natural\in\pi_1(G)_{\Gamma_0}$. Upon replacing $b$ by another representative in $[b]$, we may assume $b=\dot{\tau}$. By Proposition \ref{H-Y} and Theorem \ref{HN-min}, there exists an $\mathrm{Ad}(\tau)\circ\s$-stable subset $\breve K\subset \breve \BS$ and an $\mathrm{Ad}(\tau)\circ\s$-twisted Coxeter element $c'$ of $\breve W_{{\breve K}}$ such that the associated parahoric $\CJ$ of $J_{\dot{\tau}}(F)$ is very special, $c'\tau$ is of minimal length in its $\s$-conjugacy class, and $t^\mu c\rightarrow_{\sigma}c'\tau$.  
		
		By Proposition \ref{DL-red2}, we have 
		\begin{align*}\dbp{X_{t^\mu c}(\dot{\tau})}&=(\mathbb L-1)^{\frac{{\breve \ell}(t^\mu c)-{\breve \ell}(c')}{2}}\dbp{X_{c'\tau}(\dot{\tau})}+Q
		\end{align*}for some $Q\in\GDL^{J_{\dot \tau}(F)}$ 
		and hence \begin{align*}\dbp{X_{w_0t^\mu}(\dot{\tau})}= (\mathbb L-1)^{{\breve \ell}(w_0)+\frac{{\breve \ell}(t^\mu c)-{\breve \ell}(c')}{2}}\dbp{X_{c'\tau}(\dot{\tau})}+P\end{align*}
		for some $P\in \GDL^{J_{\dot{\tau}}(F)}$. By Lemma \ref{lem:monoid homo}, the above equality implies that 
		\begin{align}\label{eq:two contri} \fkC(\dbp{X_{w_0t^\mu}(\dot{\tau})})
			=\BT^{{\breve \ell}(w_0)+\frac{{\breve \ell}(t^\mu c)-{\breve \ell}(c')}{2}}\fkC(\dbp{X_{c'\tau}(\dot{\tau})})+H\end{align}
		for some $H\in \TIC^{J_{\dot{\tau}}(F)}$. Here on the right side, the addition is in the monoid $\TIC^{J_{\dot{\tau}}(F)}$.
		
		By \cite[\S 1.9]{Ko06}, ${\breve \ell}(c)-{\breve \ell} (c')=\mathrm{def}_G(\dot\tau)$. By Theorem \ref{thm:dim-adlv} and \cite[Theorem 10.1]{He14},  
		\begin{align*}
		\dim X_{w_0t^\mu}(\dot{\tau})&={\breve \ell}(w_0)+\dim X_\mu(\dot{\tau})\\
		&={\breve \ell}(w_0)+\langle \mu,\rho\rangle+\frac{{\breve \ell}(c')-{\breve \ell}(c)}{2}\\
		&={\breve \ell}(w_0)+\frac{{\breve \ell}(t^\mu)-{\breve \ell}(c)+{\breve \ell}(c')}{2}\\
		&={\breve \ell}(c')+ {\breve \ell}(w_0)+\frac{{\breve \ell}(t^\mu c)-{\breve \ell} (c')}{2}\\
		&=\dim( X_{c' \t}(\dot{\tau}))+{\breve \ell}(w_0)+\frac{{\breve \ell}(t^\mu c)-{\breve \ell}(c')}{2},
		\end{align*}
		where the fourth equality follows from the fact that $t^\mu c\in {^{\tS_0}\breve W}$.

		By the above computation, the first term in the sum $$\BT^{{\breve \ell}(w_0)+\frac{{\breve \ell}(t^\mu c)-{\breve \ell}(c')}{2}}\fkC(\dbp{X_{c'\tau}(\dot{\tau})})+H$$ makes a non-trivial contribution to the sum in the sense of \S \ref{subsubsec:TIC}. Thus we have a $J_{\dot \tau}(F)$-equivariant embedding $\Sigma^{\topp}(X_{c'\tau}(\dot{\tau})) \to \Sigma^{\topp}(X_{w_0t^\mu}(\dot{\tau}))$. It remains to find an element of $\Sigma^{\topp} (X_{c'\tau}(\dot{\tau}))$ whose stabilizer in $J_{\dot{\tau}}(F)$ is a very special parahoric subgroup.

		By \cite[Theorem 4.8]{He14}, $X_{c' \t}(\dot\t) \cong J_{\dot\t}(F) \times^{\CJ} X^{\breve \CK}_{c' \t}(\dot\t)$, where $X^{\breve \CK}_{c' \t}(\dot\t)$  is a classical Deligne--Lusztig variety (resp.~perfection of a classical Deligne--Lusztig variety) if $\mathrm{char}(F)>0$ (resp.~$\mathrm{char}(F)=0$) defined by \begin{align*} X^{\breve \CK}_{c' \t}(\dot\t) \cong \{g \overline{\CI}\in \overline{ \CK}/\overline{\CI}\mid g \i \s'(g) \in \overline{\CI} c' \overline{\CI}\}.\end{align*} 

		Here  $\s'=\Ad(\t) \circ \s$, and note that $\overline{\CI}$ is a $\s'$-stable Borel subgroup of $\overline{\CK}$.
		
		Since $c'$ is a $\s'$-Coxeter element of $\breve W_{\breve K}$, $X^{\breve \CK}_{c' \t}(\dot\t)$ is irreducible. Hence $$\Sigma^{\topp}( X_{c' \t}(\dot\t)) \cong J_{\dot\t}(F)/{\CJ}$$ as $J_{\dot\t}(F)$-sets and the stabilizer of the elements are isomorphic to $\CJ$. 
	\end{proof}

	\section{Component stabilizers for $X_\mu(b)$}\label{sec: main}

	\subsection{The main theorem and some consequences} \label{subsec: MV}\subsubsection{}
	We keep the notation and assumptions of \S\ref{subsec:ADLV}. In particular, $G$ is a quasi-split tamely ramified reductive group over $F$, and $\mathrm{char}(F) \nmid|\pi_1(G_{\mathrm{ad}})|$ if $\mathrm{char}(F)>0$.
	
	 We now state our main theorem, which confirms conjectures made by X.~Zhu and Rapoport. 
	\begin{theorem}\label{thm:main} Let $\mu \in X_*(T)_{\Gamma_0}^+$ and $[b]\in B(G,\mu)$. Each stabilizer for the $J_b(F)$-action on $\Sigma^{\topp} (X_{\mu}(b))$ is a very special parahoric subgroup of $J_b(F)$.
	\end{theorem}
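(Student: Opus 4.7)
The plan is to follow the two-step strategy outlined in the introduction. First I would perform a sequence of reductions: pass to $\mathrm{char}(F) = 0$, since the harmonic-analytic input from \cite{ZZ} (which relies on the Base Change Fundamental Lemma) is only available in that setting; reduce to $G$ adjoint, unramified and $F$-simple by means of the product decomposition of $G_{\mathrm{ad}}$, using that both the "very special" condition and the formation of $\Sigma^{\topp}(X_\mu(b))$ are compatible with direct products; and finally reduce to the case where $b$ is the unique basic element of $B(G, \mu)$, via a Hodge--Newton style passage to a Levi subgroup. After these reductions, Proposition \ref{prop: relation between Iw and Sp ADLV} replaces the problem by the analogous one for the Iwahori-level variety $X_{w_0 t^\mu}(b)$, where the motivic-counting framework of Section \ref{sec: DL reduction an motivic counting} applies.

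Step (1) is to produce a single top component whose $J_b(F)$-stabilizer is very special. This is precisely Proposition \ref{prop:one maximal}: the Deligne--Lusztig reduction path constructed in Proposition \ref{prop: He 11.6} terminates at $X_{c'\tau}(\dot\tau)$, and Proposition \ref{H-Y} identifies the associated parahoric $\CJ \subset J_{\dot\tau}(F)$ as very special; a dimension count then ensures that the resulting component survives in $\Sigma^{\topp}(X_{w_0 t^\mu}(b))$.

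Step (2) upgrades "one component" to "every component". I would introduce
\[
Q(\mu, b) := |J_b(F) \backslash \Sigma^{\topp}(X_\mu(b))|^{-1} \cdot \sum_{Z} \vol(\Stab_Z(J_b(F)))^{-1},
\]
where the sum runs over representatives of the $J_b(F)$-orbits, and invoke the results of \cite{ZZ} to conclude that $Q(\mu, b)$ depends only on $b$, not on $\mu$. Choosing $\mu_1$ with $|J_b(F) \backslash \Sigma^{\topp}(X_{\mu_1}(b))| = 1$ (also provided by \cite{ZZ}) and applying step (1) to $(\mu_1, b)$ gives $Q(\mu_1, b) = V_{\max}^{-1}$, where $V_{\max}$ denotes the maximal volume attained by a parahoric of $J_b(F)$. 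Since $Q(\mu, b) = V_{\max}^{-1}$ and, by Remark \ref{rem:useful}, each $\Stab_Z(J_b(F))$ is a parahoric with volume at most $V_{\max}$, the identity forces every stabilizer to attain volume $V_{\max}$. Finally, Proposition \ref{prop: eq. v. special, max vol, max log vol} translates "maximal volume" into "very special".

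The main obstacle I anticipate is the treatment of types $A$ and $E_6$, which are deliberately excluded from the introduction's sketch. For these types the combinatorial input underlying Proposition \ref{H-Y} and the inequalities of Lemma \ref{lem: comparision of d for different vertices} become more delicate, and the uniform argument of step (1) does not go through without modification. One must carry out a separate case analysis using Tits' tables, which I expect to be the content of Section \ref{subsec:special case}, to exhibit a top component with very special stabilizer in these exceptional cases; once this is done, step (2) proceeds exactly as described above.
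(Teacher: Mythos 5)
Your reduction steps and your steps (1)--(2) for the generic case coincide with the paper's argument: the reductions are Corollary \ref{cor:red1} and Corollary \ref{cor: reduction to basic}, step (1) is Proposition \ref{prop:one maximal}, and step (2) is the identity $Q(\mu,b)=Q(\mu_1,b)$ of Proposition \ref{prop:key from ZZ}(1) combined with Proposition \ref{prop: eq. v. special, max vol, max log vol}. The genuine gap is in your last paragraph, i.e.\ in the exceptional types $A$ and $E_6$. The obstruction there is not in Proposition \ref{H-Y} or in Lemma \ref{lem: comparision of d for different vertices} (both are proved for all types, including the twisted forms of $A$ and $E_6$), and it does not sit in step (1). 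It sits in step (2): the input from \cite{ZZ} that $Q(\mu,b)$ is independent of $\mu$ is only available when no simple factor of $G_{\overline F}$ is of type $A$, and for a Weil restriction of split $E_6$ one only has the weaker relation (\ref{eq:three mu's}) involving a second cocharacter $\mu_2$ and an unknown coefficient $C(\mu)$. So your claim that ``once one top component with very special stabilizer is exhibited in these cases, step (2) proceeds exactly as described'' fails: in type $A$ there is no identity $Q(\mu,b)=Q(\mu_1,b)$ to feed it into, and in the $E_6$ case one needs the value of $Q(\mu_2,b)$, which requires knowing that \emph{all} stabilizers for $(\mu_2,b)$ have maximal volume, not that one of them does.

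What the paper actually does in \S\ref{subsec:special case} is different in kind from a Tits-table search for a single good component: Proposition \ref{thm: sum of minuscule case} proves, for $\mu$ a sum of dominant minuscule cocharacters and $b$ basic, that \emph{every} $Z\in\Sigma^{\topp}(X_\mu(b))$ has special parahoric stabilizer, via the Iwasawa-type decomposition $J_b(F)=(J_b(F)\cap P(\breve F))\cdot\CJ$ at a superbasic Levi, a Bruhat-decomposition argument in the reductive quotient, and the Weil-restriction/convolution trick of Zhu and Nie to separate the minuscule summands. Tits' tables enter only at the very end, to see that for inner forms of type $A$ and $E_6$ every special parahoric is automatically very special. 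With this, type $A$ is handled entirely without step (2) (since there every dominant $\mu$ is a sum of dominant minuscule cocharacters), and the $E_6$ Weil-restriction case is handled by applying Proposition \ref{thm: sum of minuscule case} to both $\mu_1$ and $\mu_2$ and then substituting $Q(\mu_1,b)=Q(\mu_2,b)=\vol_{\max}^{-1}$ into (\ref{eq:three mu's}). You would need to supply this sum-of-minuscule argument (or an equivalent) for your plan to close.
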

By  \cite[Proposition 3.1.4]{ZZ}, we already know that each stabilizer for the $J_b(F)$-action on $\Sigma^{\topp} (X_{\mu}(b))$ is a parahoric subgroup of $J_b(F)$. In the proof of Theorem \ref{thm:main} below we shall freely use this fact. 

We now deduce an immediate consequence of Theorem \ref{thm:main}. 
\begin{definition}\label{defn:N(mu,b)}
	Fix $\mu$ and $b$ as in Theorem \ref{thm:main}. We write $\mathscr N(\mu,b)$ for the number of $J_b(F)$-orbits in $\Sigma^{\topp}(X_\mu (b))$. 
\end{definition}	
	
	\begin{corollary}\label{cor: main} There is an identification of $J_b(F)$-sets $$\Sigma^{\topp} (X_\mu (b))\cong \coprod_{i=1}^{\mathscr N(\mu,b)} J_b(F)/\CJ_i,$$ where $\CJ_i\subset J_b(F)$ is a very special parahoric subgroup for each $i$. \qed
	\end{corollary}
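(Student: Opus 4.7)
The plan is to deduce this corollary directly from Theorem \ref{thm:main} together with standard orbit-stabilizer formalism. First I would recall that by \cite[Theorem 1.1]{HVfinite}, as noted in \S\ref{sssec: ADLV}, the $J_b(F)$-action on the set of irreducible components of $X_\mu(b)$ has finitely many orbits; in particular the same is true for $\Sigma^{\topp}(X_\mu(b))$, so the integer $\mathscr N(\mu, b)$ of Definition \ref{defn:N(mu,b)} is finite.

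Next, I would choose a set of representatives $Z_1, \dots, Z_{\mathscr N(\mu,b)}$ for the $J_b(F)$-orbits in $\Sigma^{\topp}(X_\mu(b))$, and set $\CJ_i := \Stab_{Z_i}(J_b(F))$ for each $i$. By Theorem \ref{thm:main}, each $\CJ_i$ is a very special parahoric subgroup of $J_b(F)$. The standard orbit-stabilizer bijection then identifies the $J_b(F)$-orbit of $Z_i$ with the coset space $J_b(F)/\CJ_i$ as $J_b(F)$-sets, via $g\CJ_i \mapsto g \cdot Z_i$.

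Finally, taking the disjoint union over $i$ yields the desired $J_b(F)$-equivariant identification
\[
\Sigma^{\topp}(X_\mu(b)) \cong \coprod_{i=1}^{\mathscr N(\mu, b)} J_b(F)/\CJ_i.
\]
There is no real obstacle here: the entire content is packaged in Theorem \ref{thm:main}, and the corollary is only a restatement in coset-space language. The mildly nontrivial input is the finiteness of $\mathscr N(\mu,b)$, which has already been recorded, and the (tautological) fact that the stabilizers of components in the same $J_b(F)$-orbit are conjugate, so the very-special property is well-defined on orbits.
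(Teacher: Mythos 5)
Your proposal is correct and matches the paper's (implicit) argument: the paper states this corollary as an immediate consequence of Theorem \ref{thm:main}, and your orbit-stabilizer decomposition with the finiteness of $\mathscr N(\mu,b)$ from \cite[Theorem 1.1]{HVfinite} is exactly the intended reasoning.
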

	 
	\subsubsection{}\label{subsec: pinning dual group} When $G$ is unramified, an explicit formula for $\mathscr N(\mu,b)$ was conjectured by M.~Chen and X.~Zhu, and was proved independently by the second and third named authors in \cite{ZZ} and by S.~Nie in \cite{nienew}. In the appendix of \cite{ZZ}, a generalization of this formula for ramified $G$ is given. We now recall this formula when $G$ is unramified, as this will be needed in \S \ref{subsec: reduction to basic} below.  
	  
	Consider the dual group  $\widehat{G}$ of $G$ over $\BC$. We fix a pinning $(\widehat{B}, \widehat{T}, \widehat{\mathbb X}_+)$ of $\widehat {G}$, and fix an isomorphism between the based root datum of $(\widehat G, \widehat B,\widehat T)$ and the dual of the based root datum of $(G,B,T)$. (See \S \ref{subsubsec:(B,T)} for $B$.) We then have a unique $\Gamma$-action on $\widehat G$ via automorphisms preserving $(\widehat{B}, \widehat{T}, \widehat{\mathbb X}_+)$ such that the induced $\Gamma$-action on the based root datum of $(\widehat G, \widehat B,\widehat T)$ is compatible with the natural $\Gamma$-action on the based root datum of $(G,B,T)$, see for instance \cite[\S 5.1]{ZZ}. Now $\Gamma_0$ acts trivially on $X_*(T)$, so the element $\mu\in X_*(T)_{\Gamma_0}^+$ can be viewed as a $\widehat{B}$-dominant character of $\widehat{T}$. Let $V_\mu$ be the irreducible representation of $\widehat{G}$ of highest weight $\mu$. Let $\widehat{\mathcal S}$ be the identity component of the $\Gamma$-fixed points of $\widehat T$. Then $X^*(\widehat {\CS})$ is identified with the maximal torsion-free quotient  of $X_*(T)_{\Gamma} = X_*(T)_{\sigma}$.  As in \cite[Definition 2.6.4]{ZZ}, $b$ determines an element  $\l_b \in X^*(\widehat {\CS})$. We omit the explicit definition of $\lambda_b$ here. Let $V_{\mu}(\lambda_b)$ be the weight space in the $\widehat \CS$-representation $V_{\mu}$ of weight $\lambda_b$. The geometric Satake provides us with a canonical basis $\BM\BV_\mu(\lambda_b)$ of  $V_\mu(\lambda_b)$.
	
	In the theorem below, the numerical identity is proved independently by the second and third named authors \cite[Theorem A]{ZZ} and Nie \cite[Theorem 0.5]{nienew}. The second statement is due to Nie \cite[Theorem 0.5]{nienew}. 
	\begin{theorem}\label{thm:ZZA} Keep the assumptions in \S \ref{subsec:ADLV}, and assume that $G$ is unramified over $F$. 
		We have $$\mathscr N(\mu,b)=\dim V_\mu(\lambda_b).$$ Moreover, there is a natural bijection between $J_b(F)\backslash \Sigma^{\topp}(X_\mu(b))$ and $\BM\BV_\mu(\lambda_b)$. \qed
	\end{theorem}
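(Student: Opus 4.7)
My plan is to prove the numerical identity $\mathscr N(\mu,b) = \dim V_\mu(\lambda_b)$ first, then upgrade it to the natural bijection with $\BM\BV_\mu(\lambda_b)$. For the numerical statement, I would begin with a Hodge--Newton style reduction to the basic case: using the Newton stratification and the compatibility of $\lambda_b$ with the centralizer $M$ of $\overline\nu_b$, one replaces $(G,\mu,b)$ by $(M,\mu_M,b_M)$ with $b_M$ basic in $M(\breve F)$. The map $X_{\mu_M}^M(b_M) \to X_\mu^G(b)$ (up to a trivial fiber of dimension $\langle \mu-\mu_M, 2\rho\rangle$) identifies the $J_b(F)$-orbit sets on top components, and $V_\mu(\lambda_b)$ as a weight space relative to $\widehat\CS$ is matched with $V_{\mu_M}(\lambda_{b_M})$ by standard representation theory.

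In the basic case, the principal tool is harmonic analysis. By Theorem \ref{thm:main} and Corollary \ref{cor: main}, all stabilizers $\CJ_i$ of top components are very special parahorics and hence share a common volume $v$, so
$$ \mathscr N(\mu,b) \;=\; v \cdot \sum_{Z} \vol\bigl(\Stab_Z(J_b(F))\bigr)^{-1}.$$
I would then interpret the right-hand sum as a twisted (stable) orbital integral. Concretely, counting $\sigma$-conjugacy representatives of $b$ in the Hecke double coset $\breve \CK t^\mu \breve \CK$, weighted by the appropriate stabilizer volumes, produces the orbital integral of the spherical function $\mathbf 1_{\CK t^\mu \CK}$ at $b$. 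The base change fundamental lemma (Clozel--Labesse--Ng\^o) transfers this to a twisted orbital integral on $G(F_n)$ for a sufficiently large unramified extension $F_n/F$ (the assumption $\mathrm{char}(F)=0$, implicit via \cite{ZZ}, is used here). Since $b$ becomes essentially $\sigma$-conjugate to a split torus element after base change, the twisted orbital integral reduces via the Satake isomorphism to $\mathrm{tr}\bigl(s_b \mid V_\mu(\lambda_b)\bigr)$ for a specific semisimple $s_b \in \widehat G(\BC)$, and a direct computation shows this equals $\dim V_\mu(\lambda_b)$.

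For the natural bijection I would construct an explicit map
$$\Psi \colon J_b(F)\backslash \Sigma^{\topp}(X_\mu(b)) \longrightarrow \BM\BV_\mu(\lambda_b) $$
via semi-infinite retractions. Given $Z$, pick a $\kk$-point of $Z$, view it in the affine Grassmannian, and retract along the cocharacter $\overline\nu_b$ (or rather along the corresponding semi-infinite orbit); the closure of the resulting locus inside the spherical Schubert variety $\overline{\mathrm{Gr}^\mu}$ should be an MV cycle of weight $\lambda_b$. One checks that the construction is independent of the $\kk$-point (using quasi-compactness of $Z$ from Lemma \ref{lem:quasi-compact} and $J_b(F)$-equivariance of the retraction), so descends to $J_b(F)$-orbits. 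Having already established that both sides have the same cardinality by the numerical identity, it then suffices to prove that $\Psi$ is surjective, which can be done by exhibiting, for each MV cycle, an irreducible component of $X_\mu(b)$ mapping to it via a dimension-comparison argument.

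The main obstacle is the harmonic-analytic step: pinning down the precise identification of the weighted component count with a stable twisted orbital integral requires careful bookkeeping of measures, stabilizers, and the role of the Kottwitz homomorphism, and the application of the base change fundamental lemma in our setting (non-elliptic, possibly non-regular element $b$ after transfer) requires delicate manipulations. A secondary obstacle is verifying that the semi-infinite retraction produces \emph{top-dimensional} MV cycles, for which one needs the dimension formula in Theorem \ref{thm:dim-adlv} together with the Mirkovi\'c--Vilonen dimension estimate on intersections of Schubert cells with semi-infinite orbits.
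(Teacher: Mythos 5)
This statement is not proved in the paper at all: it is quoted, with the numerical identity attributed to \cite[Theorem A]{ZZ} and \cite[Theorem 0.5]{nienew} and the bijection to Nie, which is why it carries a \qed with no argument. Any proof you supply therefore has to be independent of those sources \emph{and} of the results in this paper that rest on them. Your proposal fails this test: its central step invokes Theorem \ref{thm:main} and Corollary \ref{cor: main} to conclude that all stabilizers are very special parahorics of a common volume $v$, but in this paper Theorem \ref{thm:main} is itself proved using Theorem \ref{thm:ZZA} --- explicitly in the proof of Proposition \ref{thm: sum of minuscule case} (transitivity of $J_b(F)\cap P(\breve F)$ on orbits) and, more fundamentally, through Proposition \ref{prop:key from ZZ}, whose proof quotes the main result of \cite{ZZ}, i.e.\ exactly the identity $\mathscr N(\mu,b)=\dim V_\mu(\lambda_b)$ you are trying to establish. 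So the argument is circular as it stands.

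There is also a substantive gap in the harmonic-analytic step even if one grants the equal-volume input. Knowing that $\mathscr N(\mu,b)= v\cdot\sum_Z \vol(\Stab_Z(J_b(F)))^{-1}$ only converts the problem into evaluating the weighted sum, and the (twisted) orbital integral of $\mathbf 1_{\CK t^\mu \CK}$ at $b$, computed via the base change fundamental lemma and Satake, is a polynomial in $q$ recording the full weight-multiplicity/Hall--Littlewood structure, not $\dim V_\mu(\lambda_b)$ times a volume; the sentence ``a direct computation shows this equals $\dim V_\mu(\lambda_b)$'' is precisely the hard content of \cite{ZZ}, where one needs the interpolation $S_{\mu,b}(0)=\mathscr N(\mu,b)$, $S_{\mu,b}(q)=e(J_b)\sum_Z\vol([Z])^{-1}$ together with delicate estimates to separate the orbit count from the volumes. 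Finally, the ``moreover'' part is again essentially a restatement of Nie's theorem: deducing a \emph{natural} bijection from equal cardinalities plus an unproved surjectivity of a semi-infinite retraction map does not constitute a proof. If you want to use this theorem in the context of the paper, the correct move is to cite \cite{ZZ} and \cite{nienew} as the paper does; a self-contained proof would have to avoid Theorem \ref{thm:main} and supply the orbital-integral analysis and the MV-cycle construction in full.
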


	\subsection{Reduction to adjoint unramified $F$-simple groups in characteristic $0$}\label{subsec: reduction to char 0}
	In this subsection, we show that to prove Theorem \ref{thm:main},  it suffices to prove it in the case where $\charac(F)=0$, and $G$ is an adjoint $F$-simple unramified group over $F$.
	\subsubsection{}
	
	Let $w \in \breve W$ and $[b] \in B(G)$. We first construct some combinatorial data involving only the affine Weyl group $\breve W_a$ together with the length function $\breve \ell$ and the action of $\s$ on $\breve W_a$, but not the reductive group $G$. This allows us to connect different reductive groups over different local fields. 
	
	Let $\Aut^0(\breve W_a)$ be the group of length-preserving automorphisms of $\breve W_a$. We may regard $\s$ as an element of $\Aut^0(\breve W_a)$. Let $\widehat  W_a=\breve W_a \rtimes \Aut^0(\breve W_a)$. We have a natural group homomorphism $$i: \breve W \to \widehat W_a, \qquad w \t \mapsto (w, \Ad(\t)) \text{ for } w \in \breve W_a, \t \in \Omega.$$ Moreover, the map $i$ is compatible with the actions of $\s$. (Here the action of $\s$ on $\widehat W_a$ is given by $(w,f) \mapsto (\s (w), \s \circ f \circ \s^{-1})$.) 
	
	\subsubsection{}\label{subsubsec:association}By \cite[Theorem 3.7]{He14}, the set $B(G)$ is in natural bijection with a certain subset of $\s$-conjugacy classes in $\breve W$. By composing with the map $i$, we may associate to any $[b] \in B(G)$ a $\s$-conjugacy class $C_{[b]}$ in $\widehat W_a$. Let $G'$ be a connected reductive group over a (possibly different) local field $F'$, let $b' \in G'(\breve {F'})$, and let $w'$ be an element of the Iwahori--Weyl group $\breve W'$ of $G_{\breve F'}$. Note that any length-preserving isomorphism of $\breve W_a$ to $\breve W'_a$ extends in a unique way to a group isomorphism $\widehat W_a \to \widehat W'_a$. Write $\s'$ for the Frobenius in $\Aut(\breve F'/F)$, and write $[b']$ for the $\s'$-conjugacy class of $b'$ in $G'(\breve F')$. Then $[b']$ determines a $\s'$-conjugacy class $C_{[b']}$ in $\widehat W'_a$. We say that the triples $(G, b, w)$ and $(G', b', w')$ are {\it associated} if the following conditions are satisfied:
	\begin{itemize}
		\item We have $\k_G(w)=\k_G(b)$ and $\k_{G'}(w')=\k_{G'}(b')$. 
		\item There exists a length-preserving isomorphism $f: \breve W_a \isoarrow \breve W'_a$ such that the diagram
		\[
		\xymatrix{
			\breve W_a \ar[r]^f \ar[d]_\s & \breve W'_a \ar[d]^{\s'} \\
			\breve W_a \ar[r]^f & \breve W'_a
		}
		\] commutes, and we have $f(C_{[b]})=C_{[b']}$ and $f(i(w))=i'(w')$. Here $i':\breve W'\rightarrow\widehat{W}'_a$ is the natural homomorphism analogous to $i$.  
	\end{itemize}  
In this case, $f$ induces an isomorphism from the  affine Weyl group of $J_b$ to the affine Weyl group of $J_{b'}$. We thus obtain a bijection between the standard parahoric subgroups of $J_b(F)$ and those of $J_{b'}(F)$, cf.~\cite[Lemma 3.2.2]{ZZ}. Let $\CJ\subset J_b(F)$ and $\CJ'\subset J_{b'}(F')$ be parahoric subgroups. We say that $\CJ$ and $\CJ'$ are \textit{associated with respect to $f$}, if there exist  $j\in J_b(F)$ and $j'\in J_{b'}(F')$ such that $j\CJ j^{-1}$ and $j'\CJ'j'^{-1}$ are standard parahoric subgroups which correspond to each other under the above-mentioned bijection.
	
	\begin{proposition}\label{prop: assoc parahorics}
		Suppose that $(G, b, w)$ and $(G', b', w')$ are associated, and fix $f: \breve W_a \isoarrow \breve W'_a$ as in \S \ref{subsubsec:association}. Then there is a bijection $$\Theta:J_b(F)\backslash\Sigma^{\topp}( X_w(b))\isoarrow J_{b'}(F')\backslash\Sigma^{\topp }(X_{w'}(b'))$$ satisfying the following condition:
		
		 For $Z\in \Sigma^{\topp}(X_w(b))$ and $Z'\in \Sigma^{\topp}(X_{w'}(b'))$ such that $\Theta(J_b(F)Z)=J_{b'}(F)Z'$, the parahoric subgroups $\Stab_Z(J_b(F))$ and $\Stab_{Z'} (J_{b'}(F'))$ are associated with respect to $f$.
	\end{proposition}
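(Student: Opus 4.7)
The approach is to leverage the fact that the class polynomial expansion provided by Theorem \ref{motivic} together with Corollary \ref{cor:motivic count irred comp}, and the explicit stabilizer description of Remark \ref{rem:useful}, depend only on the datum $(\breve W_a,\breve\ell,\s)$ together with a chosen $\s$-conjugacy class, and not on the particular reductive group or local field. This is exactly the information preserved by the hypothesis that $(G,b,w)$ and $(G',b',w')$ are associated via $f$.

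First I would apply Corollary \ref{cor:motivic count irred comp} to both triples, so that
\begin{equation*}
\fkC(\dbp{X_w(b)}) \;=\; \sum_{C \in \sC(\breve W),\,\Psi(C)=[b]} F_{w,C}(\BL-1)\cdot \bigl(\Sigma^{\topp}(X_{w_C}(b)),\,\dim X_{w_C}(b)\bigr)
\end{equation*}
in $\TIC^{J_b(F)}$, and analogously in $\TIC^{J_{b'}(F')}$ for the primed data. The isomorphism $f$ extends uniquely to a $\s$-equivariant group isomorphism $\widehat W_a \isoarrow \widehat W'_a$, which restricts to a bijection $\sC(\breve W)\isoarrow \sC(\breve W')$ compatible with $\pi$ through $i$ and $i'$. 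The hypothesis $f(C_{[b]})=C_{[b']}$ then puts the index sets in canonical correspondence, and by the third item of the remark following Theorem \ref{motivic} we have $F_{w,C}=F_{w',f(C)}$ as polynomials in $\mathbf q-1$. Thus the two sums are parallel term by term.

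To identify the individual summands, I would invoke Remark \ref{rem:useful}. For each $C$ with $\Psi(C)=[b]$ and any representative $w_C\in C$, the $J_b(F)$-set $\Sigma^{\topp}(X_{w_C}(b))$ is a single orbit, $\dim X_{w_C}(b)$ is given by an explicit combinatorial formula, and the stabilizer of any component is a parahoric subgroup of $J_b(F)$ whose type in the affine Weyl group of $J_b$ is read off from the combinatorics of $w_C$ and $\mathrm{Ad}(\dot\t)\circ\s$ for an appropriate length-zero $\t\in\Omega$. All of these data are intrinsic to the abstract package $(\breve W_a,\breve\ell,\s,w_C)$, so they transport under $f$ to the analogous data for $w'_{f(C)}$. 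Under the bijection between standard parahorics of $J_b(F)$ and those of $J_{b'}(F')$ induced by $f$ (cf.~\cite[Lemma 3.2.2]{ZZ}), the stabilizer attached to $w_C$ then goes to the stabilizer attached to $w'_{f(C)}$, so the two parahorics are associated with respect to $f$.

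Finally I would extract $\Theta$ by restricting to top dimension. Only those $C$ that achieve the maximum of $\dim X_{w_C}(b)$ among summands with $F_{w,C}\ne 0$ contribute to $\Sigma^{\topp}(X_w(b))$, because of the additive structure of $\TIC^{J_b(F)}$ and the fact that $\BT$ strictly raises the second coordinate. Since both the dimensions and the polynomials $F_{w,C}$ are preserved by $f$, the same set of contributing $C$'s appears on the $(G',b',w')$ side, and the coset description of each contributing summand matches via the stabilizer correspondence above; passing to $J_b(F)$-orbits yields the desired bijection $\Theta$. The main obstacle I anticipate is verifying that the explicit stabilizer description in \cite[Proposition 3.1.4]{ZZ} is genuinely functorial in the abstract datum $(\breve W_a,\breve\ell,\mathrm{Ad}(\dot\t)\circ\s,w_C)$, i.e.~that its construction takes place entirely inside the affine Weyl group and uses no information specific to $G$ itself; everything else is bookkeeping with the motivic expansion.
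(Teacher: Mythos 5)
Your proposal is correct and follows essentially the same route as the paper: reduce via Corollary \ref{cor:motivic count irred comp} and the $f$-invariance of the class polynomials $F_{w,C}$ (which depend only on $(\breve W_a,\breve\ell,\s)$, so one chooses $F_{w',C'}=F_{w,C}$ under the induced bijection $\sC(\breve W)\isoarrow\sC(\breve W')$) to the case of minimal-length elements, where transitivity of the $J_b(F)$-action and the explicit finite Deligne--Lusztig description of \cite[Theorem 4.8]{He14} (as used in \cite[Proposition 3.1.4]{ZZ}) give the bijection and the association of stabilizers. The "obstacle" you flag is exactly the point the paper also settles by citing that explicit description, so no genuinely different argument is involved.
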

	
	\begin{proof}
		By Corollary \ref{cor:motivic count irred comp}, the isomorphism class of the $J_b(F)$-set $\Sigma^{\topp} (X_{w}(b)) $ depends only on $F_{w,C}$ and the $J_b(F)$-sets $\Sigma^{\topp}(X_u(b))$ for $u\in \breve W_{\s,\min}$. Similarly, the isomorphism class of the $J_{b'}(F')$-set $\Sigma^{\topp} (X_{w'}(b')) $  depends only on $F_{w',C'}$  and $\Sigma^{\topp}(X_{u'}(b'))$ for $u'\in \breve W'_{\s',\min}$. Here $C'$ runs over $\mathscr C(\breve W')$, the set of $\tilde{\approx}_{\s'}$-equivalence classes in $\breve W'_{\s',\min}$. 
		
		Note that in the proof of Theorem \ref{motivic}, the construction of the polynomials $F_{w, C}$ only involves $\s$-conjugation of $\breve W_a$ on $\breve W$, and the construction remains the same if we replace $\breve W$ by $\widehat W_a$. The identification $f: \breve W_a \isoarrow \breve W
		_a'$ induces an identification $\mathscr C (\breve W)\isoarrow \mathscr C(\breve W')$, and we choose the polynomials $F_{w',C'}$ such that $F_{w,C}=F_{w',C'}$ if $C$ and $C'$ correspond to one another under $\mathscr C (\breve W)\isoarrow \mathscr C(\breve W')$. It follows that it suffices to prove the proposition for $w\in \breve W_{\s,\min}$.
		
		Assume that $w\in \breve W_{\s,\min}$. Since the maps $i: \breve W \to \widehat  W_a$ and $i':\breve W' \to \widehat  W'_a$ are compatible with the conjugation actions and preserves the length functions, we have $w' \in \breve W'_{\s',\min}$. By \cite[Theorem 4.8]{He14}, $J_b(F)$ (resp.~$J_{b'}(F')$) acts transitively on $\Sigma^{\topp} (X_w(b))$ (resp.~$\Sigma^{\topp }(X_{w'}(b'))$) and hence we obtain the desired bijection $\Theta$. The ``moreover'' part follows from the explicit description of $X_w(b)$ in terms of finite Deligne--Lusztig varieties given in the proof \cite[Theorem 4.8]{He14}, cf.~the proof of \cite[Proposition 3.1.4]{ZZ}.
	\end{proof}
	\begin{corollary}\label{cor:red1}
		To prove Theorem \ref{thm:main}, it suffices to prove it when $\mathrm{char}(F)=0$ and $G$ is an adjoint $F$-simple unramified group over $F$.
	\end{corollary}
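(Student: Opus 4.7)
The plan is to apply Proposition \ref{prop: assoc parahorics} together with Proposition \ref{prop: eq. v. special, max vol, max log vol}, which identifies very-special parahorics with those of maximal log-volume. Since the log-volume $\breve\ell(w_{\breve K})$ is a purely combinatorial invariant of $(\breve W_a, \breve\ell)$, any length-preserving $\sigma$-equivariant isomorphism $f: \breve W_a \isoarrow \breve W'_a$ sends a standard parahoric associated to $\breve K$ to an associated standard parahoric of equal log-volume. In particular, the property of being very special is preserved under any association in the sense of \S\ref{subsubsec:association}. The task thus reduces to constructing, for a given $(G, b, \mu)$ as in Theorem \ref{thm:main}, an associated triple $(G', b', w')$ satisfying the hypotheses of the corollary.

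First I would reduce to $G$ adjoint: the projection $G \to G_{\ad}$ induces an identification of $(\breve W_a, \breve\ell, \sigma)$ and a compatible correspondence between parahoric subgroups of $J_b(F)$ and of $J_b^{\ad}(F)$, and the notion of very-special depends only on this affine Weyl datum. Next I would reduce to $G$ being $F$-simple by decomposing $G_{\ad} = \prod_{i=1}^r G_i$ into $F$-simple factors: this induces $J_b(F) = \prod_i J_{b_i}(F)$, $\Sigma^{\topp}(X_\mu(b)) \cong \prod_i \Sigma^{\topp}(X_{\mu_i}(b_i))$, and by the product decomposition of log-volume derived in the final paragraph of the proof of Proposition \ref{prop: eq. v. special, max vol, max log vol}, a product of parahorics is very special precisely when each factor is.

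It remains to reduce to $G'$ unramified with $\charac(F') = 0$. Given adjoint, $F$-simple, tamely ramified quasi-split $G$, I would construct an adjoint $F'$-simple unramified $G'$ over a characteristic-zero local field $F'$, together with $b' \in G'(\breve{F'})$ and $w' = w_0' t^{\mu'} \in \breve W'$, so that $(G, b, w_0 t^\mu)$ and $(G', b', w')$ are associated via some $f: \breve W_a \isoarrow \breve W'_a$. The crux is that the datum $(\breve W_a, \breve\ell, \sigma)$ of a tamely ramified quasi-split group can be realized by an unramified group in characteristic zero: $\breve W_a$ is determined up to length-preserving isomorphism by its affine Dynkin type, and the $\sigma$-action factors through the finite group of affine Dynkin automorphisms, which can be realized as the Frobenius action on an unramified quasi-split adjoint simple $G'$ over a suitable unramified extension $F'/\BQ_p$. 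A matching $b'$ and $w'$ are then produced using the classifications of $\sigma$-conjugacy classes and of elements of the Iwahori--Weyl group, together with the fact that the $\kappa$-value and the $\widehat W_a$-class $C_{[b]}$ are combinatorial invariants. Once the association is in place, Proposition \ref{prop: assoc parahorics} applied to $w_0 t^\mu$, combined with Proposition \ref{prop: relation between Iw and Sp ADLV}, transfers the very-special property of stabilizers from $(G', b', \mu')$ back to $(G, b, \mu)$. The main obstacle is the explicit construction of the associated unramified triple in characteristic zero, in particular producing a basic $b'$ whose $(\overline\nu, \kappa)$-data match the combinatorial image of those of $b$; this is essentially a case-by-case verification over the absolutely simple types, using that only finitely many Dynkin diagrams and Frobenius actions need to be realized.
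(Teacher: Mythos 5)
Your core transfer is the same as the paper's: realize the combinatorial datum $(\breve W_a,\breve\ell,\sigma)$ by an adjoint unramified group $G'$ over a characteristic-zero local field, produce $b'$ and $w'_0t^{\mu'}$ so that $(G,b,w_0t^{\mu})$ and $(G',b',w'_0t^{\mu'})$ are associated, and then combine Proposition \ref{prop: assoc parahorics}, Proposition \ref{prop: relation between Iw and Sp ADLV}, and the equivalence of ``very special'' with maximal log-volume from Proposition \ref{prop: eq. v. special, max vol, max log vol}, the log-volume $\breve\ell(w_{\breve K})$ of (\ref{eqn: log vol length}) being purely combinatorial and hence preserved under association. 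The one structural difference is your preliminary reduction of the \emph{source} group: you first pass from $G$ to $G_{\ad}$ and split into $F$-simple factors before building the associated triple. The paper never compares $G$ with $G_{\ad}$: the association machinery of \S\ref{subsubsec:association} applies to arbitrary quasi-split tamely ramified $G$ (adjointness is needed only for the \emph{target} $G'$, to guarantee $f(i(\breve W))\subset i'(\breve W')$ so that $\mu'$ and $w'$ exist), and the $F$-simple reduction is performed at the end on the unramified adjoint side, where it is immediate since an adjoint unramified group is a product of $F$-simple ones. Your adjoint step, as stated, is both unnecessary and under-justified: the homomorphism $J_b(F)\to J_{b_{\ad}}(F)$ is in general neither injective nor surjective, and identifying component stabilizers of $X^G_\mu(b)$ and $X^{G_{\ad}}_{\mu_{\ad}}(b_{\ad})$ would itself require a comparison of the two affine Deligne--Lusztig varieties that you do not supply. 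Since your subsequent association argument uses neither adjointness nor $F$-simplicity of the source, you can simply delete that detour. Finally, to secure the $\kappa$-matching and the condition $f(C_{[b]})=C_{[b']}$ in the definition of association, do as the paper does: choose via \cite[Theorem 3.7]{He14} an element $w\in\breve W$ with $[b]=[\dot w]$ and $w\breve W_a=w_0t^{\mu}\breve W_a$, and take $b'=\dot w'$ with $i'(w')=f(i(w))$; this replaces the case-by-case matching of $(\overline\nu,\kappa)$-data you allude to.
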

	
	\begin{proof} We first assume Theorem \ref{thm:main} is true for unramified adjoint groups over local fields of characteristic $0$. Let $G$ be an arbitrary (i.e., quasi-split, tamely ramified, reductive) group over an arbitrary local field $F$. By Proposition \ref{prop: relation between Iw and Sp ADLV}, it suffices to show that the stabilizer in $J_b(F)$ of every element of $\Sigma^{\topp} (X_{w_0t^\mu}(b))$ is a very special parahoric of $J_b(F)$.
		
		Since $X_\mu(b) \neq \emptyset$, we have $X_{w_0 t^\mu}(b) \neq \emptyset$ and thus $\k_G(w_0 t^\mu)=\k_G(b)$. By \cite[Theorem 3.7]{He14}, there exists $w \in \breve W$ such that $[b]=[\dot w]$. In particular, we have $\k_G(w_0 t^\mu)=\k_G(w)$. By replacing $w$ by a suitable element in the $\s$-orbit of $w$, we may assume furthermore that $w_0 t^\mu \breve W_a=w \breve W_a$. 
		
		We choose $F'$ a local field of characteristic $0$ and $G'$ an adjoint unramified group over $F'$ such that there is a length-preserving isomorphism $f: \breve W_a \to \breve W'_a$ such that $f \circ \s=\s' \circ f$; see \cite[\S\S 6.1, 6.2]{He14} for the construction of such a group. Since $G'$ is adjoint, we have $f(i(\breve W)) \subset i'(\breve W')$. Let $\mu' \in X_*(T')$ with $f(i(t^\mu))=i'(t^{\mu'})$. Then $f(i(w_0 t^\mu))=i'(w'_0 t^{\mu'})$. Let $w' \in \breve W'$ with $f(i(w))=i'(w')$ and $[b'] \in B(G')$ with $[b']=[\dot w']$. Since $w_0 t^\mu \breve W_a=w \breve W_a$, we have $w'_0 t^{\mu'} \breve W'_a=w' \breve W'_a$. Therefore $\k_{G'}(b')=\k_{G'}(w')=\k_{G'}(w'_0 t^{\mu'})$. 
		
		Hence $(G, b, w_0 t^\mu)$ and $(G', b', w'_0 t^{\mu'})$ are associated. 
		
		By Proposition \ref{prop: assoc parahorics}, for any $Z\in\Sigma^{\topp}(X_{w_0t^\mu}(b))$, its stabilizer $\Stab_{Z}(J_b(F))$ is associated to $\Stab_{Z'}(J_{b'}(F'))$ for some $Z'\in\Sigma^{\topp}(X_{w_0't^{\mu'}}(b'))$. By assumption, $\Stab_{Z'}(J_{b'}(F'))$ is a very special parahoric subgroup of $J_{b'}(F')$. By the equivalence (1) $ \Leftrightarrow $ (3) in Proposition \ref{prop: eq. v. special, max vol, max log vol} and by the formula (\ref{eqn: log vol length}) for the log-volume, we know that $\Stab_{Z}(J_b(F))$ is a very special parahoric subgroup of $J_b(F)$. 
		
		Now the reduction from the adjoint  unramified case to the adjoint unramified $F$-simple case follows from the fact that any adjoint unramified group over $F$ is a direct product of adjoint unramified $F$-simple groups. 	\end{proof}

	\subsection{Reduction to the basic case}\label{subsec: reduction to basic} 
		We assume that $\mathrm{char}(F)=0$ and that $G$ is an adjoint $F$-simple  unramified group over $F$. By Corollary \ref{cor:red1}, we can reduce the proof of Theorem \ref{thm:main} to this case. In this subsection we show that we can further reduce the proof to the case where $b$ is basic. We follow the strategy of \cite[\S5]{HV}.
	\subsubsection{}\label{subsubsec:notation for M}Let $\breve K = \breve \BS_0$, and let $\CK$ and $\breve \CK$ be the corresponding parahoric subgroups of $G(F)$ and $G(\breve F)$ respectively, as in \S \ref{subsubsec:cases we consider}. In our current setting, $\CK$ is in fact a hyperspecial subgroup of $G(F)$. 
	
	Let $M\subset G$ denote the standard Levi subgroup of $G$ given by the centralizer of $\overline\nu_b^G$. We view $\breve \CA$ as an apartment for $M$ and let $\breve \fka_M \subset \breve \CA$ be the (unique) alcove with respect to $M$ such that $\breve \fka \subset \breve \fka_M$. We denote by $\breve W_M$ the Iwahori--Weyl group for $M$ and denote by $\Omega_M$ the subgroup of length zero elements determined by $\breve\fka_M$. Upon replacing $b$ by an element of its $\s$-conjugacy class in $G(\breve F)$, we may assume that $b\in M(\breve F)$ and that $\overline \nu_b^M=\overline \nu_b^G$ (see e.g.~\cite[Lemma 2.5.1]{CKV}). Then $b$ is basic in $M$. Upon further  replacing $b$ by an element of its $\s$-conjugacy class in $M(\breve F)$, we may assume that $b=\dot{\tau}$ for some $\tau\in\Omega_M$.
	
	Let $P$ be the standard parabolic subgroup of $G$ with Levi subgroup $M$. Let $N$ be the unipotent radical of $P$. Let $\breve \CK_M$ (resp.~$\breve \CK_P$) denote the intersection $M(\breve F)\cap\breve \CK$ (resp $P(\breve F)\cap \breve \CK$). These arise from group schemes $\CK_M$ and $\CK_P$ defined over $\CO_F$, and $\CK_M(\CO_F)$ is a hyperspecial subgroup of $M(F)$. As in \cite[\S5]{HV}, we define 
	\begin{align*} 
	X^{M\subset G}_\mu(b)(\kk)& :=\{g\in M(\breve F)/\breve\CK_M\mid g^{-1}b\sigma(g)\in \breve \CK\dot{t}^\mu\breve\CK\}, \\
	X^{P\subset G}_\mu(b)(\kk)& :=\{g\in P(\breve F)/\breve \CK_P\mid g^{-1}b\sigma(g)\in \breve \CK\dot{t}^\mu\breve\CK\}.
	\end{align*} 

	These can be identified with the sets of $\kk$-points of perfect subschemes $X^{M\subset G}_\mu(b)$ and $X^{P\subset G}_\mu(b)$ of $\mathrm{Gr}_{\breve\CK_M}$ and $\mathrm{Gr}_{\breve\CK_P}$ respectively. 
	
	The natural maps $M\leftarrow P\rightarrow G$ induce maps $$\xymatrix{X^{M\subset G}_\mu(b)& \ar[l]_p X^{P\subset G}_\mu(b) \ar[r]^q & X_\mu(b)},$$ which are easily seen to be $J_b(F)$-equivariant. The same argument as \cite[Lemma 2.2]{Ham} and the paragraph preceding it shows that the map $q$ is a decomposition of $X_\mu(b)$ into locally closed subschemes (cf.~\cite[Lemma 5.2]{HV}) and hence we obtain a $J_b(F)$-equivariant bijection \begin{equation}\label{eqn: bij irred comp ADLV parabolic}
	\Sigma^{\topp}(X^{P\subset G}_\mu(b))\isoarrow \Sigma^{\topp}(X_\mu(b)).
	\end{equation}
	
	\subsubsection{}Let $X$ and $\CX$ be smooth finite-type affine group schemes over $\breve F$ and $\CO_{\breve F}$ respectively. The loop group $LX$ and the positive loop group $L\CX$ are defined to be the functors on perfect $\kk$-algebras $R$ given by $$LX(R)=X(W(R) \otimes_{W(\kk)} \breve F ),\quad  \text{ and } \quad  L^+\CX(R)=\CX(W(R) \otimes_{W(\kk)}  \CO_{\breve F} ). $$ Then $LX$ is representable by an ind-perfect ind-group-scheme, and   $L^+\CX$ is representable by the perfection of an affine group scheme over $\kk$. We also define the $n^{\text{th}}$ jet-group $L^n \CX$ to be the functor on perfect $\kk$-algebras $R$ given by $$L^n\CX (R) =\CX (W (R)\otimes_{W(\kk)}\CO_{\breve F}/(\pi^n)),$$
	where $\pi$ is a uniformizer in $\breve F$. Then $L^n\CX$ is representable by the perfection of an algebraic group over $\kk$. 
		
	\begin{lemma}\label{lem: Lang map iso}
		The map $$f_b: LN \to  LN, \qquad n\mapsto n^{-1}b\sigma(n) b^{-1}$$ is an isomorphism.
	\end{lemma}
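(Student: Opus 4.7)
The plan is to exploit the fact that $\overline{\nu}_b$, which is central in $M$ by construction, is strictly $P$-dominant, so that every root $\alpha$ appearing in $\Lie(N)$ satisfies $\langle \alpha, \overline{\nu}_b \rangle > 0$. This will make the automorphism $\phi : LN \to LN$, $n \mapsto b\sigma(n) b\i$, \emph{topologically contracting} with respect to a natural congruence filtration of $LN$, and since $f_b(n) = n\i \phi(n)$ is a twisted Lang map for $\phi$, its invertibility will then be a consequence of contraction.

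First I would set up the filtration. Choose a smooth affine $\CO_F$-model $\CN$ of $N$ adapted to the root decomposition of $\Lie(N)$, and consider the decreasing congruence filtration $LN \supset L^+\CN \supset LN^{(1)} \supset LN^{(2)} \supset \cdots$, cut out by the jet-group quotients $L^+\CN \to L^k \CN$. Because $b$ is basic in $M$ with Newton point $\overline{\nu}_b$, the operator $\Ad(b) \circ \sigma$ acts on the $\alpha$-component of $\Lie(LN)$ with all Newton slopes equal to $\langle \alpha, \overline{\nu}_b \rangle > 0$, while $\sigma$ alone preserves valuation. Combining these observations across all positive roots of $N$, one obtains the contracting property: for some $N_0 \geq 1$, the operator $\phi^{N_0}$ strictly deepens the filtration, i.e.\ $\phi^{N_0}(LN^{(k)}) \subset LN^{(k+1)}$ for $k$ sufficiently large.

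Next I would use contraction to invert $f_b$. On $\kk$-points, given $g \in LN(\kk)$, the candidate preimage is the limit $n_\infty = \lim_{k\to\infty} \phi^{k-1}(g\i) \cdots \phi(g\i) g\i$, whose partial products converge in the congruence topology by the contracting property, and a direct computation yields $f_b(n_\infty) = g$. Uniqueness reduces to showing that the only $\phi$-fixed element of $LN$ is $1$, which is immediate from contraction. To upgrade this pointwise inverse to a morphism of perfect ind-schemes, I would restrict to the finite-dimensional quotients $\pi^{-m} L^+\CN / LN^{(n)}$ exhausting $LN$: there $f_b$ becomes a morphism of perfect finite-type $\kk$-schemes whose Jacobian $1 - d\phi$ is everywhere invertible (once more by topological nilpotence of $d\phi$ on the finite-dimensional tangent space), hence étale; combined with pointwise bijectivity on each quotient this gives an isomorphism, and taking the inverse limit yields the desired isomorphism $f_b : LN \isoarrow LN$.

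The main obstacle will be handling the non-abelianness of $N$ in making the contraction argument precise, since the clean statement ``$\phi$ multiplies the $\alpha$-root line by $\pi^{\langle \alpha, \overline{\nu}_b\rangle}$'' applies only abelianly. The standard workaround is to filter $N$ by its lower central series $N = N^1 \supset N^2 \supset \cdots \supset N^r = 1$, so that each successive subquotient $N^i/N^{i+1}$ is abelian and decomposes into a sum of root subgroups. On each such subquotient $\phi$ is still contracting and $f_b$ is an isomorphism by the linear-algebraic argument above; the short exact sequences of loop groups then let one assemble the inverse on the full non-abelian $LN$ by induction on $i$.
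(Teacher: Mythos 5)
Your pointwise core is sound and in fact close in spirit to the paper's: both arguments ultimately rest on the facts that $\langle\alpha,\overline\nu_b\rangle>0$ for every root $\alpha$ of $N$ (so that twisted conjugation by $b$ contracts \emph{after sufficiently many iterations} -- not after one, since $b=\dot\tau$ with $\tau\in\Omega_M$ only becomes $\dot t^{\lambda_s}\cdot(\text{unit})$ with $\lambda_s=s\overline\nu_b$ dominant after taking the $s$-fold norm), and that the twisted centralizer of $b$ in $N(\breve F)$ is trivial. The paper exploits this by replacing $f_b$ by its $s$-th iterate before doing any geometry, showing the iterate preserves the Moy--Prasad congruence subgroups $L^+\CN_r$, is \'etale on the jet groups $L^r\CN_0$ (on models \emph{before} perfection, where the $\sigma$-part has vanishing differential so the map induces $-\mathrm{id}$ on tangent spaces), deducing that the limit over $r$ is a pro-\'etale covering of $L^+\CN_0$ whose fibers are torsors under $J^{(s)}_{b_s}(F_s)\cap N(\breve F)=\{1\}$, hence trivial, and finally passing from $L^+\CN_0$ to all of $LN$ by conjugating with $\dot t^{\chi}$, $\chi\in X_*(T)^{+,\sigma}\cap X_*(Z_M)$, which commutes with the map.

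The genuine gap in your proposal is the scheme-theoretic upgrade. First, a single application of $\phi=\Ad(b)\circ\sigma$ does \emph{not} preserve the bounded pieces $\pi^{-m}L^+\CN$ nor the congruence filtration (your own caveat that only $\phi^{N_0}$ deepens the filtration concedes this), so the claim that $f_b$ ``becomes a morphism'' of the fixed finite-dimensional quotients $\pi^{-m}L^+\CN/LN^{(n)}$ is unjustified as stated; one must first pass to a high iterate (as the paper does) and keep track of level shifts. Second, even for the iterate, bijectivity on $\kk$-points of each finite-level quotient is not something you have proved and is not automatic: the finite-level maps are \'etale coverings whose degree need not be one, and the paper's proof deliberately avoids any finite-level bijectivity claim, trivializing instead the pro-\'etale covering in the limit via the vanishing of the twisted centralizer in $N$. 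Your ``\'etale $+$ pointwise bijective on each quotient $\Rightarrow$ isomorphism'' therefore assumes exactly the missing step. Third, the Jacobian criterion cannot be run on perfect finite-type $\kk$-schemes (perfections have no nonzero K\"ahler differentials); \'etaleness must be checked on deperfected models, where the correct reason is $d\sigma=0$ in characteristic $p$, not topological nilpotence of $d\phi$. Finally, $LN$ is an increasing union of the bounded pieces, so the last step is a colimit rather than an inverse limit, and compatibility of the putative inverses with both transition systems (in $m$ and in $n$) needs an argument -- the paper's $\dot t^{\chi}$-conjugation trick is precisely the device that handles this. The contraction/Lang-map heuristic and the lower-central-series d\'evissage could no doubt be made rigorous, but as written the finite-level part of your argument would not go through.
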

	
	\begin{proof}Recall we have assumed $b=\dot{\tau}$ for $ \tau \in \Omega_M$. Choose $s$ sufficiently divisible such that $\tau\sigma(\tau)\dotsc\sigma^{s-1}(\tau)=t^{\lambda_s}$ where $\lambda_s:=s\overline{\nu}_b\in X_*(T)^+$. (Note that since we have assumed $G$ is unramified, $\Gamma_0$ acts trivially on $X_*(T)$.) We set $b_s:=b\sigma(b) \cdots \sigma^{s-1}\sigma(b)$. Then we have $b_s\in \dot{t}^{\lambda_s}T(\breve F)_1$ and it suffices to show that the map $$f_b^s=f_b\circ\dotsc\circ f_b:n\mapsto n^{-1}b_s\sigma^s(n)b_s^{-1}$$ is an isomorphism $LN \to LN$.
		
		For $r\geq 0$, we define $N_r:=N(F)\cap\CI_r$ where $\CI_r$ is the $r^{\mathrm{th}}$-subgroup in the Moy--Prasad filtration of $\CI$. Then $N_r = \CN_r(\CO_F)$ for an $\CO_{ F}$-group scheme $\CN_r$ and we have $$L^+\CN_r=\ker(L^+\CN_0\rightarrow L^r\CN_0).$$

		Since $\lambda_s\in X_*(T)^+$, we have $\dot{t}^{\lambda_s}\sigma^s(N_r)\dot{t}^{-\lambda_s}\subset N_r$ for all $r$. It follows that $f^s_b$ induces a morphism $$f^s_{b,r}:L^r\CN_0 \to  L^r\CN_0$$
		for each $r$.
		In fact $f^s_{b,r}$ is naturally defined before taking perfections and is an \'etale morphism since it induces multiplication by $-1$ on tangent spaces. It follows that $f^s_{b,r}$ is an \'etale covering.
		
		Let $F_s$ be the degree $s$ unramified extension of $F$ and let $J^{(s)}_{b_s}(F_s)$ denote the $\sigma$-centralizer group $J^{(s)}_{b_s}(F_s):=\{g\in G(\breve F):g^{-1}b_s\sigma^s(g)=b_s\}$; then $J^{(s)}_{b_s}(F_s)\subset M(\breve F)$.  Since the fibers of $f^s_b:LN\rightarrow LN$ are torsors for $J^{(s)}_{b_s}(F_s)\cap N(\breve F)=\{1\}$, 
		it follows that the  ``pro-\'etale'' covering  $f^s_{b}|_{L^+\CN_0}:L^+\CN_0\rightarrow L^+\CN_0$ obtained by taking the inverse limit of the $f_{b,r}^s$ is trivial and hence $f^s_{b}|_{L^+\CN_0}$ is an isomorphism (cf. \cite[Lemma 4.3.4]{XZ}).
		
		Now fix an element $\chi\in X_*(T)^{+,\sigma}\cap X_*(Z_M)$, where $Z_M$ is the center of $M$. Using the fact that $\mathrm{Ad}\dot t^{\chi}\circ f^s_b=f^s_b\circ\mathrm{Ad}\dot{t}^\chi$,  we find that $f^s_b:\dot t^{-\chi}L^+\CN_0\dot t^\chi \rightarrow\dot t^{-\chi}L^+\CN_0\dot t^\chi$ is an isomorphism. Taking an inductive limit over $\chi$, we find that  $f^s_b:LN\rightarrow LN$ is an isomorphism. 
	\end{proof}

	\subsubsection{}We identify $\mathrm{Gr}_{\breve\CK}$ with the fpqc quotient $LG/L^+\breve\CK$.
	For $\lambda\in X_*(T)$, recall the semi-infinite orbit $$S_{N,\lambda}:=LN\dot{t}^\lambda L^+\breve\CK/L^+\breve\CK\subset\mathrm{Gr}_{\breve\CK}.$$
	We let $\mathrm{Gr}_{\breve\CK,\mu}$ denote the Schubert cell $L^+\breve{\CK}\dot{t}^\mu L^+\breve\CK/L^+\breve\CK$ and $\mathrm{Gr}_{\breve\CK,\preccurlyeq\mu}$ the corresponding Schubert variety which is defined to be the closure of $\mathrm{Gr}_{\breve\CK,\mu}$ inside $\mathrm{Gr}_{\breve\CK}$. 
	
	Let $\widehat{M}\subset \widehat{G}$ denote the Levi subgroup determined by $M$ and the fixed pinning from \S\ref{subsec: pinning dual group}.
	For an $M$-dominant element $\lambda\in X_*(T)$, we may consider $\lambda$ as element of $X^*(\widehat{T})$ which is $\widehat{M}$-dominant with respect to the ordering determined by $\widehat{B}\cap\widehat{M}$. We write $V_\lambda^{\widehat{M}}$ for the irreducible representation of $\widehat{M}$ of highest weight $\lambda$.

	We let $a_{\lambda,\mu}$ denote the multiplicity of $V_\lambda^{\widehat{M}}$ appearing in the $\widehat{M}$-representation $V_\mu|_{\widehat{M}}$, and we write $\rho_M$ (resp. $\rho_N$) for the half sum of positive roots in $M$ (resp.~roots in $N$). The same argument as 
	\cite[Proposition 5.4.2]{GHKR} shows that  $$\dim S_{N,\lambda}\cap\mathrm{Gr}_{\breve\CK,\mu}=\langle \mu+\lambda,\rho\rangle -2\langle \lambda,\rho_M\rangle, $$ and that we have $|\Sigma^{\topp}(S_{N,\lambda}\cap\mathrm{Gr}_{\breve{\CK},\mu})|=a_{\lambda,\mu}$.
	
	\begin{lemma}\label{lem: semi-infinite components}
		Let $k_M\in L^+\breve \CK_M(\kk)$ be an element such that $\dot{t}^{-\lambda}k_M \dot{t}^\lambda\in  L^+\breve\CK_M$.  Then left multiplication by $k_M$ induces an automorphism of $S_{N,\lambda}\cap\mathrm{Gr}_{\breve\CK,\mu}$, and we have $k_M(Z)=Z$ for all $Z\in \Sigma^{\topp}(S_{N,\lambda}\cap\mathrm{Gr}_{\breve\CK,\mu})$.
	\end{lemma}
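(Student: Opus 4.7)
The plan has two stages matching the two assertions of the lemma. For the first, I would compute directly: given $n\in LN$,
\[k_M\cdot n\dot{t}^\lambda L^+\breve\CK \;=\; (k_M n k_M^{-1})\,\dot{t}^\lambda\,(\dot{t}^{-\lambda}k_M\dot{t}^\lambda)\, L^+\breve\CK.\]
Since $M$ normalizes $N$, the element $k_M n k_M^{-1}$ lies in $LN$, and the hypothesis $\dot{t}^{-\lambda}k_M\dot{t}^\lambda\in L^+\breve\CK_M\subset L^+\breve\CK$ shows the right-hand side lies in $S_{N,\lambda}$. Simultaneously, $k_M\in L^+\breve\CK$ preserves the Schubert cell $\mathrm{Gr}_{\breve\CK,\mu}$, so left multiplication by $k_M$ sends $S_{N,\lambda}\cap\mathrm{Gr}_{\breve\CK,\mu}$ into itself. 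Since $k_M^{-1}$ satisfies the same hypothesis, we obtain an automorphism.

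For the second assertion, I would introduce the pro-algebraic subgroup
\[P_\lambda \;:=\; L^+\breve\CK_M\,\cap\,\dot{t}^\lambda L^+\breve\CK_M\dot{t}^{-\lambda}\subset L^+\breve\CK_M,\]
which is precisely the locus of $k$ satisfying the condition in the lemma. The same computation, carried out family-theoretically, shows that $P_\lambda$ acts on $S_{N,\lambda}\cap \mathrm{Gr}_{\breve\CK,\mu}$ by $\kk$-scheme automorphisms. Granting that $P_\lambda$ is connected, for each $Z\in\Sigma^{\topp}(S_{N,\lambda}\cap\mathrm{Gr}_{\breve\CK,\mu})$ the image of the action morphism $P_\lambda\times Z\to S_{N,\lambda}\cap\mathrm{Gr}_{\breve\CK,\mu}$ is irreducible, contains $Z$, and has dimension at least $\dim Z$; it must therefore equal $Z$, forcing $k_M(Z)=Z$.

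To verify that $P_\lambda$ is connected, I would decompose using the root datum of $M$ relative to $T$. Conjugation by $\dot{t}^\lambda$ sends each root subgroup $L^+U_\alpha$ of $M$ to $\pi^{\langle\lambda,\alpha\rangle}L^+U_\alpha$, so $P_\lambda\cap L^+U_\alpha$ equals the full $L^+U_\alpha$ for $\alpha$ negative in $M$ (using that $\lambda$ is $M$-dominant) and equals the congruence subgroup $\pi^{\langle\lambda,\alpha\rangle}L^+U_\alpha$ for $\alpha$ positive, while $L^+T\subset P_\lambda$ entirely. Modulo the first congruence subgroup of $L^+\breve\CK_M$ (pro-unipotent, hence connected), the image of $P_\lambda$ inside the reductive quotient $\breve\CK_M\otimes\kk$ is the parabolic with Levi $C_M(\lambda)\otimes\kk$ and unipotent radical generated by the $U_{-\alpha}$ for $\alpha>0$ with $\langle\lambda,\alpha\rangle>0$; this parabolic is connected, so $P_\lambda$ is itself connected as an extension of two connected groups. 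The main obstacle I expect is the bookkeeping for this Iwahori-type factorization of $P_\lambda$ in the perfect pro-algebraic setting; once that is set up, both the scheme-theoretic $P_\lambda$-action and the connectedness conclusion follow routinely.
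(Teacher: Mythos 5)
Your proof is correct and follows essentially the same route as the paper: the same one-line computation yields the automorphism, and the second assertion is obtained by letting the full group $P_\lambda$ (the paper's $L^+\breve\CK_\lambda$) act on $S_{N,\lambda}\cap\mathrm{Gr}_{\breve\CK,\mu}$ and using its connectedness to see that the image of $P_\lambda\times Z$ is an irreducible subset containing the component $Z$, hence equal to it. The only divergence is in justifying connectedness: the paper gets it for free by noting that this group is the positive loop group of a smooth connected Bruhat--Tits group scheme $\breve\CK_\lambda$ over $\CO_{\breve F}$, whereas you rederive it by hand via a root-group/Iwahori-type factorization --- valid, but more bookkeeping than needed.
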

	\begin{proof}
		Let $n\in LN(R)$ where $R$ is a $\kk$-algebra. Then $$k_M n\dot{t}_\lambda=(k_M nk_M^{-1})k_M \dot{t}^\lambda=(k_M nk_M^{-1})\dot{t}^{\lambda} (\dot{t}^{-\lambda}k_M\dot{t}^\lambda)\in LN\dot{t}^\lambda L^+\breve \CK.$$
		It follows that multiplication by $k_M$ induces an automorphism of $S_{N,\lambda}$ with inverse given by multiplication by $k_M^{-1}$, and hence an automorphism of $S_{N,\lambda}\cap\mathrm{Gr}_{\breve\CK,\mu}$. 
		
		The group $\dot{t}^{-\lambda} \breve \CK_M\dot{t}^\lambda\cap \breve\CK_M$ arises as the $\CO_{\breve F}$-points of  a smooth connected $\CO_{\breve  F}$-scheme $\breve \CK_{\lambda}$. 
		Then as above, left multiplication induces a map $$L^+\breve\CK_\lambda\times  (S_{N,\lambda}\cap\mathrm{Gr}_{\breve\CK,\mu})\rightarrow  S_{N,\lambda}\cap\mathrm{Gr}_{\breve\CK,\mu}.$$ Let $Z\in \Sigma^{\topp}(S_{N,\lambda}\cap\mathrm{Gr}_{\breve\CK,\mu})$. Then $k_M(Z)\in \Sigma^{\topp}(S_{N,\lambda}\cap\mathrm{Gr}_{\breve\CK,\mu})$ is contained in the image of  $L^+\breve \CK_\lambda\times Z\rightarrow  S_{N,\lambda}\cap\mathrm{Gr}_{\breve\CK,\mu}$. The image of this map is an irreducible subscheme of $S_{N,\lambda}\cap\mathrm{Gr}_{\breve\CK,\mu}$ containing $Z$, hence is equal to $Z$. It follows that $k_M(Z)=Z$.
	\end{proof}

	\subsubsection{}We define the sets \begin{align*}
I_{\mu,M} & :=\{\lambda\in X_*(T)\mid\lambda\in X_*(T) \text{ is $M$-dominant}, S_{N,\lambda}\cap\mathrm{Gr}_{\breve\CK,\mu} \neq\emptyset\}, \\ 
I_{\mu,b,M} & :=\{\lambda\in I_{\mu,M}\mid\kappa_M(b)=\lambda^\natural\in\pi_1(M)_{\Gamma}\}.
	\end{align*} 
	Then there is a decomposition \begin{equation}\label{eqn: decomp ADLV for M}X^{M\subset G}_\mu(b)=\coprod_{\lambda\in I_{\mu,b,M}}X_{\lambda}^M(b),
	\end{equation} where each $X_{\lambda}^M(b)$ is locally closed inside $X_{\mu}^{M\subset G}(b)$. 

	\begin{proposition}\label{lem: dimension Levi ADLV}
		\begin{enumerate}\item Let $\lambda\in I_{\mu,b,M}$ and $Z\in\Sigma^{\topp}(X^M_\lambda(b))$.
			Then $$\dim p^{-1}(Z)\leq\dim X_\mu(b)$$ with equality  if and only if $a_{\lambda,\mu}\neq 0$.

			\item Let  $U\in \Sigma^{\topp}(X_\mu(b))$ and $U^P\in \Sigma^{\topp}(X_\mu^{P\subset G}(b))$ the corresponding element. Then there exists $\lambda\in I_{\mu,b,M}$ with $a_{\lambda,\mu}\neq 0$ and $Z\in\Sigma^{\topp}(X^{M}_\lambda(b))$ such that $Z\cap p(U_P)$ is open dense in $p(U^P)$.
			
		\end{enumerate}
	\end{proposition}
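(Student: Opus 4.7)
The plan is to establish (1) via a fiber dimension analysis for $p$, and then derive (2) formally. The inequality in (1) is immediate: the map $q$ is injective on points (because $P(\breve F) \cap \breve\CK = \breve\CK_P$) and in fact a locally closed immersion by the same argument as \cite[Lemma 5.2]{HV} cited in \S\ref{subsubsec:notation for M}. Hence $\dim p^{-1}(Z) = \dim q(p^{-1}(Z)) \leq \dim X_\mu(b)$. The equality criterion, which is the main content of the proof, requires computing the generic fiber of $p$ over $Z$.

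Fix a closed point $m\breve\CK_M \in Z$ and write $b_m := m^{-1} b \sigma(m) \in \breve\CK_M \dot t^\lambda \breve\CK_M$. Setting $\breve\CK_N := \breve\CK \cap N(\breve F)$, the fiber $p^{-1}(m\breve\CK_M)$ is parametrized by $n \in LN$ satisfying $n^{-1} b_m \sigma(n) \in L^+\breve\CK \dot t^\mu L^+\breve\CK$, modulo the right action of $L^+\breve\CK_N$. Since $b_m$ is $\sigma$-conjugate to $b$ in $M(\breve F)$ it has the same dominant Newton point $\overline\nu_b$, so the argument of Lemma \ref{lem: Lang map iso} applies verbatim to $b_m$ and shows that $n \mapsto n^{-1} b_m \sigma(n) b_m^{-1}$ is an automorphism of $LN$. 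Composing with the translation map $LN \to \mathrm{Gr}_{\breve\CK}$, $n' \mapsto n' b_m L^+\breve\CK$, whose image in $\mathrm{Gr}_{\breve\CK}$ is a translate of $S_{N, \lambda}$ and whose point-fibers are cosets of $b_m L^+\breve\CK_N b_m^{-1}$, identifies $p^{-1}(m\breve\CK_M)$ with a quotient of $S_{N, \lambda} \cap \mathrm{Gr}_{\breve\CK, \mu}$ whose generic fiber dimension accounts for the relative codimension $\dim L^+\breve\CK_N - \dim(b_m L^+\breve\CK_N b_m^{-1}) = 2\langle\lambda, \rho_N\rangle$. The resulting generic fiber dimension is $\dim(S_{N, \lambda} \cap \mathrm{Gr}_{\breve\CK, \mu}) - 2\langle\lambda, \rho_N\rangle = \langle\mu - \lambda, \rho\rangle$ when $a_{\lambda, \mu} \neq 0$, and strictly smaller (or the fiber is empty) otherwise.

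Combining this with $\dim X^M_\lambda(b) = \langle\lambda, \rho_M\rangle - \frac{1}{2}\mathrm{def}_M(b)$ (from Theorem \ref{thm:dim-adlv} applied to $M$, using $\langle\overline\nu_b, \rho_M\rangle = 0$ as $\overline\nu_b$ is central in $M$), together with the identities $\mathrm{def}_G(b) = \mathrm{def}_M(b)$ (since $M$ contains a maximal $F$-split torus of the unramified $G$) and $\langle\lambda - \overline\nu_b, \rho_N\rangle = 0$ (the condition $\lambda^\natural = \kappa_M(b)$ forces $\lambda - \overline\nu_b$ into $Q^\vee(M)_\BQ + (\sigma-1)X_*(T)_\BQ$, on which the $W_M \rtimes \langle\sigma\rangle$-invariant vector $\rho_N$ vanishes), one obtains $\dim p^{-1}(Z) = \dim X_\mu(b)$ precisely when $a_{\lambda, \mu} \neq 0$, completing (1). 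For (2), take $U^P$ corresponding to $U$ under (\ref{eqn: bij irred comp ADLV parabolic}); since $p(U^P)$ is irreducible and (\ref{eqn: decomp ADLV for M}) is a locally closed stratification, a unique $\lambda$ has $X^M_\lambda(b) \cap p(U^P)$ open dense in $p(U^P)$, and for $Z$ the closure in $X^M_\lambda(b)$ of any irreducible component of this intersection, the inclusion $U^P \subset p^{-1}(Z)$ combined with (1) forces $\dim p^{-1}(Z) = \dim U^P = \dim X_\mu(b)$, hence $a_{\lambda, \mu} \neq 0$ and $Z \in \Sigma^{\topp}(X^M_\lambda(b))$. The main obstacle will be rigorously carrying out the fiber identification as perfect schemes and tracking the relative codimension $2\langle\lambda, \rho_N\rangle$ between the infinite-dimensional subgroups $L^+\breve\CK_N$ and $b_m L^+\breve\CK_N b_m^{-1}$ of $LN$.
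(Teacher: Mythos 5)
Your overall route is the same as the paper's: bound $\dim p^{-1}(Z)$ by $\dim Z$ plus a fiber dimension read off from $S_{N,\lambda}\cap\mathrm{Gr}_{\breve\CK,\mu}$, identify the equality case with $a_{\lambda,\mu}\neq 0$, and feed this into the dimension formula of Theorem \ref{thm:dim-adlv} together with the identities $\mathrm{def}_G(b)=\mathrm{def}_M(b)$ and $\langle\lambda,\rho_N\rangle=\langle\overline\nu_b,\rho_N\rangle$; part (2) is then the stratification argument for (\ref{eqn: decomp ADLV for M}). The only substantive difference is that the paper outsources the fiber estimate to \cite[Lemma 2.8, Proposition 2.9]{Ham} and \cite[Proposition 5.4.2]{GHKR} (noting they persist in mixed characteristic), whereas you re-derive it by hand via the Lang-map isomorphism of Lemma \ref{lem: Lang map iso} (which transfers to $b_m$ by conjugating by $m$, since $f_{b_m}=\Ad(m)^{-1}\circ f_b\circ\Ad(m)$, rather than ``verbatim''). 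Your numerology, $\dim(S_{N,\lambda}\cap\mathrm{Gr}_{\breve\CK,\mu})-2\langle\lambda,\rho_N\rangle=\langle\mu-\lambda,\rho\rangle=\dim X_\mu(b)-\dim X^M_\lambda(b)$, agrees with what those citations give, and your justifications of the two identities (which the paper only asserts) are essentially correct; for $\mathrm{def}_G(b)=\mathrm{def}_M(b)$ you need not only $\rank_F G=\rank_F M$ but also that the $\sigma$-centralizer of $b$ in $G$ equals the one in $M$, which holds because it is contained in the centralizer $M$ of $\overline\nu_b$.

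The one step that does not work as written is the deduction in (2). First, the inclusion $U^P\subset p^{-1}(Z)$ is false in general: $p(U^P)$ need not lie in $X^M_\lambda(b)$, only its intersection with that stratum is dense, so only a dense open subset of $U^P$ lands in $p^{-1}(Z)$ (this still gives $\dim p^{-1}(Z)\geq \dim U^P$, which is what you need). More seriously, you invoke (1) for this $Z$, but (1) applies to $Z\in\Sigma^{\topp}(X^M_\lambda(b))$, and that membership is precisely one of the conclusions being drawn, so the argument is circular as stated. The repair is to use the pointwise form of your own fiber estimate: $\dim p^{-1}(x)\leq \dim X_\mu(b)-\dim X^M_\lambda(b)$ for every $x\in X^M_\lambda(b)$, with equality possible only if $a_{\lambda,\mu}\neq 0$ (this is \cite[Proposition 2.9 (2)]{Ham}, which is exactly what the paper cites at this point). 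Applying it over $p(U^P)\cap X^M_\lambda(b)$ gives $\dim X_\mu(b)=\dim U^P\leq \dim p(U^P)+\dim X_\mu(b)-\dim X^M_\lambda(b)$, hence $\dim p(U^P)\geq \dim X^M_\lambda(b)$; by density of $p(U^P)\cap X^M_\lambda(b)$ in $p(U^P)$ this forces $\dim Z=\dim X^M_\lambda(b)$, so the irreducible component of $X^M_\lambda(b)$ containing $Z$ is top-dimensional (and equals $Z$, components being quasi-compact as in Lemma \ref{lem:quasi-compact}), and the strict inequality in the $a_{\lambda,\mu}=0$ case is excluded, so $a_{\lambda,\mu}\neq 0$. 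With this substitution your argument coincides with the paper's.
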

	
	\begin{proof} (1) By \cite[Lemma 2.8, Proposition 2.9 (3)]{Ham}, which also holds in the mixed  characteristic setting, we have \begin{align*}\dim p^{-1}(Z)&\leq\dim X_\lambda^M(b)+\langle \mu+\lambda,\rho\rangle-2\langle\lambda,\rho_M\rangle -2\langle\overline{\nu}_b,\rho_N\rangle\\
		&=\langle \lambda,\rho_M\rangle-\frac{1}{2}\mathrm{def}_M(b)+\langle \mu+\lambda,\rho\rangle-2\langle\lambda,\rho_M\rangle -2\langle\overline{\nu}_b,\rho_N\rangle\\
		&=\langle\mu-\overline\nu_b,\rho\rangle-\frac{1}{2}\mathrm{def}_G(b)\\
		&=\dim X_\mu(b).
		\end{align*}
		
		The first and third equalities follow Theorem \ref{thm:dim-adlv}
		and the second equality follows from the identities $\mathrm{def}_G(b)=\mathrm{def}_M(b)$ and $\langle \overline\nu_b,\rho_N\rangle=\langle \lambda,\rho_N\rangle$. By \cite[Proposition 5.4.2]{GHKR}, which again holds in mixed characteristic, the first inequality is an equality if and only if $a_{\lambda,\mu}\neq 0$.

		(2) By \cite[Proposition 2.9 (2)]{Ham} and a similar calculation as in (1),  for any  $\lambda\in I_{\mu,b,M}$ and $x\in X^{M}_\lambda(b)$, we have $$\dim p^{-1}(x)\leq\dim X_\mu(b)-\dim X^M _\lambda(b)$$ with equality if and only if $a_{\lambda,\mu}\neq 0$. 
		
		Since the $X_{\lambda}^M(b)$ are locally closed inside $X_\mu^{M\subset G}(b)$, there exists a unique $\lambda\in I_{\mu,b,M}$ such that $p(U^P)\cap X^M_{\lambda}(b)$ is open dense in $p(U^P)$. Since $p(U^P)$ is irreducible, we can further find a $Z\in \Sigma^{\topp}(X_\lambda^M(b))$ such that $p(U^P)\cap Z$ is open dense in $p(U^P)$. Then we have $$\dim  p(U^P)\geq \dim X_{\lambda}^M(b).$$ It follows that these quantities are equal  and we have $a_{\lambda,\mu}\neq 0$. 
	\end{proof}
	
	\begin{proposition}\label{prop: J_b action on fibers}  Let $\lambda\in I_{\mu,b,M}$ with $a_{\lambda,\mu}\neq 0$  and let $Z\in\Sigma^{\topp}(X_\lambda^M(b))$. Then $\Stab_Z(J_b(F))$ acts trivially on $\Sigma^{\topp}(p^{-1}(Z))$. 
	\end{proposition}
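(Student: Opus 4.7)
The plan is to exploit the identity $y_{jm}=y_m$ for $j\in J_b(F)\subset M(\breve F)$ and $m\in M(\breve F)$, where $y_m:=m^{-1}b\sigma(m)$. Indeed,
\[
y_{jm}=(jm)^{-1}b\sigma(jm)=m^{-1}\bigl(j^{-1}b\sigma(j)\bigr)\sigma(m)=m^{-1}b\sigma(m)=y_m
\]
since $j^{-1}b\sigma(j)=b$. Once one establishes a natural bijection $\Sigma^{\topp}(p^{-1}(Z))\isoarrow\Sigma^{\topp}(S_{N,\lambda}\cap\mathrm{Gr}_{\breve\CK,\mu})$ via the standard parametrization of fibers of $p$, the invariance above together with Lemma \ref{lem: semi-infinite components} will immediately yield the conclusion.

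For the bijection, fix $x=m\breve\CK_M\in X_\lambda^M(b)$, so $y_m\in\breve\CK_M\dot{t}^\lambda\breve\CK_M$, and
\[
p^{-1}(x)(\kk)=\{mn\breve\CK_P:n\in N(\breve F),\ n^{-1}y_m\sigma(n)\in\breve\CK\dot{t}^\mu\breve\CK\}.
\]
The natural morphism $\Phi_m:p^{-1}(x)\to\mathrm{Gr}_{\breve\CK}$, $mn\breve\CK_P\mapsto n^{-1}y_m\sigma(n)\breve\CK$, has image in $S_{N,\lambda}\cap\mathrm{Gr}_{\breve\CK,\mu}$: in $S_{N,\lambda}$ because $y_m\breve\CK\in S_{N,\lambda}$ (using that $M$ normalizes $N$ and $y_m\in\breve\CK_M\dot{t}^\lambda\breve\CK_M$), and in $\mathrm{Gr}_{\breve\CK,\mu}$ by the fiber condition. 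Since $y_m$ is $\sigma$-conjugate to $b$ in $M(\breve F)$ and hence basic in $M$ with the same Newton point, Lemma \ref{lem: Lang map iso} applies with $y_m$ in place of $b$. Combined with the dimension count in the proof of Proposition \ref{lem: dimension Levi ADLV}(1) and the hypothesis $a_{\lambda,\mu}\neq 0$, this yields $\Sigma^{\topp}(p^{-1}(x))\isoarrow\Sigma^{\topp}(S_{N,\lambda}\cap\mathrm{Gr}_{\breve\CK,\mu})$. Using the local-fibration structure of $p:p^{-1}(Z)\to Z$ together with the irreducibility of $Z$, this globalizes to $\Sigma^{\topp}(p^{-1}(Z))\isoarrow\Sigma^{\topp}(S_{N,\lambda}\cap\mathrm{Gr}_{\breve\CK,\mu})$. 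For $j\in\Stab_Z(J_b(F))$ and $x\in Z$ with lift $m$, taking $jm$ as the natural lift of $jx$, the identity $y_{jm}=y_m$ shows that $\Phi_{jm}$ is defined by the same formula as $\Phi_m$, and the $j$-action $mn\breve\CK_P\mapsto(jm)n\breve\CK_P$ corresponds to the identity map on $S_{N,\lambda}\cap\mathrm{Gr}_{\breve\CK,\mu}$.

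The main obstacle is to verify that the bijection $\Sigma^{\topp}(p^{-1}(Z))\isoarrow\Sigma^{\topp}(S_{N,\lambda}\cap\mathrm{Gr}_{\breve\CK,\mu})$ is independent of the lift $m$ chosen for each point of $Z$, hence globally well-defined. Replacing $m$ by $mk_M$ with $k_M\in\breve\CK_M$ changes $\Phi_m$ by left multiplication on $S_{N,\lambda}\cap\mathrm{Gr}_{\breve\CK,\mu}$ by an explicit element of $L^+\breve\CK_M$. Using that $y_{mk_M}=k_M^{-1}y_m\sigma(k_M)$ still lies in $\breve\CK_M\dot{t}^\lambda\breve\CK_M$, one verifies that this element satisfies the hypothesis $\dot{t}^{-\lambda}k_M\dot{t}^\lambda\in L^+\breve\CK_M$ of Lemma \ref{lem: semi-infinite components}. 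The lemma then guarantees that any change of lift fixes every top component of $S_{N,\lambda}\cap\mathrm{Gr}_{\breve\CK,\mu}$, so the bijection is well-defined and the argument concludes.
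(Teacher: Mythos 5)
Your overall strategy is the same as the paper's (parametrize the fibers of $p$ by $LN$ via $n\mapsto n^{-1}y_m\sigma(n)$, compare with $S_{N,\lambda}\cap\mathrm{Gr}_{\breve\CK,\mu}$, use the identity $y_{jm}=y_m$ and Lemma \ref{lem: semi-infinite components}), but the step you yourself single out as the main obstacle is where the argument breaks. The deduction of $\dot t^{-\lambda}k_M\dot t^{\lambda}\in L^+\breve\CK_M$ from ``$y_{mk_M}=k_M^{-1}y_m\sigma(k_M)$ still lies in $\breve\CK_M\dot t^{\lambda}\breve\CK_M$'' is vacuous: that containment holds for \emph{every} $k_M\in\breve\CK_M$ (the double coset is stable under $\sigma$-conjugation by $\breve\CK_M$), whereas $\dot t^{-\lambda}k_M\dot t^{\lambda}\in L^+\breve\CK_M$ is a genuine restriction which fails for general $k_M$; indeed, for general $k_M\in\breve\CK_M$ left multiplication by $k_M$ does not even preserve $S_{N,\lambda}$, since $k_M\dot t^{\lambda}\breve\CK$ lies in the $LN$-orbit attached to the point $k_M\dot t^{\lambda}\breve\CK_M\in\mathrm{Gr}_{\breve\CK_M}$, which equals $\dot t^{\lambda}\breve\CK_M$ exactly when $\dot t^{-\lambda}k_M\dot t^{\lambda}\in L^+\breve\CK_M$. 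The same issue already affects your construction of $\Phi_m$: if the lift $m$ only satisfies $y_m\in\breve\CK_M\dot t^{\lambda}\breve\CK_M$, then $n^{-1}y_m\sigma(n)\breve\CK=n'y_m\breve\CK$ with $n'\in LN$ and $y_m\breve\CK=k_1\dot t^{\lambda}\breve\CK$ for some $k_1\in\breve\CK_M$, and this point need not lie in $S_{N,\lambda}$ at all. So without further input the fiberwise identification is neither correctly targeted nor independent of the lift, and the proof is incomplete.

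The paper's proof repairs exactly this by normalizing the lifts: one shrinks $Z$ to an open $Y$ and passes to an (irreducible) \'etale cover $Y'$ admitting a section $\iota:Y'\to LM$ with the strict condition $m^{-1}b\sigma(m)\in\dot t^{\lambda}L^+\breve\CK_M$ for all $m\in\iota(Y')$ (not merely membership in the double coset); the comparison with $S_{N,\lambda}\cap\mathrm{Gr}_{\breve\CK,\mu}$ is then made through the torsor $\Phi\to p^{-1}(Y')$ and the twisted Lang map $\tilde f_b$, with the bijection on top components coming from the dimension estimates of Proposition \ref{lem: dimension Levi ADLV}. When one compares a point $x$ with lift $m$ and the point $jx$, whose normalized lift is $\iota(y')=jmk_M$, the element $k_M$ is constrained by the fact that \emph{both} $b_m$ and $k_M^{-1}b_m\sigma(k_M)$ lie in $\dot t^{\lambda}L^+\breve\CK_M$, and it is this — not the double-coset condition — that forces $\dot t^{-\lambda}k_M\dot t^{\lambda}\in L^+\breve\CK_M$ and lets Lemma \ref{lem: semi-infinite components} apply; a density argument over $U_1$ then transfers the conclusion to whole components. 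To make your proof work you would need to build in this normalization (which exists only \'etale-locally after shrinking, whence the cover $Y'$), and rerun your change-of-lift computation using the strict condition on both lifts rather than the automatic double-coset containment.
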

	\begin{proof}

		Let $Y$ be an open subscheme of $Z$ and let $Y'$ be an \'etale cover of $Y$  such that the inclusion map $Y\to  X_\lambda^M(b)$ lifts to a map $\iota: Y' \rightarrow LM$. The existence of $Y'$ follows from the same argument as \cite[Theorem 1.4]{PR}. Upon replacing $Y'$ with an irreducible component, we may assume that $Y'$ is also irreducible.
		
		We write $p^{-1}(Y')$ for the fiber product 
		\[\xymatrix{p^{-1}(Y')\ar[r]\ar[d]& X^{P\subset G}_\mu(b)\ar[d]\\
			Y'\ar[r]& X^{M\subset P}_\mu(b).}\]
		The natural map $p^{-1}(Y')\rightarrow p^{-1}(Z)$ induces a bijection $\Sigma^{\topp}(p^{-1}(Y'))\cong \Sigma^{\topp}(p^{-1}(Z))$.

		As in \cite[Proposition 5.6]{HV}, we set $$\Phi:=\{(m,n)\in \iota(Y')\times LN \mid mnL^+\breve\CK_P\in X_\mu^{P\subset G} \}\subset LM\times LN.$$
		Then the natural map $\gamma:\Phi\rightarrow p^{-1}(Y')$ is a $\breve \CK_N$-torsor. Moreover upon shrinking $Y$ and replacing $\iota$ if necessary (cf. \cite[Proof of Proposition 5.6]{HV}) we may assume $m^{-1}b\sigma(m)\in \dot{t}^\lambda\breve\CK_M$ for any $m\in \iota(Y')$. We then define $$\mathscr{E}:=\iota(Y')\times (LN \cap L^+\breve\CK\dot{t}^\mu L^+\breve\CK\dot{t}^{-\lambda}) \subset LM\times LN.$$
		
		We write $\mathrm{Ad}_M:LM\times LN\rightarrow LM\times LN$ for the map $(m,n)\mapsto(m,mnm^{-1})$. This is easily seen to be an isomorphism with inverse given by $\mathrm{Ad}_M^{-1}:(m,n)\mapsto (m,m^{-1}nm)$. Define $\tilde{f}_b=\mathrm{Ad}_M \i \circ (\id, f_b) \circ \mathrm{Ad}_M: LM\times LN\rightarrow LM\times LN$. By Lemma \ref{lem: Lang map iso}, $\tilde{f}_b$ is an isomorphism. The restriction of $\tilde{f}_b$ to $\Phi$ gives an isomorphism $\tilde{f}_b: \Phi\rightarrow \mathscr E$ and we have a Cartesian diagram:
		\[\xymatrix{ \Phi\ar[r]^{\tilde{f}_b} \ar[d]& \mathscr E\ar[d] \\
			\iota(Y')\times LN\ar[r]^{\tilde f_b }	& 
			\iota(Y')\times LN}\]
		
		We consider the projection $$\mathrm{pr}_\lambda:LN\rightarrow LN\dot{t}^\lambda L^+\breve{\CK}/L^+\breve \CK$$given by $n\mapsto n\dot{t}^{\lambda}\breve\CK$.
		Then $LN \cap L^+\breve\CK\dot{t}^\mu L^+\breve\CK\dot{t}^{-\lambda}$ is the preimage of $S_{N,\lambda}\cap\mathrm{Gr}_{\breve\CK,\mu}$ under $\mathrm{pr}_{\lambda}$. 
		
		We write $\mathrm{pr}:\mathscr{E}\rightarrow S_{N,\lambda}\cap\mathrm{Gr}_{\breve\CK,\mu}$ for the composition of projection onto the second component $\mathrm{pr}_2:\mathscr{E}\rightarrow LN \cap L^+\breve\CK\dot{t}^\mu L^+\breve\CK\dot{t}^{-\lambda}$ followed by $\mathrm{pr}_\lambda$. Let $Z'\in \Sigma^{\topp}(S_{N,\lambda}\cap\mathrm{Gr}_{\breve \CK,\mu})$. The same argument as \cite[Proposition 2.9]{Ham} shows that for  $Z'\in \Sigma(S_{N,\lambda}\cap\mathrm{Gr}_{\breve \CK,\mu})$, we have $$\dim\gamma((\mathrm{pr}\circ \tilde f_b)^{-1}(Z'))\leq\dim X_\mu(b)=\dim p^{-1}(Y')$$ with equality if and only if $Z'\in \Sigma^{\topp}(S_{N,\lambda}\cap\mathrm{Gr}_{\breve \CK,\mu})$. Thus the association $Z'\mapsto \gamma((\mathrm{pr}\circ \tilde f_b)^{-1}(Z'))$ induces a map $\theta:\Sigma^{\topp}(S_{N,\lambda}\cap\mathrm{Gr}_{\breve \CK,\mu})\rightarrow\Sigma^{\topp}(p^{-1}(Y'))$.  Since the maps $\gamma$, $\tilde f_b$, and $\mathrm{pr}$ all induce bijections on irreducible components, it follows that $\theta$ is a  bijection.
		
		Let $U\in\Sigma^{\topp}(p^{-1}(Z))$ and let $U_1,U_2\subset \Phi$ denote the preimages of $U$ and $jU$ respectively.  For $i=1,2$, let $Z_i'\in \Sigma^{\topp}(S_{N,\lambda}\cap\mathrm{Gr}_{\breve \CK,\mu})$ be  the unique component containing 
		$\mathrm{pr}\circ \tilde f_b(U_i)$. Then it suffices to show that $Z'_1=Z'_2$.
		
		Let $x=(m,n)\in U_1(\kk)$ such that the image $y=m\breve \CK_M$ of $x$ in $Z$ lies in $j^{-1}Y$. Note that the set of such $x$ is dense in $U_1$. Then $jy$ lies in $Y$ and we let $y'\in Y'(\kk)$ be a lift of $jy$ to $Y'(\kk)$. Then the element  $\iota(y')\in LM(\kk)$  is of the form $jmk_M$ for some $k_M \in L^+\breve \CK_M(\kk)$. 
		
		Consider the element $z=(jmk_M,k_M^{-1}nk_M)\in \Phi$. Then we have $z\in U_2(\kk)$, and one computes that $$\mathrm{pr_2}(\tilde{f}_b(z))=k_M^{-1}n^{-1}b_m\sigma(n)b_m^{-1}k_M=k_M^{-1}\mathrm{pr_2}(\tilde{f}_b(x))k_M,$$ where $b_m=m^{-1}b\sigma(m)$. By the  assumption on $\iota$, we have $b_m, k_M^{-1}b_mk_M\in \dot{t}^\lambda \breve L^+\CK_M$, and hence $\dot{t}^{-\lambda} k_M\dot{t}^\lambda\in L^+\breve\CK_M$. 
		Then by Lemma \ref{lem: semi-infinite components}, we have $\mathrm{pr}\circ \tilde f_b(x)\in Z'_2$. Since this is true for a dense set of $x$ in $U_1$, it  follows that $\mathrm{pr}\circ \tilde f_b(U_1)\subset Z_2'$, and hence $Z_1'=Z_2'$.
		
	\end{proof}
	
	\begin{corollary}\label{cor: reduction to basic}
		Let $U\in\Sigma^{\topp}(X_\mu(b))$. Then there exists $\lambda\in I_{\mu,b,M}$ and  $Z\in \Sigma^{\topp}(X^M_\lambda(b))$ such that $$\Stab_U(J_b(F))=\Stab_{Z}(J_b(F)).$$
	\end{corollary}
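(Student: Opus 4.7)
My plan is to use the $J_b(F)$-equivariant bijection (\ref{eqn: bij irred comp ADLV parabolic}) to transfer the question from $X_\mu(b)$ to $X_\mu^{P \subset G}(b)$: write $U^P \in \Sigma^{\topp}(X_\mu^{P \subset G}(b))$ for the element corresponding to $U$, so $\Stab_U(J_b(F)) = \Stab_{U^P}(J_b(F))$. Then apply Proposition \ref{lem: dimension Levi ADLV}(2) to produce $\lambda \in I_{\mu,b,M}$ with $a_{\lambda, \mu} \neq 0$ and $Z \in \Sigma^{\topp}(X_\lambda^M(b))$ such that $p(U^P) \cap Z$ is open dense in $p(U^P)$. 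The dimension computation in the proof of Proposition \ref{lem: dimension Levi ADLV}(2) shows $\dim p(U^P) = \dim X_\lambda^M(b) = \dim Z$, hence $p(U^P) \cap Z$ is also dense in $Z$, and we obtain an equality of closures $\overline{p(U^P)} = \overline{Z}$ taken inside $X_\mu^{M \subset G}(b)$. The main task is to show both inclusions $\Stab_{U^P}(J_b(F)) \subseteq \Stab_Z(J_b(F))$ and $\Stab_Z(J_b(F)) \subseteq \Stab_{U^P}(J_b(F))$.

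For the inclusion $\Stab_{U^P}(J_b(F)) \subseteq \Stab_Z(J_b(F))$, let $j \in \Stab_{U^P}(J_b(F))$. Since $p$ is $J_b(F)$-equivariant, we have $j \cdot p(U^P) = p(U^P)$, and taking closures in the $J_b(F)$-stable scheme $X_\mu^{M \subset G}(b)$ gives $j \cdot \overline{Z} = \overline{Z}$. Since the decomposition (\ref{eqn: decomp ADLV for M}) is preserved by the $J_b(F)$-action (as the condition $(jg)^{-1} b \sigma(jg) = g^{-1} b \sigma(g)$ holds for $j \in J_b(F)$), intersecting with $X_\lambda^M(b)$ and using that $Z$ is closed inside the locally closed subscheme $X_\lambda^M(b) \subset X_\mu^{M\subset G}(b)$, which forces $\overline{Z} \cap X_\lambda^M(b) = Z$, yields $j \cdot Z = Z$.

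For the reverse inclusion, I apply Proposition \ref{prop: J_b action on fibers}, which says $\Stab_Z(J_b(F))$ acts trivially on $\Sigma^{\topp}(p^{-1}(Z))$. The key step is to identify a top-dimensional irreducible component of $p^{-1}(Z)$ whose closure in $X_\mu^{P \subset G}(b)$ is exactly $U^P$. Since $p(U^P) \cap Z$ is open in $p(U^P)$, it is cut out by an open subscheme $W \subset X_\mu^{M \subset G}(b)$, and $U^P_0 := p^{-1}(W) \cap U^P$ is a nonempty open subset of the irreducible scheme $U^P$ contained in $p^{-1}(Z)$; hence $U^P_0$ is irreducible of dimension $\dim U^P = \dim X_\mu(b) = \dim p^{-1}(Z)$ (using Proposition \ref{lem: dimension Levi ADLV}(1) together with $a_{\lambda, \mu} \neq 0$). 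Its closure $\tilde{U}^P$ inside $p^{-1}(Z)$ therefore lies in $\Sigma^{\topp}(p^{-1}(Z))$, and its closure inside $X_\mu^{P \subset G}(b)$ equals $U^P$. Any $j \in \Stab_Z(J_b(F))$ fixes $\tilde{U}^P$ by Proposition \ref{prop: J_b action on fibers}, so it fixes the closure $U^P$, giving $j \in \Stab_{U^P}(J_b(F))$.

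The main obstacle I anticipate is purely bookkeeping around the locally closed decomposition: verifying carefully that $\overline{Z} \cap X_\lambda^M(b) = Z$, that the decomposition (\ref{eqn: decomp ADLV for M}) is $J_b(F)$-stable, and that the dimension matching between $p(U^P)$, $Z$, and fibers of $p$ is sharp enough to ensure that $\tilde{U}^P$ is genuinely top-dimensional in $p^{-1}(Z)$. Once these are in place, the proof is a short closure-and-stabilizer juggling argument combining equivariance of $p$ with Proposition \ref{prop: J_b action on fibers}.
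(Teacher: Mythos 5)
Your proof is correct and takes essentially the same route as the paper: the paper simply sets $Z := p(U^P)$, uses Proposition \ref{lem: dimension Levi ADLV} to see it lies in $\Sigma^{\topp}(X^M_\lambda(b))$ for some $\lambda$ with $a_{\lambda,\mu}\neq 0$, gets $\Stab_{U^P}(J_b(F))\subset\Stab_Z(J_b(F))$ from equivariance of $p$, and concludes via Proposition \ref{prop: J_b action on fibers} applied to $U^P\in\Sigma^{\topp}(p^{-1}(Z))$. Your additional closure-and-density bookkeeping (distinguishing $p(U^P)$ from the component $Z$ and working with $U^P_0$ and $\tilde U^P$) is just a more careful rendering of the same argument, not a different method.
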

	\begin{proof}Let $U^P\in \Sigma^{\topp}(X_\mu^{P\subset G}(b))$ be the component corresponding to $U$ and let $Z:=p(U^P)\subset X_\mu^{M\subset G}(b)$.  By Lemma \ref{lem: dimension Levi ADLV}, we have  $Z\in \Sigma^{\topp} (X_\lambda^{M}(b))$  for some $\lambda\in I_{\mu,b,M}$ with $a_{\lambda,\mu}\neq 0$. 
		
		By the $J_b(F)$-equivariance of $p$, we have $$\Stab_{U^P}(J_b(F))\subset \Stab_{Z}(J_b(F)).$$ Since $U^P\in \Sigma^{\topp}(p^{-1}(Z))$, Proposition \ref{prop: J_b action on fibers} implies $$\Stab_U(J_b(F))=\Stab_{U^P}(J_b(F))=\Stab_{Z}(J_b(F)). $$ The statement is proved.
	\end{proof}

	\begin{proposition}\label{prop: final reduction}
		In order to prove Theorem \ref{thm:main}, it suffices to prove it when $\mathrm{char}(F)=0$, $G$ is $F$-simple, adjoint, and unramified over $F$, and $b$ is basic. 
	\end{proposition}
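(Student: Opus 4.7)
The plan is to combine the earlier reduction of Corollary \ref{cor:red1} with the Levi reduction given by Corollary \ref{cor: reduction to basic}. First, by Corollary \ref{cor:red1} we may assume $\mathrm{char}(F) = 0$ and that $G$ is adjoint, $F$-simple, and unramified over $F$. Fix $\mu \in X_*(T)_{\Gamma_0}^+$ and $[b] \in B(G,\mu)$, and follow the setup of \S\ref{subsubsec:notation for M}: replace $b$ inside its $\s$-conjugacy class so that $b \in M(\breve F)$, where $M$ is the standard Levi of $G$ centralizing $\overline\nu_b^G$, and so that $b$ is basic in $M$.

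For any $U \in \Sigma^{\topp}(X_\mu(b))$, the plan is to invoke Corollary \ref{cor: reduction to basic} to produce $\lambda \in I_{\mu,b,M}$ and $Z \in \Sigma^{\topp}(X_\lambda^M(b))$ with $\Stab_U(J_b(F)) = \Stab_Z(J_b(F))$. This reduces the very-specialness of $\Stab_U(J_b(F))$ to that of $\Stab_Z(J_b(F))$, which is a statement of Theorem \ref{thm:main} applied to the affine Deligne--Lusztig variety $X_\lambda^M(b)$ for the unramified group $M$ at hyperspecial level, with $b$ basic.

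The main obstacle is that the Levi $M$, although unramified, need not be adjoint or $F$-simple, whereas the hypothesis gives Theorem \ref{thm:main} only for adjoint $F$-simple unramified groups in the basic case. To close this gap I would reapply the strategy of Corollary \ref{cor:red1} to the pair $(M, b)$: pass from $M$ to its $F$-simple adjoint factors $M_i$ in the decomposition $M_{\ad} = \prod_i M_i$, noting that basicness is preserved under projection to each factor since $\overline\nu_b$ is central in $M$, so that its image is central in each $M_i$. The parahoric subgroups of $J_b(F)$ correspond along the isogeny $M \to M_{\ad}$ and factorize across the $M_i$, and the very-special condition is compatible with this factorization by the proof of Proposition \ref{prop: eq. v. special, max vol, max log vol} together with the multiplicativity of log-volumes via \eqref{eqn: log vol length}. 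Assembling the basic cases of Theorem \ref{thm:main} for each $M_i$ then yields the basic case for $M$.

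I expect the most delicate bookkeeping to lie in this last step: verifying that the $J_b(F)$-equivariant bijection on top components and the identification of stabilizers survive the passage through the isogeny $M \to M_{\ad}$ and through the product decomposition of $M_{\ad}$. Once this is in place, the two reductions combine cleanly to give the proposition.
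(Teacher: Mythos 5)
Your proposal is correct and takes essentially the same route as the paper, whose proof of this proposition consists precisely of combining Corollary \ref{cor:red1} with Corollary \ref{cor: reduction to basic}. The extra step you single out — passing from the Levi $M$ to its $F$-simple adjoint unramified factors while keeping $b$ basic and transferring very-specialness of the component stabilizers — is exactly what the paper leaves implicit in its one-line proof, and your way of closing it (reapplying the reduction strategy of Corollary \ref{cor:red1}, i.e.\ the association/product-decomposition machinery of Proposition \ref{prop: assoc parahorics} together with the log-volume criterion of Proposition \ref{prop: eq. v. special, max vol, max log vol}) is sound.
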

\begin{proof} This follows from Corollary \ref{cor:red1} and Corollary \ref{cor: reduction to basic}. 
\end{proof}

	\subsection{The special case of a sum of dominant minuscule cocharacters}\label{subsec:special case} 	We assume that $\mathrm{char}(F)=0$, that $G$ is $F$-simple, adjoint, and unramified over $F$, and that $b$ is basic. Our goal in this subsection is to prove a partial result towards Theorem \ref{thm:main} when $\mu$ is a sum of minuscule dominant cocharacters. We use the idea of X.~Zhu (see \cite[\S 3.1.3]{Zhu}) that one can ``separate'' the summands of $\mu$ by constructing a convolution map from the affine Deligne--Lusztig variety of a Weil-restriction group to the original affine Deligne--Lusztig variety. This idea was originally used in \textit{loc.~cit.~}to establish the dimension formula, and it was S.~Nie who first applied this idea to the study of irreducible components (see \cite{nie} and \cite{nienew}).

	\subsubsection{}Let $F_r$ denote the unramified extension of $F$ of degree $r$ inside $\breve F$. Let $H$ be an unramified reductive group over $F_r$ and let $G':=\mathrm{Res}_{F_r/F}H$. We canonically identify $\breve F$ with $\breve F_r$. For $b \in H(\breve F)$ and $\mu$ a geometric cocharacter of $H$, we have the affine Deligne--Lusztig variety write $X^H_\mu(b)$ as in \S \ref{subsubsec:cases we consider}. In this subsection we denote  this  by $X^H_\mu(b\sigma^r)$  to emphasize that $H$ is a group over $F_r$ and the Frobenius relative to $F_r$ is $\sigma^r$. We also write $J_b^{(r)}$ for $J_b$ (defined with respect to $H$ over $F_r$), and write $B^{(r)}(H)$ for the set of $\sigma^r$-conjugacy classes in $H(\breve F)$.

	Let $\tau_0: F_r \hookrightarrow \breve F$ be the inclusion and write $\tau_i$ for $\sigma^i(\tau_0)$ for $i=1,\dotsc, r-1$. Thus $\{\tau_0,\dotsc, \tau_{r-1}\}$ is the set of $F$-algebra embeddings $F_r\rightarrow \breve F$. There is a canonical identification 
	$$G' \otimes_F \breve F\cong \prod_{ i = 0}^{r-1} H\otimes_{F_r,\tau_i}\breve F.$$  Let $T_H$ be the centralizer of a fixed maximal $ F_r$-split torus in $H$. Let $T' = \mathrm{Res}_{F_r/F} T_H$, which we view as an $F$-subgroup of $G'$. Then $T'$ is the centralizer of a maximal $F$-split torus in $G'$. A cocharacter of $T'$ is the same as a sequence $\mu'=(\mu_0,\dotsc,\mu_{r-1})$, where $\mu_i \in X_*(T_H)$. Fix a Borel subgroup of $H$ containing $T_H$ and use it to define the dominant cocharacters $X_*(T_H)^+$. This also defines a Borel subgroup of $G'$ containing $T'$ and defines $X_*(T')^+$. We fix a hyperspecial subgroup of  $ H(F_r)$ that is compatible with our  choice of the maximal $\breve F_r$-split $F_r$-rational torus of $H$. This also determines a hyperspecial subgroup of $G'(F)$. We use these hyperspecial subgroups to define affine Deligne--Lusztig varieties at hyperspecial level for $H$ and $G'$. For $b'=(b_0,\dotsc,b_r)\in G'(\breve F)$, we define  $$ \mathrm{Nm}(b'):=b_0\sigma(b_1)\cdots\sigma^{r-1}(b_{i-1})\in  H (\breve F).$$ The association $b' \mapsto \mathrm{Nm}(b')$ defines a bijection $B(G')\xrightarrow{\sim} B^{(r)}(H)$, and there is a natural isomorphism $J_{b'}(F)\cong J_{\mathrm{Nm}(b')}^{(r)}(F_r)$. 
	
	\begin{lemma}\label{lem: reduction to minuscule special} Let
$\mu'=(\mu_0,\dotsc,\mu_{r-1})\in X_*(T')^+$ and $[b']\in B(G',\mu')$. We write $|\mu'|$ for $\sum_{i=0}^{r-1}\s^i (\mu_i) \in X_*(T_H)^+$. Then there is a natural morphism $\theta: X_{\preccurlyeq\mu'}^{G'}(b')	\rightarrow X^H_{\preccurlyeq|\mu'|}(\mathrm{Nm}(b')\sigma^r)$ which is $J_{b'}(F)\cong J_{\mathrm{Nm}(b')}^{(r)}(F_r)$-equivariant. Moreover, for each $U\in \Sigma^{\topp}(X^H_{\preccurlyeq|\mu'|}(\mathrm{Nm}(b') \sigma^r))$, there exists  $Z\in \Sigma^{\topp}(X_{\preccurlyeq\mu'}^{G'}(b'))$ such that $$\Stab_{Z}(J_{b'}(F))=\Stab_{U}(J_{\mathrm{Nm}(b')}^{(r)}(F_r)).$$
	\end{lemma}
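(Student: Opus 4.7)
The plan is to construct $\theta$ as a convolution morphism and then analyze its fibers. Via the $\breve F$-isomorphism $G'_{\breve F}\cong\prod_{i=0}^{r-1}H_{\tau_i}$, under which $\sigma$ acts by $(g_0,\ldots,g_{r-1})\mapsto(\sigma(g_1),\ldots,\sigma(g_{r-1}),\sigma(g_0))$ and $\breve\CK'$ corresponds to $\prod_i\breve\CK_H$, I will define on $\kk$-points
$$\theta\bigl((g_0,\ldots,g_{r-1})\breve\CK'\bigr) := g_0\,\sigma(g_1)\,\sigma^2(g_2)\cdots\sigma^{r-1}(g_{r-1})\,\breve\CK_H.$$
A direct telescoping computation yields
$$\theta(g')^{-1}\,\mathrm{Nm}(b')\,\sigma^r(\theta(g'))=\prod_{i=0}^{r-1}\sigma^i\bigl(g_i^{-1}b_i\sigma(g_{i+1})\bigr),$$
where indices are taken modulo $r$ and $g_r:=\sigma^r(g_0)$. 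Each factor lies in $\breve\CK_H\dot t^{\leq\sigma^i\mu_i}\breve\CK_H$, and since $|\mu'|=\sum_i\sigma^i\mu_i$ is dominant, the product lies in $\breve\CK_H\dot t^{\leq|\mu'|}\breve\CK_H$. This proves $\theta$ is well-defined on $\kk$-points and independent of the choice of representative; it extends to a morphism of perfect schemes as a standard convolution map on affine Grassmannians in the sense of \cite[\S3.1]{Zhu}. The $J_{b'}(F)\cong J^{(r)}_{\mathrm{Nm}(b')}(F_r)$-equivariance is a short direct verification using the fact that under this identification an element $j$ corresponds to $(\sigma^i(j))_{i}\in\prod_i H(\breve F)$, so the left-multiplication actions are intertwined by the convolution formula.

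For the second assertion, I will first verify via Theorem \ref{thm:dim-adlv}, combined with the equalities $\mathrm{def}_{G'}(b')=\mathrm{def}_H(\mathrm{Nm}(b'))$, the matching of Newton points, and $\langle\mu',\rho_{G'}\rangle=\langle|\mu'|,\rho_H\rangle$, that
$$\dim X^{G'}_{\preccurlyeq\mu'}(b')=\dim X^H_{\preccurlyeq|\mu'|}(\mathrm{Nm}(b')\sigma^r)+d,$$
where $d$ is the common dimension of the geometric fibers of $\theta$ over a dense open $U^{\circ}\subset U$. In particular any top component $Z$ of $\theta^{-1}(U)$ is top-dimensional in $X^{G'}_{\preccurlyeq\mu'}(b')$ and $\theta(Z)$ is dense in $U$; by $J_{b'}(F)$-equivariance this already gives $\Stab_Z(J_{b'}(F))\subset\Stab_U(J^{(r)}_{\mathrm{Nm}(b')}(F_r))$.

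The main obstacle is the reverse inclusion for some judicious choice of $Z$. My plan is to analyze the fiber structure of $\theta$ locally over $U^{\circ}$: étale-locally, $\theta^{-1}(U^{\circ})\to U^{\circ}$ looks like a twisted product of the smooth projective minuscule Schubert varieties $\mathrm{Gr}_{H,\mu_i}$ intersected with a semi-infinite cell (compare the semi-infinite stratification analysis in \cite[\S5]{HV} and Proposition \ref{lem: dimension Levi ADLV} above). Thus $\Sigma^{\topp}(\theta^{-1}(U))$ is a finite set on which $\Stab_U$ acts, and I will show the action is trivial by the following mechanism: $\Stab_U$ is a parahoric subgroup of $J^{(r)}_{\mathrm{Nm}(b')}(F_r)$ (by \cite[Proposition 3.1.4]{ZZ}), hence is connected as a group scheme; any action of a connected compact-open subgroup on a finite discrete set obtained as a basis distinguished by the torus $\widehat{\CS}$ inside the geometric Satake framework factors through the trivial component group. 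Consequently \emph{every} top component $Z$ is $\Stab_U$-stable, giving $\Stab_U\subset\Stab_Z$. The most delicate technical point will be the identification of $\Sigma^{\topp}(\theta^{-1}(U))$ with an MV-type basis that is preserved by $\Stab_U$, for which I expect to borrow the local model / semi-infinite orbit methods already used in \S\ref{subsec: reduction to basic}.
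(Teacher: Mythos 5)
There is a genuine gap, and also a flaw in the construction. First, the map you write down, $\theta\bigl((g_0,\dotsc,g_{r-1})\breve\CK'\bigr)=g_0\sigma(g_1)\cdots\sigma^{r-1}(g_{r-1})\breve\CK_H$, is not well defined on $\mathrm{Gr}_{G'}\cong\prod_i\mathrm{Gr}_H$: replacing $g_i$ by $g_ik_i$ with $k_i\in\breve\CK_H$ inserts factors $\sigma^i(k_i)$ in the middle of the product, and these cannot be absorbed into the right $\breve\CK_H$-coset. Moreover the telescoping identity you assert fails for this $\theta$; the product $\prod_i\sigma^i\bigl(g_i^{-1}b_i\sigma(g_{i+1})\bigr)$ telescopes to $g_0^{-1}\,\mathrm{Nm}(b')\,\sigma^r(g_0)$, i.e.\ it is the identity satisfied by the projection $(g_i\breve\CK_H)_i\mapsto g_0\breve\CK_H$, which is (essentially) the correct natural morphism; your equivariance computation ($j\leftrightarrow(\sigma^i(j))_i$) is likewise off, since for $j\in J_{b'}(F)$ the components are related by conjugation through partial norms of $b'$, not by $\sigma^i$. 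These points are repairable, and the paper simply takes $\theta$ from the isomorphism in Zhu's Lemma 3.5 together with the convolution diagram on p.~459 of that paper.

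The serious gap is the reverse inclusion $\Stab_U\subset\Stab_Z$. Your proposed mechanism --- that $\Stab_U$ is a parahoric, hence ``connected,'' hence must act trivially on the finite set $\Sigma^{\topp}(\theta^{-1}(U))$ --- is not a valid principle: connectedness of the parahoric group scheme places no constraint on how the abstract group of its integral points permutes a finite set (a hyperspecial subgroup acts transitively on $\mathbb P^1(k_F)$, for instance), and the hoped-for ``MV-type basis preserved by $\Stab_U$'' is exactly the statement to be proved, not an available input. What the paper actually does is geometric: using the diagram on p.~459 of Zhu's paper it produces an $L^mH$-torsor $U'\to U$ carrying a $\Stab_U$-action together with a $\Stab_U$-equivariant isomorphism $\theta^{-1}(U)\times_U U'\cong F\times_{\kk}U'$ in which $\Stab_U$ acts trivially on the fixed fiber $F$; triviality of the action on $\Sigma^{\topp}(\theta^{-1}(U))$ then follows, and the inclusion $\Sigma^{\topp}(\theta^{-1}(U))\subset\Sigma^{\topp}(X^{G'}_{\preccurlyeq\mu'}(b'))$ is quoted from Nie (your dimension-count sketch via Theorem \ref{thm:dim-adlv} is a plausible substitute for that last point, but it is only a plan and would still need the generic fiber dimension of $\theta$ to be pinned down). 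Without an argument of the torsor/trivialization type, your proof of the key equality of stabilizers does not go through.
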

	\begin{proof}
		The morphism $\theta$ is given by the isomorphism in \cite[Lemma 3.5]{Zhu} and the left vertical map in the diagram on p.~459 of \cite{Zhu}. The $J_{b'}(F)\cong J_{\mathrm{Nm}(b')}^{(r)}(F_r)$-equivariance is clear from the construction. Let $U\in \Sigma^{\topp}(X_{\preccurlyeq|\mu'|}^H(\mathrm{Nm}(b')\sigma^r) )$.  We claim that $\mathcal J : = \Stab_{U}(J_{\mathrm{Nm}(b')}(F_r))$ acts trivially on  $\Sigma^{\topp}(\theta^{-1}(U))$. In fact, by the diagram on p.~459 of \cite{Zhu}, there exists $m \in \BN$ and an $L^m H$-torsor $U'$ over $U$ equipped with a $\mathcal J$-action such that $U' \to U$ is $\mathcal J$-equivariant and such that there exists a $\mathcal J$-equivariant $U'$-scheme isomorphism $\theta^{-1}(U) \times _U U' \isoarrow F \times_{\kk} U'$, where $\mathcal J$ acts trivially on $F$. Our claim follows. By the claim, we have $\Stab_{Z}(J_{b'}(F))=\Stab_{U}(J_{\mathrm{Nm}(b')}^{(r)}(F_r))$ for arbitrary $Z \in \Sigma^{\topp} (\theta^{-1}(U))$. By \cite[Lemma 1.8]{nie}, we have  $\Sigma^{\topp} (\theta^{-1}(U)) \subset  \Sigma^{\topp} (X^{G'}_{\preccurlyeq\mu'}(b'))$. The lemma follows. 
	\end{proof}
	\ignore{
	\begin{lemma}\label{lem: reducetion to minuscule special}Let $\mu'=(\mu_0,\dotsc,\mu_{r-1})\in X_*(T')$ and $[b']\in B(G',\mu')$. We write $|\mu'|$ for $\sum_{i=0}^{r-1}\s^i (\mu_i) \in X_*(T_H)$. Then we have a Cartesian diagram 
	$$\xymatrixcolsep{5pc} \xymatrix{X_{\preccurlyeq\mu'}^{G'}(b')\ar[r]\ar[d]& \mathrm{Gr}_{\breve\CK_H}\widetilde{\times} \mathrm{Gr}_{{\breve\CK_H},\preccurlyeq\mu_\bullet}\ar[d]^{\mathrm{pr_1}\times m}\\ X^H_{\preccurlyeq|\mu'|}(\mathrm{Nm}(b)\sigma^r)\ar[r]^{1\times \Nm(b)\sigma^r}&\mathrm{Gr}_{\breve\CK_H}\times \mathrm{Gr}_{\breve\CK_H}.}$$
		Moreover the map \remind{Has the map $\theta_{\mu}$ been defined?} $$\theta_\mu:X_{\preccurlyeq\mu}^G(b)	\rightarrow X^H_{\preccurlyeq|\mu_\bullet|}(\mathrm{Nm}(b)\sigma^r)$$ is $J_b(F)\cong J_{\mathrm{Nm}(b)}(F_r)$-equivariant and for any $U\in \Sigma^{\topp}(X^H_{\preccurlyeq|\mu_\bullet|}(\mathrm{Nm}(b)))$, there exists  $Z\in \Sigma^{\topp}(X_{\preccurlyeq\mu}^G(b))$ such that $$\Stab_{U}(J_b(F))=\Stab_{Z}(J_{\mathrm{Nm}(b)}(F_r)).$$
		
	\end{lemma}
	
	\begin{proof}The existence of the Cartesian diagram and the $J_b(F)\cong J_{\mathrm{Nm}(b)}(F_r)$ equivariance of $\theta_\mu$ follows from the discussion in \cite[3.1.3]{Zhu}. 
		
		Let $U\in \Sigma^{\topp}(X_{\preccurlyeq|\mu_\bullet|}^H(\mathrm{Nm}(b)\sigma^r) )$. Then by \cite[Lemma 1.8]{nie}, for any $Z\in\Sigma^{\topp} (\theta^{-1}_\mu(U))$, we have $Z\in( \Sigma^{\topp}X^G_{\preccurlyeq\mu}(b))$. It is easy to see that $\Stab_U(J_b(F))$ acts trivially on  $\Sigma^{\topp}(\theta_\mu^{-1}(U))$. It follows that for any $Z\in\Sigma^{\topp} (\theta^{-1}_\mu(U))$ we have $\Stab_{Z}(J_b(F))=\Stab_{U}(J_{\mathrm{Nm}(b)}(F_r))$.
	\end{proof} 

}

	\begin{proposition}\label{thm: sum of minuscule case}Assume that $\mu$ is a sum of dominant minuscule cocharacters and $[b]\in B(G,\mu)$ is basic. Then for any $Z\in \Sigma^{\topp}(X_\mu(b))$, $\Stab_Z({J_b(F)})$ is a special parahoric subgroup of $J_b(F)$.
	\end{proposition}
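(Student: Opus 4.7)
\textit{Plan.} My strategy is to apply Lemma~\ref{lem: reduction to minuscule special} to reduce to the case where $\mu$ is a single dominant minuscule cocharacter, where every top-component stabilizer is known to be hyperspecial by \cite[Theorem~4.4.14]{XZ}, and then to descend this stabilizer information from $J_b^{(r)}(F_r)$ to $J_b(F)$ via Galois descent of Bruhat--Tits theory.

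Write $\mu = \mu_1 + \dotsb + \mu_r$ as a sum of dominant minuscule cocharacters. Let $F_r\subset\breve F$ denote the unramified extension of degree $r$, put $H := G\otimes_F F_r$ and $G' := \mathrm{Res}_{F_r/F}(H)$; both $H$ and $G'$ are adjoint, simple, and unramified (over $F_r$ and $F$, respectively). Set $\tilde\mu_i := \sigma^{-(i-1)}(\mu_i)$, which is again dominant and minuscule in $X_*(T_H)$, and let $\mu' := (\tilde\mu_1,\dotsc,\tilde\mu_r) \in X_*(T')^+$. Then $\mu'$ is a single dominant minuscule cocharacter of $G'$, and $|\mu'| = \sum_{i=0}^{r-1}\sigma^i(\tilde\mu_{i+1}) = \mu$. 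I would then choose $b'\in G'(\breve F)$ representing the unique basic class in $B(G',\mu')$, so that $\mathrm{Nm}(b')$ is $\sigma^r$-conjugate to $b$.

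Now Lemma~\ref{lem: reduction to minuscule special} produces a $J_{b'}(F) \cong J^{(r)}_b(F_r)$-equivariant morphism $X_{\mu'}^{G'}(b') \to X_{\preccurlyeq\mu}^H(b\sigma^r)$ such that for each top-dimensional irreducible component $U$ of the target there is a top component $Z$ of the source with $\Stab_Z(J_{b'}(F)) = \Stab_U(J^{(r)}_b(F_r))$. Since $\mu'$ is a single dominant minuscule cocharacter of the unramified group $G'$ and $b'$ is basic, \cite[Theorem~4.4.14]{XZ} applies to give that each $\Stab_Z(J_{b'}(F))$ is hyperspecial. Hence every top-component stabilizer of $X_{\preccurlyeq\mu}^H(b\sigma^r)$ in $J_b^{(r)}(F_r) = J_b(F_r)$ is hyperspecial. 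The final step is to transfer this conclusion to $X_\mu^G(b)$ and $J_b(F)$: both $X_\mu^G(b)$ and $X_\mu^H(b\sigma^r)$ live in the same ambient affine Grassmannian over $\kk$, and I would realize $X_\mu^G(b)$, via a $\sigma$-equivariance argument, as a $\sigma$-stable subscheme of (a suitable dévissage of) $X_{\preccurlyeq\mu}^H(b\sigma^r)$ compatibly with the inclusion $J_b(F) = J_b(F_r)^\sigma \hookrightarrow J_b(F_r)$. By étale descent of Bruhat--Tits theory (Rousseau), the $\sigma$-fixed subgroup of a $\sigma$-stable hyperspecial parahoric of $J_b(F_r)$ is a special parahoric of $J_b(F)$, yielding the desired conclusion.

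The main obstacle is precisely this final transfer step. The two ADLVs $X_\mu^G(b)$ and $X_\mu^H(b\sigma^r)$ are defined by the distinct conditions of $\sigma$- versus $\sigma^r$-twisted conjugation, and neither is literally a subscheme of the other; matching their top-dimensional components and the induced stabilizers in a $\sigma$-equivariant way---so that the Galois descent of Bruhat--Tits theory can be invoked to convert ``hyperspecial in $J_b(F_r)$'' into ``special in $J_b(F)$''---is the technical heart of the argument, carried out along the lines of~\cite{nie,nienew}.
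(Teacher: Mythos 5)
There are two genuine gaps, and both sit at load-bearing points. First, your base case is wrong: \cite[Theorem 4.4.14]{XZ} applies only when the element $b'$ is \emph{unramified}, whereas here $b'$ is the basic class in $B(G',\mu')$ with $\kappa(b')$ in general nontrivial, so $J_{b'}$ is a non-quasi-split inner form and typically has no hyperspecial subgroups at all; the conclusion ``each $\Stab_Z(J_{b'}(F))$ is hyperspecial'' cannot hold. The minuscule case for a general basic $b'$ is precisely the hard part, and the paper proves it by a completely different route: pass to a standard Levi $M$ in which $b'$ is superbasic, use Theorem \ref{thm:ZZA} together with the ``only if'' part of \cite[Theorem 5.12]{HV} to get that $J_{b'}(F)\cap P(\breve F)=Q(F)$ ($Q$ a minimal parabolic of $J_{b'}$) acts transitively on each $J_{b'}(F)$-orbit of top components, deduce the Iwasawa-type identity $J_{b'}(F)=Q(F)\cdot\CJ$ for the (parahoric) stabilizer $\CJ$, and then force $\CJ$ to be special via \cite[Proposition 4.4.2]{BT1} and a Bruhat-decomposition argument in the reductive quotient. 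None of this is replaceable by the Xiao--Zhu theorem.

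Second, your final transfer step is both missing (as you acknowledge) and structurally misdirected. With your choice $H=G\otimes_F F_r$, the varieties $X^G_\mu(b)$ and $X^H_{\preccurlyeq\mu}(b\sigma^r)$ have no geometric relation of the kind you need: $X^G_\mu(b)$ is not a $\sigma$-stable subscheme of the latter, the $\sigma^r$-centralizer $J^{(r)}_b$ is not $J_b\otimes_F F_r$ (that base change is the $\sigma^r$-centralizer of the norm $b\sigma(b)\cdots\sigma^{r-1}(b)$, which is exactly why Lemma \ref{lem: reduction to minuscule special} goes through $\mathrm{Nm}(b')$), and even the component counts on the two sides can differ for non-split unramified $G$ since the Frobenius twisting changes after base change. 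The paper avoids any such descent: it chooses $H$ over $F_r$ to be the unramified group whose $\sigma^r$-action on the based root datum equals the $\sigma$-action for $G$ (so $H\neq G\otimes_F F_r$ in general), which makes $(G,b,w_0t^\mu)$ and $(H,b_H,w_{0,H}t^{\mu_H})$ \emph{associated} in the sense of \S\ref{subsec: reduction to char 0}; the stabilizers are then matched purely combinatorially by Proposition \ref{prop: assoc parahorics} together with Proposition \ref{prop: relation between Iw and Sp ADLV}, with specialness preserved through the log-volume criterion of Proposition \ref{prop: eq. v. special, max vol, max log vol}. Only after that combinatorial transfer does the convolution Lemma \ref{lem: reduction to minuscule special} enter, reducing to the minuscule case for $G'=\Res_{F_r/F}H$ over $F$, which is then handled by the (correctly proved) minuscule case. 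So to repair your argument you would need both the genuine minuscule-case proof and the replacement of the hoped-for Galois descent by the association mechanism.
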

	
	\begin{proof}
		We first consider the case where $\mu$ is minuscule. Let $M\subset G$ be a standard Levi subgroup such that there exists $b\in[b]\cap M(\breve F)$ which is superbasic in $M$. We use the same notations as in \S \ref{subsubsec:notation for M} with respect to $M$. We choose $b \in [b] \cap M(\breve F)$ that is superbasic in $M$, and upon $\s$-conjugating $b$ in $M(\breve F)$ we may assume that $b=\dot{\tau}$ for some $\tau\in \Omega_M$.
		
		Let $Z\in \Sigma^{\mathrm{\topp}}(X_\mu(b))$ and we let $\mathcal{J}\subset J_b(F)$ denote the stabilizer of $Z$. Let $P$ be the standard parabolic subgroup of $G$ with Levi factor $M$. By \cite[Theorem 3.1.1]{ZZ}, $\mathcal{J}$ is a parahoric subgroup of $J_b(F)$.  By Theorem \ref{thm:ZZA} and the ``only if'' part of \cite[Theorem 5.12]{HV}, $J_b(F)\cap P(\breve F)$ acts transitively on each $J_b(F)$-orbit in $\Sigma^{\topp}(X_\mu(b))$. Hence we have
		\begin{align}\label{eq:Iwasawa}
		J_b(F)=(J_b(F)\cap P(\breve F))\cdot \mathcal J.\end{align} 			Note that $J_b(F)\cap P(\breve F) = Q(F)$ where $Q$ is a  minimal parabolic subgroup of $J_b$, since $b$ is superbasic in $M(\breve F)$. Recall that $\CJ$ is a parahoric subgroup of $J_b(F)$. Thus by \cite[Proposition 4.4.2]{BT1}, the equality (\ref{eq:Iwasawa}) implies that $\mathcal J$ is contained in a  special parahoric subgroup $\CJ_1$ of $J_b(F)$. Note that by (\ref{eq:Iwasawa}), there exists $j \in Q(F)$ such that $j \CJ j^{-1}$ is associated with a facet in the standard apartment $\CA$. Thus up to replacing $Z$ by $jZ$, we may assume that both $\CJ$ and $\CJ_1$ are associated with facets in $\CA$. Then from (\ref{eq:Iwasawa}) we get 
	\begin{align}\label{eq:to apply Bruhat}
\CJ_1 = (\CJ_1 \cap Q(F)) \cdot \CJ. 
	\end{align}
 Let $\overline {\CJ_1}$ denote the reductive quotient of the special fiber of $\CJ_1$. Then the images of $\CJ_1 \cap Q(F)$ and $\CJ$ in $\overline {\CJ_1}$ are ($k_F$-points of) a Borel subgroup $\mathbb B$ and a parabolic subgroup $\mathbb P$ respectively, and $\mathbb B \cap \mathbb P$ contains a maximal torus in $\overline {\CJ_1}$. By (\ref{eq:to apply Bruhat}) we have $\overline{\CJ_1} = \mathbb B \mathbb P$, and by the Bruhat decomposition this is possible only when $\mathbb P = \overline {\CJ_1}$, or equivalently $\CJ = \CJ_1$. We have thus proved that $\CJ$ is a special parahoric subgroup of $J_b(F)$.

		We now consider the case when $\mu$ is a sum of $r$ dominant minuscule cocharacters. Let $H$ be the pinned unramified reductive group over $F_r$ such that its based root datum with the $\s^r$-action is identified with the based root datum of $(G,B,T)$ with the $\s$-action. Let $T_H$ be the maximal torus in the pinning of $H$. Then we have a canonical identification $X_*(T)^+ \cong X_*(T_H)^+$, and the image of $\mu$ in $X_*(T_H)^+$, denoted by $\mu_H$, is also a sum of $r$ dominant minuscule cocharacters. We have canonical identifications $G(\breve F) \cong H(\breve F)$ and $(\breve W, \s) \cong (\breve W_H, \s^r)$. Let $b_H \in H(\breve F)$ correspond to $b\in G(\breve F)$, and let $w_{0,H}$ denote the longest element of $\breve W_H$. Then  $(G,b,w_{0}t^{\mu})$ and $(H,b_H,w_{0,H}t^{\mu_H})$ are associated as in \S\ref{subsec: reduction to char 0}. By Proposition \ref{prop: assoc parahorics}, Proposition \ref{prop: relation between Iw and Sp ADLV}, and the fact that association of parahoric subgroups preserves being very special (see the proof of Corollary \ref{cor:red1}), it suffices to prove the result for $X^H_{\mu_H}(b_H\sigma^r)$.
		
	We decompose $\mu_H$ as  $\sum_{i=0}^{r-1}\s^i (\mu_i)  $, and define $G ' = \Res_{F_r/F} H$. Let $\mu' = (\mu_0, \cdots, \mu_{r-1})$. Choose $b'\in G'(\breve F)$ such that its image under $G'(\breve F) \to B(G') \isoarrow B^{(r)}(H)$ is the class of $b_H$. By Lemma \ref{lem: reduction to minuscule special} applied to the current situation, for every $U \in \Sigma^{\topp}(X^H_{\preccurlyeq\mu_H}(b_H\sigma^r)) $, there exists $Z\in \Sigma^{\topp}(X^{G'}_{
			\preccurlyeq \mu'}(b'))$ such that $\Stab_U(J_{b_H} ^{(r)} (F_r)) = \Stab_Z (J_{b'} (F)).$ Note that $\mu'$ is minuscule, so $X^{G'}_{
			\preccurlyeq \mu'}(b')  = X^{G'}_{\mu'}(b')$, and by the previous part of the proof, we know that $\Stab_Z (J_{b'} (F))$ is a special parahoric. The desired result for $X^H_{\mu_H}(b_H\sigma^r)$ follows by noting that the natural map $X^H_{\mu_H}(b_H\sigma^r)\rightarrow X^H_{\preccurlyeq\mu_H}(b_H\sigma^r)$ induces a $J_{b_H}^{(r)}(F_r)$-equivariant bijection between the sets of top-dimensional irreducible components.
	\end{proof}

\subsection{Numerical relations}\label{subsec:numerical} Another key ingredient in our proof of Theorem \ref{thm:main} is a set of  numerical relations deduced from results in \cite{ZZ}, which we discuss here.

\subsubsection{}  \label{subsubsec:setting for numerical} We assume that $\mathrm{char}(F)=0$, that $G$ is $F$-simple, adjoint, and unramified over $F$, and that $b$ is basic. We also assume that $[b]$ is not unramified, i.e., we assume that $\mathrm{def}_G(b) \neq 0$. 

Since $b$ is basic, $J_b$ is an inner form of $G$. Thus we can transfer Haar measures on $G(F)$ to Haar measures on $J_b(F)$, as in \cite[\S 1]{kottTama}. We fix the Haar measure on $G(F)$ giving volume $1$ to hyperspecial subgroups, and transfer it to a Haar measure on $J_b(F)$. (This Haar measure on $J_b(F)$ may not give volume $1$ to Iwahori subgroups.) For each $Z\in \Sigma^{\topp}(X_\mu(b))$, the volume of the parahoric subgroup $\Stab_{Z} (J_b(F)) \subset J_b(F)$ depends on $Z$ only via the $J_b(F)$-orbit $[Z]$ of $Z$. We denote this volume by $\vol([Z])$. 

Let $S_{\mu,b}(t) \in \BQ(t)$ be the rational function in \cite[Theorem 6.1.3]{ZZ}. We have
\begin{align*} S_{\mu, b }(0) & =\mathscr{N}(\mu,b), \\ 
S_{\mu, b } (q) & = e(J_b)\sum _{ [Z]\in J_b(F)\backslash \Sigma^{\topp}(X_\mu(b))} \vol ([Z])^{-1}.
\end{align*}
Here $q$ denotes the cardinality of the residue field of $F$, and $e(J_b) \in \{ \pm 1\}$ is the Kottwitz sign of $J_b$. (Recall $\mathscr N(\mu,b)$ from Definition \ref{defn:N(mu,b)}.) We set 
\begin{align}\label{eq:Q}
Q(\mu,b) : = e(J_b) S_{\mu,b} (q) \mathscr N(\mu,b)^{-1} =  \mathscr N(\mu,b)^{-1} \sum _{ [Z]\in J_b(F)\backslash \Sigma^{\topp}(X_\mu(b))} \vol ([Z])^{-1} .
\end{align}

\begin{proposition}\label{prop:key from ZZ} Keep the assumptions on $F,$ $G,$ and $ [b]$ in \S \ref{subsubsec:setting for numerical}. Assume that none of the simple factors of $G_{\overline F}$ is of type $A$. The following statements hold.  \begin{enumerate}
	\item Assume that $G$ is not a Weil restriction of the split adjoint group of type $E_6$. Then there exists a minuscule $\mu_1 \in X_*(T)^+$ such that $\mathscr N(\mu_1, b ) =1$, and such that for all $\mu \in X_*(T) ^+$ we have
$$Q(\mu,b) = Q(\mu_1, b). $$ 
	\item Assume that $G$ is a Weil restriction of the split adjoint group of type $E_6$. (The Weil restriction is necessarily along an unramified extension of $F$ since $G$ is unramified). Then there exist $\mu_1,\mu_2 \in X_*(T) ^+$, where $\mu_1$ is minuscule and $\mu_2$ is a sum of dominant minuscule cocharacters, such that $\mathscr N(\mu_1, b) = 1$ and such that for all $\mu \in X_*(T) ^+$ we have 
	\begin{align}\label{eq:three mu's}
Q(\mu,b)= Q(\mu_1,b)  + C(\mu) (Q(\mu_2,b) -Q (\mu_1,b)),
	\end{align}
 for some $C(\mu) \in \BQ$. 
\end{enumerate}
\end{proposition}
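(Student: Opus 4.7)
My plan is to invoke \cite[Theorem 6.1.3]{ZZ}, which expresses the rational function $S_{\mu,b}(t)$ as an explicit sum indexed by the Mirkovi\'c--Vilonen basis $\BM\BV_\mu(\lambda_b)$, with each summand depending only on the $J_b^{\ad}(F)$-conjugacy class of the (very special) parahoric subgroup of $J_b(F)$ stabilizing the corresponding top-dimensional irreducible component of $X_\mu(b)$. Combined with Theorem \ref{thm:ZZA}, which identifies $J_b(F)\backslash \Sigma^{\topp}(X_\mu(b))$ with $\BM\BV_\mu(\lambda_b)$, this lets me rewrite
$$ Q(\mu,b) \;=\; \frac{1}{\mathscr N(\mu,b)} \sum_{\mathbf{a}\in \BM\BV_\mu(\lambda_b)} \vol(\CJ^{\mathbf{a}})^{-1}, $$
so that $Q(\mu,b)$ becomes an average of inverse volumes of MV-cycle stabilizers.

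For Part (1), observe that when $G$ has no factor of type $A$ and is not a Weil restriction of split adjoint $E_6$, the inner form $J_b$ inherits this property; by \cite[Lemma 6.1]{Zhu} combined with Proposition \ref{prop: eq. v. special, max vol, max log vol}, every very special parahoric of $J_b(F)$ then lies in a single $J_b^{\ad}(F)$-conjugacy class, so all $\vol(\CJ^{\mathbf{a}})$ coincide and $Q(\mu,b)$ is independent of $\mu$. To produce $\mu_1$ minuscule with $\mathscr N(\mu_1, b)=\dim V_{\mu_1}(\lambda_b)=1$, I would use the fact that in a minuscule representation all nonzero weight multiplicities equal $1$; it then suffices to exhibit a minuscule $\mu_1$ with $[b]\in B(G,\mu_1)$, which I would verify by direct inspection of the minuscule cocharacters for each $F$-simple type outside type $A$ (the vector and spin representations for types $B, C, D$, and the standard $27$- and $56$-dimensional minuscule representations for $E_6, E_7$; types $G_2, F_4, E_8$ have no nontrivial basic $[b]$ of nonzero defect and so are immediate).

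For Part (2), when $G\cong \Res_{F_r/F} H_{\ad}$ with $H$ split of type $E_6$, the group $J_b(F)$ admits exactly two $J_b^{\ad}(F)$-conjugacy classes of very special parahoric subgroups (corresponding to the two classes of special vertices on the $E_6$ diagram exchanged by the outer involution), so the sum of Step~1 splits as $A(\mu) v_1^{-1} + B(\mu) v_2^{-1}$ with $A(\mu)+B(\mu)=\mathscr N(\mu,b)$. Choosing $\mu_1$ minuscule whose MV cycles all have stabilizer in the first conjugacy class and $\mu_2$ a sum of two minuscule cocharacters whose MV cycles meet both classes, a short algebraic manipulation yields (\ref{eq:three mu's}) with $C(\mu)=B(\mu)\mathscr N(\mu_2,b)/\bigl(B(\mu_2)\mathscr N(\mu,b)\bigr)$. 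The main obstacle will be verifying the two-class structure in the $E_6$ case together with the existence of such a $\mu_2$: this requires a combinatorial analysis of the stabilizers attached to MV polytopes in type $E_6$, pinning down which sums of minuscule cocharacters realize the ``second'' very special conjugacy class. A secondary technical check is that the terms of \cite[Theorem 6.1.3]{ZZ}, once evaluated at $t=q$ with the appropriate Kottwitz sign, do indeed reduce to $\vol(\CJ^{\mathbf{a}})^{-1}$ as asserted above.
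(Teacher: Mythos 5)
Your argument is circular at its central step. In both parts you derive the $\mu$-independence (resp.\ the relation (\ref{eq:three mu's})) by writing $Q(\mu,b)$ as an average of $\vol(\CJ^{\mathbf a})^{-1}$ over the MV basis and then asserting that the stabilizers $\CJ^{\mathbf a}$ are \emph{very special} parahorics (all of one volume in case (1), distributed over two explicit conjugacy classes in case (2)). But that the component stabilizers are very special is exactly Theorem \ref{thm:main}, which Proposition \ref{prop:key from ZZ} exists to prove: in \S\ref{subsec:proof} one only knows a priori (from \cite[Proposition 3.1.4]{ZZ}) that each stabilizer is \emph{some} parahoric, of volume at most $\vol_{\max}$, and the identities of Proposition \ref{prop:key from ZZ} are precisely what forces every volume to equal $\vol_{\max}$. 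So the input to these identities cannot be any knowledge of the stabilizers beyond parahoricity. The paper obtains them instead from the proof of \cite[Theorem 6.3.2]{ZZ} (the equations below (6.3.3), (6.3.7), (6.3.8) there) combined with \cite[Theorem A]{ZZ} ($\mathscr M(\mu,b)=\mathscr N(\mu,b)$); those identities are established by $p$-adic harmonic analysis (twisted orbital integrals and the base change fundamental lemma, whence the hypothesis $\charac(F)=0$), with the $\mu$-dependence of $S_{\mu,b}(q)$ controlled through the representation-theoretic quantity $\mathscr M(\mu,b)$, not through the component stabilizers. Note also that \cite[Theorem 6.1.3]{ZZ} does not say each summand "depends on the very special parahoric stabilizing the component"; it only gives $S_{\mu,b}(0)=\mathscr M(\mu,b)$ and $S_{\mu,b}(q)=e(J_b)\sum_{[Z]}\vol([Z])^{-1}$, which is what is quoted in \S\ref{subsubsec:setting for numerical}.

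Two further points would need repair even setting the circularity aside. First, in case (1) your multiplicity-one argument for $\mathscr N(\mu_1,b)=1$ is too quick when $G$ is quasi-split but not split: $V_{\mu_1}(\lambda_b)$ is a weight space for $\widehat{\CS}$, the identity component of $\widehat T^{\Gamma}$, so several $\widehat T$-weights of a minuscule representation can contribute to it, and one must check the specific $\mu_1$ chosen (this is done case by case in the proof of \cite[Theorem 6.3.2]{ZZ}); moreover $J_b$ may have factors of type $C$-$BC_n$, in which case very special parahorics need not form a single $J_b^{\ad}(F)$-conjugacy class (cf.\ the remark after Theorem A), though they do share the same volume. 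Second, in case (2) the claimed two-conjugacy-class structure, and especially the selection of $\mu_1,\mu_2$ according to which class their MV-cycle stabilizers realize, presupposes information about stabilizers up to conjugacy that is not available (the paper explicitly leaves the determination of stabilizers up to $J_b(F)$-conjugacy open); the coefficient $C(\mu)$ in (\ref{eq:three mu's}) comes in the paper from an identity of orbital-integral expressions in \cite{ZZ}, not from counting which MV cycles land in which class.
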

\begin{proof}The proposition follows from the main result of \cite{ZZ} (i.e., the Chen--Zhu Conjecture), and the proof of \cite[Theorem 6.3.2]{ZZ}. More precisely, part (1) follows from the equation below \cite[(6.3.3)]{ZZ} and the main result \cite[Theorem A]{ZZ} asserting that the numbers $\mathscr M(\mu,b)$ and $\mathscr M(\mu_1,b)$ in that equation are equal to $\mathscr N(\mu,b)$ and $\mathscr N(\mu_1,b)$ respectively . Part (2) follows from the equation below \cite[(6.3.7)]{ZZ}, the equation below \cite[(6.3.8)]{ZZ}, and \cite[Theorem A]{ZZ} asserting that $\mathscr M(\mu,b)  = \mathscr N(\mu,b)$. 
\end{proof}
\begin{remark}In Proposition \ref{prop:key from ZZ}, the conclusion in case (2) is weaker than that in case (1), and this originates from the dichotomy in \cite[Proposition 6.3.2]{ZZ}. It turns out that in case (2), there is extra difficulty in trying to establish the key estimate \cite[(6.3.1)]{ZZ}, and in fact only the weaker statement \cite[Proposition 6.3.2 (2)]{ZZ} is proved. If $G$ is a Weil restriction of $\PGL_n$, there seems to be even more serious difficulty in trying to establish \cite[(6.3.1)]{ZZ}. As a result the type A case is not considered in \cite[Proposition 6.3.2]{ZZ}. After the publishing of \cite{ZZ}, the authors have realized that one can actually prove \cite[(6.3.1)]{ZZ} when $G$ is a Weil restriction of an adjoint unramified unitary group. We will not need this for the purposes of the current paper. 
\end{remark}

\subsection{Proof of Theorem \ref{thm:main}} \label{subsec:proof}
By Proposition \ref{prop: final reduction}, we may assume without loss of generality that $\mathrm{char}(F)=0$, that $G$ is $F$-simple, adjoint, and unramified over $F$, and that $b$ is basic. If $[b]$ is unramified, then Theorem \ref{thm:main} is already proved in \cite[Theorem 4.4.14 (1)]{XZ}, cf.~\cite[Theorem 6.2.2]{ZZ}. We hence assume that $[b]$ is not unramified. Thus we are in the same setting as \S \ref{subsubsec:setting for numerical}.

Let $\vol_{\max}$ be the volume of a very special parahoric subgroup of $J_b(F)$, where the Haar measure on $J_b(F)$ is as in \S \ref{subsubsec:setting for numerical}. We know that every stabilizer for the $J_b(F)$-action on $\Sigma^{\topp}(X_\mu (b))$ is a parahoric subgroup of $J_b(F)$, see Remark \ref{rem:useful} and \cite[Proposition 3.1.4]{ZZ}. As a result, the volume of such a stabilizer will be at most $\vol_{\max}$, and equality holds if and only if the stabilizer is very special. Since the quantity $Q(\mu,b)$  defined in (\ref{eq:Q}) is the average of the volumes of these stabilizers, we see that Theorem \ref{thm:main} for $(\mu,b)$ is equivalent to the relation
\begin{align}\label{eq:suffices}
Q(\mu,b) = \vol_{\max}^{-1}.
\end{align}

Since $G$ is $F$-simple, the simple factors of $G_{\overline F}$ are isomorphic to each other. If they are of type $A$, then $\mu$ is necessarily a sum of dominant minuscule cocharacters in $X_*(T)$. In this case, Theorem \ref{thm:main} follows from Proposition \ref{thm: sum of minuscule case} if we know that every special parahoric subgroup of $J_b(F)$ is automatically very special. Since $J_b$ is an inner form of $G$ and hence also of type $A$, it is indeed the case that special parahoric subgroups of $J_b(F)$ are automatically very special, by inspecting the tables in \cite[\S 4]{Tits}.

Assume that $G$ is as in Proposition \ref{prop:key from ZZ} (1), and let $\mu_1$ be as in that part of that proposition. Since $\mathscr N(\mu_1,b) =1$, it follows from Proposition \ref{prop:one maximal} that $Q(\mu_1, b ) = \vol_{\max}^{-1}$. (Here Proposition \ref{prop:one maximal} is indeed applicable since $G$ is $F$-simple and adjoint.) But $Q(\mu, b) = Q(\mu_1,b)$, so (\ref{eq:suffices}) holds for $(\mu,b)$, and this implies that Theorem \ref{thm:main} holds for $(\mu,b)$. 

We are left with the case where $G$ is a Weil restriction of the split adjoint group of type $E_6$. In this case, let $\mu_1$ and $\mu_2$ be as in Proposition \ref{prop:key from ZZ} (2). Since $J_b$ is also of type $E_6$, by inspecting the tables in \cite[\S 4]{Tits} we see that every special parahoric subgroup of $J_b(F)$ is automatically very special.  Thus by Proposition \ref{thm: sum of minuscule case} we know that Theorem \ref{thm:main} holds for $(\mu_1,b)$ and $(\mu_2,b)$. It follows that $$Q(\mu_1,b) = Q(\mu_2,b) = \vol_{\max}^{-1}. $$ Substituting this back to (\ref{eq:three mu's}), we obtain (\ref{eq:suffices}) for $(\mu,b)$, and this implies that Theorem \ref{thm:main} holds for $(\mu,b)$.

The proof of Theorem \ref{thm:main} is complete.

\section{Irreducible components of basic loci} \label{sec: Shimura}
\subsection{Shimura varieties}

\subsubsection{}We use the previous section to describe the irreducible components in the basic locus of certain Hodge type Shimura varieties constructed in \cite{KP}. Let $\mathbf{G}$ be a connected reductive group over $\mathbb{Q}$ and $X$ a conjugacy class of homomorphisms $$h:\mathbb{S}:=\mathrm{Res}_{\mathbb{C}/\mathbb{R}}\rightarrow \mathbf{G}_{\mathbb{R}}$$ such that $(\mathbf{G},X)$ is a Shimura datum. For any $\mathbb{C}$-algebra $R$ we have $R\otimes_{\mathbb{R}}\mathbb{C}\cong R\times c^*(R)$, where $c$ is the complex conjugation. For $h\in X$ we let $\mu_h$ denote the cocharacter of $\mathbf{G}_{\BC}$ given by $$R^\times\rightarrow R^\times\times c^*(R)^{\times} \xrightarrow{h}\mathbf{G}(R),$$ where $R$ is an arbitrary $\BC$-algebra and the first map is $z \mapsto (z,1)$.  The conjugacy class of $\mu_{h}^{-1}$ is defined over a number field $\bE:=\bE(\bG,X)\subset \BC$ and we write $\{\mu\}$ for the corresponding geometric conjugacy class of cocharacters over $\overline{\bE}$. 

Let $p$ be an odd prime and we write $G:=\bG_{\BQ_p}$ for the base change of $\bG$ to $\BQ_p$.  We let $\BA_f$ denote the ring of finite adeles and $\BA_f^p$ the finite adeles with trivial component at $p$. Let $\RK=\RK_p\RK^p\subset \bG(\BA_f)$ where $\RK_p\subset \bG(\BQ_p)$ and $\RK^p\subset \bG(\BA_f^p)$ are compact open subgroups. Then for $\RK^p$ sufficiently small $$\mathrm{Sh}_{\RK}(\bG,X)(\BC)=\bG(\BQ)\backslash X\times \bG(\BA_f)/\RK$$ arises as the complex points of an algebraic variety $\mathrm{Sh}_{\RK}(\bG,X)$ defined over $\bE$. 

\subsubsection{}\label{subsec:Hodge embedding}From now on, we will assume the datum $(\mathbf{G},X)$ is of Hodge type. This means that there exists an embedding of Shimura data $$\rho:(\mathbf{G},X)\to (\mathbf{GSp}(V,\psi),S^\pm)$$ where $(V,\psi)$ is a symplectic space over $\BQ$ and $(\mathbf{GSp}(V,\psi),S^\pm)$ is the standard Siegel Shimura datum. We will also make the following assumptions.

($\dagger$) The group $G: = \bG_{\BQ_p}$ is quasi-split and splits over a tamely ramified extension of $\BQ_p$. Moreover $p\nmid |\pi_1(G_{\mathrm{der}})|$, and $\RK_p$ is a connected very special parahoric subgroup of $G(\BQ_p)$.

Here we say a parahoric $\RK_p$ is connected if it is the same as the stabilizer of a facet in the building for $G$. When $G$ is unramified, every parahoric  which is contained in a hyperspecial parahoric is connected. In the sequel we let $\CG$ be the group scheme over $\BZ_p$ corresponding to the parahoric $\RK_p$.

Let $v$ be a prime of $\bE$ lying above $p$ with residue field $k_v=\BF_{q}$. We write $\CO$ for the ring of integers of $\bE$ and $\CO_{(v)}$ for the localization of $\CO$ at $v$. Under the assumptions above, Kisin--Pappas \cite{KP} have constructed an integral model $\sS_{\RK}(\bG,X)$ for $\mathrm{Sh}_{\RK}(\bG,X)$ over  $\CO_{(v)}$. We briefly recall the construction below.

By the discussion in \cite[\S2.3.15]{KP}, upon replacing $\rho$ with a different Hodge embedding, we may assume that there exists a $\BZ_p$-lattice $V_{\BZ_p}\subset V_{\BQ_p}$ such that $\rho$ induces a closed immersion $\mathcal G\rightarrow \GL(V_{\BZ_p})$. From now on we fix $\rho$ such that this condition is satisfied. We let $\RK'=\RK'_p\RK'^p\subset \mathbf{GSp}(V_{\BA_f})$ with $\RK'_p\subset \mathbf{GSp}(V_{\BQ_p})$ the stabilizer of $V_{\BZ_p}$ and $\RK'^p\subset \mathbf{GSp}(\BA_f^p)$ a sufficiently small compact open subgroup. By \cite[Lemma 2.1.2]{KisinIntMod}, up to shrinking $\RK^p$ we may choose a sufficiently small $\mathrm K'^p$ such that the Hodge embedding $\rho$ defines a closed immersion $$\mathrm{Sh}_{\RK}(\bG,X)\rightarrow \mathrm{Sh}_{\RK'}(\mathbf{GSp}(V),S^\pm)\otimes_{\BQ}\bE$$ of Shimura varieties. We let $V_{\BZ_{(p)}}=V_{\BZ_p}\cap V$ and we let $G_{\BZ_{(p)}}$ denote the Zariski closure of $\mathbf{G}$ in $\mathbf{GSp}(V_{\BZ_{(p)}})$. The choice of $V_{\BZ_{(p)}}$ gives rise to an interpretation of $\mathrm{Sh}_{\RK'}(\mathbf{GSp}(V),S^\pm)$ as a moduli space of abelian varieties and hence to an integral model $\sS_{\RK'}(\mathbf{GSp}(V),S^\pm)$ over $\BZ_{(p)}$; see \cite[\S4]{KP} and  \cite[\S6]{Zhou}. The integral model $\sS_{\RK}(\mathbf{G},X)$ is defined to be the normalization of the closure of $\mathrm{Sh}_{\RK}(\mathbf{G},X)$ in $\sS_{\RK'}(\mathbf{GSp}(V),S^\pm)\otimes_{\BZ_{(p)}}\CO_{(v)}$. We will write $\CA$ for the pullback of the universal abelian scheme on $\sS_{\RK'}(\mathbf{GSp}(V),S^\pm)\otimes_{\BZ_{(p)}}\CO_{(v)}$ to $\sS_{\RK}(\mathbf{G},X)$.

\subsection{Rapoport--Zink Uniformization}\subsubsection{}

We fix a maximal $\breve \BQ_p$-split $\BQ_p$-rational torus $S$ in $G$ (cf.~\S \ref{subsubsec:IW}) such that $\mathrm K_p$ corresponds to a $\s$-stable special point $\breve \fks$ in the apartment corresponding to $S$. We let $T$ denote the centralizer of $S$ and we fix $B$ a Borel subgroup of $G$ containing $T$ (which exists as we have assumed that $G$ is quasi-split). We let $\mu\in X_*(T)_{\Gamma_0}^+$ denote the image of a dominant representative $\widetilde{\mu}\in X_*(T)^+$ of $\{\mu\}$. (Here $\Gamma_0$ is as in \S \ref{subsubsec:IW} with respect to $F = \BQ_p$.) Then for $b\in B(G,\mu)$ we have the associated affine Deligne--Lusztig variety $X_\mu(b)$ as in \S\ref{subsec:ADLV} corresponding to the very special parahoric $\mathrm K_p$.

To ease notation we write $\Sh_{\RK}$ for the geometric special fiber of $\sS_{\RK}(\bG,X)$. By \cite[\S  8]{Zhou}, there exists a map $$\mathcal{N}:\Sh_{\RK}\to B(G,\mu)$$ which induces the \textit{Newton stratification} on $\Sh_{\RK}$. We let $[b]_{\mathrm{basic}}\in B(G,\mu)$ denote the unique basic $\sigma$-conjugacy class in $B(G,\mu)$ and we write $\Sh_{\RK,\mathrm{bas}}$ for the preimage of $[b]_{\mathrm{basic}}$ under $\mathcal N$. By \cite[Theorem 3.6]{RR} this is a closed subscheme of $\Sh_{\RK}$, which is known as the \textit{basic locus}. 

Our goal is to understand the set $\Sigma^{\topp} (\Sh_{\RK,\mathrm{bas}})$ of top-dimensional irreducible components of $\Sh_{\RK,\mathrm{bas}}$. This will follow from our study of $X_\mu(b)$ above and the following result, which is the analogue in our context of the Rapoport--Zink uniformization. 
\begin{proposition}\label{prop: RZ uniformization}
	Let $b\in [b]_{\mathrm{basic}}$.	There exists an isomorphism of perfect schemes $$I(\BQ)\backslash X_\mu(b)\times \mathbf{G}(\BA_f^p)/\RK^p\cong \Sh_{\RK,\mathrm{bas}}^{\pfn}$$
	where $I$ is a certain inner form of $\bG$ with $I\otimes_{\BQ}\BA_f^p\cong \bG\otimes_{\BQ}\BA_f^p$ and $I\otimes_{\BQ}\BQ_p\cong J_b$. Moreover this isomorphism is equivariant for prime-to-$p$ Hecke operators.
\end{proposition}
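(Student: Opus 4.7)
The plan is to adapt the method of \cite[\S 7]{XZ} to the integral models of \cite{KP}, using the inputs from \cite{Zhou} that are specific to the Kisin--Pappas setting (existence and compatibility of crystalline tensors, the local model diagram, and the comparison of the formal neighborhood of a point in $\Sh_{\RK}$ with an affine Deligne--Lusztig variety). The group $I$ and the map will be constructed essentially as in the Hodge-type Rapoport--Zink uniformization, but in the world of perfect schemes, so that everything takes place in the Witt-vector affine Grassmannian rather than a formal scheme.

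First, I would fix a point $x_0 \in \Sh_{\RK, \mathrm{bas}}(\kk)$ and let $A_0 = \CA_{x_0}$ be the corresponding abelian variety together with its tensors $(s_{\alpha, 0})$ coming from the Hodge embedding, yielding a $p$-divisible group $\CH_0 = A_0[p^\infty]$ with $\CG$-structure (using \cite{Zhou} and \cite{KP}). Then I would define $I$ as the $\BQ$-group whose $R$-points are the $\BQ$-linear self-quasi-isogenies of $A_0$ preserving all tensors $s_{\alpha, 0}$ (both the crystalline and the \'etale ones) after tensoring with $R$. Using the results of \cite{Zhou} which identify $J_b$ with the group of self-quasi-isogenies of $\CH_0$ preserving the crystalline tensors, together with the \'etale description on the prime-to-$p$ side, one verifies that $I$ is an inner form of $\bG$ with $I \otimes_{\BQ} \BA_f^p \cong \bG \otimes_{\BQ} \BA_f^p$ and $I \otimes_{\BQ} \BQ_p \cong J_b$.

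Next I would construct the uniformization map
\[
\Theta \colon X_\mu(b) \times \bG(\BA_f^p)/\RK^p \longrightarrow \Sh_{\RK, \mathrm{bas}}^{\pfn}
\]
on $\kk$-points as follows. A point $(g\breve\CK, h\RK^p) \in X_\mu(b)(\kk) \times \bG(\BA_f^p)/\RK^p$ determines a quasi-isogeny $\CH_0 \dashrightarrow \CH_g$ to a new $p$-divisible group with $\CG$-structure (built from the Dieudonn\'e lattice $g\cdot M_0$), and via the $h$ this can be modified on the prime-to-$p$ level structures; Serre's tensor construction / the Hodge-type analogue of the Serre--Tate deformation-theoretic argument then produces an abelian variety $A_{g,h}$ isogenous to $A_0$, equipped with tensors and level structure, hence a $\kk$-point of $\Sh_{\RK}$. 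One checks $\Theta$ factors through $I(\BQ)\backslash(-)$ and lands in the basic stratum. The key geometric input to promote $\Theta$ to a morphism of perfect schemes, and to show it is an isomorphism, is the local description of $\Sh_{\RK}^{\pfn}$ near each point as a product of an affine Deligne--Lusztig variety piece and something \'etale-locally constant, which is exactly what \cite{Zhou} provides in the tame, very-special parahoric setting that the hypothesis $(\dagger)$ guarantees. The Hecke equivariance on the prime-to-$p$ side is immediate from the construction.

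Finally, to show $\Theta$ is an isomorphism: injectivity (modulo $I(\BQ)$) follows from the standard argument that two points of the source yielding isogenous abelian varieties with matching tensors must differ by an element of $I(\BQ)$; surjectivity on $\kk$-points follows from Dieudonn\'e theory together with the fact that every basic point is isogenous to $x_0$ in a tensor-preserving way, which again reduces to \cite{Zhou}. That $\Theta$ is an isomorphism of perfect schemes (and not just of underlying point sets) follows by comparing formal/infinitesimal neighborhoods using the Kisin--Pappas--Zhou local model theory. I expect the main obstacle to be the construction of the tensor-preserving quasi-isogeny used to build $\Theta$ in families and the verification that the resulting map is an isomorphism of perfect schemes; this is precisely the place where one must import the technical machinery of \cite{Zhou} (the existence of integral canonical tensors and the local model diagram for the parahoric integral model) to imitate \cite[\S 7]{XZ}, where only the hyperspecial case was treated.
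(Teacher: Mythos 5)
Your overall strategy coincides with the paper's: define $I$ as the group of tensor-preserving self-quasi-isogenies of $\CA_{x_0}$, use the point-wise map $X_\mu(b)(\kk)\to \Sh_{\RK}(\kk)$ supplied by \cite[Proposition 7.7]{Zhou}, promote it to a morphism of perfect schemes by imitating \cite[\S 7]{XZ} (the paper does this via a pullback diagram against the Siegel model, spreading out the tensor identity with \cite[Lemma 5.10]{MP} and using \cite[Proposition 4.2.2]{KP} plus reducedness to see the comparison map is an isomorphism), and conclude that the induced map $\iota_{\mathrm{isog}}$ is finite, bijective on closed points, and a closed immersion on completions, hence an isomorphism.

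However, there is a genuine gap at the step you dismiss with ``every basic point is isogenous to $x_0$ in a tensor-preserving way, which again reduces to \cite{Zhou}.'' This is precisely what does \emph{not} reduce to \cite{Zhou}: Zhou's results give the uniformization of a \emph{single} isogeny class (and the existence of CM lifts inside each isogeny class), but not that the whole basic locus is one isogeny class, and without that statement your map $\Theta$ only surjects onto the isogeny class of $x_0$. In the hyperspecial setting this is available from Kisin and Xiao--Zhu, but in the Kisin--Pappas parahoric setting it is the main new content of this section of the paper. Concretely, the paper must prove: (i) any two basic points have equivalent Kottwitz triples (this uses that a power of $\gamma_0$ is central, compactness of $(Z^\circ/\BG_m)(\BR)$, and the Kottwitz homomorphism to compare the $p$-components); (ii) the associated groups $I$, $I'$ are isomorphic as inner forms of $\bG$ (Hasse principle for adjoint groups); and (iii) the scheme $\CP_{s_\alpha}(x,x')$ of tensor-preserving quasi-isogenies is a \emph{trivial} $I$-torsor, which requires choosing CM lifts via \cite[Theorem 9.4]{Zhou}, invoking Kisin's twisting construction for abelian varieties with Hodge cycles, and a Galois-cohomological computation identifying $\ker(\RH^1(\BQ,I)\to\RH^1(\BR,I))$ with $\ker(\RH^1(\BQ,\bG)\to\RH^1(\BR,\bG))$ to kill the relevant class. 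Your proposal supplies none of this, and also implicitly assumes $I$ is independent of the chosen base point, which is part of the same circle of arguments; as written, the surjectivity (and hence the isomorphism onto $\Sh_{\RK,\mathrm{bas}}^{\pfn}$ rather than onto one isogeny class) is unproven.
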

\begin{corollary}\label{thm: irred comp basic locus}
	There exists an identification $$\Sigma^{\topp}(\Sh_{\RK,\mathrm{bas}})\cong \coprod_{i=1}^{\mathscr N (\mu,b)} I(\BQ)\backslash I(\BA_f)/\RI_{p}^{i}\RI^p$$ where $\mathscr{N}(\mu,b)$ is as in Definition \ref{defn:N(mu,b)}, $\RI_p^{i}$ is a very special parahoric of $I(\BQ_p)$ and $\RI^p\cong \RK^p$ under a fixed identification $I\otimes_{\BQ}\BA_f^p\cong G\otimes_{\BQ}\BA_f^p$. Moreover the following statements hold. 
	\begin{enumerate}\item  The identification is compatible with prime-to-$p$ Hecke operators.
		\item If $G$ is unramified, we may replace the indexing set with $\BM\BV_\mu(\lambda_b)$.
	\end{enumerate}
\end{corollary}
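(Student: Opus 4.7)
The plan is to deduce the corollary directly from Proposition \ref{prop: RZ uniformization}, Corollary \ref{cor: main}, and (for the second bullet) Theorem \ref{thm:ZZA}. First, since perfection is a universal homeomorphism, there is a natural, Hecke-equivariant identification $\Sigma^{\topp}(\Sh_{\RK,\mathrm{bas}}) = \Sigma^{\topp}(\Sh_{\RK,\mathrm{bas}}^{\pfn})$. Applying the uniformization of Proposition \ref{prop: RZ uniformization}, it then suffices to compute the top-dimensional irreducible components of the quotient perfect scheme $I(\BQ)\backslash \bigl(X_\mu(b)\times \bG(\BA_f^p)/\RK^p\bigr)$.

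Next, since $\bG(\BA_f^p)/\RK^p$ is a discrete set, each irreducible component of $X_\mu(b)\times \bG(\BA_f^p)/\RK^p$ has the form $Z\times \{g\RK^p\}$ for some $Z\in \Sigma(X_\mu(b))$ and $g\in \bG(\BA_f^p)$; its dimension equals $\dim Z$. Thus
$$
\Sigma^{\topp}\bigl(X_\mu(b)\times \bG(\BA_f^p)/\RK^p\bigr) = \Sigma^{\topp}(X_\mu(b))\times \bG(\BA_f^p)/\RK^p
$$
as $I(\BQ)$-sets, where $I(\BQ)$ acts diagonally through the fixed isomorphisms $I\otimes_{\BQ}\BQ_p\cong J_b$ and $I\otimes_{\BQ}\BA_f^p\cong \bG\otimes_{\BQ}\BA_f^p$. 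Applying Corollary \ref{cor: main} yields a $J_b(\BQ_p)$-equivariant identification $\Sigma^{\topp}(X_\mu(b))\cong \coprod_{i=1}^{\mathscr N(\mu,b)} J_b(\BQ_p)/\CJ_i$ with each $\CJ_i$ a very special parahoric of $J_b(\BQ_p)$. Setting $\RI_p^i:=\CJ_i$ and $\RI^p:=\RK^p$, and using $I(\BA_f)=J_b(\BQ_p)\times \bG(\BA_f^p)$, the $I(\BQ)$-quotient rewrites as
$$
\coprod_{i=1}^{\mathscr N(\mu,b)} I(\BQ)\backslash I(\BA_f)/\RI_p^i\RI^p,
$$
which is the desired identification. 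Compatibility with prime-to-$p$ Hecke operators is inherited from that of Proposition \ref{prop: RZ uniformization}, proving (1). For (2), when $G$ is unramified, Theorem \ref{thm:ZZA} gives a canonical bijection between $J_b(\BQ_p)\backslash \Sigma^{\topp}(X_\mu(b))$ and $\BM\BV_\mu(\lambda_b)$, and this allows us to re-index the disjoint union by the MV basis.

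The argument is essentially bookkeeping once Theorem \ref{thm:main} and Proposition \ref{prop: RZ uniformization} are in hand. The one point worth being careful about is the passage from components of $X_\mu(b)\times \bG(\BA_f^p)/\RK^p$ to components of the $I(\BQ)$-quotient; this is where I expect the only friction. The diagonal $I(\BQ)$-action on the discrete component set is countable, so the orbit set computes the components of the quotient with no collapsing, and the translation of orbits into double cosets via stabilizer computations is then immediate. Beyond this routine check, all the heavy lifting has already been carried out in the proofs of Theorem \ref{thm:main} and Proposition \ref{prop: RZ uniformization}, so no further obstacles remain.
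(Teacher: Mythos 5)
Your proposal is correct and follows essentially the same route as the paper, whose own proof is the one-line observation that the corollary follows from Proposition \ref{prop: RZ uniformization}, Corollary \ref{cor: main} (plus Theorem \ref{thm:ZZA} for the MV-basis indexing), and the invariance of the underlying topology under perfection. The extra bookkeeping you spell out — components of the product with the discrete set $\bG(\BA_f^p)/\RK^p$ and the passage to the $I(\BQ)$-quotient — is exactly what the paper leaves implicit.
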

\begin{proof}
	This follows from Proposition \ref{prop: RZ uniformization}, Corollary \ref{cor: main}, and the fact that the topology of a scheme is invariant under taking perfection.
\end{proof}

The rest of the section will be devoted to the proof of Proposition \ref{prop: RZ uniformization}. The case when $G$ is an unramified group is proved  in \cite[Corollary 7.2.6]{XZ}, a key input being the existence of a natural map $$X_\mu(b)(\overline{ \BF}_p)\to \underline{\mathrm{Sh}}_{\mathrm{K}}(\overline{ \BF}_p)$$ which was proved in \cite[Proposition 1.4.4]{KisinModp}. Our proposition follows similarly using results from \cite{Zhou}. We first recall some notations from \cite[\S6.2]{Zhou}.

\subsubsection{}\label{subsubsec:delta}
By construction, for a scheme $T$ over $\CO_{(v)}$, a point $x\in \sS_{\RK}(\mathbf  G,X)(T)$ gives rise to a triple $(\CA_x,\lambda,\epsilon^p_{\RK'})$ where $\CA_x$ is an abelian variety over $T$, $\lambda$ is a weak polarization (cf.~\cite[\S6.3]{Zhou}), and $\epsilon_{\RK'}^p$ is a global section of  the \'etale sheaf $$\underline{\mathrm{Isom}}_{\lambda,\psi}(\widehat{V}(\mathcal{A}_x),V_{\BA_f^p})/\RK'^p.$$ Here $\widehat{V}(\CA_x)=(\varprojlim_{p \nmid n} \CA_x[n])\otimes_{\BZ} \BQ$ is the adelic prime-to-$p$ Tate module of $\CA_x$, and we refer the reader to \textit{loc.~cit.~}for more details of the above \'etale sheaf.

For $R$ a ring and $M$ an $R$-module, we let $M^\otimes$  denote the direct sum of all $R$-modules obtained from $M$ by taking duals, tensor products, and symmetric and exterior products. By \cite[1.3.2]{KisinIntMod} and the assumption on $\rho$ in \S\ref{subsec:Hodge embedding}, the subgroup $G_{\BZ_{(p)}}$ of  $\mathbf{GSp}(V,\psi)$ is the stabilizer of a collection of tensors $s_\alpha\in V_{\BZ_{(p)}}^\otimes$. Let $k$ be a finite field or $\overline{\BF}_p$, and let $x\in\Sh_{\RK}(k)$. Then by the discussion in \cite[\S6]{Zhou}, the abelian variety $\CA_x$ is equipped with Frobenius-invariant tensors $s_{\alpha,\ell,x}\in T_\ell(\CA_x)^\otimes$ for primes $\ell\neq p$  and  $\varphi$-invariant tensors $s_{\alpha,0,x}\in\BD(\sG_x)^\otimes$. Here  $T_{\ell}(\CA_x)$ is the $\ell$-adic Tate module of $\CA_x$,  $\sG_x:=\CA_x[p^\infty]$ is the $p$-divisible group of $\CA_x$, and $\BD(\sG_x)$ is its contravariant Dieudonn\'e module. Upon fixing an isomorphism $$\BD(\sG_x)\cong V^\vee_{\BZ_p}\otimes_{\BZ_p} W(k)$$ taking $s_{\alpha,0,x}$ to $s_{\alpha}$, which exists by \cite[Proposition 3.3.8]{KP}, the Frobenius $\varphi$ is given by $\delta \sigma$ for an element $\delta\in G(W(k))[\frac{1}{p}])$ well defined up to $\sigma$-conjugation by $G(W(k))$. 

We write $\mathbb{M}$ for the $F$-crystal of the  $p$-divisible group associated to $\mathcal{A}$ over $\Sh_{\RK}$ and we let $\mathbb{M}[\frac{1}{p}]$ denote the associated isocrystal. By \cite{KMS}, there exists tensors $\mathbf{s}_{\alpha,0}\in\mathbb{M}[\frac{1}{p}]$ which specialize to $s_{\alpha,0,x}$ for all $x\in \Sh_{\RK}(\overline{\BF})$.

\subsubsection{}Now let $k=\BF_{p^r}$ be a finite extension of $k_v$. Fix  $x\in\Sh_{\RK}(k)$. For each prime $\ell \neq p$, upon fixing an isomorphism $$V_{\BQ_\ell}^\vee\cong T_\ell(\CA_x)^\vee\otimes_{\BZ_\ell} \BQ_\ell$$ taking $s_{\alpha}$ to $s_{\alpha,\ell,x}$, which exists by \cite[\S3.4.2]{KisinIntMod}, the $p^r$-Frobenius on the right is given by an element $\gamma_\ell\in \mathbf{G}(\BQ_\ell)$ well defined up to conjugation. We may and shall arrange that $(\gamma_{\ell})_{\ell \neq p}$ is an element of $\bG(\BA_f^p)$. We let $I_{\ell/k}$ denote the centralizer of $\gamma_\ell$. For sufficiently divisible $n$, the  centralizer of $\gamma_\ell^n$ stabilizes and we write $I_{\ell}$ for this centralizer. We also obtain $\delta \in G(W(k)[\frac{1}{p}])$ from $x$ as explained in \S \ref{subsubsec:delta}, and we define the $\BQ_p$-group $I_{p/k}$ whose points in a $\BQ_p$-algebra $R$ is given by $$I_{p/k}(R):=\{g\in G(W(k)[1/p]\otimes_{\BQ_p} R)\mid g^{-1}\delta\sigma(g)=\delta\}.$$ Then $I_{p/k}$ is a subgroup of $J_{\delta}$, and it grows if we keep $\delta$ fixed and let the finite field $k$ grow. Thus when $k'/k$ is a sufficiently large finite extension $I_{p/k'}$ stabilizes, and we denote it by $I_p$.  We write $\gamma_p$ for the norm $\delta\sigma(\delta)\dotsc \sigma^{r-1}(\delta)$.

Finally we define the $\BQ$-group whose points valued in a $\BQ$-algebra $R$ are given by $$\mathrm{Aut}(\CA_x\otimes_k\overline{ \BF}_p)(R)=(\mathrm{End}_\BQ(\CA_x\otimes_k\overline{ \BF}_p)\otimes_{\BQ} R)^\times$$
and we let $I\subset \mathrm{Aut}(\CA_x\otimes_k\overline{ \BF}_p)$ denote the subgroup which preserve the tensors $s_{\alpha,0,x}$ and $s_{\alpha,\ell,x}$ for all $\ell\neq p$.
We have the following facts about these groups for points $x$ in the basic locus.

\begin{proposition}\label{prop: I group} Let $k=\BF_{p^r}$ a finite extension of $k_v$ and $x\in\Sh_{\RK,\mathrm{bas}}(\BF_{p^r})$.
	
	\begin{enumerate}\item There exists $\gamma_0\in \bG(\BQ)$ which is elliptic in $\bG(\BR)$ such that $(\gamma_0,(\gamma_\ell)_{\ell\neq p},\delta)$ forms a Kottwitz triple of level $r$ in the sense of \cite[\S 4.3.1]{KisinModp}. In particular, $\gamma_0$ is $\mathbf{G}(\overline{ \BQ}_\ell)$-conjugate to $\gamma_\ell$ for all $\ell$ (including $\ell=p$).
		
		\item  For any prime $\ell$ (including $\ell=p$), the natural map $I\otimes_{\BQ}\BQ_\ell\rightarrow I_{\ell}$ is an isomorphism, and the group $(I/\BG_m)(\BR)$ is compact. Here $\BG_m\subset I$ arises from the image of the weight homomorphism of the Shimura datum $(\bG,X)$. 
		\item We write $I_0\subset\bG$ for the centralizer of $\gamma_0^n$ for sufficiently divisible $n$ such that the centralizers stabilize.  Then there exists an inner twisting  $I\otimes_{\BQ} \overline{ \BQ}\isoarrow I_{0}\otimes_{\BQ}\overline{ \BQ}$ which  makes $I$ an inner form of $I_0$ and  such that the diagram \[\xymatrix{ I_{0}\otimes_{\BQ}\overline{ \BQ}_\ell\ar[r]^\sim\ar@{=}[d] &  	I_{\ell}\otimes_{\BQ_l}\overline{ \BQ}_{\ell} \\
			I_{0}\otimes_{\BQ}\overline{ \BQ}_\ell\ar[r]^\sim & 	I\otimes_{\BQ}\overline{ \BQ}_\ell\ar[u]_\sim}\] commutes up to inner automorphism for any prime $\ell$.
		\end{enumerate}
\end{proposition}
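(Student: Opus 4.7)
The plan is to adapt the strategy Kisin used in \cite[\S 2]{KisinModp} for the hyperspecial case, replacing his input theorems by their analogues in the Kisin--Pappas setting established in \cite{Zhou}. We work with a fixed $x \in \Sh_{\RK, \mathrm{bas}}(\BF_{p^r})$ and the data $(\delta, (\gamma_\ell)_{\ell \neq p})$ attached to it.

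\textbf{Part (1).} The $p^r$-Frobenius endomorphism $\pi_x \in \mathrm{End}_{\BF_{p^r}}(\CA_x)$ defines an element of $\mathrm{Aut}(\CA_x \otimes \overline{\BF}_p)(\BQ)$, and one must show that it lies in the subgroup $I \subset \mathrm{Aut}(\CA_x \otimes \overline{\BF}_p)$ cut out by the tensors $(s_{\alpha,0,x})$ and $(s_{\alpha,\ell,x})_{\ell \neq p}$. At the $\ell$-adic realizations this holds because the tensors are by construction Frobenius-invariant, and at $p$ it follows from the $\varphi$-invariance of $s_{\alpha,0,x}$ together with $\pi_x$ being the $r$-th iterate of Frobenius on $\BD(\sG_x)$. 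To produce the rational element $\gamma_0 \in \bG(\BQ)$ and to verify the Kottwitz compatibility at $\infty$, one follows \cite[\S 4.3]{KisinModp}. The basicness of $[b]$ forces the Newton point to be central, and one concludes ellipticity of $\gamma_0$ at $\infty$ via the same Honda--Tate style argument used in the hyperspecial case.

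\textbf{Part (2).} At a prime $\ell \neq p$, the natural map $I \otimes_\BQ \BQ_\ell \to I_\ell$ is an isomorphism by the same argument as \cite[Corollary 2.3.1]{KisinModp}: both groups are identified with the stabilizer of the tensors $(s_{\alpha,\ell,x})$ inside the appropriate centralizer of Frobenius, and the $\BQ_\ell$-structures match. The harder assertion is at $\ell = p$, where one has to identify $I \otimes_\BQ \BQ_p$ with $I_p \subset J_\delta$. This is precisely the content of the parahoric isogeny-class results of \cite{Zhou} (the counterpart to Kisin's Proposition 2.1.3 and Corollary 2.3.1), and we invoke it directly. Finally, $(I/\BG_m)(\BR)$ is compact because the inner twist realized in (3) identifies $I_\infty^{\mathrm{der}}$ with the centralizer of the elliptic $\gamma_0$ in $\bG^{\mathrm{der}}_\BR$, which is compact by ellipticity.

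\textbf{Part (3).} With the local isomorphisms of (2) in hand, an inner twisting $I \otimes \overline{\BQ} \isoarrow I_0 \otimes \overline{\BQ}$ is constructed by the standard argument: at each place $v$, conjugation by an element of $\bG(\overline{\BQ}_v)$ identifies $I_\ell$ with the centralizer $I_{0,\BQ_\ell}$ of $\gamma_0^n$, and the resulting local cocycles in $Z^1(\BQ_v, I_{0,\mathrm{ad}})$ glue to a global cocycle in $H^1(\BQ, I_{0,\mathrm{ad}})$ by the same Hasse principle / Galois cohomology argument as \cite[Proposition 2.1.3]{KisinModp}. The commutativity of the diagram up to inner automorphism is automatic from the construction.

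\textbf{Main obstacle.} The principal difficulty is (2) at $\ell = p$, which rests on having a sufficiently strong description of the isogeny class through $x$ at very special parahoric level: one needs to know that the $p$-adic automorphism group of the $p$-divisible group together with its crystalline tensors (and the $\CG$-structure encoded by $\RK_p$) is exhausted by elements of $I$. This is exactly what is packaged into Zhou's theorems, so the proof reduces to correctly invoking that input and verifying that the tensors $(\mathbf{s}_{\alpha,0})$ and the group-theoretic data imposed by our very special parahoric $\RK_p$ fit into the framework of \cite{Zhou}.
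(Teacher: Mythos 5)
Your overall route is the one the paper takes: parts (1) and (2) are exactly the content of \cite[\S 9.5]{Zhou} (the parahoric analogue of Kisin's hyperspecial arguments), and part (3) is obtained by running the argument of \cite[Corollary 2.3.5]{KisinModp} with \cite[Theorem 9.4]{Zhou} substituted for \cite[Theorem 2.2.3]{KisinModp}. Your treatment of (1), of (2) at the finite places, and the cohomological gluing in (3) are consistent with that, modulo the remark that the relevant Kisin reference for (3) is Corollary 2.3.5 rather than Proposition 2.1.3, and that before one can speak of cocycles valued in $I_{0,\mathrm{ad}}$ one needs the identification $I\otimes_{\BQ}\overline\BQ\cong I_0\otimes_{\BQ}\overline\BQ$, which is part of what the Tate-type theorem (\cite[Theorem 9.4]{Zhou}) provides.

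There is, however, a genuine gap in your archimedean argument. You deduce compactness of $(I/\BG_m)(\BR)$ from the ellipticity of $\gamma_0$, claiming that the inner twist of (3) identifies $I^{\mathrm{der}}_{\BR}$ with the centralizer of $\gamma_0$ in $\bG^{\mathrm{der}}_{\BR}$, ``which is compact by ellipticity.'' This implication is false: ellipticity only says that $\gamma_0$ lies in an elliptic maximal torus, not that its centralizer is anisotropic modulo center. In the basic situation of this proposition the failure is not hypothetical, since (as in the proof of Proposition \ref{prop: basic same Kottwitz triple}) $\gamma_0^n$ is central for suitable $n$, so $I_0=\bG$ and the centralizer is all of $\bG_{\BR}$. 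Your argument is also circular: in the Kottwitz--Kisin framework (and in \cite[\S 9.5]{Zhou}) the ellipticity of $\gamma_0$ at $\infty$ asserted in (1) is \emph{deduced from} the compactness of $(I/\BG_m)(\BR)$, not the other way around; and your (2) invokes the twist of (3), while (3) is built from the local isomorphisms of (2). The correct source of the archimedean compactness is the weak polarization $\lambda$: elements of $I$ preserve $\lambda$ up to scalar, the associated Rosati involution is positive, hence $I(\BR)$ modulo the central scalars $\BG_m(\BR)$ is compact; ellipticity of $\gamma_0$ then follows because $\gamma_0\in I(\BQ)$ and $I_{\BR}$ is anisotropic modulo $\BG_m$. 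With that repair — which is precisely part of what \cite[\S 9.5]{Zhou} packages — the rest of your outline goes through as in the paper.
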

\begin{proof}
	(1) and (2) follow from the discussion in \cite[\S9.5]{Zhou}. (3) follows from the same argument as \cite[Corollary 2.3.5]{KisinModp} using \cite[Theorem 9.4]{Zhou} in place of \cite[Theorem 2.2.3]{KisinModp}.
\end{proof}

\subsubsection{}
For $(\gamma_0,(\gamma_\ell)_{\ell\neq p},\delta)$ a Kottwitz triple of level $r$, $(\gamma_0^m,(\gamma_\ell^m)_{\ell\neq p},\delta)$ is a Kottwitz triple of level $rm$. We consider the smallest  equivalence relation on the set of all Kottwitz triples of all levels under which $(\gamma_0,(\gamma_\ell)_{\ell\neq p},\delta)$ is equivalent to $(\gamma_0^m,(\gamma_\ell^m)_{\ell\neq p},\delta)$ for all $m\geq 1$. An equivalence class under this relation is called a \emph{Kottwitz triple}. For $x\in \Sh_{\RK,{\mathrm{bas}}}(\overline{\BF}_p)$, we know that $x$ is defined over some $k=\BF_{p^r}$, and the associated Kottwitz triple $(\gamma_0,(\gamma_\ell)_{\ell\neq p},\delta)$ of level $r$ defines a Kottwitz triple which is independent of the choice of $\BF_{p^r}$.

Recall the following notion of isogeny classes introduced in \cite{Zhou}.

\begin{definition} Let $x,x'\in \underline{\mathrm{Sh}}_{\RK}(\overline{\BF}_p)$.  We say $x$ and $x'$ are \emph{isogenous} if there exists a quasi-isogeny $\CA_x\rightarrow \CA_{x'}$ which takes $s_{\alpha,\ell,x}$ to $s_{\alpha,\ell,x'}$ and $s_{\alpha,0,x}$ to $s_{\alpha,0,x'}$. 
	Clearly  this defines an equivalence relation on $\underline{\mathrm{Sh}}_{\RK}(\overline{\BF}_p)$, and the equivalence classes will be called \emph{isogeny classes}. 
\end{definition}

\subsubsection{}
We define an equivalence relation $\sim$ on the set of all Kottwitz triples by setting $\mathfrak{t}\sim\mathfrak{t}'$ for Kottwitz triples $\mathfrak{t},\mathfrak{t}'$  if there exist representatives $(\gamma_0,(\gamma_\ell)_{\ell\neq p},\delta)$, $(\gamma'_0,(\gamma'_\ell)_{\ell\neq p},\delta')$ of some level $r$ for $\mathfrak{t}$, $\mathfrak{t}'$ respectively such that \begin{enumerate}
	\item 
	$(\gamma_\ell)_{\ell\neq p}$ and $(\gamma'_\ell)_{\ell\neq p}$ are conjugate in $\bG(\BA_f^p)$ \item$\delta $ and $\delta'$ are $\sigma$-conjugate in $G(K_0)$, where $K_0=W(\BF_{p^r})[\frac{1}{p}]$.
\end{enumerate} It is easy to see that if $\mathfrak{t}$, $\mathfrak{t}'$ are the Kottwitz triples associated to points $x,x'\in \Sh_{\RK,{\mathrm{bas}}}(\overline{\BF}_p)$ lying in the same isogeny class, then $\mathfrak{t}\sim\mathfrak{t}'$.

\begin{proposition}\label{prop: basic same Kottwitz triple}
	Let $x,x'\in\Sh_{\RK,{\mathrm{bas}}}(\overline{\BF}_p)$  and let $\mathfrak{t}$ (resp.~$\mathfrak{t}'$) 
	denote the Kottwitz triple associated to $x$ (resp.~$x'$). Then $\mathfrak{t}\sim\mathfrak{t}'$. 
\end{proposition}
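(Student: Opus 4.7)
My plan is to verify the two conditions in the definition of $\sim$ directly. After enlarging $r$, we may assume $x,x' \in \Sh_{\RK,\mathrm{bas}}(\BF_{p^r})$ and fix representatives $(\gamma_0,(\gamma_\ell),\delta)$ and $(\gamma_0',(\gamma_\ell'),\delta')$ of $\mathfrak{t},\mathfrak{t}'$ at level $r$. Condition (2) is the easier part: since $[\delta]=[\delta']=[b]_{\mathrm{basic}}$ in $B(G)$, some element $g \in G(\breve \BQ_p)$ $\sigma$-conjugates $\delta$ to $\delta'$, and any such $g$ is automatically defined over a finite unramified extension $W(\BF_{p^{r_0}})[1/p]$. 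Replacing $r$ by a common multiple of $r$ and $r_0$---which, by the definition of the equivalence relation $\sim$, corresponds to passing from $(\gamma_0,(\gamma_\ell),\delta)$ to a Kottwitz triple at a higher level by replacing $(\gamma_0,(\gamma_\ell))$ with suitable powers and $\delta$ with the corresponding norm---puts us in a situation where condition (2) is satisfied.

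For condition (1), my plan is to construct a quasi-isogeny $\phi : \CA_x \to \CA_{x'}$ over $\overline{\BF}_p$ which sends $s_{\alpha,0,x}$ to $s_{\alpha,0,x'}$ and $s_{\alpha,\ell,x}$ to $s_{\alpha,\ell,x'}$ for every prime $\ell \neq p$. Such a $\phi$ induces a tensor-preserving isomorphism of prime-to-$p$ adelic Tate modules $\widehat V(\CA_x) \isoarrow \widehat V(\CA_{x'})$ which intertwines the Frobenius actions; translating this back through the trivializations used to define $\gamma_\ell$ and $\gamma_\ell'$ yields an element of $\bG(\BA_f^p)$ conjugating $(\gamma_\ell)_{\ell \neq p}$ to $(\gamma_\ell')_{\ell \neq p}$, which is precisely condition (1). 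In particular, both conditions of $\sim$ will be verified without ever having to pass to an additional power in this second step.

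The main obstacle is therefore the construction of the tensor-preserving quasi-isogeny $\phi$, which is equivalent to showing that $\Sh_{\RK,\mathrm{bas}}(\overline{\BF}_p)$ forms a single isogeny class in the sense defined immediately before the proposition. In the Hodge-type hyperspecial setting this is the content of \cite[\S 2]{KisinModp}; in our connected very special parahoric setting one adapts \cite[\S\S 8--9]{Zhou}. The outline is to begin from the observation that the isocrystals-with-$G$-structure associated to $\CA_x$ and $\CA_{x'}$ are $\sigma$-conjugate (both lying over $[b]_{\mathrm{basic}}$), then lift a chosen $\sigma$-conjugator to a quasi-isogeny of $p$-divisible groups using Dieudonn\'e theory and the tensors $\mathbf{s}_{\alpha,0}$, and finally promote this to a quasi-isogeny of abelian varieties using Serre--Tate deformation theory together with Tate's theorem for $\ell$-adic Tate modules. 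This last step---ensuring simultaneous compatibility with all of the tensors $s_{\alpha,\ell}$ and $s_{\alpha,0}$ at a single quasi-isogeny---is the most delicate point, and relies crucially on the integral model construction of \cite{KP} together with its compatibility with the Siegel moduli interpretation recalled in \S \ref{subsec:Hodge embedding}.
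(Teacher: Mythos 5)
Your reduction of condition (2) contains a genuine error. You claim that a $\sigma$-conjugator $g\in G(\breve\BQ_p)$ with $g^{-1}\delta\sigma(g)=\delta'$ is ``automatically defined over a finite unramified extension $W(\BF_{p^{r_0}})[1/p]$''. This is false twice over: $\breve\BQ_p$ is strictly larger than $\BQ_p^{\ur}=\bigcup_r W(\BF_{p^r})[1/p]$, and, more seriously, $\sigma$-conjugacy over $\breve\BQ_p$ of two elements that both lie in $G(W(\BF_{p^r})[1/p])$ does not imply $\sigma$-conjugacy in $G(W(\BF_{p^{r'}})[1/p])$ for any finite $r'$, no matter how much you raise the level. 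Already for $G=\BG_m$, take $\delta=1$ and $\delta'=u$ a unit of $W(\BF_{p^r})$: they are $\sigma$-conjugate over $\breve\BQ_p$, but $\sigma$-conjugacy at level $r'$ is equivalent to $N_{\BQ_{p^{r'}}/\BQ_p}(u)=1$, which fails for every $r'$ as soon as $N_{\BQ_{p^r}/\BQ_p}(u)$ is not a root of unity. So descending from $G(\breve\BQ_p)$ to a finite level is precisely the hard point of condition (2), and it is where the paper's proof does real work: it first shows (via the argument of \cite[Lemma 7.2.14]{XZ}) that a power of $\gamma_0$ and of $\gamma_0'$ is central, then uses integrality of $\gamma_0^{-1}\gamma_0'$ at all finite places (at $p$ this uses that $\delta,\delta'$ are \emph{basic}, via the Kottwitz map) together with compactness of $(I/\BG_m)(\BR)$ from Proposition \ref{prop: I group} to force $\gamma_0=\gamma_0'$ after enlarging $k$, and only then deduces from $g^{-1}\gamma_0\sigma^r(g)=\gamma_0$ with $\gamma_0$ central that $g^{-1}\sigma^r(g)=1$, i.e.\ $g\in G(\BQ_{p^r})$. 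None of this norm/centrality mechanism appears in your argument, and without it your step (2) does not go through.

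Your treatment of condition (1) is also problematic, for a structural reason: you reduce it to the assertion that $\Sh_{\RK,\mathrm{bas}}(\overline\BF_p)$ is a single isogeny class, but in the paper that assertion is Proposition \ref{prop: Basic locus one isogeny class}, which is proved \emph{after} and \emph{by means of} the present proposition -- the identification $I\cong I'$ as inner forms (Corollary \ref{cor: basic same inner form}, which rests on Proposition \ref{prop: basic same Kottwitz triple} and the Hasse principle) is what makes the common-torus, special-point-lift and torsor-twisting argument possible. Your sketch (lift a $\sigma$-conjugator of isocrystals to a quasi-isogeny of $p$-divisible groups, then ``promote'' it via Serre--Tate and Tate's theorem) does not produce a quasi-isogeny of abelian varieties respecting \emph{all} the tensors $s_{\alpha,0}$ and $s_{\alpha,\ell}$ simultaneously; that simultaneous compatibility is exactly the difficult content of the Kisin/Zhou argument and is not a formal consequence of Dieudonn\'e theory plus Tate's theorem. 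As written, your proof of (1) therefore presupposes a stronger result whose only available proof uses the statement you are trying to prove. Note that the paper's actual route avoids isogenies entirely: once $\gamma_0=\gamma_0'$ is central, the compatibility $\gamma_\ell\sim\gamma_0$ over $\overline\BQ_\ell$ forces $\gamma_\ell=\gamma_\ell'$ on the nose, so condition (1) is immediate, and condition (2) follows from the norm trick described above.
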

\begin{proof}
	We fix a sufficiently large finite field $k=\BF_{p^r}$ such that $x$ and $x'$ are both defined over $k$ and we  fix representatives $(\gamma_0,(\gamma_\ell)_{\ell\neq p},\delta)$ and  $(\gamma'_0,(\gamma'_\ell)_{l\neq p},\delta')$ of level $r$  for $\mathfrak{t}$ and $\mathfrak{t}'$ respectively. Write $I$ and $I'$ for the $\BQ$-groups associated to $x$ and $x'$ as above. We first claim that there exists $n\geq 1$ such that $\gamma_0^n$ and $\gamma_0'^n$ are central. Indeed this follows verbatim from the argument in \cite[Lemma 7.2.14]{XZ} which works without the assumption that $G$ is unramified. Therefore upon extending $k$ we may assume $\gamma_0$ and $\gamma_0'$ are both central.
	
	Let $Z^\circ$ denote the connected component of the center of $\mathbf{G}$. Upon enlarging $k$, we may assume $t:=\gamma^{-1}_0\gamma_0'\in Z^\circ(\BQ)$. We claim that the image of $t$ in $Z^\circ(\BA_f)$ lies in a compact open subgroup $H$. For $\ell\neq p$, we have $\gamma_0=\gamma_\ell$, hence $\gamma_0$ lies in a compact subgroup of $Z^\circ(\BQ_\ell)$ since $\gamma_\ell$ is the Frobenius automorphism of the $\ell$-adic Tate module.  The same argument works for $\gamma_0'$ and  hence $t$ lies in a compact open subgroup of $\mathbf{G}(\BA_f^p)$.  For $\ell=p$, we have that $\gamma_0$ and $\gamma_0'$ both have the same image in $\pi_1(G)_{\Gamma}$ since $\delta$ and $\delta'$ are both basic. Since the kernel of the map $X_*(Z^\circ)_{\Gamma_0}^{\sigma}\rightarrow \pi_1(G)_\Gamma$ is torsion, it follows that upon further extending $k$, we may assume that $\gamma$ and $\gamma_0'$ have the same image under the Kottwitz map $\kappa:Z^\circ(\BQ_p)\rightarrow X_*(Z^\circ)_{\Gamma_0}^{\sigma}$.  Thus $t$ lies in the kernel of $\kappa$ which is a compact open subgroup of $Z^{\circ}(\BQ_p)$.
	
	Since $\mathbf{G}$  and $I$ are inner forms (recall $\gamma_0$ is central), we may naturally consider $Z^\circ$ as a subgroup of $I$ which contains the scalars $\mathbb{G}_m$. Then the compactness of $(I/\BG_m)(\BR)$ implies $(Z^\circ/\BG_m)(\BR)$ is compact. It follows that $H\cap Z^\circ(\BQ)$ is finite. Hence there exists $m$ such that $\gamma_0^m=\gamma_0'^m$. Upon extending $k$, we may assume $\gamma_0=\gamma_0'$. This implies $\gamma_\ell=\gamma_\ell'$. 
	
	Now since $x,x'\in \Sh_{\RK,\mathrm{bas}}(k)$, there exists $g\in G(\breve{\BQ}_p)$ such that $g^{-1}\delta\sigma(g)=\delta'$.
	Taking norms, we obtain $$g^{-1}\gamma_0\sigma^r(g)=\gamma_0'=\gamma_0$$ and hence $g^{-1}\sigma^r(g)=1$ since $\gamma_0$ is central. This implies $g\in G(\BQ_{p^r})$ and hence $\delta$ and $\delta'$ are $\sigma$-conjugate in $G(\BQ_{p^r})$. It follows that $\mathfrak{t}\sim \mathfrak{t}'$.\end{proof}
  Proposition \ref{prop: I group} and Proposition \ref{prop: basic same Kottwitz triple} together with the Hasse principle for adjoint groups imply the following corollary.

\begin{corollary}\label{cor: basic same inner form}	Let $x,x'\in\Sh_{\RK,{\mathrm{bas}}}(\overline{\BF}_p)$. Then the groups $I$ and $I'$ are isomorphic as inner forms of $\mathbf{G}$. \qed
\end{corollary}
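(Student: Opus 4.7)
The plan is to show that $[I]$ and $[I']$ define the same class in $H^1(\mathbb{Q}, I_0^{\mathrm{ad}})$ for a common $\mathbb{Q}$-subgroup $I_0 \subset \mathbf{G}$, and then invoke the Hasse principle for adjoint groups.

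First I would use Proposition~\ref{prop: basic same Kottwitz triple} to align the Kottwitz triples of $x$ and $x'$. After extending the base field $k = \mathbb{F}_{p^r}$ and replacing the representatives by suitable powers, we may choose level-$r$ representatives $(\gamma_0, (\gamma_\ell)_{\ell\neq p}, \delta)$ and $(\gamma_0', (\gamma_\ell')_{\ell\neq p}, \delta')$ such that $\gamma_0 = \gamma_0'$, such that $(\gamma_\ell)_{\ell\neq p}$ is $\mathbf{G}(\mathbb{A}_f^p)$-conjugate to $(\gamma_\ell')_{\ell\neq p}$, and such that $\delta$ and $\delta'$ are $\sigma$-conjugate in $G(\mathbb{Q}_{p^r})$. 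In particular the subgroup $I_0 \subset \mathbf{G}$, defined as the centralizer of a sufficiently high power of $\gamma_0$, is the same for $x$ and $x'$. By Proposition~\ref{prop: I group}(3), both $I$ and $I'$ are inner forms of $I_0$ over $\mathbb{Q}$, yielding cohomology classes $[I], [I'] \in H^1(\mathbb{Q}, I_0^{\mathrm{ad}})$.

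Next I would verify that these two classes coincide locally at every place $v$ of $\mathbb{Q}$. At a finite place $\ell \neq p$, conjugacy of $\gamma_\ell$ and $\gamma_\ell'$ in $\mathbf{G}(\mathbb{Q}_\ell)$ identifies $I_\ell$ with $I_\ell'$, and the commutative diagram in Proposition~\ref{prop: I group}(3) shows that this identification is compatible with the inner twistings from $I_0 \otimes_{\mathbb{Q}} \mathbb{Q}_\ell$; using Proposition~\ref{prop: I group}(2) we then obtain an isomorphism $I \otimes \mathbb{Q}_\ell \cong I' \otimes \mathbb{Q}_\ell$ as inner forms of $I_0 \otimes \mathbb{Q}_\ell$. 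At $\ell = p$, the $\sigma$-conjugacy of $\delta$ and $\delta'$ yields $I_p \cong I_p'$ compatibly in the same way. At the archimedean place, the ellipticity of $\gamma_0$ in $\mathbf{G}(\mathbb{R})$ (Proposition~\ref{prop: I group}(1)) implies that $I_0(\mathbb{R})$ contains an elliptic maximal torus, and Proposition~\ref{prop: I group}(2) tells us that both $(I/\mathbb{G}_m)(\mathbb{R})$ and $(I'/\mathbb{G}_m)(\mathbb{R})$ are compact; since there is a unique inner form of $I_0 \otimes \mathbb{R}$ with compact adjoint group, $I \otimes \mathbb{R} \cong I' \otimes \mathbb{R}$ as inner forms of $I_0 \otimes \mathbb{R}$.

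Finally, the Hasse principle for adjoint groups (Kneser--Harder--Chernousov) asserts that the localization map
\[
H^1(\mathbb{Q}, I_0^{\mathrm{ad}}) \longrightarrow \prod_v H^1(\mathbb{Q}_v, I_0^{\mathrm{ad}})
\]
is injective. Since $[I]$ and $[I']$ have the same image in every factor on the right, we conclude $[I] = [I']$, so $I \cong I'$ as inner forms of $\mathbf{G}$. The main obstacle is the archimedean comparison, since it requires showing that the compactness condition of Proposition~\ref{prop: I group}(2) pins down the $\mathbb{R}$-inner form uniquely; this should follow from elementary facts about elliptic tori in reductive groups over $\mathbb{R}$, but is the one place where one must go beyond pure Kottwitz-triple bookkeeping.
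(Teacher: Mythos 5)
Your proposal is correct and is essentially the paper's own argument: the corollary is deduced exactly as you describe, by combining Proposition \ref{prop: I group} (the local descriptions $I\otimes\BQ_\ell\cong I_\ell$, the inner-twisting diagram, and compactness of $(I/{\BG}_m)(\BR)$, which forces the unique anisotropic-mod-center class at the archimedean place) with Proposition \ref{prop: basic same Kottwitz triple} (alignment of the local data at all finite places) and the Hasse principle for adjoint groups. The only point worth making explicit is that in the basic case $\gamma_0$ may be taken central, so $I_0=\mathbf{G}$ and your comparison of classes indeed takes place in $H^1(\BQ,\mathbf{G}^{\mathrm{ad}})$, giving the statement about inner forms of $\mathbf{G}$ itself.
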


\begin{proposition}\label{prop: Basic locus one isogeny class}Let $x,x'\in \Sh_{\RK,\mathrm{bas}}(\overline{\BF}_p)$. Then $x$ and $x'$ lie in the same isogeny class.\end{proposition}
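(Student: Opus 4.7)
The plan is to construct directly a quasi-isogeny $\phi: \CA_x \to \CA_{x'}$ sending $s_{\alpha,\ell,x}$ to $s_{\alpha,\ell,x'}$ for every prime $\ell$ (including the crystalline tensors at $\ell=p$); the existence of such a $\phi$ is, by definition, what it means for $x$ and $x'$ to be isogenous. All the numerical input required is already packaged in the preceding results. By Propositions \ref{prop: I group} and \ref{prop: basic same Kottwitz triple} and Corollary \ref{cor: basic same inner form}, after enlarging the finite field of definition $k=\BF_{p^r}$ and adjusting representatives, the Kottwitz triples of $x$ and $x'$ agree in their prime-to-$p$ components and their $p$-components $\delta,\delta'$ are $\sigma$-conjugate in $G(W(k)[\tfrac{1}{p}])$, while the $\BQ$-groups $I$ and $I'$ are isomorphic inner forms of $\bG$.

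The first substantive step is to translate these conjugacy data, through the trivializations fixed in \S\ref{subsubsec:delta}, into concrete isomorphisms of realizations carrying tensors to tensors: a Galois-equivariant isomorphism $\widehat V(\CA_x)\otimes_{\BZ} \BQ \isoarrow \widehat V(\CA_{x'})\otimes_{\BZ} \BQ$ sending $s_{\alpha,\ell,x}$ to $s_{\alpha,\ell,x'}$ for every $\ell\neq p$, and a $\varphi$-equivariant isomorphism $\BD(\sG_x)[\tfrac{1}{p}] \isoarrow \BD(\sG_{x'})[\tfrac{1}{p}]$ sending $s_{\alpha,0,x}$ to $s_{\alpha,0,x'}$. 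Since the common conjugacy class of $\gamma_0$ controls the characteristic polynomial of Frobenius on both abelian varieties, the Honda--Tate theorem produces at least one quasi-isogeny $\psi : \CA_x \to \CA_{x'}$ over a finite extension of $k$. The failure of $\psi$ to match the tensors is then recorded by a collection $(u_\ell)\in \prod_\ell I'(\BQ_\ell)$, where $u_\ell$ compares the action of $\psi$ on the $\ell$-adic realization (or Dieudonn\'e isocrystal, when $\ell=p$) to the tensor-preserving isomorphism constructed above.

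The decisive step is to replace $\psi$ by $u\circ \psi$ for some $u \in I'(\BQ)$ that simultaneously approximates the $(u_\ell)$ closely enough at every prime so that $u\circ \psi$ carries all tensors on the nose. The existence of such a $u$ is the main obstacle, and rests on Hasse-principle and approximation arguments for the inner form $I'$ of $\bG$; concretely, since the tensor-preserving discrepancy is governed by $I'^{\mathrm{ad}}$, which satisfies the Hasse principle, one can lift a well-chosen adelic correction to a rational one. This rationality step proceeds along the same lines as in the unramified case (\cite[Corollary 7.2.16]{XZ}, itself modeled on Kisin's analysis of isogeny classes in \cite[\S 2]{KisinModp}), with the additional inputs needed in the tamely ramified Kisin--Pappas setting supplied by Zhou's results \cite[\S 9]{Zhou}. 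Once such a $u$ has been produced, $u\circ \psi$ is by construction a tensor-preserving quasi-isogeny, so $x$ and $x'$ lie in the same isogeny class.
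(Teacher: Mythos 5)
Your high-level strategy (use the equivalence of Kottwitz triples to get tensor-preserving comparisons of all $\ell$-adic and crystalline realizations, then promote these to an actual tensor-preserving quasi-isogeny by a cohomological/Hasse-principle argument) points in the right general direction, but the decisive step is not substantiated, and as written it would fail. First, the discrepancy between an arbitrary Honda--Tate quasi-isogeny $\psi$ and the tensor-preserving isomorphisms of realizations does \emph{not} lie in $\prod_\ell I'(\BQ_\ell)$: at each place it is a Frobenius-equivariant automorphism of the realization which need not preserve the tensors, so it lives in the much larger group $\uAut(\CA_{x'})(\BQ_\ell)$ (resp.\ its crystalline analogue), not in $I'(\BQ_\ell)$. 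Second, ``choosing $u\in I'(\BQ)$ approximating $(u_\ell)$ closely enough so that $u\circ\psi$ carries all tensors on the nose'' cannot work: matching tensors is an exact (closed) algebraic condition at infinitely many places, not an open condition that weak approximation can achieve; moreover composing with an element of $I'(\BQ)$, which by definition fixes the tensors $s_{\alpha,\cdot,x'}$, does not move $\psi(s_{\alpha,\cdot,x})$ onto $s_{\alpha,\cdot,x'}$ unless one already works inside a torsor formalism. Finally, invoking the Hasse principle for $I'^{\ad}$ is not the relevant statement, and the archimedean place is never addressed.

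What is actually needed (and what the paper does) is to consider the scheme $\mathcal P_{s_\alpha}(x,x')$ of tensor-preserving quasi-isogenies and prove two things: (i) it is an $I$-torsor at all, i.e.\ non-empty over $\overline\BQ$ with simply transitive $I$-action --- this is where the CM-lift input enters: after moving $x,x'$ in their isogeny classes using \cite[Theorem 9.4]{Zhou}, both admit special-point lifts with a common torus $\bT$, and Kisin's twisting formalism (\cite[Proposition 4.2.6]{KisinModp}) exhibits $\mathcal P_{s_\alpha}$ as the pushout of an explicit $\bT$-torsor given by a cocycle $c_\tau=g^{-1}\tau(g)\in i(\bT)(\overline\BQ)$; and (ii) the resulting class in $\RH^1(\BQ,I)$ is trivial, which is proved not by approximation but via the kernel comparison $\ker(\RH^1(\BQ,I)\to\RH^1(\BR,I))\cong\ker(\RH^1(\BQ,\bG)\to\RH^1(\BR,\bG))$ (\cite[Lemma 4.4.3]{KisinModp}), the vanishing of $c$ in $\RH^1(\BR,\bT)$ coming from compactness of $(I/\BG_m)(\BR)$ (\cite[Lemma 4.4.5]{KisinModp}), and the evident triviality of $c$ in $\RH^1(\BQ,\bG)$. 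Your proposal omits both the torsor (geometric non-emptiness) step and the correct cohomological triviality argument, so the gap is genuine.
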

\begin{proof}
	Let $k=\BF_{p^r}$ be a sufficiently large finite field such that $x$ and $x'$ are both defined over $k$. We let $I$ and $I'$ be the groups associated to $x$ and $x'$ respectively. We let $\mathrm{Isog}(\CA_x,\CA_{x'})$ be the scheme of quasi-isogenies between $\CA_{x'}$ and $\CA_{x'}$. We define $$\mathcal{P}_{s_\alpha}(x,x')\subset \mathrm{Isog}(\CA_x,\CA_{x'})$$ to be the subscheme which takes $(s_{\alpha,\ell,x})_{l\neq p}$ (resp.~$s_{\alpha,0,x}$) to $(s_{\alpha,\ell,x'})_{\ell\neq p}$ (resp.~$s_{\alpha,0,x'}$). It suffices to show that $\mathcal{P}_{s_\alpha}(x,x')$ is a trivial $I$-torsor.
	
	We first show $\mathcal{P}_{s_\alpha}(x,x')$ is an $I$-torsor. By Corollary \ref{cor: basic same inner form}, we may fix an isomorphism $I\cong I'$. Let $\bT\subset I\cong I'$ be a maximal torus. The proof of \cite[Theorem 9.4]{Zhou} shows that upon modifying $x$ and $x'$ in its isogeny class, we may assume that $x$ and $x'$ admit lifts $\tilde{x}$ and $\tilde{x}'$ to $\mathrm{Sh}_{\RK}(\mathbf{G},X)(\overline{\BQ})$ satisfying the conditions:
	
	\begin{enumerate}	\item  $\bT\subset \mathrm{Aut}(\CA_x)$ and $\bT\subset\mathrm{Aut}(\CA_{x'})$ lift to $\bT\subset \mathrm{Aut}(\CA_{\tilde{x}})$ and $\bT\subset \mathrm{Aut}(\CA_{\tilde{x}'})$. 
		
		\item  The Hodge filtrations on $\mathrm{H}^1_{\mathrm{dR}}(\mathcal{A}_{\tilde{x}})$ and $\mathrm{H}^1_{\mathrm{dR}}(\mathcal{A}_{\tilde{x}'})$ are induced by the same $\bT$-valued cocharacter $\mu^{\bT}$.
		
		\item If $i, i':\bT\rightarrow \mathbf{G}$ are the inclusions obtained by regarding $\bT$ as a subgroup of the Mumford--Tate groups of $\CA_{\tilde{x}}$ and $\CA_{\tilde{x}'}$ (these are well-defined up to $\mathbf{G}(\BQ)$-conjugacy), then $\tilde{x}$ and $\tilde{x}'$ are in the images of the maps $$i:\mathrm{Sh}(\bT,h_{\bT})\rightarrow \mathrm{Sh}_{\RK}(\mathbf{G},X)_{\bE_{\bT}}$$ 
		$$i':\mathrm{Sh}(\bT,h_{\bT})\rightarrow \mathrm{Sh}_{\RK}(\mathbf{G},X)_{\bE_{\bT}}$$
		respectively.  Here $\mathrm{Sh}(\bT,h_{\bT})$ is the Shimura variety for $(\bT, h_{\bT})$ and  $\bE_{\bT}$ is its reflex field.
	\end{enumerate}
	We let $\tilde{\mathcal{P}}\subset \mathrm{Isog}(\CA_{\tilde{x}},\CA_{\tilde{x}'})$ be the scheme of isogenies which respect the Hodge cycles and the action of $\bT$. We claim that $\tilde{\mathcal{P}}$ is a $\bT$-torsor; for this it suffices to show that $\tilde{\mathcal{P}}$ is non-empty.
	
	By Proposition \ref{prop: I group}, the map $$\bT\xrightarrow{i}\bG\otimes_{\BQ}\overline{ \BQ}\cong I\otimes_{\BQ}\overline{ \BQ}$$
	is conjugate to the natural inclusion, and a similar statement holds for the map $$\bT\xrightarrow{i'}\bG\otimes_{\BQ}\overline{ \BQ}\cong I'\otimes_{\BQ}\overline{ \BQ}.$$
	It follows that there exists $g\in \mathbf{G}(\overline{ \BQ})$ such that $gig^{-1}=i'$. Since $i(\bT)$ is  its own centralizer in $\mathbf{G}$, we have $c_\tau=g^{-1}\tau(g)\in i(\bT)(\overline{ \BQ})$ for any $\tau\in\mathrm{Gal}(\overline{ \BQ}/\BQ)$. Let $\RK_\infty$ denote the centralizer of $i\circ h_T$. Then by the same argument as in \cite[Proposition 4.4.13]{KisinModp}, the image of $c$  in $ \mathrm{H}^1(\BR,\RK_{\infty})$ is trivial.
	
	This defines a $\bT$-torsor $\tilde{\mathcal{P}}'$ which is  isomorphic to $\tilde{\mathcal{P}}$ by \cite[Proposition 4.2.6]{KisinModp}. Indeed the proposition in loc. cit. shows that $\mathcal{A}_{\tilde{x}'}$ is isomorphic to the twisted abelian variety $\mathcal{A}_{\tilde{x}}^{\tilde{\mathcal{P}}'}$ as in \cite[\S4.1]{KisinModp} equipped with its collection of Hodge cycles and action of $\bT$ induced from $\mathcal{A}_{\tilde{x}}$. It then follows by the construction of $\mathcal{A}_{\tilde{x}}^{\tilde{\mathcal{P}}'}$ that $\tilde{\mathcal{P}}\cong\tilde{\mathcal{P}}'$.
	It follows that $\mathcal{P}_{s_{\alpha}}$ is the $I$-torsor obtained by pushout from the $\bT$-torsor $\tilde{\mathcal{P}}$.  
	
	By \cite[Lemma 4.4.3]{KisinModp}, there is an isomorphism $$\ker(\RH^1(\BQ,I)\rightarrow \RH^1(\BR,I))\cong \ker(\RH^1(\BQ,\mathbf G)\rightarrow \RH^1(\BR,\mathbf G)).$$ 
	By \cite[Lemma 4.4.5]{KisinModp} applied to the inclusion $\bT_{\BR}\rightarrow K_\infty$, the image of $c$ in $\RH^1(\BR,\bT)$ is trivial, and hence the image of $c$ in  $\RH^1(\BQ,I)$ lies in  $\ker(\RH^1(\BQ,I)\rightarrow \RH^1(\BR,I))$.  Since the image of $c$  in $\RH^1(\BQ,\mathbf G)$ is trivial, we have that $c$ is trivial in $\RH^1(\BQ,I)$.   It follows that the $I$-torsor $\mathcal{P}_{s_\alpha}(x,x')$ is trivial.
\end{proof}

\begin{proof}[Proof of Proposition \ref{prop: RZ uniformization}] Let $x\in\Sh_{\RK,\mathrm{bas}} (\overline{ \BF}_p)$. We first define a natural map $ X_\mu(\delta)\rightarrow \Sh_{\RK,\mathrm{bas}}^{\mathrm{pfn}}.$ The key input for this is the existence of such a map on $\overline{ \BF}_p$-points which was constructed in  \cite[Proposition 7.7]{Zhou}.  We may then argue as in \cite[Lemma 7.2.12]{XZ}; we sketch the argument  emphasizing the points which do not directly carry over to the ramified case. 
	
	As in \cite[7.2.6]{XZ}, we may construct an abelian variety $\CA$ over $X_\mu(b)$ equipped with a $p$-power quasi isogeny $\CA\rightarrow\CA_x\times X_\mu(b) $. Moreover this quasi-isogeny equips $\CA$ with tensors $\mathbf{s}'_{\alpha,0}\in \BD(\CA[p^\infty])^\otimes$, as well as a weak polarization and a prime-to-$p$ level structure. Hence we obtain a map $$\iota':X_\mu(b)\to \sA_{g,K'}.$$ We claim $\iota'$ lifts to a unique map $$\iota:X_\mu(b)\to \Sh_{\RK}^{\mathrm{pfn}}$$ such that for each closed point $y\in X_\mu(b)$, we have $s_{\alpha,0,y}=\mathbf{s}'_{\alpha,0,y}$.  The uniqueness follows from \cite[Corollary 6.3]{Zhou} and the fact that two maps between perfect schemes coincide if and only if they coincide on the level of closed points. Thus it suffices to prove the lifting locally. 
	
	Let $y$ be a closed point of $X_\mu(b)$ and $U\subset X_\mu(b)$ an affine open neighborhood containing $y$ which is perfectly of finite presentation. We may assume $U$ is the perfection  of a reduced affine scheme $U_0\cong\Spec R$ and that the quasi-isogeny $\CA|_U\rightarrow \CA_x\times U$ comes from pullback from a quasi-isogeny $\CA_0\rightarrow\CA_x\times U_0$ over $U_0$. We thus obtain a map $$\iota_0':U_0\rightarrow\sS_{\RK'}(\mathbf{GSp}(V),S^\pm)\otimes_{\BZ_{(p)}}\overline{\BF}_p$$ and it suffices to show $\iota'_0$ can be lifted to $\iota:U_0\rightarrow \Sh_{\RK}$. 
	
	We form	the pullback diagram
	\[\xymatrix{Y \ar[r]\ar[d]& \Sh_{\RK} \ar[d]\\
		\Spec R\ar[r]& \sS_{\RK'}(\mathbf{GSp}(V),S^\pm)\otimes_{\BZ_{(p)}}\overline{\BF}_p	}.\]
	
	Then $Y$ is equipped with a polarized abelian variety $(\mathcal{A}_Y,\lambda_Y)$ and tensors $$\mathbf{s}'_{\alpha,0,Y}\in\mathbb{D}(\mathcal{A}[p^\infty])[\frac{1}{p}]^\otimes, \ \ \ \mathbf{s}_{\alpha,0,Y}\in\mathbb{D}(\mathcal{A}[p^\infty])[\frac{1}{p}]^\otimes,$$ where the $\mathbf{s}'_{\alpha,0,Y}$ are obtained from pullback of $\mathbf{s}_{\alpha,0}'$ along $Y\rightarrow \Spec R$, and the $\mathbf{s}_{\alpha,0,Y}$ are obtained from pullback of $\mathbf{s}_{\alpha,0}$ along $Y\rightarrow \Sh_{\RK}$. We let $Y^\circ$ denote the union of connected components which contain an $\overline{ \BF}_p$-point $y$ such that $\mathbf{s}_{\alpha,0,y}=\mathbf{s}'_{\alpha,0,y}$. By \cite[Lemma 5.10]{MP}, $\mathbf{s}_{\alpha,0,Y^\circ}=\mathbf{s}'_{\alpha,0,Y^\circ}$. By \cite[Proposition 6.5 (i)]{Zhou}, the map $Y^\circ\rightarrow \Spec R$ is bijective on $\overline{ \BF}_p$-points and by \cite[Proposition 4.2.2]{KP}, the map $Y^\circ\rightarrow \Spec R$ is finite and is a closed immersion when completed at every point of the domain. In addition $R$ is reduced; it follows that $Y^\circ \rightarrow\Spec R$ is an isomorphism.
	
	The map $\iota$ induces  a finite map $$\iota_{\mathrm{isog}}:I(\BQ)\backslash X_\mu(b)\times \bG(\BA_f^p)/\RK^p\to \Sh_{\RK,b}^{\mathrm{pfn}}$$ which is bijective on closed points by \cite[Proposition 9.1]{Zhou} and Proposition \ref{prop: Basic locus one isogeny class}, and is a closed immersion when completed at every closed point of the domain. It follows that $\iota_{\mathrm{isog}}$ is an isomorphism.

\end{proof}

\section*{Erratum for \cite{ZZ}} In \cite[Definition 5.2.7]{ZZ}, the two appearances of $\mathbb C[Y^*]$ should be replaced by $\mathbb C$. In \cite[Proposition 5.5.1]{ZZ}, the identity is between two elements of $\mathbb C[\mathbf q^{-1}]$.

\bibliographystyle{hep}
\bibliography{myref}
\end{document}